\def\l@section{\@tocline{1}{0pt}{1pc}{}{}}
\def\l@subsection{\@tocline{2}{0pt}{1pc}{4.6em}{}}
\def\l@subsubsection{\@tocline{3}{0pt}{1pc}{7.6em}{}}
\renewcommand{\tocsection}[3]{%
  \indentlabel{\@ifnotempty{#2}{\makebox[2.3em][l]{%
    \ignorespaces#1 #2.\hfill}}}#3}
\renewcommand{\tocsubsection}[3]{%
  \indentlabel{\@ifnotempty{#2}{\hspace*{2.3em}\makebox[2.3em][l]{%
    \ignorespaces#1 #2.\hfill}}}#3}
\renewcommand{\tocsubsubsection}[3]{%
  \indentlabel{\@ifnotempty{#2}{\hspace*{4.6em}\makebox[3em][l]{%
    \ignorespaces#1 #2.\hfill}}}#3}
\numberwithin{equation}{section}
\newcounter{hours}\newcounter{minutes}
\theoremstyle{definition}
\newtheorem{thm}{Theorem}[section]
\newtheorem{lem}[thm]{Lemma}
\newtheorem{cor}[thm]{Corollary}
\newtheorem{prop}[thm]{Proposition}
\newtheorem*{THM}{Theorem}
\theoremstyle{remark}                  
\newtheorem{rem}[thm]{Remark}
\newtheorem*{REM}{Remark}
\theoremstyle{definition}
\newtheorem{DEF}[thm]{Definition}
\def\real{{\mathbb R}}
\def\diam{\textnormal{diam}}
\def\det{\textnormal{det}}
\def\union{\bigcup}
\newcommand{\norm}[1]{\lvert#1\rvert}
\newcommand{\Norm}[2][]{\lVert#2\rVert_{#1}}
\newcommand{\inner}[2]{\langle #1,#2\rangle}
\def\arbitrary{A}
\def\sublevelset{S}
\def\ellipsoid{E}
\def\normal{N}
\def\innerdom{\Omega_0}
\def\outerdom{\Omega}
\def\innertarget{\Omegabar_0}
\def\outertarget{\Omegabar}
\newcommand{\direction}[2][]{\omega^{#1}_{#2}}
\DeclareMathOperator{\spt}{spt}
\DeclareMathOperator{\cl}{cl}
\DeclareMathOperator{\interior}{int}
\newcommand{\dist}[2]{d{\left(#1, #2\right)}}
\newcommand{\gdist}[2]{d_{\g{}}{\left(#1, #2\right)}}
\newcommand{\gbardist}[2]{d_{\gbar{}}{\left(#1, #2\right)}}
\newcommand{\Leb}[1]{\left\vert{#1}\right\vert_{\mathcal{L}}} 
\def\defin{:=}
\newcommand{\ball}[2]{B_{#1}{\left(#2\right)}}
\newcommand{\euclidinner}[2]{\left (#1, #2\right)}
\def\M{M}
\def\Mbar{\bar{M}}
\newcommand{\dVol}[1][]{d \textnormal{Vol}_{#1}}
\newcommand{\tansp}[2]{T_{#1}{#2}}
\newcommand{\cotansp}[2]{T^*_{#1}{#2}}
\newcommand{\cotanspM}[1]{\cotansp{#1}{M}}
\newcommand{\tanspM}[1]{\tansp{#1}{M}}
\newcommand{\cotanspMbar}[1]{\cotansp{#1}{\Mbar}}
\newcommand{\tanspMbar}[1]{\tansp{#1}{\Mbar}}
\newcommand\g[1]{g_{#1}}
\newcommand\gbar[1]{\bar{g}_{#1}}
\newcommand{\gnorm}[2][]{\lvert #2\rvert_{\g{#1}}}
\newcommand{\gbarnorm}[2][]{\lvert #2\rvert_{\gbar{#1}}}
\newcommand{\innerg}[3][x_0, \xbar_0]{\g{#1}\left(#2, #3\right)}
\newcommand{\innergbar}[3][\xbar_0]{\gbar{#1}\left(#2, #3\right)}
\newcommand{\raisecovect}[1]{{#1}^{\sharp}}
\DeclareMathOperator{\ch}{conv}
\newcommand{\plane}[2]{\Pi^{#2}_{#1}}
\newcommand{\projop}[1]{\pi_{#1}}
\newcommand{\proj}[2]{\projop{#1}{\left(#2\right)}}
\newcommand{\cone}[2]{I_{\pbar_0}(#1, #2)}
\newcommand{\polardual}[2]{{#1}^{\ast}_{#2}}
\def\Omegabar{\bar{\Omega}}
\def\arbitrarybar{\bar{\arbitrary}}
\def\Dbar{\bar{D}}
\def\xbar{\bar{x}}
\def\pbar{\bar{p}}
\def\qbar{\bar{q}}
\def\Vbar{\bar{V}}
\def\zhat{\widehat{z}}
\def\xbarhat{\widehat{\bar{x}}}
\def\xdot{\dot{x}}
\def\xddot{\ddot{x}}
\def\xbardot{\dot{\xbar}}
\def\xbarddot{\ddot{\xbar}}
\def\ttil{\tilde{t}}
\def\pmax{p_{\max}}
\def\xmax{x_{\max}}
\newcommand{\coord}[2]{[#1]_{#2}} 
\newcommand{\outerdomcoord}[1]{\coord{\outerdom}{#1}}
\newcommand{\innerdomcoord}[1]{\coord{\innerdom}{#1}}
\newcommand{\innertargetcoord}[1]{\coord{\innertarget}{#1}}
\newcommand{\outertargetcoord}[1]{\coord{\outertarget}{#1}}
\newcommand{\sublevelsetcoord}[1]{\coord{\sublevelset}{#1}}
\def\sectioncoord{\sublevelsetcoord{\xbar_0}}
\newcommand{\cExp}[2]{exp^c_{#1}({#2})}
\def\mountain{m}
\def\mountainhat{{m}}
\newcommand{\Gseg}[3]{\left[#1, #2\right]_{#3}} 
\def\subzero{\sublevelset_0}
\def\subzerocoord{\coord{\subzero}{\xbar_0, \z_0}}
\def\contactcoord{\coord{\contact}{\xbar_0}}
\def\contact{\sublevelset_0}
\newcommand{\linear}[3][]{l^{#2}_{#1}(#3)}
\newcommand{\e}[2][]{e^{#1}_{#2}}
\def\mountainhat{\widehat{\mountain}}
\newcommand{\w}[2][]{w^{#1}_{#2}}
\newcommand{\subcoord}[1][\xbar]{\coord{\sublevelset}{#1}}
\newcommand{\supsegment}[2]{ l(#1,#2) } 
\newcommand{\paren}[1]{\left(#1\right)}
\newcommand{\brackets}[1]{\left[#1\right]}
\newcommand{\curly}[1]{\left\{#1\right\}}
\def\G{G}
\def\Gstar{{G^\ast}}
\def\H{H}
\def\x{x}
\def\y{y}
\def\u{u}
\def\p{p}
\def\q{q}
\def\intervalG{I_{\G}}
\def\intervalH{I_{\H}}
\def\z{z}
\newcommand{\Gsubdiff}[3][\G]{\partial_{#1}#2\paren{#3}}
\def\Dx{D}
\def\Dxx{D^2}
\def\Gz{\G_{\z}}
\def\Hu{\H_{\u}}
\def\Gzz{\G_{\z\z}}
\def\Xop{\textnormal{exp}^{\G}}
\def\Xbarop{\textnormal{exp}^{\G}}
\def\Zop{Z^{\G}}
\def\Uop{U^{\G}}
\newcommand{\X}[3]{\Xop_{#1,#2}(#3)}
\newcommand{\Xbar}[3]{\Xbarop_{#1,#2}(#3)}
\newcommand{\Z}[3]{\Zop_{#1}(#2, #3)}
\newcommand{\U}[3]{\Uop_{#1} (#2, #3)}
\newcommand{\A}[2]{A_{#1#2}}
\newcommand{\Adual}[2]{A^{\ast}_{#1#2}}
\def\V{V}
\def\Vbar{\bar{\V}}
\def\etabar{\bar{\eta}}
\def\ybar{\bar y}
\def\pbarmapop{\bar{p}}
\def\pmapop{p}
\newcommand{\pbarmap}[3]{\pbarmapop_{#1,#2}(#3)}
\newcommand{\pmap}[3]{\pmapop_{#1,#2}(#3)}
\newcommand{\nondegmatrix}[4][]{E_{#1}(#2, #3, #4)}
\newcommand{\nondegmatrixentries}[1][]{E_{#1}}
\newcommand{\nondegmatrixinv}[4][]{E^{-1}_{#1}(#2, #3, #4)}
\newcommand{\dualnondegmatrix}[4][]{\bar{E}_{#1}(#2, #3, #4)}
\newcommand{\dualnondegmatrixentries}[1][]{\bar{E}_{#1}}
\newcommand{\dualnondegmatrixinv}[4][]{\bar{E}^{-1}_{#1}(#2, #3, #4)}
\DeclareMathOperator{\tensorop}{T}
\newcommand{\tensor}[5][]{\tensorop_{#1}\paren{#2, #3, #4, #5}}
\DeclareMathOperator{\dualtensorop}{T^{\ast}}
\newcommand{\dualtensor}[5][]{\dualtensorop_{#1}\paren{#2, #3, #4, #5}}
\newcommand{\pdiff}[2][]{\frac{\partial #1}{\partial #2}}
\newcommand{\twicepdiff}[3][]{\frac{\partial^2 #1}{{\partial #2}{\partial #3}}}
\newcommand{\twicepdiffsamevar}[2][]{\frac{\partial^2 #1}{{\partial #2^2}}}
\newcommand{\diff}[2][]{\frac{d#1}{d#2}}
\def\zdot{\dot{z}}
\def\perturb{\delta}
\def\perturbparam{\tau_{\perturb}}
\def\perturbparamtilde{\tilde{\tau}_{\perturb}}
\newcommand\nbhdof[2]{\mathcal{N}_{#1}\left(#2\right)}
\newcommand{\vertbar}[1]{\left. #1\right\vert}
\def\Gfiveupper{\overline{u}}
\def\Gfivelower{\underline{u}}
\def\lipbound{K_0}
\def\bdry{\partial}
\def\zmax{\z_{\max}}
\def\gendom{\mathfrak{g}}
\def\gendomdual{\mathfrak{h}}
\newcommand{\Gdual}[2]{{#1}^{\G}_{#2}} 
\def\basesection{S}
\def\basemountain{\mountain}
\def\basexbar{\xbar}
\def\baseq{\qbar}
\def\basez{\z}
\def\cm{x_{cm}}
\def\pcm{p_{cm}}
\def\supheight{\lambda_{\sup}}
\def\dilationconst{K}
\def\infbound{C_{\inf}}
\def\supbound{C_{\sup}}
\def\firsttime{t_0}
\def\sublevelsethat{\widehat{\sublevelset}}
\def\sptmountain{\widehat {\mountain}}
\def\sptpoint{\widehat x}
\def\sptxbar{\widehat \xbar}
\def\sptparam{\widehat s}
\def\sptseg{\widehat x}
\def\bdrysptpoint{\sptpoint^\partial}
\def\sptmountainshift{\tilde{\sptmountain}}
\def\maxpoint{x_{\max}}
\def\bdrymaxpoint{\maxpoint^\partial}
\def\maxparam{s_{\max}}
\def\maxseg{x_{\max}}
\def\Gdualybar{\ybar}
\def\polarheightconst{C_1}
\def\QQConvlower{\underline{u}_{\rm Q}}
\def\QQConvupper{\overline{u}_{\rm Q}}
\def\Nicelower{\underline{u}_{\rm N}}
\def\Niceupper{\overline{u}_{\rm N}}
\newcommand{\subdiff}[2]{\partial {#1}{(#2)}}
\def\S{\mathbb{S}}
\newcommand{\Gcone}[1]{K^{\G}_{#1}}
\def\Niceinterval{[\Nicelower, \Niceupper]}
\def\basexbarA{\xbar}
\def\basezA{\z}
\def\basemountainA{\mountain}
\def\contactpoint{\x_{\omega}}
\def\contactpointcoord{\pmap{\basexbarA}{\basezA}{\contactpoint}}
\def\sectioncoord{\sublevelsetcoord{\basexbarA, \basezA}}
\def\parallelpoint{\x_1}
\def\parallelpointcoord{\p_1}
\def\Gconez{{\z}_{\direction{}}}
\def\origvect{v}
\def\newvect{w}
\def\turnedfocus{\xbar_{\direction{}}}
\def\turnedfocuscoord{\pbar_{\direction{}}}
\def\turnedseg{\xbar_{\direction{}}}
\def\turnedz{\z_{\direction{}}}
\def\Gtil{\tilde{G}}
\def\gendomtil{\tilde{\gendom}}
\def\ftil{\tilde{f}}
\def\fhat{\hat{f}}
\newcommand{\clarke}[2]{\partial^C#1(#2)}
\begin{document}
\title{Pointwise estimates and regularity in geometric optics and other Generated Jacobian equations}

\author{Nestor Guillen}
\address[Nestor Guillen]{Department of Mathematics, University of Massachusetts at Amherst}
\email{nguillen@math.umass.edu} 

\author{Jun Kitagawa}
\address[Jun Kitagawa]{Department of Mathematics, Michigan State University}
\thanks{N. Guillen is partially supported by a National Science Foundation grant DMS-1201413. This material is based upon work supported by the National Science Foundation under Grant No. 0932078 000, while J. Kitagawa was in residence at the Mathematical Sciences Research Institute in Berkeley, California, during the fall semester of 2013. The authors would also like to thank the Fields Institute for Research in Mathematical Sciences, where they resided as part of the Thematic Program on Variational Problems in Physics, Economics and Geometry (Fall 2014).}
\email{kitagawa@math.msu.edu}

\begin{abstract}
  The study of reflector surfaces in geometric optics necessitates the analysis of certain nonlinear equations of Monge-Amp\`ere type known as generated Jacobian equations. This class of equations, whose general existence theory has been recently developed by Trudinger, goes beyond the framework of optimal transport. We obtain pointwise estimates for weak solutions of such equations under minimal structural and regularity assumptions, covering situations analogous to that of costs satisfying the A3-weak condition introduced by Ma, Trudinger and Wang in optimal transport. These estimates are used to develop a $C^{1,\alpha}$ regularity theory for weak solutions of Aleksandrov type. The results are new even for all known near-field reflector/refractor models, including the point source and  parallel beam reflectors and are applicable to problems in other areas of geometry, such as the generalized Minkowski problem.
\end{abstract}

\date{\today}

\date{}
\maketitle
\markboth{N. Guillen and J. Kitagawa}{Pointwise estimates and regularity in geometric optics and other generated Jacobian equations}

\tableofcontents 

\section{Introduction}\label{section: introduction}

\subsection{Overview.} This paper is concerned with the regularity theory of a broad class of Monge-Amp\`ere type equations spanning optimal transport and geometric optics. These may sometimes lie outside the scope of optimal transport but always have a Jacobian structure, namely
\begin{align}
  \det(D[T(x,Du,u) ]) = \psi(x,D u,u) \label{eqn: intro generated Jacobian equation},
\end{align}
for some $T:dom(T)\subseteq\Omega \times \mathbb{R}^d\times \mathbb{R} \to \mathbb{R}^d$ (see below). Admissible $u$ are ``convex'', i.e.
\begin{align*}
  D[T(x,Du,u)]\geq 0,
\end{align*}
which is necessary for \eqref{eqn: intro generated Jacobian equation} to be a degenerate elliptic PDE. To appreciate the generality of \eqref{eqn: intro generated Jacobian equation}, note it covers the real Monge-Amp\'ere equation, the $c$-Monge-Amp\'ere equation from optimal transport with cost $c$, the point source near-field reflector problem from geometric optics, several variations of the Minkowski problem and principal-agent problems in economics when dealing with a non-quasilinear utility function (references to the relevant literature are given below, see also Section \ref{section: examples}). Some of the corresponding $T$'s are
\begin{align*}
   T(x,\pbar,u) & = \pbar & \textnormal{(real Monge-Amp\'ere equation)},\\
  (D_xc)(x,T(x,\pbar, u)) & = -\pbar & \textnormal{(Optimal transport with cost $c(\cdot,\cdot)$)},\\
   T(x,\pbar,u) & = \frac{\pbar}{|\pbar|^2-(u-\pbar\cdot x)^2 } & \textnormal{(Point source near-field reflector)}.
\end{align*}
These and other examples will be discussed further in Section \ref{section: examples}. The mappings $T(x,\pbar,u)$ considered here will always be given by a \emph{generating function}. This means there is a function $G:dom(G)\subseteq \M\times\Mbar\times \mathbb{R}\to\mathbb{R}$ and associated ``exponential mappings'' $\Xbar{\x}{\u}{\cdot},\Z{\x}{\cdot}{\cdot}$ (see Section \ref{section: elements of generating functions} for definitions) such that
\begin{align*}
  T(x,\pbar,u) = \Xbar{\x}{\u}{\pbar}.
\end{align*}
For such $T$'s, \eqref{eqn: intro generated Jacobian equation} takes the form
\begin{align}
  \det(D^2u+(D^2_{\x}\G)(x,\Xbar{\x}{\u}{Du},\Z{\x}{Du}{u})) = \psi_G(\x,D \u,u) \label{eqn: generated Jacobian equation}. \tag{\textnormal{GJE}}
\end{align}
The corresponding convexity condition for $\u$ asks that it be of the form 
\begin{align*}
  \u(x) = \sup \limits_{\xbar} \limits\G(x,\xbar,\z_{\xbar})
\end{align*}
for some function $\xbar\to \z_{\xbar}\in \mathbb{R}$, $\xbar\in\Mbar$. Following work of Trudinger \cite{Tru14}, where the general framework for these equations is proposed, equation \eqref{eqn: generated Jacobian equation} will be called a ``Generated Jacobian Equation.'' The distinguishing feature of \eqref{eqn: generated Jacobian equation} is the dependence of the mapping $T$ on the values of the solution, which is not present in the case of optimal transport. Recall that in optimal transport, one has 
\begin{align*}
  \G(\x,\xbar,z) & =-c(\x,\xbar)+z,\\
  T(x,\pbar,u) & = T(x,\pbar) = \cExp{x}{\pbar}
\end{align*}
where $c(x,\xbar)$ denotes the cost function. In general, changing the ``height'' parameter $\z$ in $\G(\x,\xbar,\z)$ will result in a change in the shape of the function, and not merely a vertical shift; and the choice of coordinate systems must now take this into account. 

The aim of this work is to determine the differentiability of weak solutions to \eqref{eqn: generated Jacobian equation} under minimal on assumptions on the data (including the generating function $\G$, and the function $\psi_\G$). Specifically, we focus on weak solutions to \eqref{eqn: generated Jacobian equation}  in the ``Aleksandrov sense'', we also require the right hand side of the equation to bounded away from zero and infinity. The notion of Aleksandrov solution originated in the study of the real Monge-Amp\`ere equation and has also played a key role in optimal transport, see Definition \ref{def:Aleksandrov_solutions} for the setting of generated Jacobian equations. Our results are new even for the case of near-field reflector/refractor problems, covering situations where the condition \eqref{G3s} fails but \eqref{G3w} still holds.  The \eqref{G3w} condition was introduced by Trudinger in \cite{Tru14}, it generalizes (A3w) condition for the  Ma-Trudinger-Wang tensor \cite{MTW05}. Both the MTW tensor and the (A3w) condition play a central role in the regularity theory of optimal transport. 

This general framework makes our results applicable to problems beyond geometric optics. Roughly speaking, these results are in the same vein as Caffarelli's localization and differentiability estimate for the real Monge-Amp\'ere equation \cite{Caf90}; Figalli, Kim, and McCann's regularity theory for optimal transport maps under the (A3w) condition \cite{FKM13}, as well as work by V\'etois \cite{Vetois2015}, and by the authors on the strict $c$-convexity of $c$-convex potentials \cite{GK14}.

In the spirit of \cite{GK14}, the most important assumption on $\G$ is a synthetic version of \eqref{G3w}, which is roughly a ``quantitative quasiconvexity'' condition along $\G$-segments \eqref{QQConv}. This condition follows from \eqref{G3w} when $\G$ is smooth enough, it generalizes the (QQConv) condition introduced in \cite{GK14} for optimal transport (and in that case, it refines Loeper's maximum principle).

Our main results can be broadly separated in two parts. The first part consists of pointwise inequalities, Theorems \ref{thm: G-aleksandrov estimate} and \ref{thm: Sharp growth}, for \emph{$\G$-convex} functions $\u$ (see Definition~\ref{def: G-functions}). These are obtained under natural assumptions on $G$ and $\u$, one of the key conditions being \eqref{QQConv} mentioned above. The pointwise inequalities may be thought of as nonlinear analogues of the Blaschke-Santal\'o inequalities for the Mahler volume (see discussion in Section \ref{section: Mahler volume})

The second part comprises Theorems \ref{thm:strict_convexity} and \ref{thm:C1alpha_regularity}, in which we prove  \emph{strict} $\G$-convexity and interior $C^{1, \alpha}$ differentiability respectively of weak solutions $\u$ of \eqref{eqn: intro generated Jacobian equation}. This part relies on the pointwise inequalities in Theorems \ref{thm: G-aleksandrov estimate} and \ref{thm: Sharp growth} to show solutions satisfy a localization property (which leads to strict convexity) and an engulfing property (which leads to interior $C^{1,\alpha}$ estimates). Finally, we show that for $\G$ smooth enough, condition \eqref{QQConv} is implied by \eqref{G3w} (Theorem \ref{thm: G3w implies QQconv}). Precise statements for these results are given in Section \ref{section: Main results}.\\

\noindent \textbf{Acknowledgements.} The authors would like to thank Neil Trudinger for helpful correspondence and also for graciously sharing some of his unpublished work. The authors would also like to express their gratitude to the anonymous referee for their prompt and thorough reading of the manuscript, as well as suggested improvements.

\subsection{Strategy: Mahler volume and Monge-Amp\'ere equations.}\label{section: Mahler volume} In order to motivate the main results (Section \ref{section: Main results}) it will be convenient to recall several facts about the Mahler volume and relate it to the regularity theory for the real Monge-Amp\'ere equation. Let $\sublevelset$ be a convex set with non-empty interior and whose center of mass is $0$. The Mahler volume of $\sublevelset$, $m(\sublevelset)$, is defined as
\begin{align*}
  m(\sublevelset) := \Leb{\sublevelset}\Leb{\sublevelset^*},
\end{align*}	
where the set $\sublevelset^*$ is the polar dual of $\sublevelset$,
\begin{align*}
  \sublevelset^* := \{ y \in \mathbb{R}^n \mid (x',y)\leq 1,\;\;\forall\;x'\in \sublevelset \}.
\end{align*}
Then, the celebrated Blashke-Santal\'o and reverse Santal\'o inequalities together say that
\begin{align}\label{eqn:Blashke-Santalo}
  c_n^{-1}\; \leq \Leb{\sublevelset}\Leb{\sublevelset^*} \leq c_n.
\end{align}
These geometric inequalities imply (and are in fact equivalent) to certain pointwise inequalities for convex functions. Suppose a convex function $u:\mathbb{R}^n\to\mathbb{R}$ and an affine function $l$ are such that the set $\sublevelset:=\{u< l\}$ is nonempty, bounded, and with center of mass at $0$. Then, one can use \eqref{eqn:Blashke-Santalo} to prove the bounds
\begin{align}
  & \sup\limits_{\sublevelset }(l-u)^n\geq C_n^{-1}\Leb{\sublevelset}\Leb{\subdiff{u}{\tfrac{1}{2}\sublevelset}},\label{eqn: classical Sharp Growth}\\
  & (l(x)-u(x))^n \leq C_n \Leb{\sublevelset}\Leb{\subdiff{u}{\sublevelset}}, \;\;\forall\;\x\in\sublevelset\label{eqn: classical Aleksandrov}.
\end{align}
These two estimates are crucial in the theory of the Monge-Amp\`ere equation, in fact, they are the basis of Caffarelli's theorem on the strict convexity and differentiability of Aleksandrov solutions of the real Monge-Amp\'ere equation \cite{Caf90} (see the discussion in \cite[Part 2]{Caf92lectures}, and the discussion in \cite[Section 1.3]{GK14}). For now let us explain informally how these estimates may be used to obtain regularity to solutions of the Monge-Amp\`ere equation, namely $C^{1,1}$ regularity and strong convexity. Note first that a convex function $u$ is $C^{1,1}$ and strongly convex if and only if there is a $C>0$ such that for any supporting affine function $l_0$ and every small enough $h>0$ we have the inclusions
\begin{align}
  B_{C^{-1}\sqrt{h}} \subset  \{ u\leq l_0+h\} \subset B_{C\sqrt{h}}.\label{eqn: sublevel sets regular solution}
\end{align}
In other words, the level sets of $u$ are comparable to those of a paraboloid. Now, let $u$ be an Aleksandrov solution to $\det(D^2u) = f$, with  $\lambda \leq f \leq \Lambda$ (see Definition \ref{def:Aleksandrov_solutions}). Let us see to what extent would something like \eqref{eqn: sublevel sets regular solution} hold for $u$. Since $u$ is an Aleksandrov solution,  we have  $\Leb{\subdiff{u}{\sublevelset} }\sim \Leb{\sublevelset}$, in which case estimates \eqref{eqn: classical Sharp Growth}-\eqref{eqn: classical Aleksandrov} imply
\begin{align}
  \Leb{B_{C^{-1}\sqrt{h}}} \leq \Leb{\{ u\leq l_0+h\}} \leq \Leb{B_{C\sqrt{h}}}\label{eqn: sublevel sets measure}
\end{align}
for some $C>0$. This is a weaker assertion than \eqref{eqn: sublevel sets regular solution}, since we only compare the measures of the sublevel sets. The approach introduced by Caffarelli in the context of the real Monge-Amp\'ere equation \cite{Caf90, Caf91, Caf92}  shows how to go beyond \eqref{eqn: sublevel sets measure}  and obtain $C^{1,\alpha}$ regularity and strict convexity for $u$, and even \eqref{eqn: sublevel sets regular solution} and higher regularity if $f$ is assumed to be regular. See Guti\'errez's book \cite{Gut01} for a comprehensive exposition of these ideas.

For the $c$-Monge-Amp\'ere equation arising in optimal transport, Figalli, Kim, and McCann obtained \cite{FKM13} analogues of \eqref{eqn: classical Sharp Growth}-\eqref{eqn: classical Aleksandrov} under the (A3w) assumption of Ma, Trudinger and Wang, from where they obtained $C^{1,\alpha}$ and strict $c$-convexity estimates. In \cite{GK14} the authors introduced a condition on costs, ``quantitative quasiconvexity'' (QQConv), and used it to derive analogues of \eqref{eqn: classical Sharp Growth}-\eqref{eqn: classical Aleksandrov}. This (QQConv) condition is a refinement of Loeper's ``maximum principle'' \cite{Loe09} but at least for $C^4$ costs turns out to be equivalent to (A3w) (and thus to Loeper's condition itself).

Beyond $C^{1,\alpha}$ and $C^{2,\alpha}$ estimates, these inequalities are also an important tool in deriving $W^{2,p}$ estimates \cite{Caf90b} under extra assumptions on $f$, and more recently $W^{2,1+\epsilon}$ estimates under minimal assumptions \cite{DePFig2013,DePFigSav2013}. See the survey by Figalli and De Philippis \cite{DePFig2014} for a thorough discussion of recent optimal transport literature (see also Section \ref{section: examples}).

\subsection{Notation.} Before continuing with the introduction, let us set up some notational conventions used within the paper: $\inner{\cdot}{\cdot}$ will denote the evaluation pairing between an element of a vector space and an element of its dual space. $(\M,\g{}),(\Mbar,\gbar{})$ will denote $n$-dimensional complete Riemannian manifolds. Points in $\M$ will be denoted with $\x,\y...$ points in $\Mbar$ will be denoted with $\xbar,\ybar,...$ while $\Leb{\cdot}$ will denote the Riemannian volume on $(\M, \g{})$, $(\Mbar, \gbar{})$, or the associated Riemannian volumes on a tangent or cotangent space. $\gnorm[x]{\cdot}$ and $\gbarnorm[\xbar]{\cdot}$ will denote the length of tangent or cotangent vectors, with respect to the inner products $\g{x}$ and $\gbar{\xbar}$, and $\gdist{\cdot}{\cdot}$, $\gbardist{\cdot}{\cdot}$ will refer to the geodesic distances induced by the respective metrics. Also, we will use $A^{\interior}$, $A^{\cl}$, and $A^\bdry$ to refer to the interior, closure, and boundary of a set $A$ respectively.\\

Here is a summary of several other symbols, together with their definition number.\\

\begin{tabular}{lll}
Notation / Condition & Name & Definition location\\
\hline
$\G$, $\H$ & Generating function, dual function & Section \ref{subsection: basic definitions}\\
$\gendom,\gendomdual$ &  & Section \ref{subsection: basic definitions}\\
\eqref{G5}, \eqref{Lip}, $\lipbound$ &  & Definition~\ref{definition: G5}\\
\eqref{Twist}, \eqref{DualTwist} &  & Definition~\ref{definition: Bi-twist}\\
\eqref{Nondeg} &  & Definition~\ref{def: nondeg}\\
$\nondegmatrix{\x}{\xbar}{\z}$, $\dualnondegmatrix{\x}{\xbar}{\z}$ &  & Definition~\ref{def: nondeg}\\
$\pmap{\xbar}{\z}{x}$, $\pbarmap{x}{\u}{\xbar}$  &  & Definition~\ref{DEF: G-coordinates}\\
$\coord{\bar \arbitrary}{x,\u}$, $\coord{\arbitrary}{\xbar,\z}$ &  & Definition~\ref{DEF: G-coordinates}\\
$\Gseg{\x_0}{\x_1}{\xbar, \z}$ & $\G$-segment  & Remark~\ref{rem: G-segment notation}\\
$\X{\xbar}{\z}{\cdot}$, $\Xbar{\x}{\u}{\cdot}$, $\Z{\x}{\cdot}{\cdot}$ & $\G$-exponential mappings & Definition~\ref{DEF: exponential mappings}\\
\eqref{DualDomConv}, \eqref{DomConv} &  & Definition~\ref{def: DomConv}\\
\eqref{QQConv}, \eqref{DualQQConv} &  & Definition~\ref{definition: QQConv}\\
$\mountain,\mountain_0,\ldots$ & $\G$-affine functions  & Definition~\ref{def: G-functions}\\
$\u,\u_0,\ldots$ &  $\G$-convex functions & Definition~\ref{def: G-functions}\\
$\partial_{\G}u$ & $\G$-subdifferential  & Definition~\ref{DEF: G-subdifferentials}\\
& \emph{nice}, \emph{very nice} functions  & Definition~\ref{def: very nice}\\
&\emph{very nice} constant& Remark~\ref{rem: universal constants}\\
$\arbitrary^*_{p,q,\lambda}$ & Polar dual  & Definition~\ref{DEF: polar dual}\\
$\arbitrary^{\G}_{\x,m,\lambda}$ & $\G$-dual  & Definition~\ref{DEF: G-dual}\\
$K^{\G}_{\x,\sublevelset}(\cdot)$ & $\G$-cone  & Definition~\ref{DEF: G-cone}\\
$\plane{\arbitrary}{\direction{}}$ & Supporting hyperplane  & Definition~\ref{DEF:notation_supporting_plane}\\
\end{tabular}

\section{Statement of main results}\label{section: Main results}

In this section we state the exact form of our main results. The precise statement themselves involve a great deal of notation that will not be introduced until Section \ref{section: elements of generating functions}, however, for the sake of having all the main results stated in one section, we choose to present them here. Thus, the reader is advised to skim through this section on a first reading and return to it after reading the elements of generating functions in Section \ref{section: elements of generating functions}.\\
	
\textbf{Structural assumptions}. All of the theorems below require a number of structural assumptions on $\G$ and its domain of definition. In many important subclasses of examples (i.e. optimal transport, near field problems in optics) each of these structural assumptions are known to be necessary conditions for the regularity of solutions. 

Then, we are given $n$-dimensional Riemannian manifolds $M,\Mbar$, a \emph{generating function} which is a function $\G:\M\times \Mbar\times \real\to\mathbb{R}$; we are also given domains $\outerdom\subset M$, $\outertarget\subset \Mbar$, and $\gendom \subset \M\times \Mbar\times \real$. We assume these objects have the following properties (see Section \ref{section: elements of generating functions} for details)
\begin{align*}
  \begin{array}{l}
     \textnormal{(I)}\; \G(\x,\xbar,\z) \textnormal{ is } C^2 \textnormal{ in the sense that all purely mixed second derivatives exist and are }\\
	 \text{continuous in all of } \M\times \Mbar\times \real.\;\textnormal{ Moreover, } \G_z<0.\\
     \\
	 \textnormal{(II)}\;\; \textnormal{There are constants } -\infty\leq \Gfivelower<\Gfiveupper\leq \infty  \textnormal{ and }\lipbound>0\textnormal{ such that } \outerdom, \outertarget, \textnormal{ and } \gendom \textnormal{ satisfy}\\
	 \textnormal{\eqref{G5}, \eqref{Lip}, \eqref{DomConv}, and \eqref{DualDomConv}} \textnormal{ with respect to the interval } (\Gfivelower, \Gfiveupper).\\
	 \\	 
	 \textnormal{(III)}\; \G \textnormal{ satisfies } \eqref{Twist}, \eqref{DualTwist}, \eqref{Nondeg}, \eqref{QQConv} \textnormal{ and } \eqref{DualQQConv}.\\
  \end{array}
\end{align*}
We are also given a function $u$ (eventually, the solution to \eqref{eqn: generated Jacobian equation}), assumed to satisfy the following (see Definition~\ref{def: very nice} and Remarks~\ref{rem: very nice interval} and \ref{rem: universal constants})
\begin{align*}
  \textnormal{(IV) } u:\outerdom\to\mathbb{R} \textnormal{ is a \emph{very nice} } \G \textnormal{-convex function with an associated} \;\;\;\; \quad\quad\quad\quad\quad\quad\quad \\
   \textnormal{\emph{very nice} interval } \Niceinterval.  \quad\quad\quad\quad\quad\quad\quad\quad\quad\quad\quad\quad\quad\quad\quad\quad \quad\quad\quad\quad\quad\quad\quad\quad\quad\quad	
\end{align*}	
\begin{REM} The notion of \emph{very nice} for a $\G$-convex function is explained in Definition \ref{def: very nice}, this notion is irrelevant in optimal transport, where all $\G$-convex functions are automatically \emph{very nice}. The necessity for this notion for general Generated Jacobian equations is illustrated by phenomena present in the near field problem (see  Karakhanyan and Wang \cite[Theorem A,B]{KarWang10}). This is discussed in detail at the end of Section \ref{section: literature reflector problems}.
\end{REM}

Finally, in all what follows $M\geq 1$ will denote the constant associated to $\G$ by \eqref{QQConv} and \eqref{DualQQConv} with the interval $\Niceinterval$.\\

The first result is an Aleksandrov type estimate, which will play the role that \eqref{eqn: classical Aleksandrov} plays for the standard Monge-Amp\'ere theory.
\begin{thm}\label{thm: G-aleksandrov estimate}
  Suppose $\basemountainA(\cdot)\defin\G(\cdot, \basexbarA, \basezA)$ for some $(\basexbar, \basez)\in\outertarget\times\real$ is a \emph{nice} $\G$-affine function and $\sublevelset\defin\curly{u\leq \basemountainA}$. Also assume that $\sectioncoord\subset B\subset 3B\subset \coord{\outerdom}{\basexbarA, \basezA}$ for some ball $B$ in $\cotanspMbar{\basexbarA}$ (which may be of any radius). Then, there exist \emph{very nice} constants $\epsilon$, $C>0$ such that for any $\direction{1}\in\S^{n-1}\subset \cotanspMbar{\basexbarA}$ and $\x_0\in\sublevelset^{\interior}$, if $\diam{(\sublevelset)}<\epsilon$ then 
  \begin{align*}
    (\basemountainA(\x_0)-u(\x_0))^n\leq \frac{C\dist{\p_0}{\plane{\sectioncoord}{\direction{1}}}}{\supsegment{\sectioncoord}{\direction{1}}}\Leb{\sublevelset}\Leb{\partial_{\G}u(\sublevelset)},
  \end{align*}
  where $\p_0\defin \pmap{\basexbarA}{\basezA}{\x_0}$ and $\supsegment{\sectioncoord}{\direction{1}}$ is defined as the maximum length among all line segments parallel to $\direction{1}$ and contained in $\sectioncoord$.
\end{thm}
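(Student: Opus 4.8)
## Proof proposal for Theorem \ref{thm: G-aleksandrov estimate}

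The plan is to reduce the generated-Jacobian setting to the classical Aleksandrov estimate \eqref{eqn: classical Aleksandrov} by passing to the $\G$-coordinates centered at $(\basexbarA, \basezA)$. First I would work in the chart $\coord{\outerdom}{\basexbarA, \basezA}\subset \cotanspMbar{\basexbarA}$, writing $\p = \pmap{\basexbarA}{\basezA}{\x}$ and comparing $u$ with the $\G$-affine function $\basemountainA$. In these coordinates the sublevel set $\sublevelset$ becomes the convex-like region $\sectioncoord$ (its convexity, up to controlled distortion, is exactly what \eqref{DomConv} together with the assumption $\sectioncoord\subset B\subset 3B\subset \coord{\outerdom}{\basexbarA,\basezA}$ buys us on a small enough scale). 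The smallness hypothesis $\diam(\sublevelset)<\epsilon$ is what lets us treat $\G$, on the relevant neighborhood, as a uniformly small $C^2$-perturbation of its linearization, so that the difference $\basemountainA - u$ transported to the chart is comparable to a genuinely convex function $v$ with $\{v<0\}\approx\sectioncoord$, and its $\G$-subdifferential $\partial_\G u(\sublevelset)$ is comparable in Lebesgue measure (via the \eqref{Nondeg} matrices $\nondegmatrix{\x}{\xbar}{\z}$ and \eqref{Lip}) to the ordinary subdifferential $\subdiff{v}{\{v<0\}}$.

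Next, the geometric content. Rather than invoking \eqref{eqn: classical Aleksandrov} in its center-of-mass form, I would use the directional refinement: for a bounded convex set $\sectioncoord$ and any unit direction $\direction{1}$, the classical Aleksandrov-type estimate gives, for $\p_0$ in the interior,
\begin{align*}
  (-v(\p_0))^n \leq C_n\,\frac{\dist{\p_0}{\plane{\sectioncoord}{\direction{1}}}}{\supsegment{\sectioncoord}{\direction{1}}}\,\Leb{\sectioncoord}\,\Leb{\subdiff{v}{\sectioncoord}},
\end{align*}
where $\plane{\sectioncoord}{\direction{1}}$ is the supporting hyperplane to $\sectioncoord$ with outer normal $\direction{1}$ and $\supsegment{\sectioncoord}{\direction{1}}$ the longest chord of $\sectioncoord$ parallel to $\direction{1}$. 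This is the ``John ellipsoid / slab'' version of the Aleksandrov maximum principle: one bounds $v$ by a linear function vanishing on $\plane{\sectioncoord}{\direction{1}}$ on the cone over the point where the longest chord through $\p_0$ meets $\bdry\sectioncoord$, and compares volumes of the two cones. I would then pull this inequality back: $(-v(\p_0))^n$ is comparable to $(\basemountainA(\x_0)-u(\x_0))^n$ (Taylor expansion of $\G$ in $\z$, using $\G_z<0$ and \eqref{G5}), $\Leb{\sectioncoord}$ is comparable to $\Leb{\sublevelset}$, $\Leb{\subdiff{v}{\sectioncoord}}$ is comparable to $\Leb{\partial_\G u(\sublevelset)}$, and the ratio $\dist{\p_0}{\plane{\sectioncoord}{\direction{1}}}/\supsegment{\sectioncoord}{\direction{1}}$ is exactly the quantity appearing in the statement (these are intrinsic to $\sectioncoord$ in the chart, so no further comparison is needed). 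Collecting the comparison constants — all \emph{very nice} by Remark~\ref{rem: universal constants} since they depend only on $\lipbound$, the interval $\Niceinterval$, $M$, and $n$ — yields the claimed inequality.

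The main obstacle, and where \eqref{QQConv} actually enters, is controlling the geometry of $\sectioncoord$: a priori the sublevel set of a $\G$-convex function, viewed in these coordinates, need not be convex, and the naive bound $v \le 0$ on $\sectioncoord$ is not enough to run the slab argument — one needs that $v$ (equivalently $u-\basemountainA$) does not dip too far below the linear interpolation along chords of $\sectioncoord$. This is precisely the quantitative quasiconvexity along $\G$-segments guaranteed by \eqref{QQConv}, with constant $M$: it tells us that $\basemountainA - u$ restricted to a $\G$-segment is bounded above by $M$ times the linear interpolation of its endpoint values, which on a set of diameter $<\epsilon$ (so that $\G$-segments are $C^1$-close to straight segments) translates to the needed one-sided convexity estimate for $v$. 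The second delicate point is quantifying the Lebesgue-measure comparison for $\partial_\G u$: one must check that the $\G$-subdifferential of $u$ over $\sublevelset$, after the coordinate change $\q \mapsto \pbarmap{\x}{u}{\xbar}$, is sandwiched between fixed dilates of $\subdiff{v}{\sectioncoord}$, which follows from the nondegeneracy bounds on $\nondegmatrix{}{}{}$ and \eqref{Lip} but requires care to make the constants uniform over $\sublevelset$ rather than pointwise. Once these two points are in place the remainder is the bookkeeping of comparison constants.
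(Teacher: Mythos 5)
There is a genuine gap, and it sits exactly at the step you dispatch with ``comparable in Lebesgue measure (via \eqref{Nondeg} and \eqref{Lip})''. The hypothesis $\diam(\sublevelset)<\epsilon$ does \emph{not} let you treat $u$, read in the $(\basexbarA,\basezA)$-chart, as a controlled perturbation of a genuinely convex function: the supporting $\G$-affine functions of $u$ on $\sublevelset$ have foci $\xbar$ ranging over all of $\outertarget$, so in the chart they are order-one curved, and over a section of diameter $\epsilon$ their deviation from affine is of size $\epsilon^{2}$ --- the same order as (or larger than) the height $\basemountainA(\x_0)-u(\x_0)$ you are trying to estimate. So the ``perturbation'' is never small relative to the quantities in the inequality, and the comparison $\Leb{\Gsubdiff{u}{\sublevelset}}\sim\Leb{\subdiff{v}{\sectioncoord}}$ for a convex comparison function $v$, with constants depending only on \eqref{Nondeg} and \eqref{Lip}, is precisely the nontrivial content of the theorem rather than bookkeeping. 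If such a bounded-distortion reduction to the classical estimate \eqref{eqn: classical Aleksandrov} were available, the theorem would hold without any condition of \eqref{QQConv}/\eqref{G3w} type, whereas (already in the optimal transport case, by Loeper-type counterexamples) estimates of this kind genuinely fail without it. Your remark that \eqref{QQConv} supplies a one-sided convexity bound along chords addresses the geometry of $\sectioncoord$ (which is in fact \emph{exactly} convex, by Proposition~\ref{prop: sections are convex}), but it does not produce the needed lower bound on the subdifferential measure.

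What the paper does instead at this point is construct that lower bound by hand through the $\G$-cone: $\Gsubdiff{\Gcone{\x_0, \sublevelset}}{\x_0}\subset\Gsubdiff{u}{\sublevelset}$ (Lemma~\ref{lem: G-dual inside G-subdifferential image}), and for each of $n$ suitably chosen directions an explicit element of $\Gsubdiff{\Gcone{\x_0, \sublevelset}}{\x_0}$ is produced by tilting a supporting function at a boundary contact point of $\sublevelset$ (via the Clarke subdifferential, since $u$ need not be semiconvex) and sliding along a $\G$-segment in the target until it touches $u$ at $\x_0$; the quantitative lower bound \eqref{eqn: bound on direction in Gdual} comes from the linearization Lemma~\ref{lem: linearization}, which rests on \eqref{DualQQConv}, and Proposition~\ref{prop: local to global} is what guarantees the slid function still lies below $\basemountainA$ on $\sublevelset$ (Lemma~\ref{lem: G-distance bound}). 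The smallness $\diam(\sublevelset)<\epsilon$ is used only there, to show the constructed direction deviates from $\nondegmatrix{\x_0}{\basexbarA}{\basezA}\raisecovect{\direction{}}$ by at most $1/(2\sqrt n)$, so that the $n$ constructed points span a parallelepiped of the right volume (Lemma~\ref{lem: S G-dual lower volume bound}); combined with the convex-geometry volume bound for $\sublevelset$ (Lemma~\ref{lem: volume bound}) this yields the directional factor $\dist{\p_0}{\plane{\sectioncoord}{\direction{1}}}/\supsegment{\sectioncoord}{\direction{1}}$ directly, with no appeal to the classical Aleksandrov inequality. To repair your argument you would have to supply this construction (or an equivalent), at which point you are no longer doing a perturbative reduction but reproving the paper's Lemmas~\ref{lem: G-distance bound}--\ref{lem: volume bound}.
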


The second result gives a generalization of estimate \eqref{eqn: classical Sharp Growth}.

\begin{thm}\label{thm: Sharp growth} Suppose $\basemountain(\cdot):=\G(\cdot, \basexbar, \basez)$ for some $(\basexbar, \basez)\in\outertarget\times\real$ is a $\G$-affine function such that $\Nicelower\leq \basemountain \leq\Niceupper$ on $\outerdom^{\cl}$. Writing $\sublevelset:=\curly{\x\in\outerdom\mid u(x)\leq \basemountain(x)}$, there exist \emph{very nice} constants $C$, $\dilationconst>0$ such that for any $\arbitrary\subset \outerdom$ with $\coord{\arbitrary}{\basexbar, \basez}$ connected, satisfying 
\begin{align}
\dilationconst M\coord{\arbitrary}{\basexbar, \basez}&\subset \coord{\sublevelset}{\basexbar, \basez},\label{eqn: dilation condition}\\
\sup_{\arbitrary}{\basemountain}+ \sup_{\arbitrary}{(\basemountain-u)}&< \Gfiveupper,\label{eqn: section not too deep}
\end{align}
 we have
\begin{align*}
 \sup_{\arbitrary}{(\basemountain-u)}^n\geq C\Leb{\arbitrary}\Leb{\Gsubdiff{u}{\arbitrary}}.
\end{align*}
Here $\dilationconst M\coord{\arbitrary}{\basexbar, \basez}$ is the dilation of $\coord{\arbitrary}{\basexbar, \basez}$ with respect to its center of mass $\pmap{\basexbar}{\basez}{\cm}$.
\end{thm}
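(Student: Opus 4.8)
\textbf{Proof proposal for Theorem \ref{thm: Sharp growth}.}

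The plan is to transfer the problem to the $\G$-coordinate chart $\coord{\cdot}{\basexbar, \basez}$ on $\cotanspMbar{\basexbar}$, where $\G$-affine functions become genuinely affine and $\G$-convexity becomes (a perturbation of) ordinary convexity, and then run the classical argument behind estimate \eqref{eqn: classical Sharp Growth}. First I would pass to the normalized coordinates: writing $v:=\coord{\basemountain-u}{\basexbar, \basez}$ on $\sublevelsetcoord{\basexbar, \basez}$, the structural assumptions \eqref{G5}, \eqref{Lip}, \eqref{DomConv} guarantee this chart is bi-Lipschitz with controlled constants, so Riemannian volumes of $\arbitrary$, $\sublevelset$, and $\Gsubdiff{u}{\arbitrary}$ are comparable to Lebesgue measures of their images up to \emph{very nice} constants. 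Here is where the hypothesis \eqref{eqn: section not too deep} is essential: it keeps all the relevant heights inside the admissible interval $(\Gfivelower, \Gfiveupper)$ so that the coordinate maps and the $\G$-subdifferential $\partial_\G u$ are well-defined and the distortion estimates from Section \ref{section: elements of generating functions} apply.

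Next I would exploit the dilation hypothesis \eqref{eqn: dilation condition}. The point of the factor $\dilationconst M$ is that after dilating $\coord{\arbitrary}{\basexbar, \basez}$ by this amount about its center of mass $\pmap{\basexbar}{\basez}{\cm}$ one still lands inside $\coord{\sublevelset}{\basexbar, \basez}$; this is precisely the room one needs to build a competitor $\G$-affine support at a point where $\basemountain - u$ is nearly maximal on $\arbitrary$, push its graph until it first touches $u$ over $\sublevelset$, and conclude that the contact point lies in (the dilate of) $\arbitrary$. In the flat Monge-Amp\`ere setting this is the standard ``lift a cone from the center of mass'' construction: the sublevel set $\sublevelset$ contains a John ellipsoid, $\frac{1}{n}$ of it about the center of mass is still inside $\sublevelset$, and the cone over $\sublevelset$ with vertex height $\sup_\arbitrary(\basemountain-u)$ has its subdifferential covering a ball of radius $\sim \sup_\arbitrary(\basemountain - u)/\diam(\sublevelset)$. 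In the present setting I would instead invoke \eqref{QQConv} (via the synthetic quasiconvexity along $\G$-segments) together with \eqref{Nondeg} and \eqref{Twist}/\eqref{DualTwist} to produce, for the $\G$-affine function realizing the sup on $\arbitrary$, a family of $\G$-affine supports of $u$ whose $\G$-subdifferentials at the contact point fill out a set of measure bounded below by $C \sup_\arbitrary(\basemountain-u)^n / \Leb{\sublevelset}$ — the constant $M$ entering exactly through the quantitative quasiconvexity modulus. Summing/integrating the inclusion $\Leb{\Gsubdiff{u}{\sublevelset}} \gtrsim \sup_\arbitrary(\basemountain-u)^n/\Leb{\sublevelset}$ against the trivial bound $\Leb{\arbitrary} \le \Leb{\sublevelset}$, and noting $\Gsubdiff{u}{\arbitrary} \subseteq \Gsubdiff{u}{\sublevelset}$, would not quite give the stated inequality; rather I would localize the touching to $\arbitrary$ itself using \eqref{eqn: dilation condition}, getting $\Leb{\Gsubdiff{u}{\arbitrary}} \Leb{\arbitrary} \lesssim \sup_\arbitrary(\basemountain - u)^n$ after rearrangement, which is the claim.

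The main obstacle, as usual in the generated-Jacobian setting, is that $\G$-segments and the sets $\coord{\sublevelset}{\basexbar, \basez}$ are only \emph{quantitatively} convex rather than convex, so the elementary affine geometry used in the classical proof (John ellipsoids, dilation about the center of mass, exact conicity of the competitor) has to be replaced by its synthetic counterpart built from \eqref{QQConv} and the nondegeneracy of $\nondegmatrix{\x}{\xbar}{\z}$; keeping every distortion constant \emph{very nice} — i.e. depending only on the allowed data and the interval $\Niceinterval$, and in particular depending on $M$ in a controlled way — is the delicate bookkeeping. A secondary subtlety is verifying that the competitor $\G$-affine function one slides down to touch $u$ stays inside $\gendom$ and respects \eqref{G5}; this is exactly what hypotheses \eqref{eqn: dilation condition} and \eqref{eqn: section not too deep} are calibrated to ensure, and I would check it before invoking the comparison principle for $\G$-subdifferentials. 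Once the touching point and the lower bound on the measure of the $\G$-subdifferential there are in hand, the final inequality follows by the same measure-theoretic packing as in the flat case.
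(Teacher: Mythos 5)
There is a genuine gap at the heart of your argument: the theorem is an \emph{upper} bound on $\Leb{\Gsubdiff{u}{\arbitrary}}\,\Leb{\arbitrary}$, but the construction you describe --- lifting a cone (a family of tilted supports) from the point where $\basemountain-u$ is nearly maximal and showing its subdifferential fills out a set of measure $\gtrsim \supheight^n/\Leb{\sublevelset}$ --- produces a \emph{lower} bound on the measure of a subdifferential image, which is the mechanism behind the Aleksandrov-type estimate (Theorem \ref{thm: G-aleksandrov estimate}), not this one. You notice the mismatch yourself (``would not quite give the stated inequality''), but the fix you offer, ``localize the touching to $\arbitrary$ itself \dots getting $\Leb{\Gsubdiff{u}{\arbitrary}}\Leb{\arbitrary}\lesssim \sup_{\arbitrary}(\basemountain-u)^n$ after rearrangement,'' is a restatement of the conclusion, not an argument: nothing you set up bounds $\Leb{\Gsubdiff{u}{\arbitrary}}$ from above. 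The missing idea is a flatness statement about \emph{every} supporting $\G$-affine function at points of $\arbitrary$: if $\sptxbar\in\Gsubdiff{u}{\sptpoint}$ with $\sptpoint\in\arbitrary$, and you re-base the focus $\sptxbar$ so the resulting $\G$-affine function agrees with $\basemountain$ at the center of mass $\cm$, then on $\arbitrary$ it exceeds $\basemountain$ by at most $\supheight$; i.e.\ $\Gsubdiff{u}{\arbitrary}\subset\Gdual{\arbitrary}{\cm,\basemountain,\supheight}$ (Lemma \ref{lem: thin cone}). This is where \eqref{eqn: dilation condition} and \eqref{QQConv} actually enter: one applies \eqref{QQConv} along the $\G$-segments from $\cm$ through $\sptpoint$ and through the point of maximal excess out to $\bdry\sublevelset$, the dilation hypothesis forcing the relevant parameters to be $\le 1/(\dilationconst M)$; and because the $\z$-dependence of $\G$ is nonlinear, the re-basing is \emph{not} a vertical translation, so one needs mean-value comparisons in the scalar variable, which is precisely what determines $\dilationconst$ as a ratio of the resulting upper and lower constants.

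The second missing ingredient is how flatness converts into a volume bound. The paper linearizes: by Lemma \ref{lem: linearization} (which rests on \eqref{DualQQConv}), membership in $\Gdual{\arbitrary}{\cm,\basemountain,\supheight}$ forces, in the cotangent coordinates at $\cm$ and after applying $\nondegmatrixinv{\cm}{\basexbar}{\basez}$, a genuine polar-dual constraint of scale $\sim M\supheight$ against $\coord{\arbitrary}{\basexbar,\basez}$; the elementary estimate $\Leb{A}\,\Leb{\polardual{A}{\p_0,\q_0,\lambda}}\le C\lambda^n$ then yields $\Leb{\Gsubdiff{u}{\arbitrary}}\le C\supheight^n/\Leb{\arbitrary}$, which is the theorem (Lemma \ref{lem: G-dual in polar dual}). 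Hypothesis \eqref{eqn: section not too deep} is consumed precisely here, not merely for chart admissibility: it guarantees that the $\G$-segments $\xbar(t)$ from $\basexbar$ to a point of the $\G$-dual, at height $\basemountain(\cm)$, remain in $\gendom$ (via \eqref{G5} together with the no-interior-maximum consequence of \eqref{DualQQConv}), so the linearization is legitimate. Finally, a small correction: under \eqref{QQConv} the sets $\coord{\sublevelset}{\basexbar,\basez}$ are honestly convex (Proposition \ref{prop: sections are convex}) and $\G$-segments are straight segments in these coordinates, so the difficulty is not ``only quantitative convexity'' of sections but the height re-basing and domain bookkeeping just described.
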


Our next two results concern weak solutions $u$ to \eqref{eqn: generated Jacobian equation},  in the sense of Aleksandrov (see Definition~\ref{def:Aleksandrov_solutions}). We use the notation $\innerdom$ for the support of the Radon measure $\Leb{\Gsubdiff{u}{\cdot}}$ and $\innertarget:=\Gsubdiff{u}{\innerdom}$. The first of the two theorems deals with the strict $\G$-convexity of $u$.
\begin{thm}\label{thm:strict_convexity}
Suppose $u$ is a \emph{very nice} Aleksandrov solution of \eqref{eqn: generated Jacobian equation}. If $\innerdom^{\cl}\subset \outerdom^{\interior}$ and $\innertarget^{\cl}\subset \outertarget^{\interior}$, and $\coord{\innertarget}{\x_0, u(\x_0)}$ is convex for some $\x_0\in \innerdom^{\interior}$, then $u$ is strictly $\G$-convex at $\x_0$, i.e. if $\xbar_0\in \Gsubdiff{u}{\x_0}$, then the set $\curly{x\in\outerdom\mid u(\x)=\G(\x,\xbar_0, \H(\x_0, \xbar_0, u(\x_0))}$ is the singleton $\curly{\x_0}$.
\end{thm}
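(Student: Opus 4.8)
The goal is to show that if $u$ is a very nice Aleksandrov solution of \eqref{eqn: generated Jacobian equation} and $\xbar_0 \in \Gsubdiff{u}{\x_0}$ for an interior point $\x_0$, then the contact set $\sublevelset_0 := \curly{\x \in \outerdom \mid u(\x) = \G(\x, \xbar_0, \H(\x_0, \xbar_0, u(\x_0)))}$ is a single point. The strategy is the standard Caffarelli-type dichotomy adapted to the generated Jacobian setting: argue by contradiction, assume $\sublevelset_0$ is not a singleton, and derive a contradiction with the Aleksandrov-solution bounds $\lambda \leq \psi_\G \leq \Lambda$ using Theorems \ref{thm: G-aleksandrov estimate} and \ref{thm: Sharp growth}. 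In the $\G$-coordinates $\coord{\cdot}{\xbar_0, \z_0}$ (with $\z_0 = \H(\x_0, \xbar_0, u(\x_0))$), the $\G$-affine function $\basemountain := \G(\cdot, \xbar_0, \z_0)$ is a supporting function of $u$ whose contact set, transported to coordinates, is a convex set of positive dimension $k$ with $1 \leq k \leq n-1$. The aim is to slide the supporting level downward slightly and show the sublevel sets $\sublevelset^h := \curly{u \leq \basemountain + h}$ become, for $h$ small, arbitrarily "thin" in the $n-k$ directions transverse to the contact set while remaining of fixed size along the contact set — this is incompatible with the volume balance forced by the equation.

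**Key steps.** First I would pass to $\G$-coordinates centered at $(\xbar_0, \z_0)$ and record that $\coord{\sublevelset_0}{\xbar_0, \z_0}$ is a closed convex set (using \eqref{DomConv}/\eqref{DualDomConv} and $\G$-convexity of $u$), of positive dimension by assumption; after an affine change in $\cotanspMbar{\xbar_0}$ we may assume it contains a segment. Second, for small $h > 0$ consider $\sublevelset^h$; by the localization machinery (interior contact set, plus the hypotheses $\innerdom^{\cl} \subset \outerdom^{\interior}$, $\innertarget^{\cl} \subset \outertarget^{\interior}$ guaranteeing we stay inside the domain where all structural assumptions hold) these are compactly contained in $\outerdom$ for $h$ small, and $\coord{\sublevelset^h}{\xbar_0, \z_0}$ decreases to $\coord{\sublevelset_0}{\xbar_0, \z_0}$. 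Third — the heart — apply Theorem \ref{thm: G-aleksandrov estimate} with a direction $\direction{1}$ chosen transverse to the affine hull of the contact set: the factor $\dist{\p_0}{\plane{\sectioncoord}{\direction{1}}}/\supsegment{\sectioncoord}{\direction{1}}$ stays bounded away from $0$ as $h \to 0$ at points $\x_0$ near the "center" of the contact set in that transverse direction, but the height $(\basemountain(\x_0) - u(\x_0))$ is $\gtrsim h$ there, giving $h^n \lesssim C(h) \Leb{\sublevelset^h}\Leb{\Gsubdiff{u}{\sublevelset^h}} \sim C(h)\Leb{\sublevelset^h}^2$ after using $\lambda \leq \psi_\G \leq \Lambda$ (so $\Leb{\Gsubdiff{u}{\sublevelset^h}} \sim \Leb{\sublevelset^h}$). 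Conversely, Theorem \ref{thm: Sharp growth}, applied with $\arbitrary$ a $\dilationconst M$-rescaled copy of $\sublevelset^h$, gives $h^n \gtrsim \sup_{\sublevelset^h}(\basemountain - u)^n \gtrsim \Leb{\sublevelset^h}^2$ — combined these pin down $\Leb{\sublevelset^h} \sim h^{?}$ in a way that, tracking how $\Leb{\sublevelset^h}$ degenerates because the contact set has dimension $\geq 1$ (so $\sublevelset^h$ is thin in only $n - k$ directions and fat in $k$), forces a contradiction: the transverse thinness makes $\Leb{\sublevelset^h}$ go to zero too slowly relative to $h^n$, or the geometric factor in Theorem \ref{thm: G-aleksandrov estimate} blows up. One extracts the contradiction by comparing the growth rates, exactly as in Caffarelli's original argument and its $c$-convex analogue in \cite{FKM13}, \cite{GK14}.

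**Main obstacle.** The delicate point is \emph{the uniformity of the "very nice" constants} and the geometric factor $\dist{\p_0}{\plane{\sectioncoord}{\direction{1}}}/\supsegment{\sectioncoord}{\direction{1}}$ as $h \to 0$: one needs that rescaling $\sublevelset^h$ to unit size (by an affine map in the cotangent space) keeps $u$, after the induced $\G$-normalization, in the class of \emph{very nice} $\G$-convex functions with uniformly controlled very-nice interval and uniformly controlled \eqref{QQConv} constant $M$, so that Theorems \ref{thm: G-aleksandrov estimate} and \ref{thm: Sharp growth} apply with constants independent of $h$. In optimal transport this is automatic; here it is precisely the role of the "very nice" notion, and verifying it requires using the invariance of the structural conditions \eqref{G5}, \eqref{Lip}, \eqref{Twist}, \eqref{DualTwist}, \eqref{Nondeg}, \eqref{QQConv}, \eqref{DualQQConv} under the relevant affine normalizations (which should be part of the earlier sections' machinery), together with the hypotheses that the contact sets stay compactly inside $\outerdom$ and $\outertarget$. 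The convexity hypothesis on $\coord{\innertarget}{\x_0, u(\x_0)}$ enters to guarantee the $\G$-subdifferential image behaves well under the localization and that the dual side of the estimates (needed implicitly through $\Leb{\Gsubdiff{u}{\sublevelset^h}}$) is controlled. Once the uniformity is in hand, the contradiction is the same clean dimension-counting argument as in the classical case.
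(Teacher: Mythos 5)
There is a genuine gap at the heart of your argument: the contradiction you propose to extract from vertically shifted sections does not exist. If you only raise the supporting $\G$-affine function (replacing $\z_0$ by $\z_h=\H(\x_0,\xbar_0,u(\x_0)+h)$) and apply Theorem~\ref{thm: G-aleksandrov estimate} and Theorem~\ref{thm: Sharp growth} to $\sublevelset^h=\curly{u\leq \G(\cdot,\xbar_0,\z_h)}$, the two estimates are perfectly consistent with a nontrivial contact set: at a point near the middle of the contact set the geometric factor in Theorem~\ref{thm: G-aleksandrov estimate} is of order one in every direction, so together with the mass bounds you only get $\Leb{\sublevelset^h}\sim h^{n/2}$, and a section of that volume can easily contain a segment of unit length provided its transverse widths shrink accordingly. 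No dimension counting rules this out, and it cannot: Pogorelov-type solutions of the real Monge--Amp\`ere equation with bounded right-hand side (already covered by this framework) do have contact sets containing segments, so no argument using only the interior pointwise estimates on vertically lifted sections can conclude. What is missing is the information about \emph{where the extremal points of the contact set lie}, and the mechanism to exploit it is \emph{tilting}, not lifting: one tilts the supporting function toward an exposed point $\p_e$ of $\coord{\sublevelset_0}{\xbar_0,\z_0}$ along a $\G$-segment in the target (Lemmas~\ref{lem: localization claim} and~\ref{lem: chopping convergence}), so that the chopped sections $\sublevelset_t$ concentrate at $\p_e$; only then does the factor $\dist{\cdot}{\plane{\cdot}{\e{t}}}/\supsegment{\cdot}{\e{t}}$ in Theorem~\ref{thm: G-aleksandrov estimate} degenerate, see \eqref{eqn: supporting plane collapses}, while the height ratio stays bounded below, \eqref{eqn: lower bound on localization ratios}; combining with Theorem~\ref{thm: Sharp growth} and the Aleksandrov bounds gives the contradiction. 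This proves only the localization statement, Theorem~\ref{thm: localization}: either $\sublevelset_0$ is a point, or all its extremal points lie on $\partial\coord{\outerdom}{\xbar_0,\z_0}$.

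The second missing ingredient is the argument that excludes the boundary alternative, which is where the hypotheses $\innerdom^{\cl}\subset\outerdom^{\interior}$, $\innertarget^{\cl}\subset\outertarget^{\interior}$ and the convexity of $\coord{\innertarget}{\x_0,u(\x_0)}$ actually enter — not, as you suggest, merely to keep sections inside the domain. In the paper's proof one uses the convexity of $\coord{\innertarget}{\x_0,u(\x_0)}$ to find a direction $\qbar_0$ and a cone-annulus of cotangent vectors near $\pbar_0$ contained in $\coord{\innertarget}{\x_0,u(\x_0)}^{\interior}$ (Lemma~\ref{lem: good direction a3w}); Theorem~\ref{thm: localization} forces the maximizer $\pmax$ of the associated linear functional over $\coord{\sublevelset_0}{\xbar_0,\z_0}$ onto $\partial\coord{\outerdom}{\xbar_0,\z_0}$, and Lemma~\ref{lem: halfspace inequality} together with Corollary~\ref{cor: preimage trapping a3w} shows that every point whose $\G$-subdifferential meets that cone-annulus must lie near $\pmax$, hence in $\outerdom^{\cl}\setminus\innerdom$. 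The Aleksandrov-solution mass balance then yields $0<\Leb{\Gsubdiff{u}{A}}\leq C\Leb{A\cap\innerdom}=0$, the desired contradiction. Neither the tilting step nor this cone/mass-balance step appears in your proposal, and the renormalization issue you flag as the main obstacle is not in fact how the paper proceeds (no affine rescaling of the solution is performed; the estimates are applied directly with \emph{very nice} constants).
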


We prove (interior) $C^{1,\alpha}$ regularity of weak solutions (provided they are \emph{very nice}). The proof relies on the previous theorems as well as extensions of the \emph{engulfing property} of sublevelsets of solutions for the real Monge-Amp\'ere Equation (see \cite{FM04},\cite[Section 9]{FKM13}).
	
\begin{thm}\label{thm:C1alpha_regularity}
Suppose in addition to the assumptions of Theorem~\ref{thm:strict_convexity} above, that $\G$ is a $C^{1, \alpha}$ function in the $\x$ variable for some $\alpha\in(0, 1]$, uniformly in the $(\xbar, \z)$ variables. Then there exists an $\beta\in (0, 1]$ such that $u\in C_{loc}^{1, \beta}(\innerdom^{\interior})$.
\end{thm}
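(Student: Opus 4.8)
The plan is to follow the classical Caffarelli scheme, as adapted to the optimal transport setting by Figalli--Kim--McCann \cite{FKM13}, but now driven by the pointwise estimates of Theorems \ref{thm: G-aleksandrov estimate} and \ref{thm: Sharp growth} rather than the affine invariance of the real Monge--Amp\`ere equation. The starting point is that, by Theorem \ref{thm:strict_convexity}, $u$ is strictly $\G$-convex at every $\x_0 \in \innerdom^{\interior}$, so for each such $\x_0$ with $\xbar_0 \in \Gsubdiff{u}{\x_0}$ the contact set of the supporting $\G$-affine function $\basemountain(\cdot) = \G(\cdot,\xbar_0,\H(\x_0,\xbar_0,u(\x_0)))$ is a single point. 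For $h>0$ let $\sublevelset_h := \{u \le \basemountain + h\}$ be the corresponding sublevel set. Strict $\G$-convexity guarantees $\sublevelset_h \downarrow \{\x_0\}$ as $h \downarrow 0$, so these sets are compactly contained in $\innerdom^{\interior}$ for $h$ small; after passing to the $\G$-coordinate chart $\coord{\cdot}{\xbar_0,\z_0}$ we may treat $\sublevelset_h$ as an honest bounded convex-like region, recentered at its center of mass.

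The first substantive step is to establish the two-sided volume comparison: there are \emph{very nice} constants so that, writing $\coord{\sublevelset_h}{\xbar_0,\z_0}$ for the normalized section and using that $u$ is an Aleksandrov solution with $\Leb{\Gsubdiff{u}{\sublevelset_h}} \sim \Leb{\sublevelset_h}$ (from the two-sided bound on $\psi_\G$), Theorems \ref{thm: G-aleksandrov estimate} and \ref{thm: Sharp growth} yield
\begin{align*}
  C^{-1} h^{n} \le \Leb{\sublevelset_h}^{2} \le C h^{n},
\end{align*}
exactly the analogue of \eqref{eqn: sublevel sets measure}. I would then upgrade this to the \emph{engulfing property}: there is a constant $\theta>1$ such that whenever $\x_1 \in \sublevelset_h(\x_0)$ one has $\sublevelset_h(\x_1) \subset \sublevelset_{\theta h}(\x_0)$, where $\sublevelset_h(\x)$ denotes the section centered at $\x$ at height $h$. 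This is where the bulk of the work lies, and it is the main obstacle: one must control how the $\G$-affine supporting functions at two nearby points compare, which in optimal transport is handled via $c$-convexity and Loeper's maximum principle, and here must be handled via \eqref{QQConv}/\eqref{DualQQConv} together with the nondegeneracy and bi-twist conditions, keeping track of the dependence of $\G$ on the height variable $\z$ (the feature absent in optimal transport). The extra hypothesis that $\G$ is $C^{1,\alpha}$ in $\x$ enters precisely here and in the next step: it gives Lipschitz (or Hölder) control of the contact point as the height varies, which is what makes the engulfing of sections quantitative.

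With the engulfing property in hand, the $C^{1,\beta}$ estimate follows by the now-standard iteration: engulfing implies that the sections $\{\sublevelset_{2^{-k}h}(\x_0)\}_k$ form a nested sequence whose diameters decay geometrically, and combined with the volume comparison this forces $\osc_{\sublevelset_{2^{-k}h}}(u - \basemountain) \to 0$ at a geometric rate in $k$ while $\diam(\sublevelset_{2^{-k}h}) \to 0$ geometrically. Translating back through the $\G$-coordinate charts (using that the normalization maps are bi-Lipschitz with \emph{very nice} constants, by (II)--(III) and the smoothness of $\G$), one obtains that the supporting $\G$-affine function at $\x_0$ touches $u$ to order $1+\beta$ for some $\beta \in (0,1]$ depending only on $\theta$, $n$, the ellipticity, $M$, and $\alpha$. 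Since $u$ is strictly $\G$-convex throughout $\innerdom^{\interior}$, the $\G$-subdifferential $\Gsubdiff{u}{\cdot}$ is single-valued there, and the order-$(1+\beta)$ touching, uniform on compact subsets, upgrades this to $u \in C^{1,\beta}_{loc}(\innerdom^{\interior})$ after a standard covering argument. I expect the only genuinely delicate point beyond the engulfing estimate to be the careful bookkeeping of the $\z$-dependence when passing between charts centered at different contact points, but this is exactly what the \emph{very nice} framework and the $C^{1,\alpha}$ hypothesis on $\G$ are designed to make routine.
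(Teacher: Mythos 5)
Your overall scaffolding (small sections around a point of strict $\G$-convexity, the two-sided volume comparison $\Leb{\sublevelset_h}\sim h^{n/2}$ from Theorems \ref{thm: G-aleksandrov estimate} and \ref{thm: Sharp growth} together with the Aleksandrov-solution property, and then an engulfing property) matches the paper's strategy in outline, but the step where you actually extract $C^{1,\beta}$ has a genuine gap, and it is in the wrong direction. Geometric decay of $\diam(\sublevelset_{2^{-k}h}(\x_0))$ together with $\osc_{\sublevelset_{2^{-k}h}}(u-\basemountain)\le 2^{-k}h$ (the latter is trivial from the definition of a section) only tells you that points \emph{outside} a small section are far from $\x_0$, i.e. it yields a quantitative strict-convexity lower bound of the form $u(\x)-\basemountain(\x)\gtrsim \gdist{\x}{\x_0}^{\gamma}$. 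For $u\in C^{1,\beta}$ you need the opposite inequality, $u(\x)-\basemountain(\x)\lesssim \gdist{\x}{\x_0}^{1+\beta}$, which amounts to a \emph{lower} bound on how large the section $\sublevelset_t(\x_0)$ is around $\x_0$ (an inradius bound $\sublevelset_t\supset B(\x_0,c\,t^{1/(1+\beta)})$), and that does not follow from diameter upper bounds and oscillation decay; moreover the claim that engulfing by itself forces geometric diameter decay of the sections is not justified. The paper closes this step entirely differently, following Forzani--Maldonado and \cite[Section 9]{FKM13}: engulfing is first converted (Lemma \ref{lem:characterization_engulfing}) into the two-point monotonicity inequality $\Lambda_1^{-1}(u(\x_1)-\G(\x_1,\xbar_0,\H(\x_0,\xbar_0,u(\x_0))))\le u(\x_0)-\G(\x_0,\xbar_1,\H(\x_1,\xbar_1,u(\x_1)))$, and then along a geodesic one derives the differential inequality $s\phi'(s)\ge (1+\Lambda_1^{-1})\phi(s)-Cs^{1+\alpha}$ for $\phi(s)=u(\x_g(s))-\G(\x_g(s),\xbar_0,\z_0)$, whose integration gives the $(1+\beta)$-order touching. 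This is also precisely where the hypothesis that $\G$ is $C^{1,\alpha}$ in $\x$ enters (the $Cs^{1+\alpha}$ error in the Taylor comparison), not, as you suggest, in making the engulfing quantitative; the engulfing lemmas in the paper use only the structural conditions.

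The second shortfall is that you leave the engulfing property itself as a black box while correctly identifying it as the bulk of the work. In the paper it rests on a chain you would still need to supply: well-centering of sections about the vertex image (Proposition \ref{prop:crossections_are_wellcentered}, which combines both pointwise estimates with the convex-geometry result of \cite{FKM13b}), the homothety of sections at heights $h$ and $2h$ (Lemma \ref{lem:comparison_of_sections}), and then \eqref{QQConv} applied along a $\G$-segment through the two points, with repeated mean-value arguments in the scalar variable to manage the $\z$-dependence. Note also that the paper's engulfing (Lemma \ref{lem:engulfing_property}) is the point-inclusion statement $\x_0\in\sublevelset(\x_1,\xbar_1,\Lambda h)$, which suffices for the monotonicity inequality, rather than the stronger section-inclusion form you posit. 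Finally, the last integration step needs care because $u$ is not a priori $C^1$: the paper runs the argument through Clarke subdifferentials to make the chain rule for $\phi$ legitimate. As written, your proposal would not compile into a proof without replacing the iteration step by (something equivalent to) the Forzani--Maldonado differential-inequality argument and proving the engulfing chain.
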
	

Our final result connects the \eqref{G3w} condition introduced by Trudinger \cite{Tru14} with the conditions \eqref{QQConv} and \eqref{DualQQConv}.
\begin{thm}\label{thm: G3w implies QQconv}
Assume there are $-\infty\leq \Gfivelower<\Gfiveupper\leq \infty$ such that $\outerdom$, $\outertarget$, and $\gendom$ satisfy \eqref{G5}, \eqref{DomConv}, and \eqref{DualDomConv} with respect to $(\Gfivelower, \Gfiveupper)$. Also assume $\G$ is $C^4$, by which we mean all derivatives of up to order $4$ total, with at most two derivatives ever falling on one variable $\x$, $\xbar$, or $\z$ at once, exist and are continuous and $\G$ satisfies \eqref{Twist}, \eqref{DualTwist}, \eqref{Nondeg}, and \eqref{G3w}. Then $\G$ also satisfies both \eqref{QQConv} and \eqref{DualQQConv}.
\end{thm}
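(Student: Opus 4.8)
The plan is to prove Theorem~\ref{thm: G3w implies QQconv} by establishing the synthetic condition \eqref{QQConv} as a pointwise differential inequality along $\G$-segments, deriving it from \eqref{G3w} via a one-variable ODE/Taylor argument. The structure of \eqref{QQConv} (being a ``quantitative quasiconvexity'' statement for the function $t\mapsto \G(\X{\xbar}{\z}{\cdot}(t), \xbar', \z')$ along a $\G$-segment) should reduce, after choosing good coordinates adapted to $\G$ at a base point, to showing that a certain scalar function $h(t)$ satisfies a differential inequality of the form $h''(t)\geq -C\,h'(t)$ (or a suitable quantitative refinement), whenever $h$ vanishes together with its derivative at the endpoints of a subinterval. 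This is exactly the shape of the argument that recovers Loeper's maximum principle from (A3w) in optimal transport, and the paper signals (via the remark that \eqref{QQConv} ``refines Loeper's maximum principle'' and generalizes (QQConv) from \cite{GK14}) that the same machinery applies here. Since only \eqref{DualQQConv} is the mirror image of \eqref{QQConv} under swapping the roles of $\M$ and $\Mbar$ (and $\G$ with its dual $\H$), it suffices to prove \eqref{QQConv}; \eqref{DualQQConv} follows by the analogous computation, using that $C^4$ regularity of $\G$ with the stated mixed-derivative restriction and \eqref{Nondeg}, \eqref{Twist}, \eqref{DualTwist} are symmetric in the two manifolds and that \eqref{G3w} is likewise symmetric.

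The key steps, in order, are as follows. First, I would unwind the definitions: fix a $\G$-affine function $\mountain=\G(\cdot,\xbar_0,\z_0)$, a $\G$-segment $x_t=\X{\xbar_0}{\z_0}{(1-t)p_0+tp_1}$ (or however it is parametrized in Definition~\ref{DEF: exponential mappings}/Remark~\ref{rem: G-segment notation}), and a competing $\G$-affine function $\mountain'=\G(\cdot,\xbar',\z')$, and write $h(t)\defin \mountain'(x_t)-\mountain(x_t)$; the goal is the inequality on $h$ that \eqref{QQConv} demands. Second, using \eqref{Nondeg} and the bi-twist conditions, I would pass to the distinguished coordinates $\pmap{\xbar_0}{\z_0}{\cdot}$ in which the $\G$-segment is an honest line segment and $\mountain$ is (up to normalization) affine, so that $h(t)$ is, modulo a controlled error, the restriction of a $\G$-affine function to a line. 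Third — the heart of the matter — I would compute $h''(t)$ and show that the obstruction term is, up to lower-order terms controlled by the $C^4$-norm of $\G$ and the geometry of $\gendom$, exactly the Ma--Trudinger--Wang-type expression appearing in \eqref{G3w}; the sign hypothesis \eqref{G3w} then gives $h''(t)\geq -C(|h'(t)| + \text{error})$ whenever the relevant directions are orthogonal in the appropriate sense, which is the classical MTW computation (cf. \cite{MTW05, Loe09, Tru14}) transplanted to the generating-function setting and carried out keeping track of the $\z$-dependence that is absent in optimal transport. Fourth, I would integrate this differential inequality, using \eqref{G5}, \eqref{Lip}, \eqref{DomConv}, \eqref{DualDomConv} to ensure all points stay in $\gendom$ and to get uniform constants on the interval $(\Gfivelower,\Gfiveupper)$, and conclude the quantitative quasiconvexity bound with an explicit constant $M\geq 1$ depending only on the $C^4$ data; this yields \eqref{QQConv}, and symmetry yields \eqref{DualQQConv}.

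The main obstacle I anticipate is Step three: correctly identifying the second-derivative obstruction term with the \eqref{G3w} tensor in the presence of the extra height variable $\z$. In optimal transport the cost has no $u$-dependence, so the MTW tensor is a clean fourth-order object in $(x,\xbar)$; here, as the introduction emphasizes, varying $\z$ changes the \emph{shape} of $\G(\cdot,\xbar,\z)$, not just by a vertical shift, so differentiating $h$ twice along the $\G$-segment produces additional terms involving $\G_z$, $\G_{zz}$ (which the $C^4$ hypothesis with ``at most two derivatives on $\z$'' is precisely tuned to control) and the coupling between the $\xbar$- and $\z$-directions via $\Z{\x}{\cdot}{\cdot}$. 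Disentangling these — showing that after the coordinate normalization the genuinely new terms are either absorbed into the MTW-type expression (as in Trudinger's formulation of \eqref{G3w}) or are lower-order errors bounded by $\lipbound$ and the $C^4$-norm — is the technical crux. A secondary difficulty is bookkeeping: verifying that the coordinate changes $\normalize$, $\dualnormalize$ are uniformly bi-Lipschitz with constants depending only on the structural data, so that the final constant $M$ is genuinely \emph{very nice}; this is routine but must be done carefully to keep the theorem's constants uniform over the admissible class.
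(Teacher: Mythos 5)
Your overall strategy---an MTW-type second-derivative computation along segments followed by integration, in the spirit of \cite{GK14} and \cite{KM10}---is the right family of ideas, and it is what the paper also does; but as written your plan has two concrete gaps. The first is that the scalar function you propose to differentiate is the wrong one. You set $h(t)=\G(x_t,\xbar',\z')-\G(x_t,\xbar_0,\z_0)$ with a \emph{fixed} competing height $\z'$, but \eqref{QQConv} is not a statement about such an $h$: its right-hand side re-anchors the height at $\x(s')$ through $\H(\x(s'),\xbar_1,\G(\x(s'),\xbar_0,\z_0))$, and the \eqref{G3w} tensor itself is built from $\G_{\x\x}$ evaluated at the re-solved height $\Z{\x}{\pbar}{\u}$. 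The differential inequality $f''\geq -C\norm{f'}$ that \eqref{G3w}/\eqref{dual-G3w} actually produces holds for the \emph{re-anchored} functions---in the paper, for $t\mapsto \G(\x_1,\xbar(t),\H(\x_0,\xbar(t),\u_0))$ along $\Mbar$-segments (Lemma~\ref{lem: cataclysm}, proved via an auxiliary two-parameter function whose convexity in the second variable encodes the tensor), and for the analogous $\H$-based function along $\M$-segments. If you run the computation for your fixed-height $h$, the second derivative does not organize into the \eqref{G3w} expression; the $\Gz$, $\Gzz$ couplings you flag in your Step three are precisely the terms that force the re-anchored formulation, so identifying the correct function to differentiate is not a technicality you can defer.

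The second gap is the symmetry claim. Asserting that \eqref{DualQQConv} follows from \eqref{QQConv} ``by the analogous computation'' because \eqref{G3w} ``is likewise symmetric'' is not available: the source and target play genuinely asymmetric roles (the scalar parameter attaches to $\Mbar$ through $\z$ and to $\M$ through $\u$), and the equivalence of \eqref{G3w} with its dual \eqref{dual-G3w} is a theorem of Trudinger (\cite[Theorem 3.1]{Tru14}) that must be invoked, not an evident symmetry. Moreover, even after proving the quasiconvexity inequality along $\M$-segments one does not directly obtain \eqref{QQConv}: what comes out is the $\H$-level statement \eqref{HQQConv}, which must then be converted into \eqref{QQConv} by applying $\G(\x(s),\xbar_1,\cdot)$, using $\Gz<0$, the mean value theorem, and uniform bounds on $\sup\norm{\Gz}$ and $\sup\norm{\Hu}$ over controlled ranges of the scalar variable, all while checking via \eqref{G5}, \eqref{DomConv}, \eqref{DualDomConv} that every triple stays in $\gendom$. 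This conversion step (and the choice of which of \eqref{G3w}, \eqref{dual-G3w} to use on which side) is a genuine part of the argument in the paper, and your plan currently has no counterpart for it.
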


\subsection{Overview of the rest of the paper.} A detailed discussion of examples of \eqref{eqn: generated Jacobian equation} covered by our results is carried out in Section~\ref{section: examples}, examples discussed include the near-field reflector problem and the generalized Minkowski problem. In Section~\ref{section: elements of generating functions} we review the elements of generating functions and the associated Jacobian equations \eqref{eqn: generated Jacobian equation} (following to a great extent the ideas in \cite{Tru14}), we also introduce the \eqref{QQConv} and \eqref{DualQQConv} conditions on $\G$.

In Section~\ref{section: Aleksandrov estimate} we show how \eqref{QQConv} and \eqref{DualQQConv} lead to the Aleksandrov-type estimate, Theorem \ref{thm: G-aleksandrov estimate}. In Section~\ref{section: sharp growth estimate} we prove the sharp growth estimate, Theorem \ref{thm: Sharp growth}. In Section \ref{section: localization} we use the pointwise estimates to prove a localization property for weak solutions, and their strict convexity (Theorem \ref{thm:strict_convexity}). The work of all previous sections are combined in Section \ref{section: engulfing} to prove solutions are $C^{1,\alpha}$ (Theorem \ref{thm:C1alpha_regularity}).

Finally, in Section~\ref{section: G3w implies QQConv} we prove (Theorem \ref{thm: G3w implies QQconv}) that the condition \eqref{G3w} (defined by Trudinger in \cite{Tru14} to obtain classical regularity in generated Jacobian equations), implies conditions \eqref{QQConv} and \eqref{DualQQConv}. 

\section{Examples}\label{section: examples}

\subsection{Point source, near-field reflector}\label{section: literature reflector problems} 
For our first example, we spend some time discussing the \emph{near-field reflector problem}, as it is a well-studied problem that gives rise to a generated Jacobian equation \eqref{eqn: generated Jacobian equation} which does \emph{not} arise from an optimal transport problem, and as such displays many subtle difficulties not seen in the optimal transport case.

The engineering literature on reflector design is too large to review in detail here, but let us point out the reader to a few references, such as Oliker \cite{Oli89} Kochengin and Oliver \cite{KO98} and Janssen and Maes \cite{JanssenMaes92} for the case of cylindrical reflectors. For more on the literature and the exposition to follow, the reader is directed to the survey article \cite{Oli03} by Oliker, the discussion in Karakhanyan and Wang \cite{Kar14}. See also the classical monograph by Rusch and Potter \cite{RuschPotter1970} for a broader introduction to the engineering of antennas.
\begin{figure}[H] 
  \centering
    \includegraphics[height=.40\textwidth]{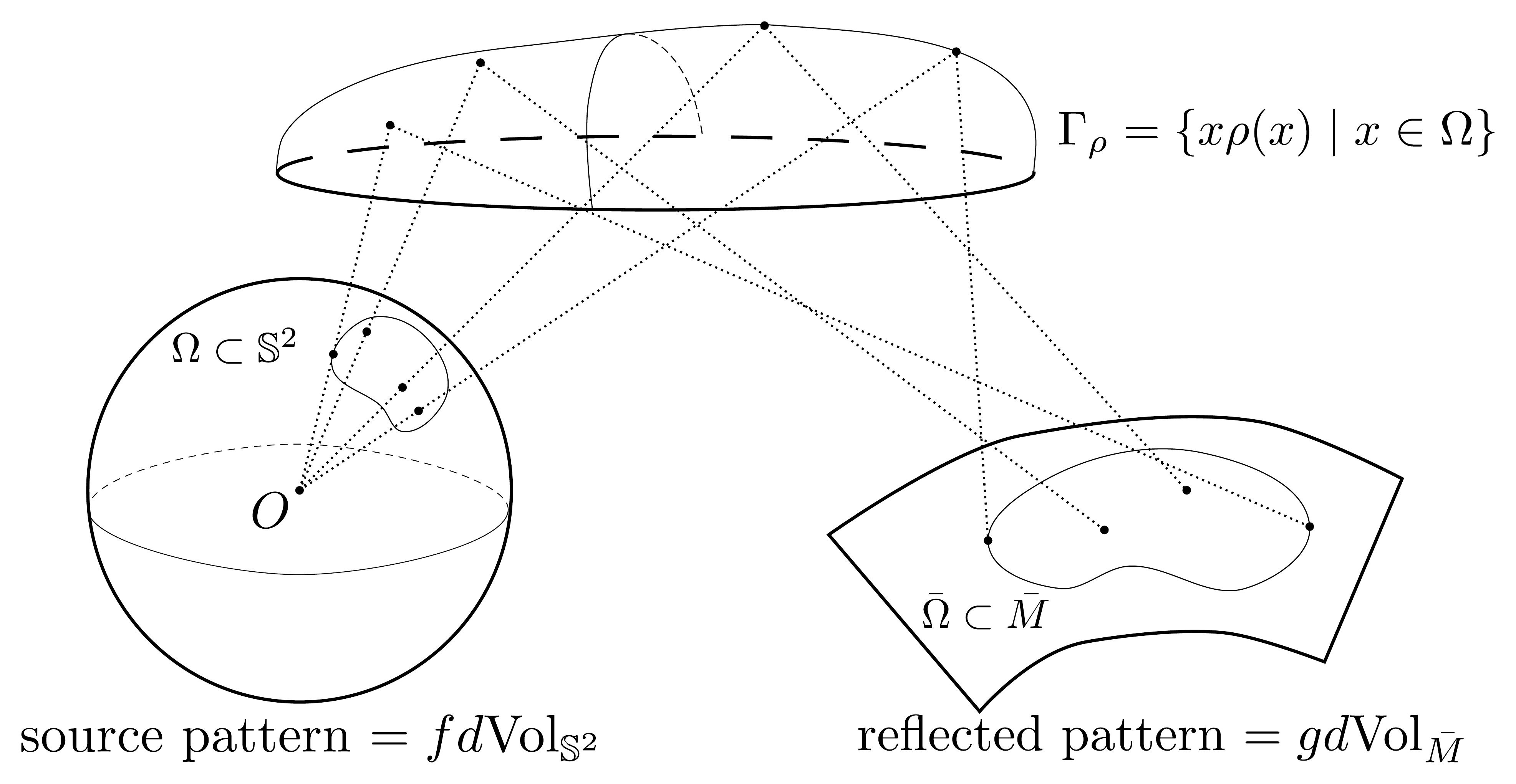}
     \caption{}\label{figure: Sec1fig1}
\end{figure}
We are given a light source at some point $O\in\mathbb{R}^3$, that shines through a ``source region'' $\outerdom\subset \S^2$ and a ``target region'' to be illuminated, which is a region $\outertarget$ contained within some codimension one surface $\Mbar \subset \mathbb{R}^3$. Moreover, the light source may not have a uniform intensity, instead it radiates energy through $\outerdom$ modeled by some absolutely continuous measure $f \dVol[\S^2]$.

The goal is now to build a \emph{reflector}: a (perfectly reflective) surface $\Gamma_\rho\subset\mathbb{R}^3$ given by the radial graph of some function $\rho:\outerdom\to\mathbb{R}$ with the property that light emanating from $O$ according to the distribution $f$ is reflected off to arrive in $\outertarget$. This problem is severely underdetermined, thus we also assume that we are given an absolutely continuous measure $g\dVol[\Mbar]$ supported on $\outertarget$, and the reflector is required to recreate this measure as the resulting illumination pattern. The assumption of perfect reflection implies that the total masses of $f$ and $g$ must be equal. The usual plan of attack for this problem is to first assume the \emph{geometric optics approximation}, in which light rays are treated like particles, completely ignoring any wave-like behavior that may be present.

To motivate an elementary method of constructing such a desired reflector, consider the case where the target measure is not absolutely continuous, but a Dirac delta concentrated at a point $\xbar \in \outertarget$. Then the reflector can be taken as any ellipsoid of revolution with foci $O$ and $\xbar$. For $2a>\norm{\xbar}$ there is a unique ellipsoid of revolution with foci $O$ and $\xbar$ whose major axis has length equal to $2a$. A straightforward computation shows that such an ellipsoid can be written as the radial graph of a function $e(\cdot,\xbar,a):\mathbb{S}^2\to\mathbb{R}_+$ defined by
\begin{align*}
  e(\x,\xbar,a) = \frac{a^2-\tfrac{1}{4}|\xbar|^2}{a-\tfrac{1}{2}\euclidinner{\x}{\xbar}}
\end{align*}
where $\euclidinner{\x}{\xbar}$ is the Euclidean inner product in $\real^{3}$. We can view $a$ here as a scalar parameter controlling the eccentricity of the ellipse, in particular we see there is a one parameter family of reflectors that solve our problem.
\begin{figure}[H]
  \centering
    \includegraphics[height=.40\textwidth]{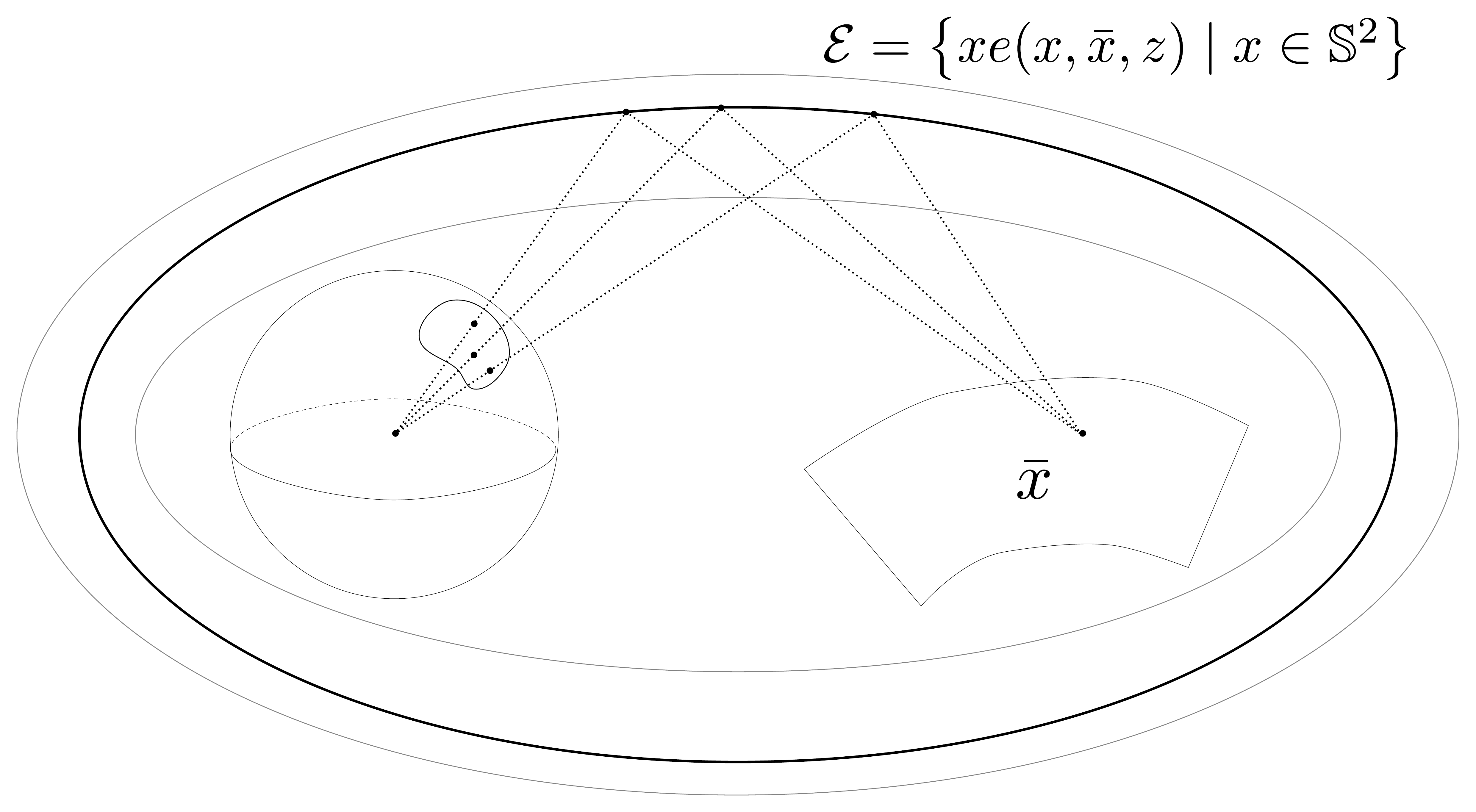}
     \caption{}\label{figure: Sec1fig2}
\end{figure}

If the target measure is now a finite sum of weighted Dirac deltas, we can take the reflector to be the boundary of the intersection of the same number of ellipsoids, each with one focus at $O$ and the other at a point where the sum of deltas is supported. By adjusting the scalar parameters, we can ensure each point in the target receives the correct amount of energy.

One can then approximate the absolutely continuous target measure by a sequence of such finite sums of Dirac deltas, and rigorously justify a limiting process to obtain a reflector that is the boundary of an intersection of (an infinite) family of ellipsoids, or in terms of $\rho$: 
\begin{align}\label{eqn:intro_ellipsoid_envelope}
  \rho(x) = \inf_{(\xbar, a)\in \mathcal{A}}e(x, \xbar, a)
\end{align}
for some appropriate collection $\mathcal{A}$. 
\begin{figure}[H]
  \centering
    \includegraphics[height=.40\textwidth]{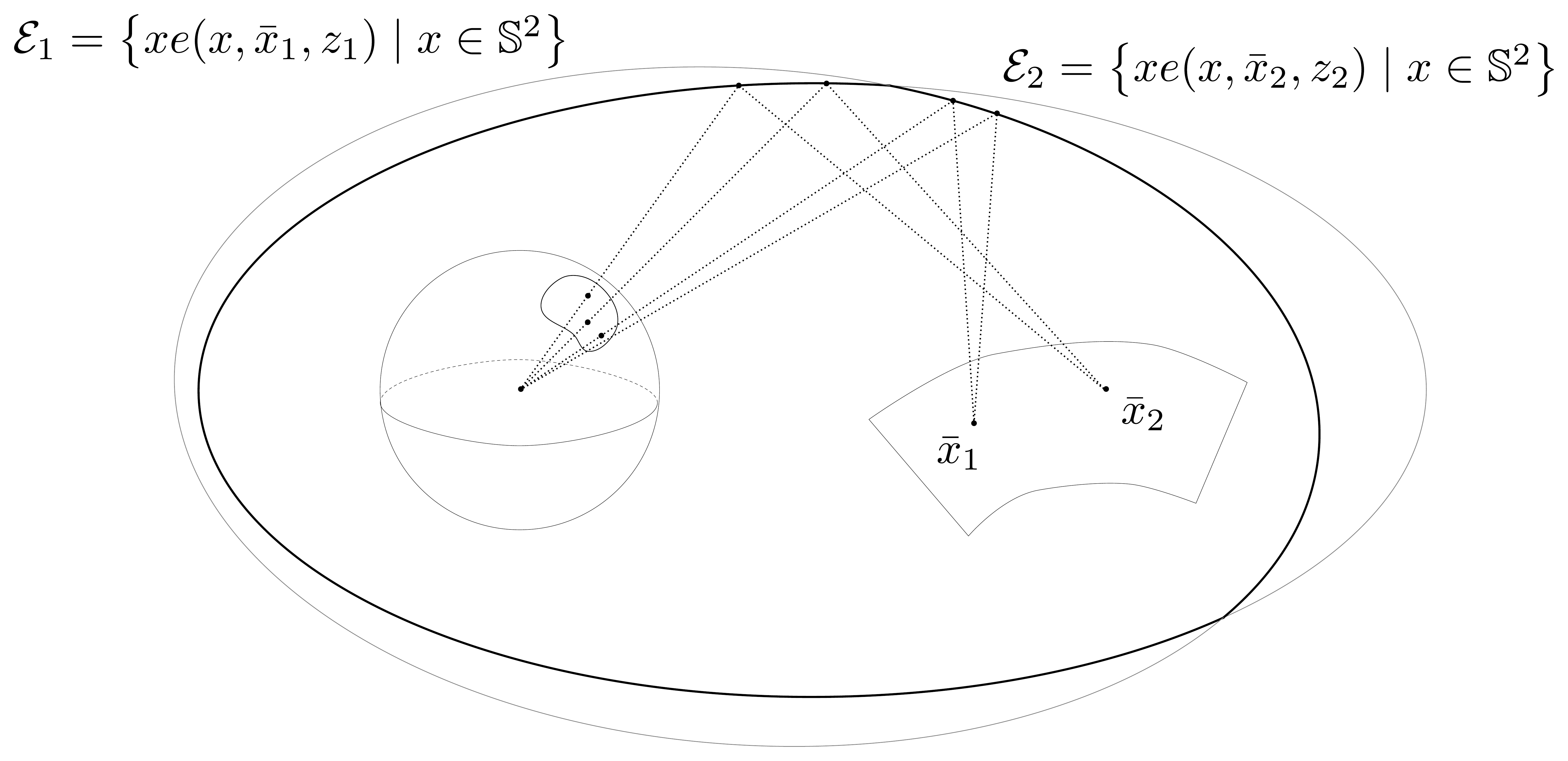}
     \caption{}\label{figure: Sec1fig3}
\end{figure}
This representation of $\rho$ can be interpreted as a form of ``concavity'' of $\rho$, where instead of hyperplanes as in the usual case of a concave function, $\rho$ is supported from above by graphs of ellipsoids which serve as some sort of ``fundamental shape''. Indeed, if we take 
\begin{align*}
 \G(x, \xbar, \z):=\frac{1}{e(x, \xbar, \z^{-1})},
\end{align*}
defined in
\begin{align*}
  \gendom = \curly{(\x,\xbar,\z) \in \mathbb{S}^2\times \Mbar \times\mathbb{R}_+\mid \tfrac{1}{2}\z\norm{\xbar}<1},
\end{align*}
then $\rho$ will exactly be a \emph{$\G$-convex} function as in Definition~\ref{def: G-functions}. 

When $\Mbar$ can be written as the graph of a function over a portion of $\real^{2}$, it can easily be verified that our choice of $\G$ coincides with that of Trudinger in \cite[(4.15)]{Tru14}. In the particular case when $\Mbar$ is contained in a hyperplane parallel to $\real^2$ lying below $\real^2$, from the formulae in \cite[Section 4]{Tru14} it can be seen that $\G$ satisfies conditions \eqref{Twist}, \eqref{DualTwist}, \eqref{Nondeg}, and \eqref{G3w}, and \eqref{G5} with $(\Gfivelower, \Gfiveupper)=(0, \infty)$. The main difference here from the usual case of convexity / concavity (or indeed, from the optimal transport case known in the literature as \emph{$c$-convexity}), is that when the scalar parameter $\z$ is changed in any of the functions $e$ forming the infimum in \eqref{eqn:intro_ellipsoid_envelope}, there is a nontrivial change in the shape that goes beyond a simple translation or dilation.

Next one can consider what is known as the \emph{ray-tracing map}, a map $T_\rho: \outerdom\to\outertarget$ that simply gives the location that a beam originating through $x$ ends up after reflecting off of $\Gamma_\rho$.

It can be seen that to obtain the desired illumination property, it is sufficient to impose a \emph{prescribed Jacobian equation} of the form $f(x)\det{DT_\rho(x)}=g(T_\rho(x))$. From the form \eqref{eqn:intro_ellipsoid_envelope} of $\rho$, and a calculation of $T_\rho$ in terms of the derivative of $\rho$, this equation can be re-written as a generated Jacobian equation of the form \eqref{eqn: generated Jacobian equation}. In fact, the choice of $u=\rho^{-1}$ will be a solution of \eqref{eqn: generated Jacobian equation} with our above choice of $\G$, and a certain $\psi_\G$ involving the densities $f$ and $g$.

\begin{figure}[H]
  \centering
    \includegraphics[height=.40\textwidth]{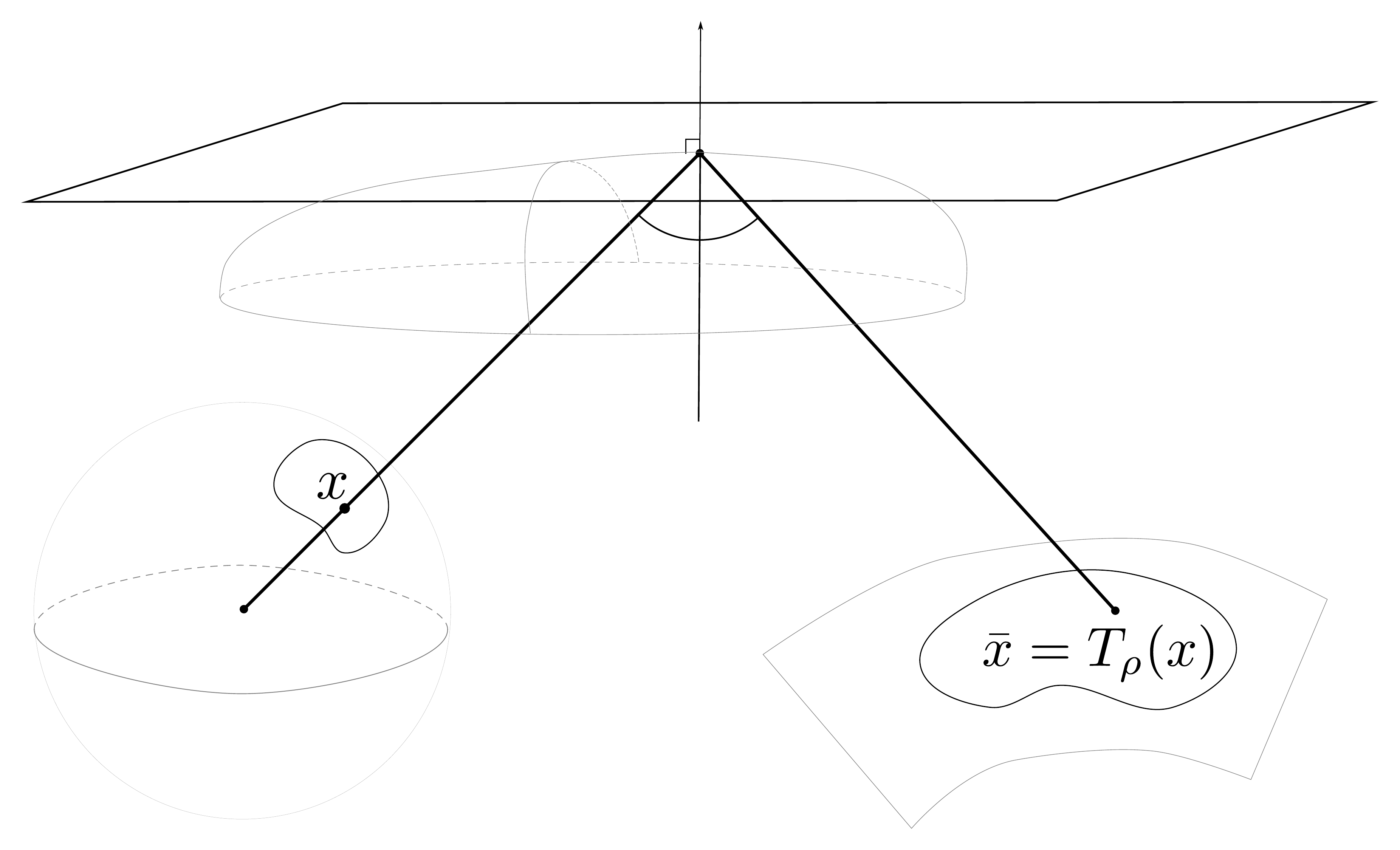}
     \caption{}\label{figure: Sec1fig4}
\end{figure}

It should be noted that a question of deep physical interest now is regularity of the reflector. Indeed, non differentiability of a reflector would cause diffraction phenomena, which may not be accurately modeled by the geometric optics approximation. In the case of refraction problems which also give rise to generated Jacobian equations, singularities can lead to chromatic aberrations, which also lie outside the realm of geometric optics. 

Recent work of Karakhanyan and Wang \cite{KarWang10} guarantees regularity ($C^{2,\alpha}$) for reflectors. Their main result illustrates some of the complexities that arise once we leave the optimal transport framework (see in particular Remark \ref{rem: different regularity at different heights} below). 

\begin{THM}\cite[see Theorems A-B]{KarWang10} Suppose that
	
	\begin{enumerate}
	  \item $\outerdom$, $V\subset \mathbb{S}^{n-1}$, $\outerdom\cap V=\emptyset$, $\outerdom$ has Lipschitz boundary.
	  \item $\outertarget$ is a region in a convex hypersurface $\Mbar$, given by a radial graph of some smooth function over $V$.
	  \item $f:\outerdom\to\mathbb{R},$ $g:\outertarget\to\mathbb{R}$ smooth, strictly positive functions with the same total mass.
	  \item $\partial \outertarget$ is ``$R$-convex.''
	\end{enumerate}
	
	Then, there is a reflector that is contained in a region close to $O$ that is smooth.
\end{THM}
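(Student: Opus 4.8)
The plan is to follow the two-stage strategy standard for Monge--Amp\`ere type equations: first produce a weak reflector by a Minkowski-type approximation, then bootstrap its regularity using the structural conditions, which are available only in a region close to $O$.

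\emph{Step 1: weak existence.} Approximate the target density $g\dVol[\Mbar]$ by finite atomic measures $g_k=\sum_i m_i^{(k)}\delta_{\xbar_i^{(k)}}$ with the same total mass as $f$. For each $k$ seek a reflector of the form $\rho_k(x)=\inf_i e(x,\xbar_i^{(k)},a_i)$, the lower envelope of finitely many confocal ellipsoids of revolution with focus $O$; the ray-tracing map $T_{\rho_k}$ then sends $\outerdom$ into $\{\xbar_i^{(k)}\}$, and solving the discrete reflector problem amounts to choosing the eccentricity parameters $a=(a_i)$ so that the $f$-mass reflected to $\xbar_i^{(k)}$ equals $m_i^{(k)}$. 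This finite nonlinear system is solvable by a degree/fixed-point argument exploiting the monotone dependence of the mass assigned to $\xbar_i$ on $a_i$ (as in \cite{Oli89, KO98}); hypotheses (1)--(3) and the equal-mass condition force a solution with every $a_i$ bounded and bounded away from the degenerate value $\tfrac12|\xbar_i^{(k)}|$, so that the $\rho_k$ are uniformly controlled and the reflectors stay in a fixed region near $O$. Letting $k\to\infty$ and using compactness of families of $\G$-convex functions (Definition~\ref{def: G-functions}) produces a weak reflector $\rho$ solving $f(x)\det DT_\rho(x)=g(T_\rho(x))$ in the Aleksandrov sense. Equivalently, by the change of variables recalled in Section~\ref{section: literature reflector problems}, $u:=\rho^{-1}$ is a $\G$-convex Aleksandrov solution of \eqref{eqn: generated Jacobian equation} with $\psi_\G$ bounded between positive constants on the region near $O$, since $f$ and $g$ are smooth and strictly positive and the geometric Jacobian factors are nondegenerate there.

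\emph{Step 2: verifying the structural hypotheses near $O$.} On a neighborhood of $O$ one checks that $\G(x,\xbar,\z)=1/e(x,\xbar,\z^{-1})$ and the data satisfy assumptions (I)--(IV) of Section~\ref{section: Main results}: from the explicit formula and the smoothness and convexity of $\Mbar$ one obtains \eqref{Twist}, \eqref{DualTwist}, \eqref{Nondeg}, and \eqref{G5} with $(\Gfivelower,\Gfiveupper)=(0,\infty)$ (as in \cite[Section 4]{Tru14}); the $R$-convexity of $\bdry\outertarget$ in hypothesis (4) is precisely the $\G$-convexity of the target domain $\coord{\outertarget}{\x, u(\x)}$ required in Theorem~\ref{thm:strict_convexity}; and --- this is exactly where the restriction ``close to $O$'' enters --- the condition \eqref{G3w} holds for $\G$ on that region, so by Theorem~\ref{thm: G3w implies QQconv} the synthetic conditions \eqref{QQConv} and \eqref{DualQQConv} hold as well. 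One must also confirm that $u$ is \emph{very nice} there in the sense of Definition~\ref{def: very nice}; this is the subtle point (flagged in the Remark following (IV)) that prevents the argument from being global and confines the conclusion to a sub-region.

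\emph{Step 3: regularity bootstrap, and the main obstacle.} With $u$ a very nice Aleksandrov solution satisfying the verified structure, Theorem~\ref{thm:strict_convexity} gives strict $\G$-convexity at interior points, which rules out flat pieces in the graph of $\rho$ and hence makes the reflector $C^1$; Theorem~\ref{thm:C1alpha_regularity}, whose engine is the pointwise bounds of Theorems~\ref{thm: G-aleksandrov estimate} and \ref{thm: Sharp growth} together with an engulfing property, then upgrades this to $u\in C^{1,\beta}_{loc}$. Once $u$ is $C^1$, the equation \eqref{eqn: generated Jacobian equation} is uniformly elliptic along $u$, and feeding this into the a priori second-derivative estimates for generated Jacobian / MTW-type equations (Trudinger, Trudinger--Wang, Liu--Trudinger) --- whose global form uses the $R$-convexity of $\outertarget$ to control the oblique second boundary condition $T_\rho(\outerdom)=\outertarget$ --- yields a $C^2$ bound; Evans--Krylov and Schauder give $C^{2,\alpha}$, and smoothness of $f$, $g$, $\Mbar$ then bootstraps this to $C^\infty$. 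Since all estimates are performed on the fixed region near $O$, this produces the smooth reflector asserted. The hard part is the strict-convexity/$C^1$ step: unlike optimal transport, varying the eccentricity parameter $\z$ deforms the supporting ellipsoid rather than merely translating it, so Loeper's maximum principle and the usual $c$-convex localization arguments do not apply directly; it is exactly here that the \eqref{QQConv}-based machinery of the present paper, valid only where \eqref{G3w} holds (i.e. near $O$), is indispensable, and without this restriction the reflector can genuinely be singular.
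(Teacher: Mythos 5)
First, note that the paper does not prove this statement: it is quoted as background from \cite{KarWang10}, and the authors explicitly remark that the method of \cite{KarWang10} --- uniform a priori ($C^2$) estimates for smooth solutions combined with a continuity/approximation scheme --- is ``entirely different'' from the pointwise-estimate approach developed here. So there is no internal proof to compare your argument against; any route is necessarily external to the paper, and using the paper's own Theorems~\ref{thm:strict_convexity} and \ref{thm:C1alpha_regularity} to prove a result the paper cites as prior motivation is at best a logically independent alternative.

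On the merits, your Steps 1 and 2 are a reasonable sketch: approximation by envelopes of confocal ellipsoids is indeed the standard weak-existence mechanism \cite{Oli89, KO98}, and identifying ``close to $O$'' with the region where the structural conditions and the \emph{very nice} property can be verified is the right heuristic (though you only flag, rather than carry out, the \emph{very nice} verification, which is the delicate point given that solutions at different heights can have different regularity). The genuine gap is in Step 3. Under \eqref{G3w} alone --- which is all you assume --- interior second-derivative estimates are \emph{not} available; this is precisely the borderline regime in which the machinery of this paper (and of \cite{FKM13} in optimal transport) stops at $C^{1,\alpha}$. You cannot ``feed'' a $C^{1,\beta}$ Aleksandrov solution into the Trudinger/Liu--Trudinger--Wang a priori estimates and then invoke Evans--Krylov: those estimates are established for solutions that are already classical, and they require either the strict condition \eqref{G3s} for interior bounds or a global second-boundary-value setup; they do not apply to a weak solution with merely H\"older-continuous gradient. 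The reason Theorems A--B of \cite{KarWang10} actually deliver smoothness is that for reflectors confined to a region close to $O$ the relevant curvature condition holds in its \emph{strict} form (the geometry degenerates toward the paraboloid/far-field case, where the MTW-type tensor is strictly positive), and the a priori $C^2$ estimate is proved directly for smooth approximating solutions before passing to the limit. Your proposal conflates the weak \eqref{G3w} regime, where only $C^{1,\alpha}$ is attainable in general, with the strict regime needed for the claimed $C^\infty$ conclusion; as written, the passage from $C^{1,\beta}$ to $C^2$ is unjustified.
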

The authors continue on to give finer conditions to obtain regularity (see \cite[Theorem C]{KarWang10}). In particular, they provide a condition on the second fundamental form of the target hypersurface $\Mbar$ corresponding to the \eqref{G3s} condition; they demonstrate regularity under this condition, and that if a version of the condition corresponding to \eqref{G3w} fails then there are smooth, positive $f$ and $g$ for which the reflector is \emph{not even $C^1$}.

\begin{rem}\label{rem: different regularity at different heights} Another important difference with the regularity theory of optimal transport is that two solutions for the same data $f$ and $g$ may exhibit different regularity. In fact, the existence of such examples can be proven, see the discussion on page 567 of \cite{KarWang10}. This difficulty is what requires us to have to consider the notion of \emph{very nice} solutions, see Definition~\ref{def: very nice} and the remarks that follow it.
	
\end{rem}	

We point out that our method of proof is entirely different from those of \cite{KarWang10}, as their method relies on uniform a priori estimates, while in this paper we rely on pointwise estimates of the solution. In particular, we are able to handle the borderline case corresponding to the \eqref{G3w} condition. However, it should also be noted that the results of \cite{KarWang10} (as those of \cite{GutTour14}, see below) are finer than ours in the sense that they are ``local'' in nature: their result can characterize and separate regions of regularity and nonregularity of solutions, while ours are ``global'': we can only find a solution to be regular on its whole domain, or not.

\bigskip
\subsection{Other geometric optics problems}
There are a number of other geometric optics problems that also result in generated Jacobian equations of the form \eqref{eqn: generated Jacobian equation}, which do not fall within the optimal transport problem. Some of this we mention briefly (even though they each deserve as lengthy a discussion as the previous). One can, for example, consider problems of \emph{refraction} instead of reflection with a point light source, as considered in works by Guti\'errez and Huang \cite{GutHuang14}; and Oliker, Rubinstein, and Wolansky \cite{OliRubWol15}. In another direction, one can change the light source to be a parallel beam instead of a point source (see \cite{Kar14}), or consider multiple optical instruments instead of just one (see work of Glimm and Oliker\cite{GlimmOliker04}, and Oliker \cite{Oli11}). Another interesting family of problems are models with nonperfect energy transmission, as studied by Guti\'errez and Mawi \cite{GutMawi13} and Guti\'errez and Sabra \cite{GutSabra14}. 

There are regularity results available for several of these problems, under assumption \eqref{G3s}. We highlight recent work of Guti\'errez and Tournier \cite{GutTour14} dealing with the (near field) parallel beam reflection and refraction problems. Their results include  $C^{1,\alpha}$ estimates without any smoothness assumptions on the source and target measures. Moreover, unlike our results, the results in \cite{GutTour14} only require local assumptions regarding the ``niceness'' of the solutions.

To give a concrete example, let us write down the generating function for the parallel beam, near-field reflector problem. Let $\Phi$ be a smooth function on some compact region of $\real^2$ (whose graph represents the target surface to be illuminated) and for $(\x,\xbar,\z)\in \mathbb{R}^2\times\mathbb{R}^2\times \mathbb{R}_+$ let
  \begin{equation*}
    \G(\x,\xbar,\z) := \frac{1}{2}\paren{\frac{1}{\z}-\z\norm{\x-\xbar}^2} +\Phi(\xbar).
  \end{equation*}
Then a solution of \eqref{eqn: generated Jacobian equation} with this $\G$ will solve the reflector problem, with an appropriate choice of right hand side depending on the input and output light patterns. There is a detailed verification of conditions \eqref{G5}, \eqref{Twist}, \eqref{DualTwist}, \eqref{Nondeg}, and \eqref{G3w} contained in  \cite[Section 4.2]{JiangTru14} for this choice of $\G$, with $(\Gfivelower, \Gfiveupper)=(0, \infty)$.

\subsection{Optimal transport.}\label{section: OT} Fix any two domains $\outerdom\subset\M$ and $\outertarget\subset\Mbar$ in Riemannian manifolds, suppose we have a measurable \emph{cost function} $c: \outerdom^{\cl}\times\outertarget^{\cl}\to\real$, and probability measures $\mu$ and $\nu$ with supports in $\outerdom$ and $\outertarget$ respectively. The \emph{optimal transport (Monge-Kantorovich)} problem is to find a measurable mapping $T:\spt{\mu}\to\spt{\nu}$ defined $\mu$-a.e. with $T_{\#}\mu=\nu$, minimizing
\begin{align*}
 \int_\outerdom c(\x, S(\x))\mu(dx)
\end{align*}
over all measurable $S:\spt{\mu}\to\spt{\nu}$ with $S_{\#}\mu=\nu$. 

The connection of the optimal transport problem with generated Jacobian equations is through defining
\begin{align*}
 \G(\x, \xbar, \z):=-c(\x, \xbar)-\z.
\end{align*}
With this definition, various structural conditions reduce to well-known conditions, for example in the notation of \cite[Section 2]{GK14}: \eqref{Twist} and \eqref{DualTwist} to (Twist), \eqref{Nondeg} to (Nondeg), \eqref{DomConv} and \eqref{DualDomConv} to (DomConv) there, \eqref{QQConv} and \eqref{DualQQConv} to (QQConv), and \eqref{G3w} and \eqref{dual-G3w} to (A3w) (also known as the Ma-Trudinger-Wang or (MTW) condition). If (Twist), (Nondeg), and (A3w) hold on $\outerdom^{\cl}\times\outertarget^{\cl}$, note that $\gendom=\gendomdual=\outerdom^{\cl}\times\outertarget^{\cl}\times\real$ hence \eqref{G5} is satisfied with $(\Gfivelower, \Gfiveupper)=\real$. Also with these conditions, if $\mu$, $\nu\ll\dVol[\M]$ it is known that a solution of the optimal transport problem can be obtained from a $\G$-convex potential function $u$ satisfying \eqref{eqn: generated Jacobian equation}, by the expression $T(x):=\Xbar{x}{\u_0}{Du(x)}$, for any choice of $\u_0\in\real$ (see \cite{Bre91, GM96, McC01, MTW05}). There is also a regularity theory based on conditions (A3w) and (QQConv), see Section~\ref{section: G3w implies QQConv} for more comments and references.

We also point out a connection of optimal transport to the near-field reflector example in Section~\ref{section: literature reflector problems}. If the target surface $\Mbar$ is very far from the source $O$, then any point being illuminated is approximately determined by the direction of the beam after reflection. Relatedly, if the focus $\xbar$ is far away, then the corresponding ellipsoids are close to being a paraboloid of revolution. Thus taking a limit as the target object goes out to infinity, one obtains the \emph{far-field reflector problem}, which can be viewed as a problem where both domains are the sphere, and reflectors are constructed as envelopes of paraboloids of revolution. Mathematical study of the far-field reflector problem itself stretches back several decades (\cite{HasanisKoutroufiotis85, CafOli08, CafGutHuang08}). The realization that this problem was equivalent to an optimal transport problem for the cost
\begin{align*}
 c(x, \xbar):=-\log{(1-\euclidinner{x}{\xbar})}
\end{align*}
on $\S^2\times \S^2$ (see Glimm and Oliker (\cite{GlimmOliker03}), X.-J. Wang (\cite{Wan96, Wan04}), Guan and Wang (\cite{GuanWang98})) was very fruitful and served as motivation much work in both the mathematics of reflectors and optimal transport. 

\subsection{Generalized Minkowski problem.} A different kind of generated Jacobian equation is given by the classical Minkowski problem. Recall that given a convex body $B \subset \mathbb{R}^n$, $O \in B$, its supporting function is a function $h:\mathbb{S}^{n-1}\to\mathbb{R}$ defined by
\begin{align*}
  h(x) = \sup \limits_{q\in B} (q,x),\;\;x\in\mathbb{S}^{n-1}.
\end{align*}	
It is well known that if $K(x)$ denotes the Gauss curvature of the boundary of $B$ at the point with outer normal $x$, then (see \cite{LutOli1995})
\begin{align*}
  \det\left (\nabla^2_{ij} h+ h g_{ij} \right )	 = \frac{\det( g_{ij})}{K(x)},	
\end{align*}	
where $g_{ij}$ denotes the standard metric of $\mathbb{S}^{n-1}$ and $\nabla^2_{ij}$ denotes the respective covariant derivative. The classical Minkowski problem consists in recovering $B$ from $K(x)$: given a function $K(x)$ on the sphere satisfying certain compatibility conditions, does there exists a smooth, strongly convex body $B$ whose Gauss curvature at the point with normal $x$ is equal to $K(x)$? The formula above shows that in terms of the support function of $B$, this problem falls within the scope of equation \eqref{eqn: generated Jacobian equation}. 

Motivated by questions stemming from the Brunn-Minkowski theory of mixed volumes, Lutwak and Oliker \cite{LutOli1995} considered the more general $p$-Minkowski problem ($p\geq 1$) which asks to find, for a given function $K:\mathbb{S}^{n-1}\to\mathbb{R}$, a convex set whose support function $h$ solves
\begin{align*}
  \det\left (\nabla^2_{ij} h+ h g_{ij} \right )	 = h^{p-1}\frac{\det( g_{ij})}{K(x)}.	
\end{align*}	
For $p\geq 1$, $p \neq n$ and $K(x)$ a positive, even function. When $p=1$ this gives back the original Minkowski problem.

Let us make a few comments about the validity of the various structural assumptions for this example when $p=1$ (see Section \ref{section: elements of generating functions} for definitions). First, the generating function is given by
\begin{align*}
  \G(\x,\xbar,\z) = \z (\x,\xbar),
\end{align*}
where
\begin{align*}
  \gendom = \{ (\x,\xbar,\z) \in \mathbb{S}^{n-1}\times \mathbb{S}^{n-1}\times \mathbb{R} \mid (\x,\xbar)>0,\;\;\z>0\}.	 
\end{align*}	
Then, a straightforward computation shows that
\begin{align*}
  D \G(\x,\xbar,\z) = z (\xbar-(\xbar,\x) \x),\;\;\; \Dbar \G(\x,\xbar,\z) = z (\x-(\x,\xbar) \xbar),\;\;\;\G_{\z}(\x,\xbar,\z) = (\x ,\xbar).
\end{align*}
From here it is not difficult to check the injectivity of $(D \G(\x,\xbar,\z),\G(\x,\xbar,\z)) $ as a function of $(\xbar,\z)$ (for any fixed $\x$) as well as the injectivity of
\begin{align*}
  -\frac{\Dbar \G(\x,\xbar,\z)}{\G_{\z}(\x,\xbar,\z)} = -\frac{\z}{\x\cdot \xbar}(\x-(\x\cdot\xbar) \xbar),
\end{align*}
as a function of $x$ (for any fixed $(\xbar,\z)$), therefore $\G$ verifies \eqref{Twist} and \eqref{DualTwist}. It is not hard to see that the above maps are local diffeomorphisms, and thus the condition \eqref{Nondeg} also holds (see also Remark \ref{rem: local coordinates}). The validity of condition \eqref{QQConv} remains to be determined for this particular $\G$.

\subsection{Stable matching problems with non-quasilinear utility functions} Finally, it is worthwhile to point out a recent preprint of Noldeke and Samuelson where a Generated Jacobian Equation arises in economics. In \cite{NoldekeSamuelson2013}, the authors consider stable matching problems and principal-agent problems where agents may have utility functions that are not \emph{quasilinear}. More concretely, in this setup, $x\in X$ represents all possible \emph{buyer types} while $y\in Y$ represents all possible \emph{seller types}, and $v\in \real$ is a \emph{monetary transfer} (i.e. the price of a product). One is given \emph{utility functions} $\phi(x, y, v)$ and $\psi(x,y,u)$, $\phi(x,y,v)$ being the intrinsic value that $x$ receives when purchasing from seller $y$ at a price of $v$, while $\psi(x,y,u)$ represents the utility $y$ obtains when making a transaction with $x$, by providing $x$ with a utility of $u$. Naturally, these functions satisfy the inverse relation $\phi(x, y, \psi(x, y, u))=u$. 

The \emph{stable matching problem} is then as follows: given two probability measures $\mu$ and $\nu$ on $X$ and $Y$, find a pair of \emph{utility profiles} $(u, v)$ which are measurable, real valued functions on $X$ and $Y$, and a bijective, measurable \emph{matching} $\bm{y}: X\to Y$ such that
\begin{align*}
 u(x)&\equiv\phi(x, \bm{y}(x), v(\bm{y}(x))),\\
 v(y)&\equiv\psi(\bm{y}^{-1}(y), y, u(\bm{y}^{-1}(y))),\\
 \bm{y}_\#\mu&=\nu,
\end{align*}
where it is asked that $(u, v, \bm{y})$ be \emph{stable}, meaning that
\begin{align*}
 u(x) \geq \phi(x, y, v(y)),\;\;\; v(y)\geq \psi(x, y, u(x)),\forall\;x\in X,\ y\in Y.
\end{align*}
In other words, each buyer and seller gets the most utility out of the particular matching $\bm{y}$, and so they have no incentive to pick different parties to deal with. Thus for a stable matching, the profile $u$ is a $\phi$-convex function, satisfying some weak version of equation \eqref{eqn: generated Jacobian equation} (with right hand side depending on the measures $\mu$ and $\nu$).

A utility function is said to be quasilinear if it has the form $\phi(x,y,v) = b(x,y)-v$. In this case, the stable matching problem reduces to an optimal transport problem, and is also related to a hedonic pricing problem. This direction has been explored by Ekeland (\cite{Ek05, Ek10}), and later by Chiappori, McCann, and Nesheim (\cite{CMN10}). Figalli, Kim, and McCann have also shown that the problem becomes a convex screening problem, under a strengthening of the (A3w) condition, often known as ``non-negative cross curvature'' in the optimal transport literature (see \cite{FKM11a}). Moreover, for this quasilinear case, the structural assumptions for $\G$ discussed in Section \ref{section: elements of generating functions} reduce to the standard ones for optimal transport (see Example \ref{section: OT}).

It is worth noting that in the terminology introduced at the beginning of Section \ref{section: elements of generating functions}, the function $\phi$ corresponds to $\G$, while $\psi$ corresponds to the dual generating function $H$. We do not know yet of any specific, multi-dimensional non-quasilinear utility functions for which our assumptions hold. It would be worthwhile to find concrete examples of such utility functions which are different from the generating functions in the previous examples, and to provide economic interpretations for these structural assumptions. 

\section{Elements of Generating Functions}\label{section: elements of generating functions}

\subsection{Basic definitions}\label{subsection: basic definitions} 
Suppose $(\M,\g{})$ and $(\Mbar,\gbar{})$ are $n$-dimensional Riemannian manifolds. We fix a real valued \emph{generating function} $\G(\cdot, \cdot, \cdot)$ defined on $\M\times\Mbar\times I$ for some open interval $I$; after a change of variables that will not affect any of the other conditions we pose on $\G$, it can be assumed $I=\real$ (which we will do for the remainder of the paper). 
We will use the notation $\Dx$ for derivatives in the $\x$ variable, and $\Dbar$ for derivatives in the $\xbar$ variable, while $\Gz$, $\Gzz$, etc. denote derivatives in the scalar $\z$ variable. We also assume that $\G$ is $C^2$ in the sense that any second order derivative in the variables $\x$, $\xbar$, and $\z$ which is mixed (i.e., $\Dbar \Dx \G$, or $\Dbar \Gz$, etc) is continuous and that $\Gz(\x, \xbar, \z)<0$ for all $(\x, \xbar, \z)$.

The inverse function theorem yields the existence of a unique function $\H(\x,\xbar,\u)$ such that
  \begin{align*}
    \G(\x, \xbar, \H(\x, \xbar, \u))=u.
  \end{align*}
 $\H(\x, \xbar, \cdot)$ is defined on some open interval (which may depend on $(\x, \xbar)$) with $\Hu<0$, and $\H$ is $C^2$ in the above sense. Whenever we write an expression of the form $\H(\x, \xbar, \u)$, it is with the understanding that $\u$ is in the range of $\G(\x, \xbar, \cdot)$.

  As in \cite{Tru14}, we require $\G$ to satisfy certain structural conditions. These assumptions will hold on a subset of the domain of $\G$, denoted $\mathfrak{g}$ (and fixed from now on), which has the form
  \begin{equation*}
    \gendom := \curly{ (\x,\xbar,\z) \in \M\times \Mbar\times \mathbb{R} \mid \z \in \intervalG(\x,\xbar) },
  \end{equation*}
  where for each $(\x,\xbar)\in \M\times \Mbar$ the set $\intervalG(\x,\xbar)$ is an open interval (possibly empty). Similarly, we will deal with the set
  \begin{equation*}
    \gendomdual := \{ (\x,\xbar,\u) \in \M\times \Mbar\times \mathbb{R} \mid \u \in \intervalH(\x,\xbar) \},\;\;\intervalH(\x,\xbar):=\G(\x,\xbar,\intervalG(\x,\xbar)).
  \end{equation*}
The following condition is a relaxation of the (G5) condition presented in \cite{Tru14}, and is also due to Trudinger \cite{Trudinger}.
\begin{DEF}\label{definition: G5}
  A generating function $\G$ and bounded, open domains $\outerdom\subset\M$, $\outertarget\subset\Mbar$ are said to satisfy \textbf{\underline{uniform admissibility}} if there are constants $-\infty\leq\Gfivelower<\Gfiveupper\leq \infty$ and $0<\lipbound<\infty$ for which, whenever $(\x, \xbar, \u)\in\outerdom^{\cl}\times\outertarget^{\cl}\times(\Gfivelower,\Gfiveupper)$, then 
  \begin{align}
(\x, \xbar, \H(\x, \xbar, \u))&\in\gendom,\tag{Unif}\label{G5}\\
      \gnorm[\x]{\Dx\G(\x, \xbar, \H(\x, \xbar, \u))}&\leq \lipbound.\label{Lip}\tag{$\textnormal{Lip}_{K_0}$}
  \end{align}
\end{DEF}

\begin{rem}\label{rem:Unif_consequence} One elementary but useful consequence of \eqref{G5} is that if $\G(\x, \xbar, \z)\in (\Gfivelower, \Gfiveupper)$, then we must have $(\x, \xbar, \z)\in \gendom$. Indeed this is immediate as if $\u:=\G(\x, \xbar, \z)$, by definition $\H(\x, \xbar, \u)=\z$. We will use this fact frequently.
\end{rem}

\begin{DEF}\label{definition: Bi-twist} 
  The function $\G$ is said to satisfy the \textbf{\underline{twist conditions}} if for any $(\x_0,\xbar_0,z_0)\in \M \times \Mbar \times \mathbb{R}$ we have the following 
  \begin{enumerate}
    \item The mapping
    \begin{align}
      (\xbar, \z) & \mapsto (\Dx \G(\x_0, \xbar, \z), \G(\x_0, \xbar, \z))\in \cotanspM{\x_0}\times \real,\tag{$\G$-Twist}\label{Twist}
    \end{align}
    is injective on the set 
    $\curly{(\xbar, \z)\in\Mbar\times\real \mid(\x_0, \xbar, \z)\in\gendom}$.
    \item The mapping
    \begin{align}
      x & \mapsto -\frac{\Dbar \G(\x, \xbar_0, \z_0)}{\Gz(\x, \xbar_0, \z_0)}\in\cotanspMbar{\xbar_0}\tag{$\Gstar$-Twist}\label{DualTwist}
    \end{align}
    is injective on
    $\curly{\x\in\M\mid (\x, \xbar_0, \z_0)\in\gendom}$.
  \end{enumerate}
\end{DEF}

 Although conditions \eqref{Twist} and \eqref{DualTwist} may seem quite different, they are actually symmetric in nature. See Remark~\ref{rem: twists are symmetric} for more details.
 
\begin{rem}
  For the sake of brevity, the arguments in expressions such as $(\Dx \G(\x_0, \xbar, \z),\G(\x_0, \xbar, \z))$ and $-\frac{\Dbar \G(\x, \xbar,\z)}{\Gz(\x, \xbar,\z)}$ will be written simply as $(\Dx \G, \G)(\x_0, \xbar, \z)$ and $-\frac{\Dbar \G}{\Gz}(\x, \xbar,\z)$.
\end{rem}

\begin{DEF}\label{def: nondeg}
  The function $\G$ is said to satisfy the \textbf{\underline{nondegeneracy condition}} if given any triplet $(\x, \xbar, \z) \in \mathfrak{g}$, the linear mapping $\nondegmatrix{\x}{\xbar}{\z}: \tanspMbar{\xbar}\to\cotanspM{\x}$ defined by 
  \begin{align}\tag{$\G$-Nondeg}\label{Nondeg}
    \nondegmatrix{\x}{\xbar}{\z}\Vbar := \Dbar \Dx\G(\x, \xbar, \z)\Vbar-\inner{\frac{\Dbar \G}{\Gz}(\x, \xbar, \z)}{\Vbar}\Dx \Gz(\x, \xbar, \z),\;\;\Vbar \in \tanspMbar{\xbar}
  \end{align}
  is invertible. The adjoint operator of $\nondegmatrix{\x}{\xbar}{\z}$ (which is also invertible under the assumption \eqref{Nondeg}), will be denoted by $\dualnondegmatrix{\x}{\xbar}{\z}: \tanspM{\x} \to \cotanspMbar{\xbar}$, so
  \begin{align*}
    \inner{\nondegmatrix{\x}{\xbar}{\z}\Vbar}{\V}=\inner{\Vbar}{\dualnondegmatrix{\x}{\xbar}{\z}\V},\qquad \forall\; \Vbar\in\tanspMbar{\xbar},\ \V\in\tanspM{\x}.
  \end{align*}	
\end{DEF}

\begin{DEF}\label{DEF: G-coordinates}
  We will use the notation 
  \begin{align*}
    \pbarmap{x}{\u}{\xbar}:&=\Dx \G (\x, \xbar, \H(\x, \xbar, \u)),\\
    \pmap{\xbar}{\z}{x}:&=-\frac{\Dbar \G}{\Gz}(\x, \xbar, \z).
  \end{align*}

  Also if $\arbitrary \subset \outerdom$ and $(\xbar,\z)$ are such that $(\x,\xbar,\z)\in\gendom$ for all $\x\in\arbitrary$, we will write
  \begin{align*}
    \coord{\arbitrary}{\xbar,\z} &:= \pmap{\xbar}{\z}{\arbitrary} \subset \cotanspMbar{\xbar}.
  \end{align*}
  Likewise, if $\bar\arbitrary$ and $(\x,\u)$ are such that $(\x,\xbar,\u) \in \gendomdual$ for all $\xbar\in\bar\arbitrary$, we will write
  \begin{align*}
    \coord{\bar \arbitrary}{x,\u} & := \pbarmap{x}{\u}{\bar \arbitrary} \subset \cotanspM{\x}.
  \end{align*}  
\end{DEF}

\begin{DEF}\label{DEF: exponential mappings}
  Due to \eqref{Twist} and \eqref{DualTwist} there are differentiable maps
  \begin{equation*}
    \X{\xbar}{\z}{\cdot},\;\;\Xbar{\x}{\u}{\cdot},\;\;\Z{\x}{\cdot}{\cdot}, 
  \end{equation*}
  respectively defined on subsets of $\cotanspMbar{\xbar},\cotanspM{\x}$, and $\cotanspM{\x}\times \mathbb{R}$, by the system of equations
  \begin{align*}
    (\Dx\G, \G)(\x,\Xbar{\x}{\u}{\pbar}, \Z{\x}{\pbar}{\u})&=(\pbar, \u), & \;\;\forall\;\; (\pbar, \u) \in (D\G,\G)(\left \{(\x,\xbar,\z) \mid (\x,\xbar,\z) \in \gendom \right \} )
  \end{align*}
  and 
  \begin{align*}
      -\frac{\Dbar \G}{\Gz}(\X{\xbar}{\z}{\p},\xbar,\z) =\p, \;\;\forall\; \p\in -\frac{\Dbar \G}{\Gz}( \curly{(\x,\xbar,\z) \mid (\x,\xbar,\z) \in \gendom  } ).
  \end{align*}
   Note that by \eqref{Twist},
  \begin{align*}
    \Z{\x}{\pbar}{\u}\equiv\H(\x, \Xbar{\x}{\u}{\pbar}, \u).
  \end{align*}  

\end{DEF}

\subsection{\G-convex geometry.} The coordinate systems given by $\pmap{\xbar}{\z}{\cdot}$ and $\pbarmap{\x}{\u}{\cdot}$ are of great relevance to the study of the generating function $\G$ (see also Lemma~\ref{lem: comparability}). Of special interest are those domains in $\outerdom$ (resp. $\outertarget$) that correspond to convex sets in at least one of these coordinate systems. The same can be said for curves in $\outerdom$ (resp. $\outertarget$) that correspond to a straight line segment in one of these coordinate systems. These ideas are recalled in detail below.

\begin{DEF}\label{DEF: G-segment}
  A differentiable curve $\x(s)$ in $\M$ ($s\in[0,1]$) is said to be a \emph{$\G$-segment} with respect to $(\xbar,\z)\in \Mbar \times \mathbb{R}$, if for all $s\in[0,1]$ we have that $(\x(s),\xbar, \z)\in\gendom$ and
  \begin{align*}
    \pmap{\xbar}{\z}{\x(s)} = (1-s)\pmap{\xbar}{\z}{\x(0)}+s\pmap{\xbar}{\z}{\x(1)}.
  \end{align*}
  Likewise, a curve $\xbar(t)$ in $\Mbar$ ($t\in[0,1]$) is said to be a \emph{$\G$-segment} with respect to $(\x, \u) \in \M\times \mathbb{R}$, if for all $t\in[0,1]$ we have that $(\x,\xbar(t), \u)\in\gendomdual$  and
  \begin{align*}
    \pbarmap{\x}{\u}{\xbar(t)} = (1-t)\pbarmap{\x}{\u}{\xbar(0)}+t\pbarmap{\x}{\u}{\xbar(1)}.
  \end{align*}
\end{DEF}
\begin{rem}\label{rem: G-segment notation}
 If $\x(s)$ is a $\G$-segment with respect to $(\xbar, \z)$ with $\x(0)=\x_0$, $\x(1)=\x_1$, we will use the notation $\Gseg{\x_0}{\x_1}{\xbar, \z}$ for the image $\x([0, 1])$. Moreover, given $\x_0$, $\x_1\in\M$, and $(\xbar, \z)\in\Mbar \times \mathbb{R}$, by an abuse of notation we will write \emph{$\x(s):=\Gseg{\x_0}{\x_1}{\xbar, \z}$} to signify that $\x(s)$ is the (unique) parametrization of a $\G$-segment given in the above definition with $\x(0)=\x_0$, $\x(1)=\x_1$. Additionally, when we say $\x(s)$ is \emph{well-defined} it specifically denotes that for all $s\in[0, 1]$, $(1-s)\pmap{\xbar}{\z}{\x_0}+s\pmap{\xbar}{\z}{\x_1}$ lies in the image $\curly{-\frac{\Dbar\G}{\Gz}(\x, \xbar, \z)\mid \x\in\M, (\x, \xbar, \z)\in\gendom}$. A similar remark holds for $\G$-segments $\xbar(t)$ in $\Mbar$.
\end{rem}
\begin{rem}\label{rem: local coordinates}
Fixing local coordinates in $\M$ and $\Mbar$, the matrix representation of $\nondegmatrix{\x}{\xbar}{\z}$ is 
\begin{align*}
  \nondegmatrixentries[ij]=\G_{x^i\xbar^j} -\frac{\G_{x^i \z}\G_{\xbar^j}}{\Gz}.
\end{align*}
A routine calculation then shows that the derivatives of the maps $\x\mapsto \pmap{\xbar}{\z}{\x}$ and $\xbar\mapsto \pbarmap{\x}{\u}{\xbar}$ are given by $-\frac{\dualnondegmatrix{\x}{\xbar}{\z}}{\Gz}$ and $\nondegmatrix{\x}{\xbar}{\H(\x, \xbar, \u)}$ respectively, hence these mappings are $C^1$-diffeomorphisms in a neighborhood of wherever \eqref{Nondeg} holds (i.e., near $\x$ such that $(\x, \xbar, \z)\in\gendom$ for $\pmap{\xbar}{\z}{\cdot}$ and near $\xbar$ such that $(\x, \xbar, \u)\in\gendomdual$ for $ \pbarmap{\x}{\u}{\cdot}$). In particular, this implies that $\G$-segments are differentiable as long as they are well-defined.
\end{rem}
We also make some convexity assumptions on the domains $\outerdom\subset \M$ and $\outertarget\subset \Mbar$.

\begin{DEF}\label{def: DomConv}  
  We will assume that for any $\x\in\outerdom^{\cl}$, 
  \begin{align}
    \u\in(\Gfivelower, \Gfiveupper)&\implies\coord{\outertarget}{\x, \u}\text{ is convex.}\label{DualDomConv}\tag{DomConv$^*$}
  \end{align}
  Also suppose $\x_0$, $\x_1\in\outerdom^{\cl}$, $\xbar\in\outertarget^{\cl}$, and $\z\in\real$ with $\G(\x_0, \xbar, \z)\in (\Gfivelower, \Gfiveupper)$. Then we assume that $\outerdom$ is path-connected and
  \begin{align}
    &(\x_0, \xbar, \z),\ (\x_1, \xbar, \z)\in\gendom\notag\\
    &\implies \x(s):=\Gseg{\x_0}{\x_1}{\xbar, \z}\text{ is well-defined and }\Gseg{\x_0}{\x_1}{\xbar, \z}\subset\outerdom^{\cl}.
    \label{DomConv}\tag{DomConv}
  \end{align}
\end{DEF}

The next proposition computes the velocity of a $\G$-segment in terms of the linear maps $\nondegmatrix{\x}{\xbar}{\z}$ and $\dualnondegmatrix{\x}{\xbar}{\z}$

\begin{prop}\label{prop: motivation for nondegeneracy}
  Let $\x(s)$ be a well-defined $\G$-segment with respect to some $(\xbar_0,\z_0)$, $\xbar(t)$ a well-defined $\G$-segment with respect to some $(\x_0,\u_0)$, and let 
  \begin{equation*}
    \z(t) := \H(\x_0,\xbar(t),\u_0).  
  \end{equation*}
  Then, using the notation $\p_s:=\pmap{\xbar_0}{\z_0}{\x(s)}$ and $\pbar_t:=\pbarmap{\x_0}{\u_0}{\xbar(t)}$, we have the expressions
  \smallskip
  \begin{align}
    \xdot(s) & = -\Gz(\x(s), \xbar_0, \z_0)\dualnondegmatrixinv{\x(s)}{\xbar_0}{\z_0}(\p_1-\p_0), \label{eqn: xdot representation}\\
    \xbardot(t) & = \nondegmatrixinv{\x_0}{\xbar(t)}{\z(t)}(\pbar_1-\pbar_0), \label{eqn: xbardot representation}\\
    \zdot(t) & = \inner{-\frac{\Dbar \G}{\Gz}(\x_0, \xbar(t), \z(t))}{\xbardot(t)}. \label{eqn: zdot representation}
  \end{align}
\end{prop}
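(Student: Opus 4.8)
The plan is to differentiate the affine relations that define the $\G$-segments and substitute the derivative formulas for the coordinate maps $\x\mapsto\pmap{\xbar}{\z}{\x}$ and $\xbar\mapsto\pbarmap{\x}{\u}{\xbar}$ recorded in Remark~\ref{rem: local coordinates}.

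For \eqref{eqn: xdot representation}: by Definition~\ref{DEF: G-segment} the curve $s\mapsto\p_s=\pmap{\xbar_0}{\z_0}{\x(s)}$ equals $(1-s)\p_0+s\p_1$, so $\diff[\p_s]{s}=\p_1-\p_0$ for all $s$. Since the $\G$-segment is well-defined, Remark~\ref{rem: local coordinates} guarantees $\x(s)$ is differentiable and that the differential of $\pmap{\xbar_0}{\z_0}{\cdot}$ at $\x(s)$ equals $-\Gz(\x(s),\xbar_0,\z_0)^{-1}\dualnondegmatrix{\x(s)}{\xbar_0}{\z_0}$, a map invertible since $\Gz<0$ and \eqref{Nondeg} holds. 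Applying the chain rule, $\p_1-\p_0 = -\Gz(\x(s),\xbar_0,\z_0)^{-1}\dualnondegmatrix{\x(s)}{\xbar_0}{\z_0}\,\xdot(s)$, and solving gives \eqref{eqn: xdot representation}. I would obtain \eqref{eqn: xbardot representation} the same way: $t\mapsto\pbar_t=\pbarmap{\x_0}{\u_0}{\xbar(t)}$ is affine with $\diff[\pbar_t]{t}=\pbar_1-\pbar_0$, the differential of $\pbarmap{\x_0}{\u_0}{\cdot}$ at $\xbar(t)$ is $\nondegmatrix{\x_0}{\xbar(t)}{\H(\x_0,\xbar(t),\u_0)}=\nondegmatrix{\x_0}{\xbar(t)}{\z(t)}$ by Remark~\ref{rem: local coordinates} and the definition of $\z(t)$, and inverting this (again via \eqref{Nondeg}) yields the formula.

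For \eqref{eqn: zdot representation}: I would first differentiate the defining identity $\G(\x,\xbar,\H(\x,\xbar,\u))=\u$ in the $\xbar$ variable, obtaining $\Dbar\G+\Gz\,\Dbar\H=0$, i.e. $\Dbar\H(\x,\xbar,\u)=-\frac{\Dbar\G}{\Gz}(\x,\xbar,\H(\x,\xbar,\u))$. Then, since $\z(t)=\H(\x_0,\xbar(t),\u_0)$, the chain rule gives $\zdot(t)=\inner{\Dbar\H(\x_0,\xbar(t),\u_0)}{\xbardot(t)}=\inner{-\frac{\Dbar\G}{\Gz}(\x_0,\xbar(t),\z(t))}{\xbardot(t)}$, which is \eqref{eqn: zdot representation}.

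I do not anticipate any genuine obstacle; the argument is a chain-rule computation once Remark~\ref{rem: local coordinates} is in hand. The only point needing care is the bookkeeping of the tangent and cotangent spaces in which $\dualnondegmatrix{\x(s)}{\xbar_0}{\z_0}$, $\nondegmatrix{\x_0}{\xbar(t)}{\z(t)}$, and the vectors $\p_1-\p_0$, $\pbar_1-\pbar_0$, $\xdot(s)$, $\xbardot(t)$ live, but these are dictated by Definitions~\ref{def: nondeg} and \ref{DEF: G-coordinates} and match up automatically.
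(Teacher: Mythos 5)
Your proposal is correct and follows essentially the same route as the paper: differentiate the affine relations defining the $\G$-segments and identify the resulting linear maps with $\dualnondegmatrix{\x(s)}{\xbar_0}{\z_0}$ and $\nondegmatrix{\x_0}{\xbar(t)}{\z(t)}$, obtaining \eqref{eqn: zdot representation} from the identity $\Dbar\H=-\frac{\Dbar\G}{\Gz}$. The only cosmetic difference is that you cite the derivative formulas for $\pmap{\xbar}{\z}{\cdot}$ and $\pbarmap{\x}{\u}{\cdot}$ from Remark~\ref{rem: local coordinates}, whereas the paper redoes that computation inline by differentiating $-\frac{\Dbar\G}{\Gz}(\x(s),\xbar_0,\z_0)$ and the system $\Dx\G(\x_0,\xbar(t),\z(t))=(1-t)\pbar_0+t\pbar_1$, $\G(\x_0,\xbar(t),\z(t))=\u_0$ directly.
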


\begin{proof}
  Differentiating the identity $-\frac{\Dbar \G}{\Gz}(x(s), \xbar_0, \z_0) = (1-s)\p_0+s\p_1$ in $s$ yields
  \begin{equation*}
    \left [\frac{-\Dx\Dbar \G}{\Gz}\right ] \xdot(s) +\frac{\inner{\Dx\Gz}{\xdot(s)}\Dbar\G}{\Gz^2} = \p_1-\p_0,
  \end{equation*}
  where all expressions are evaluated at $(\x(s), \xbar_0, \z_0)$. 
  Therefore, for an arbitrary $\Vbar \in \tanspMbar{\xbar_0}$ and $s\in[0,1]$,
  \begin{align*}
    \inner{\p_1-\p_0}{\Vbar}&=-\frac{1}{\Gz(x(s), \xbar_0, \z_0)}\paren{\inner{\brackets{\Dbar\Dx\G}\Vbar}{\xdot(s)}-\inner{\Dx\Gz}{\xdot(s)}\inner{\frac{\Dbar\G}{\Gz}}{\Vbar}}\\
    &= -\frac{1}{\Gz(x(s), \xbar_0, \z_0)}\inner{\nondegmatrix{\x(s)}{\xbar_0}{\z_0}\Vbar}{\xdot(s)}
  \end{align*}
  and \eqref{eqn: xdot representation} follows. Similarly, differentiating the identities
  \begin{align*}
    \Dx\G(\x_0, \xbar(t), \z(t))&=(1-t)\pbar_0+t\pbar_1,\\
    \G(\x_0, \xbar(t), \z(t))&=\u_0,
  \end{align*}
in $t$ we obtain 
\begin{align*}
\brackets{ \Dbar\Dx\G} \xbardot(t)+\Dx\Gz \zdot(t)&=\pbar_1-\pbar_0,\\
 \inner{\Dbar\G}{\xbardot(t)}+\Gz\zdot(t)&=0,
\end{align*}
where all expressions are evaluated at $(\x_0, \xbar(t), \z(t))$. Rearranging the second line above and using that $\Gz\neq 0$ yields \eqref{eqn: zdot representation}. We can then substitute \eqref{eqn: zdot representation} into the first line above to obtain
\begin{align*}
 \pbar_1-\pbar_0&= \brackets{\Dbar\Dx\G} \xbardot(t)-\Dx\Gz \inner{\frac{\Dbar \G}{\Gz}}{\xbardot(t)}\\
 &=\nondegmatrix{\x_0}{\xbar(t)}{\z(t)}\xbardot(t).
\end{align*}
Since $\nondegmatrix{\x_0}{\xbar(t)}{\z(t)}$ is invertible by \eqref{Nondeg}, the formula \eqref{eqn: xbardot representation} follows.
\end{proof}

The last two conditions on the generating function $\G$ are as follows.\\

\begin{DEF}\label{definition: QQConv}
We say \emph{$\G$ satisfies \eqref{QQConv}}, if for any compact subinterval $[\QQConvlower, \QQConvupper]\subset (\Gfivelower, \Gfiveupper)$, there is a constant $M\geq 1$ with the following property: take any $\x_0$, $\x_1\in\outerdom^{\cl}$, $\xbar_1$, $\xbar_0\in\outertarget^{\cl}$, $\z_0\in\real$ such that $\G(\x(s), \xbar_0, \z_0)\in[\QQConvlower, \QQConvupper]$ for all $s\in [0, 1]$ where $\x(s):=\Gseg{\x_0}{\x_1}{\xbar_0, \z_0}$. Then if $\z_1:=\H(\x_0, \xbar_1, \G(\x_0, \xbar_0, \z_0))$, it holds

\begin{align*}
  & \G(\x(s), \xbar_1, \z_1))-\G(\x(s), \xbar_0, \z_0) \label{QQConv}\tag{$\G$-QQConv}\\
  & \qquad\leq \frac{Ms}{1-s'}(\G(\x_1, \xbar_1, \H(\x(s'), \xbar_1, \G(\x(s'), \xbar_0, \z_0)))-\G(\x_1, \xbar_0, \z_0)),
\end{align*}
for any $s\in [0, 1]$ and $s'\in [0, 1)$.
  Likewise, \emph{$\G$ satisfies \eqref{DualQQConv}} if for any compact subinterval $[\QQConvlower, \QQConvupper]\subset (\Gfivelower, \Gfiveupper)$ there exists a constant $M\geq 1$ such that: whenever $\x_0\in\outerdom^{\cl}$, $\xbar_0$, $\xbar_1\in\outertarget^{\cl}$, $\u_0\in[\QQConvlower, \QQConvupper]$, and $\x_1\in\M$ with $(\x_1, \xbar(t), \H(\x_0, \xbar(t), \u_0))\in\gendom$ for all $t\in[0, 1]$, (where $\xbar(t):=\Gseg{\xbar_0}{\xbar_1}{\x_0, \u_0}$), it holds for any $t'\in [0, 1)$ that
  \begin{align}
    &\G(\x_1,\xbar(t), \H(\x_0,\xbar(t),\u_0))-\G(\x_1,\xbar_0,\H(\x_0,\xbar_0,\u_0)) 	\label{DualQQConv}\tag{$\Gstar$-QQConv}\\
    &\qquad \leq \frac{Mt}{1-t'}\;\brackets{\G(\x_1,\xbar_1,\H(\x_0,\xbar_1,\u_0))-\G(\x_1,\xbar(t'),\H(\x_0,\xbar(t'),\u_0))}_{+}.\notag
  \end{align}
If $\G$ satisfies both \eqref{QQConv} and \eqref{DualQQConv}, we say that $\G$ is \textbf{\underline{quantitatively quasiconvex}}.
\end{DEF}
We note that due to assumptions \eqref{G5}, \eqref{DomConv}, and \eqref{DualDomConv}, in the above definitions both $\G$-segments $\x(s)$ and $\xbar(t)$ are well-defined and remain in $\outerdom^{\cl}$, $\outertarget^{\cl}$ respectively for all $s$, $t\in[0, 1]$. 

\subsection{$\G$-convex functions.}

\begin{DEF}\label{def: G-functions}
  A real valued function $u$ defined on $\outerdom$ is said to be \emph{$\G$-convex} if for any $\x_0\in\outerdom$ there is a \emph{focus $(\xbar_0, \z_0) \in \outertarget\times\real$} such that $(\x_0, \xbar_0, \z_0)\in\gendom$ and 
  \begin{align*}
    u(\x_0)&=\G(\x_0, \xbar_0, \z_0),\\
    u(\x)&\geq \G(\x, \xbar_0, \z_0),\qquad\forall\; \x\in\outerdom.
  \end{align*}
Any function of the form $\G(\cdot, \xbar_0, \z_0)$ will be called a \emph{$\G$-affine function}, and if it satisfies the above conditions we say it is \emph{supporting to $u$ at $\x_0$}.
\end{DEF}
We remark here that by \eqref{Twist} it is clear that if $\G(\cdot, \xbar_0, \z_0)$ is supporting to $u$ at $\x_0$, we must have $\z_0=\H(\x_0, \xbar_0, \z_0)$. Also note that in the definition above, it is \emph{not} assumed that $(\x, \xbar_0, \z_0)\in\gendom$ for all $\x\in\outerdom$, but only for $\x_0$. This distinction will motivate further definitions below.
\begin{DEF}\label{DEF: G-subdifferentials}
Let $u$ be a $\G$-convex function and $x\in\outerdom$. We define the \emph{$\G$-subdifferential of $u$ at $x$} as the set-valued mapping
\begin{align*}
\Gsubdiff{u}{\x}\defin\curly{\xbar\in\outertarget\mid \exists \z\in\real\text{ s.t. } \G(\cdot, \xbar, \z) \text{ is supporting to }u\text{ at }\x}.
\end{align*}
For $\x\in\outerdom^{\bdry}$, we define 
\begin{align*}
\Gsubdiff{u}{\x}:=\curly{\lim_{k\to\infty}\xbar_k\mid \xbar_k\in\Gsubdiff{u}{\x_k},\ \outerdom\ni\x_k\xrightarrow[k\to\infty]{}\x}.
\end{align*}
Also, for any $\arbitrary\subset \outerdom^{\cl}$, we define
\begin{align*}
\Gsubdiff{u}{\arbitrary}:=\union_{x\in \arbitrary}\Gsubdiff{u}{x}.
\end{align*}
\end{DEF}
If $\x\in \outerdom^\bdry$ we will say $\G(\cdot, \xbar, \H(\x, \xbar, u(\x)))$ is \emph{supporting to $u$ at $\x$} only when $\xbar\in \Gsubdiff{u}{\x}$.

With this notion in hand, we are now able to define an appropriate \emph{weak} notion of solutions to the generated Jacobian equation \eqref{eqn: generated Jacobian equation}, which will allow for measure valued data.
  \begin{DEF}\label{def:Aleksandrov_solutions}
Let $\mu$ be a positive Borel measure defined on $\outerdom$. We say a $\G$-convex function $u$ on $\outerdom$ is an \emph{Aleksandrov weak solution of the generated Jacobian equation} if for any Borel measurable $A\subset \outerdom$ we have
\begin{align*}
 \Leb{\Gsubdiff{u}{A}}=\mu(A).
\end{align*}
  \end{DEF}
We recall that $\Leb{\Gsubdiff{u}{\cdot}}$ is a Radon measure (see \cite[Section 4]{Tru14}) known as the \emph{$\G$-Monge-Amp{\`e}re measure} of $u$.

\begin{rem}\label{rem: weak solutions with bounded RHS}
 In this paper, we are concerned with the specific case corresponding to equation \eqref{eqn: generated Jacobian equation} when the function $\psi_G$ on the right hand side is bounded away from zero and infinity. Thus, in the sequel we will say $\G$-convex function $u$ on $\outerdom$ is an \emph{Aleksandrov solution of \eqref{eqn: generated Jacobian equation} (with bounded right hand side)} to mean there exists a constant $\Lambda>0$ such that
	  \begin{align*}
   \Lambda^{-1}\Leb{A\cap \innerdom}\leq \Leb{\Gsubdiff{u}{A}}\leq \Lambda \Leb{A\cap \innerdom},\quad\textnormal{any Borel set } \arbitrary\subset\outerdom.
      \end{align*}
      Here $\innerdom$ is the support of $\Leb{\Gsubdiff{u}{\cdot}}$.
\end{rem}

\begin{DEF}\label{def: very nice}
 We say that a $\G$-convex function $u$ is \emph{nice (in $\outerdom$)} if $\Gfivelower<u <\Gfiveupper$ on $\outerdom^{\cl}$.
 
We also say a $\G$-convex function $u$ is \emph{very nice (in $\outerdom$} if every $\G$-affine function supporting to $u$ in $\outerdom^{\cl}$ is \emph{nice} (thus in particular, $u$ is also nice).
\end{DEF}

\begin{rem}\label{rem: nice functions are nice}
If $u$ is a \emph{nice} $\G$-convex function, \eqref{Lip} combined with a standard argument implies $u$ is locally bounded, and also locally Lipschitz (and in particular continuous) in $\outerdom^{\cl}$. As a result, a \emph{nice} $\G$-convex function is differentiable a.e. on $\outerdom$.

Indeed, fix any point $\x_0\in\outerdom$. Since $u$ is \emph{nice} then $u(\x_0)\in (\Gfivelower+\epsilon, \Gfiveupper-\epsilon)$ for some $\epsilon>0$, small enough that $\ball{\epsilon/\lipbound}{\x_0}\subset\outerdom$ (where $\lipbound$ is the constant in \eqref{Lip}). We first claim that 
\begin{align*}
u(\x_0)-\epsilon<\G(\y, \xbar, \H(\x_0, \xbar, u(\x_0)))<u(\x_0)+\epsilon
\end{align*}
for all $\xbar\in\outertarget^{\cl}$ and $\y\in \ball{\epsilon/\lipbound}{\x_0}$. Indeed, let $\y\in \ball{\epsilon/\lipbound}{\x_0}^{\cl}$ and write $\y_{\g{}}(s)$ for the unit speed minimal geodesic from $\x_0$ to $\y$ (we may first shrink $\epsilon$ to ensure such a minimal geodesic exists for every point within the boundary of the ball). Fix an arbitrary $\xbar\in\outertarget^{\cl}$ and define 
\begin{align*}
s^*:=\sup\curly{s^{**}\in [0, \gdist{\x_0}{\y}]\mid \G(\y_{\g{}}(s), \xbar, \H(\x_0, \xbar, u(\x_0)))\in (u(\x_0)-\epsilon, u(\x_0)+\epsilon),\ \forall\; s\in[0,  s^{**}]}.
\end{align*}
If $s^*=\gdist{\x_0}{\y}$, we are done. Otherwise, we must have $\G(\y_{\g{}}(s^*), \xbar, \H(\x_0, \xbar, u(\x_0)))$ equal to either $u(\x_0)-\epsilon$ or $u(\x_0)+\epsilon$, thus by \eqref{Lip} we can calculate
\begin{align*}
 \epsilon&=\norm{\G(\y_{\g{}}(s^*), \xbar, \H(\x_0, \xbar, u(\x_0)))-u(\x_0)}\\
 &\leq \int_0^{s^*}\inner{\Dx\G(\y_{\g{}}(s), \xbar, \H(\x_0, \xbar, u(\x_0)))}{\dot{\y}_{\g{}}(s)}ds\\
 &\leq \lipbound s^*<\lipbound \gdist{\x_0}{\y}
\end{align*}
which is a contradiction, thus we obtain our first claim.

Now take any $\y\in \ball{\epsilon/\lipbound}{\x_0}$ and let $\ybar\in\Gsubdiff{u}{\y}$, then we have $\G(\x_0, \ybar, \H(\y, \ybar, u(\y)))\leq u(\x_0)$, thus combined with the above bound
\begin{align*}
 u(\y)&=\G(\y, \ybar, \H(\y, \ybar, u(\y)))\\
 &= \G(\y, \ybar, \H(\x_0, \ybar, \G(\x_0, \ybar, \H(\y, \ybar, u(\y)))))\\
 &\leq \G(\y, \ybar, \H(\x_0, \ybar, u(\x_0)))<u(\x_0)+\epsilon.
\end{align*}
On the other hand, if $\xbar_0\in\Gsubdiff{u}{\x_0}$, we see that $u(\y)\geq \G(\y, \xbar_0, \H(\x_0, \xbar_0, u(\x_0)))>u(\x_0)-\epsilon$, thus we find that $u\in (u(\x_0)-\epsilon, u(\x_0)+\epsilon)\subset (\Gfivelower, \Gfiveupper)$ on $\ball{\epsilon/\lipbound}{\x_0}^{\cl}$, i.e. $u$ is locally bounded in $\outerdom$.

By following the same line of proof as above, we can see that for any $\y_1$, $\y\in\ball{\epsilon/2\lipbound}{\x_0}^{\cl}$ and $\ybar_1\in\Gsubdiff{u}{\y_1}$, we have $\G(\y, \ybar_1, \H(\y_1, \ybar_1, u(\y_1)))\in (\Gfivelower, \Gfiveupper)$. Then by choosing $\mathcal{N}$ to be a small enough geodesically convex neighborhood of $\x_0$ contained in $\ball{\epsilon/2\lipbound}{\x_0}^{\cl}$, by \eqref{Lip} we find for any $\y_1$, $\y_2\in\mathcal{N}$,
\begin{align*}
 u(\y_1)-u(\y_2)&\leq \G(\y_1, \ybar_1, \H(\y_1, \ybar_1, u(\y_1)))-\G(\y_2, \ybar_1, \H(\y_1, \ybar_1, u(\y_1)))\\
  &\leq \lipbound\gdist{\y_1}{\y_2}.
\end{align*}
By a symmetric argument, $u$ is locally Lipschitz in $\outerdom$.
\end{rem}

\begin{rem}\label{rem: very nice interval}
 If $u$ is a \emph{very nice} $\G$-convex function, there exists a compact subinterval $\Niceinterval\subset (\Gfivelower, \Gfiveupper)$ of such that $\Nicelower<\mountain<\Niceupper$ on $\outerdom^{\cl}$ for \emph{any} $\G$-affine function $\mountain$, supporting to $u$ in $\outerdom$. Indeed, note that
\begin{align*}
 &\sup\curly{\mountain(\y)\mid\y\in\outerdom^{\cl},\ \mountain\text{ is }G\text{-affine and supporting to }u \text{ in }\outerdom^{\cl}}\\
 &=\sup\curly{\G(\y, \xbar, \H(\x, \xbar, u(\x)))\mid \y, \x\in\outerdom^{\cl}, \xbar\in\Gsubdiff{u}{\x}}
\end{align*}
and as $u$ is \emph{very nice} (by Remark~\ref{rem: nice functions are nice} above, $u$ is continuous on $\outerdom^{\cl}$), the constraint set in the second line is clearly compact. A similar argument holds for the infimum. We will refer to this subinterval as a \emph{very nice interval} associated to $u$.
\end{rem}

\begin{rem}\label{rem: existence of very nice solutions}
One of our ultimate goals is to apply Theorems ~\ref{thm: G-aleksandrov estimate} and \ref{thm: Sharp growth} toward regularity of \emph{weak solutions} of \eqref{eqn: generated Jacobian equation} (see \cite[Section 4]{Tru14} for a definition and discussion). However, when $(\Gfivelower, \Gfiveupper)\neq \real$ in \eqref{G5}, we may only be able to apply our estimates Theorems ~\ref{thm: G-aleksandrov estimate} and \ref{thm: Sharp growth} to a \emph{very nice} $\G$-convex function $u$. This is to be expected as one feature of this case is that weak solutions of \eqref{eqn: generated Jacobian equation} with the same data may have differing regularity (see Sections \ref{section: literature reflector problems}-\ref{section: OT}).

The following adaptation of the condition (G5) in \cite{Tru14} due to Trudinger (also shared with us through personal communication \cite{Trudinger}) gives existence of weak solutions of \eqref{eqn: generated Jacobian equation} that are \emph{very nice}. Indeed, define 
\begin{align*}
 d_\outerdom(x_1, x_2):&=\inf\curly{L_g(\gamma)\mid \gamma\subset \outerdom^{\cl},\ \gamma(0)=x_1, \gamma(1)=x_2,\ \gamma \text{ piecewise }C^1},\\
 \diam_\outerdom{(\outerdom)}:&=\sup_{x_1,\ x_2\in\outerdom}d_\outerdom(x_1, x_2),
\end{align*}
(here $L_g(\gamma)$ above is the length of a piecewise $C^1$ curve in $(\M, \g{})$). 
Then assume that the constant $\lipbound$ in \eqref{G5} satisfies $\lipbound<\tfrac{\Gfiveupper-\Gfivelower}{2\diam_{\outerdom}{(\outerdom)}}$. Then writing $K_1:=K_0 \sup_{\x\in\outerdom}d_{\outerdom}(\x_0,\x)$, for any measurable, bounded data, $\x_0\in\outerdom$, and $\u_0\in(\Gfivelower+K_1, \Gfiveupper-K_1)$, there exists a \emph{nice} weak solution $u$ of \eqref{eqn: generated Jacobian equation} with $u(\x_0)=\u_0$ (see \cite[Theorem 4.2]{Tru14}). If $\u_0\in(\Gfivelower+3K_1, \Gfiveupper-3K_1)$, $u$ will be \emph{very nice}; the argument is similar to the one in Remark~\ref{rem: nice functions are nice}.
\end{rem}
The next notion is that of the \emph{$\G$-dual of a set} $\arbitrary\subset\outerdom^{\cl}$.
\begin{DEF}\label{DEF: G-dual}
Let $\arbitrary\subset\outerdom$, $\x\in\arbitrary^{\interior}$, $\lambda>0$, and $\mountain$ be a $\G$-affine function. We define the \emph{$\G$-dual of $\arbitrary$ with vertex $\x$, base $\mountain$, and height $\lambda$} by
  \begin{align*}
    \Gdual{\arbitrary}{\x, \mountain,\lambda}&\defin \curly{\xbar\in\outertarget^{\cl}\mid \G(\y, \xbar, \H(\x, \xbar, \mountain(\x)))\leq \mountain(\y)+\lambda,\ \forall\;\y\in\arbitrary}.
  \end{align*}
  In other words, $\xbar \in \Gdual{\arbitrary}{\x, \mountain,\lambda}$ if and only if there exists some $\z$ such that
  \begin{equation*}
    \G(\x, \xbar, \z)=\mountain(\x)\;\;\textnormal{ and } \G(\y, \xbar, \z)\leq \mountain(\y)+\lambda,\ \forall\;\y\in\arbitrary.
  \end{equation*}
\end{DEF}
The following Propositions~\ref{prop: local to global} and \ref{prop: sections are convex} make essential use of the conditions \eqref{DualQQConv} and \eqref{QQConv}.
\begin{prop}\label{prop: local to global}
 If $u$ is a \emph{nice} $\G$-convex function, then $\coord{\Gsubdiff{u}{\x}}{\x, u(\x)}$ is convex for any $\x\in\outerdom$.
 
 If $\arbitrary\subset\outerdom^{\cl}$ is connected and $\mountain$ is a $\G$-affine function with $\Gfivelower< \mountain<\Gfiveupper$ on $\arbitrary^{\cl}$, then $\coord{\Gdual{\arbitrary}{\x, \mountain, \lambda}}{\x, \mountain(\x)}$ is convex for any $0<\lambda$ such that $\sup_{\arbitrary}\mountain+\lambda<\Gfiveupper$ and $\x\in\arbitrary^{\interior}$.
\end{prop}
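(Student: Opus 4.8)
The plan is to handle both assertions by a single ``local-to-global'' argument built on \eqref{DualQQConv}. In both cases the set under consideration consists of those $\xbar\in\outertarget^{\cl}$ for which the $\G$-affine function $\G(\cdot,\xbar,\H(\x,\xbar,V(\x)))$ lies below a fixed ceiling $V$ — with $V=u$ on $\outerdom$ in the first part, and $V=\mountain+\lambda$ on $\arbitrary$ in the second — while equalling $V$ at $\x$. Given two members $\xbar_0,\xbar_1$ of this set, set $\pbar_i:=\pbarmap{\x}{V(\x)}{\xbar_i}$ and $\xbar(t):=\Gseg{\xbar_0}{\xbar_1}{\x,V(\x)}$; since $V(\x)\in(\Gfivelower,\Gfiveupper)$ (by niceness of $u$, resp.\ by $\Gfivelower<\mountain$ at $\x$), assumption \eqref{DualDomConv} guarantees this $\G$-segment is well defined, stays in $\outertarget^{\cl}$, and satisfies $\pbarmap{\x}{V(\x)}{\xbar(t)}=(1-t)\pbar_0+t\pbar_1$. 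Thus it suffices to show each $\xbar(t)$ belongs to the set, i.e.\ that $\G(\y,\xbar(t),\z_t)\le V(\y)$ for all relevant $\y$, where $\z_t:=\H(\x,\xbar(t),V(\x))$; this places the whole segment $[\pbar_0,\pbar_1]$ inside the image of the set under $\pbarmap{\x}{V(\x)}{\cdot}$, and convexity of that image follows since $\xbar_0,\xbar_1$ were arbitrary.

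For the pointwise bound, fix $t\in(0,1)$ and a point $\y$ in the relevant domain, and suppose for contradiction that $\G(\y,\xbar(t),\z_t)>V(\y)$. Apply \eqref{DualQQConv} with $\x_0=\x$, $\u_0=V(\x)$, $\x_1=\y$, and send the auxiliary parameter $t'\uparrow t$: by continuity of $\G$, $\H$, and of $\xbar(\cdot)$ the inequality passes to the limit and gives
\[
  \G(\y,\xbar(t),\z_t)-\G(\y,\xbar_0,\z_0)\ \le\ \frac{Mt}{1-t}\brackets{\G(\y,\xbar_1,\z_1)-\G(\y,\xbar(t),\z_t)}_{+},
\]
where $\z_0=\H(\x,\xbar_0,V(\x))$ and $\z_1=\H(\x,\xbar_1,V(\x))$. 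Because $\xbar_1$ lies in the set, $\G(\y,\xbar_1,\z_1)\le V(\y)<\G(\y,\xbar(t),\z_t)$, so the positive part on the right vanishes; the inequality then forces $\G(\y,\xbar(t),\z_t)\le\G(\y,\xbar_0,\z_0)\le V(\y)$ (the last step since $\xbar_0$ lies in the set), contradicting the assumption. Hence $\G(\y,\xbar(t),\z_t)\le V(\y)$ for all such $\y$, and together with the endpoint cases $t\in\{0,1\}$ this completes both parts. Note that sending $t'\uparrow t$ (rather than taking $t'=0$) is precisely what makes the argument work for \emph{every} $t\in(0,1)$, not merely for $t\le 1/M$.

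The step I expect to require real care — and the main obstacle — is the verification that \eqref{DualQQConv} is in fact applicable, namely that $(\y,\xbar(t),\z_t)\in\gendom$ for \emph{all} $t\in[0,1]$ and every $\y$ in the relevant domain. By Remark~\ref{rem:Unif_consequence} it suffices to know $\G(\y,\xbar(t),\z_t)\in(\Gfivelower,\Gfiveupper)$ for all $t$. At $\y=\x$ this holds trivially and uniformly in $t\in[0,1]$ (there $\G(\x,\xbar(t),\z_t)=V(\x)$, and $[0,1]$ is compact), and one then propagates it to nearby points along paths inside the domain using the Lipschitz bound \eqref{Lip}, exactly as in the continuity/bootstrapping argument of Remark~\ref{rem: nice functions are nice}; path-connectedness of $\outerdom$ (in the first part) and connectedness of $\arbitrary$ together with $\sup_{\arbitrary}\mountain+\lambda<\Gfiveupper$ and $\Gfivelower<\mountain<\Gfiveupper$ on $\arbitrary^{\cl}$ (in the second) are precisely what let this bootstrap close. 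Once this $\gendom$-membership is secured, the argument above goes through and yields the convexity of $\coord{\Gsubdiff{u}{\x}}{\x,u(\x)}$ and of $\coord{\Gdual{\arbitrary}{\x,\mountain,\lambda}}{\x,\mountain(\x)}$ respectively.
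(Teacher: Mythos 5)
There is a genuine gap, and it sits exactly at the step you yourself flag as the main obstacle: the applicability of \eqref{DualQQConv}. Your plan is to first establish, by Lipschitz propagation from $\y=\x$ via \eqref{Lip}, that $(\y,\xbar(t),\H(\x,\xbar(t),\u_0))\in\gendom$ for \emph{every} $t\in[0,1]$ and \emph{every} $\y$ in the relevant domain, and only then apply \eqref{DualQQConv} once, globally. That membership claim is not provable and is false in general: the paper notes right after Definition~\ref{def: G-functions} that a supporting $\G$-affine function is only required to satisfy $(\x,\xbar_0,\z_0)\in\gendom$ at the contact point, not on all of $\outerdom$, and the supporting functions are not assumed \emph{nice}, so even the endpoints $t\in\curly{0,1}$ may leave $\gendom$ away from $\x$. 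The bound \eqref{Lip} cannot rescue this: it controls the gradient of $\y\mapsto\G(\y,\xbar(t),\z(t))$ only while its value lies in $(\Gfivelower,\Gfiveupper)$, so it propagates control only over distances of order $(\Gfiveupper-\u_0)/\lipbound$ (and similarly toward $\Gfivelower$); over a domain of larger diameter the value may reach the edge of the interval and then escape, and the only available ceiling is the inequality $\G(\y,\xbar(t),\z(t))\le V(\y)$ that you are in the middle of proving. This is precisely why the paper's proof is interleaved: it runs an open/closed connectedness argument on $\outerdom'=\curly{\y\mid \rho(\y)\le u(\y)}$ with $\rho(\y)=\sup_{t}\G(\y,\xbar(t),\z(t))$, gets the upper bound $\rho\le\Gfiveupper-\epsilon/2$ near points of $\outerdom'$ by continuity of $\rho$, applies \eqref{DualQQConv} only after reparametrizing to the \emph{maximal subinterval} $[t_0,t_1]$ on which $\G(\y,\xbar(\cdot),\z(\cdot))\ge\Gfivelower+\epsilon$, and handles the case $t_0>0$ separately using $u\ge\Gfivelower+\epsilon$. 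Your contradiction step with $t'=t$ is fine once applicability on the full segment is granted, but no argument in your sketch secures it, and even with the interleaving the lower bound forces the restriction to a subinterval, so a single application of \eqref{DualQQConv} over $[0,1]$ with endpoints $\xbar_0,\xbar_1$ cannot be carried out as written.

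There is also a second, fixable, error in the second assertion: you anchor the $\G$-affine functions at $V(\x)=\mountain(\x)+\lambda$ and work with $\pbarmap{\x}{\mountain(\x)+\lambda}{\cdot}$, but $\Gdual{\arbitrary}{\x,\mountain,\lambda}$ is defined by $\G(\y,\xbar,\H(\x,\xbar,\mountain(\x)))\le\mountain(\y)+\lambda$, i.e.\ anchored at $\mountain(\x)$ with ceiling $\mountain+\lambda$, and the proposition concerns $\coord{\Gdual{\arbitrary}{\x,\mountain,\lambda}}{\x,\mountain(\x)}$. Since changing the scalar parameter is not a vertical shift, these are different objects; in particular $\xbar_1\in\Gdual{\arbitrary}{\x,\mountain,\lambda}$ does \emph{not} give the inequality $\G(\y,\xbar_1,\H(\x,\xbar_1,\mountain(\x)+\lambda))\le\mountain(\y)+\lambda$ that your contradiction step uses (monotonicity in the scalar variables goes the wrong way: $\Hu<0$ and $\Gz<0$ make the re-anchored function \emph{larger}). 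The correct setup, as in the paper, is $\xbar(t)=\Gseg{\xbar_0}{\xbar_1}{\x,\mountain(\x)}$, $\z(t)=\H(\x,\xbar(t),\mountain(\x))$, and $\outerdom'=\curly{\y\in\arbitrary\mid\rho(\y)\le\mountain(\y)+\lambda}$, with connectedness of $\arbitrary$ and $\sup_{\arbitrary}\mountain+\lambda<\Gfiveupper$ entering through that open/closed argument rather than through a separate membership step.
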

\begin{proof}
Begin by fixing $\x\in\outerdom$ and $\xbar_0$, $\xbar_1\in\Gsubdiff{u}{\x}$. We let 
\begin{align*}
\xbar(t):=\Gseg{\xbar_0}{\xbar_1}{\x, u(\x)},\quad \z(t):=\H(\x, \xbar(t), u(\x)),
\end{align*}
 and define 
 \begin{align*}
 \rho(\y):=\sup_{t\in[0, 1]}\G(\y, \xbar(t), \z(t))
 \end{align*}
  for any $\y\in\outerdom$. Note since $u$ is \emph{nice}, $\xbar(t)$ is well-defined and contained in $\outertarget^{\cl}$ by \eqref{DualDomConv}. Also as a result, by \eqref{Lip} we see $\rho$ is continuous on $\outerdom$. Now consider the set
\begin{align*}
 \outerdom^\prime:=\curly{\y\in\outerdom\mid \rho(\y)\leq u(\y)}.
\end{align*}
Clearly $\x\in\outerdom^\prime$, and $\outerdom^\prime$ is relatively closed as a subset of $\outerdom$. We now aim to show that $\outerdom^\prime$ is relatively open, then we would obtain $\outerdom^\prime=\outerdom$ since $\outerdom$ is connected by \eqref{DomConv}. Since $u(\x)=\G(\x, \xbar(t), \z(t))$ for all $t\in[0, 1]$ by construction and $u$ is nice, \eqref{G5} implies that $(\x, \xbar(t), \z(t))\in\gendom$ for all $t\in[0, 1]$. As a result, we would have $\Gseg{\xbar_0}{\xbar_1}{\x, u(\x)}\subset\Gsubdiff{u}{\x}$, proving the proposition.

Note that since $\outerdom^{\cl}$ is compact and $u$ is nice, there exists some $\epsilon>0$ such that $\Gfivelower+\epsilon\leq u\leq \Gfiveupper-\epsilon$ on $\outerdom^{\cl}$. 
Suppose that $\y_0\in\outerdom^\prime$; thus $\rho(\y_0)\leq u(\y_0)\leq\Gfiveupper-\epsilon$. By continuity of $\rho$, there exists $\delta>0$ such that $\rho(\y)\leq \Gfiveupper-\epsilon/2$ for all $\gdist{\y}{\y_0}<\delta$. Fix such a $\y$, we claim that $\rho(\y)\leq u(\y)$ as well. If $\rho(\y)< \Gfivelower+\epsilon$, the claim is immediate. Otherwise let $[t_0, t_1]\subset [0, 1]$ be the maximal subinterval on which $\G(\y, \xbar(\cdot), \z(\cdot))\geq \Gfivelower+\epsilon$ that also contains a value $t_\y\in(t_0, t_1)$ where $\G(\y, \xbar(\cdot), \z(\cdot))$ is maximized; by possibly reversing the parametrization of $\xbar(t)$ let us assume $\G(\y, \xbar(t_0), \z(t_0))\geq \G(\y, \xbar(t_1), \z(t_1))$. Thus for any $t\in [t_0, t_1]$ we have $\G(\y, \xbar(t), \z(t))\in(\Gfivelower, \Gfiveupper)$, and in turn by \eqref{G5}, $(\y, \xbar(t), \z(t))\in \gendom$. As a result we can apply \eqref{DualQQConv} to the reparametrized $\G$-segment $\xbarhat(t):=\xbar((1-t)t_0+tt_1)$ to obtain
\begin{align*}
 \G(\y, \xbar(t_\y), \z(t_\y))&\leq \G(\y, \xbar(t_0), \z(t_0))+\frac{M(t_y-t_0)}{t_1-t_0}\brackets{\G(\y, \xbar(t_1), \z(t_1))-\G(\y, \xbar(t_0), \z(t_0))}_+\\
 &= \G(\y, \xbar(t_0), \z(t_0))\leq u(\y)
\end{align*}
as desired (the constant $M$ here actually depends on the specific value of $u(x)$, but it clearly does not affect the final inequality). 
This last inequality is due to the fact that $\G(\cdot, \xbar(0), \z(0))$ and $\G(\cdot, \xbar(1), \z(1))$ are supporting to $u$ from below (in the case $t_0=0$), while $\Gfivelower+\epsilon\leq u(\y)$ (in the case $t_0>0$).

To obtain the second claim, repeat nearly the same proof with $\xbar_0$, $\xbar_1\in \Gdual{\arbitrary}{\x, \mountain, \lambda}$, using $\xbar(t):=\Gseg{\xbar_0}{\xbar_1}{\x, \mountain(\x)}$, $\z(t):=\H(\x, \xbar(t), \mountain(\x))$, and $ \outerdom^\prime:=\curly{\y\in\arbitrary\mid \rho(\y)\leq \mountain(\y)+\lambda}$.
\end{proof}

\begin{cor}\label{cor: local to global}
 Suppose $u$ is a \emph{nice} $\G$-convex function as above. If $\mountain(\cdot)=\G(\cdot, \xbar, \z)$ is a $\G$-affine function such that $\mountain(\x_0)=u(\x_0)$ and $\mountain\leq u$ in some neighborhood of $\x_0\in\outerdom$, then $\xbar\in\Gsubdiff{u}{\x_0}$.
\end{cor}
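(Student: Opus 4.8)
The plan is to prove the stronger statement that $\mountain=\G(\cdot,\xbar,\z)\le u$ on all of $\outerdom$. This suffices: since $\mountain(\x_0)=u(\x_0)$, condition \eqref{Twist} forces $\z=\H(\x_0,\xbar,u(\x_0))$, so $\G(\cdot,\xbar,\z)$ is then supporting to $u$ at $\x_0$ and hence $\xbar\in\Gsubdiff{u}{\x_0}$ by Definition~\ref{DEF: G-subdifferentials}. To set up, use that $u$ is $\G$-convex to fix some $\xbar_1\in\Gsubdiff{u}{\x_0}$ and put $\z_1:=\H(\x_0,\xbar_1,u(\x_0))$, so that $\G(\cdot,\xbar_1,\z_1)\le u$ on $\outerdom$ with equality at $\x_0$. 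Since $u$ is \emph{nice}, $u(\x_0)\in(\Gfivelower,\Gfiveupper)$, so by \eqref{DualDomConv} the $\G$-segment $\xbar(t):=\Gseg{\xbar_1}{\xbar}{\x_0,u(\x_0)}$ is well-defined and stays in $\outertarget^{\cl}$. Writing $\z(t):=\H(\x_0,\xbar(t),u(\x_0))$ and $\rho(\y):=\sup_{t\in[0,1]}\G(\y,\xbar(t),\z(t))$, one obtains (by \eqref{Lip}, as in the proof of Proposition~\ref{prop: local to global}) a continuous function on $\outerdom$ with $\rho\ge\max\{\G(\cdot,\xbar_1,\z_1),\mountain\}$, $\rho(\x_0)=u(\x_0)$, $\G(\cdot,\xbar(0),\z(0))=\G(\cdot,\xbar_1,\z_1)$, and $\G(\cdot,\xbar(1),\z(1))=\mountain$.

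The one new ingredient relative to Proposition~\ref{prop: local to global} is that $\rho\le u$ on a neighborhood of $\x_0$. Choose a neighborhood $\mathcal{N}\ni\x_0$ on which $\mountain\le u$, shrunk (using compactness of $[0,1]$ together with $\G(\x_0,\xbar(t),\z(t))=u(\x_0)<\Gfiveupper$) so that also $\G(\y,\xbar(t),\z(t))<\Gfiveupper$ for all $\y\in\mathcal{N}$ and $t\in[0,1]$, and fix $\epsilon>0$ with $\Gfivelower+\epsilon\le u\le\Gfiveupper-\epsilon$ on $\outerdom^{\cl}$. For $\y\in\mathcal{N}$, if $\rho(\y)<\Gfivelower+\epsilon$ or if the maximum of $\G(\y,\xbar(\cdot),\z(\cdot))$ over $[0,1]$ is attained at an endpoint, then $\rho(\y)\le u(\y)$ follows at once (the endpoint cases give $\rho(\y)=\G(\y,\xbar_1,\z_1)\le u(\y)$, or $\rho(\y)=\mountain(\y)\le u(\y)$ since $\y\in\mathcal{N}$). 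Otherwise one argues exactly as in Proposition~\ref{prop: local to global}: take the maximal subinterval $[t_0,t_1]$ on which $\G(\y,\xbar(\cdot),\z(\cdot))\ge\Gfivelower+\epsilon$ and which contains the interior maximizer, oriented so that $\G(\y,\xbar(t_0),\z(t_0))\ge\G(\y,\xbar(t_1),\z(t_1))$; there $(\y,\xbar(t),\z(t))\in\gendom$ by Remark~\ref{rem:Unif_consequence}, so \eqref{DualQQConv} applied to the reparametrized segment gives $\rho(\y)\le\G(\y,\xbar(t_0),\z(t_0))\le u(\y)$, the last inequality holding in each of the cases $t_0=0$, $t_0=1$ (using $\y\in\mathcal{N}$), and $0<t_0<1$ (where the value equals $\Gfivelower+\epsilon$).

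Now set $\outerdom':=\{\y\in\outerdom:\rho(\y)\le u(\y)\}$; it contains the open set $\mathcal{N}$ and is relatively closed in $\outerdom$ since $\rho,u$ are continuous. If $\outerdom'$ is also relatively open, then $\outerdom'=\outerdom$ by path-connectedness of $\outerdom$ (\eqref{DomConv}), whence $\mountain\le\rho\le u$ on $\outerdom$ and the proof is complete.

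The relative openness of $\outerdom'$ is the main obstacle, and it is genuinely harder than in Proposition~\ref{prop: local to global}: there both endpoint functions of the focus-segment were globally below $u$, whereas here only $\G(\cdot,\xbar(0),\z(0))=\G(\cdot,\xbar_1,\z_1)$ is, while $\mountain=\G(\cdot,\xbar(1),\z(1))$ is a priori below $u$ only on $\mathcal{N}$. Rerunning the \eqref{DualQQConv} estimate of the second paragraph near a point $\y_0\in\outerdom'$ succeeds in every case except $t_0=1$, which forces $\rho(\y)=\mountain(\y)$ (since $\rho\ge\mountain$ everywhere); so it remains to rule out that $\mountain(\y)>u(\y)$ for $\y$ arbitrarily close to such a $\y_0$. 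The route I would take is to prove that the set $\{\y\in\outerdom:\mountain\le u\text{ on some neighborhood of }\y\}$ is relatively closed in $\outerdom$ (it is open and contains $\x_0$ by hypothesis, so by connectedness this already forces $\mountain\le u$ globally): a boundary point $\y_\infty$ of this set must satisfy $\mountain(\y_\infty)=u(\y_\infty)$ by continuity, and at $\y_\infty$ one picks $\ybar_\infty\in\Gsubdiff{u}{\y_\infty}$ and runs the maximum-principle argument once more along the focus-segment from $\ybar_\infty$ to $\xbar$ based at $\y_\infty$ with height $u(\y_\infty)$, exploiting that its two endpoint functions are the support function of $u$ at $\y_\infty$ with focus $\ybar_\infty$ (globally $\le u$) and $\mountain$ (which is $\le u$ on the neighborhoods supplied by the approximating points). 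Carrying this through rigorously — in particular verifying the domain hypotheses needed for \eqref{DualQQConv} and handling the case where the relevant points approach $\outerdom^{\bdry}$ — is the part I expect to be delicate; equivalently, one can try to show that $t^\ast:=\sup\{t\in[0,1]:\xbar(s)\in\Gsubdiff{u}{\x_0}\text{ for all }s\le t\}$ equals $1$, the hard step being to push $t^\ast$ strictly past any value below $1$.
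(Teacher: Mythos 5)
Your overall plan (prove the stronger statement that $\mountain\leq u$ on all of $\outerdom$ and then invoke \eqref{Twist}) is sound in principle, but the proof is not complete: the relative openness of $\outerdom'=\{\rho\leq u\}$ — equivalently, ruling out the case $t_0=1$ where $\rho(\y)=\mountain(\y)$ near a contact point, or pushing $t^\ast$ strictly past any value below $1$ — is the entire mathematical content of the corollary, and you leave it at the level of a sketch. Worse, the closure argument you propose is essentially circular: at a boundary point $\y_\infty$ of the set $\{\y:\mountain\leq u\text{ near }\y\}$ you only know $\mountain(\y_\infty)=u(\y_\infty)$ together with $\mountain\leq u$ on (possibly shrinking) neighborhoods of the approximating points $\y_k$, which need not cover any neighborhood of $\y_\infty$; so what you must establish at $\y_\infty$ is exactly the statement of the corollary, with a hypothesis no stronger than the one you started with. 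The \eqref{DualQQConv} maximum-principle step along the segment from $\ybar_\infty$ to $\xbar$ based at $\y_\infty$ requires the endpoint function $\mountain$ to lie below $u$ at the points $\y$ where the inequality is applied, and that is precisely what is unknown there. As written, the argument does not close, and you acknowledge as much.

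For comparison, the paper avoids globalizing $\mountain\leq u$ altogether. It works at the single point $\x_0$: since $\mountain$ locally supports $u$ at $\x_0$, one has $\Dx\mountain(\x_0)\in\subdiff{u}{\x_0}$ (the classical subdifferential), and the proof shows $\subdiff{u}{\x_0}=\coord{\Gsubdiff{u}{\x_0}}{\x_0,u(\x_0)}$. This is done by taking an exposed point $\pbar_0$ of $\subdiff{u}{\x_0}$, realizing it as a limit of gradients $\Dx u(\x_k)$ at nearby points of differentiability (using that a \emph{nice} $\G$-convex function is locally Lipschitz, Remark~\ref{rem: nice functions are nice}), passing to the limit in $\Xbar{\x_k}{u(\x_k)}{\Dx u(\x_k)}\in\Gsubdiff{u}{\x_k}$, and then invoking \cite[Theorem 18.7]{Roc70} together with the convexity of $\coord{\Gsubdiff{u}{\x_0}}{\x_0,u(\x_0)}$ from Proposition~\ref{prop: local to global}. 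The conclusion $\xbar=\X{\x_0}{u(\x_0)}{\Dx\mountain(\x_0)}\in\Gsubdiff{u}{\x_0}$ then follows from \eqref{Twist} and \eqref{G5}. If you want to salvage your route, you would need a genuinely new mechanism to propagate the local inequality — the paper's pointwise/exposed-point argument is one way to see why such a mechanism is not needed.
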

\begin{proof}
 Suppose $\mountain$ is such a $\G$-affine function, locally supporting from below at a point $\x_0\in\outerdom$. Recall the \emph{subdifferential of $u$ at $\x_0$}, 
 \begin{align*}
 \subdiff{u}{\x_0}=\curly{p\in\cotanspM{\x_0}\mid u(\exp_{\x_0}{v})\geq u(\x_0)+\inner{p}{v}+o(\gnorm[\x_0]{v}),\ v\to 0}
 \end{align*}
  is a closed convex subset of $\cotanspM{\x_0}$, compact since $u$ is nice; here $\exp_{\x_0}$ is the usual Riemannian exponential map. We pause to remark here that since $\G$ is not assumed to be $C^2$ in the $\x$ variable, $u$ may not be semi-convex; however since it is $\G$-convex, it is easy to see that $\subdiff{u}{\x}\neq\emptyset$ for any $\x\in\outerdom$. By our current assumptions, $\Dx \mountain(\x_0)\in\subdiff{u}{\x_0}$. Our goal will now be to show that $\subdiff{u}{\x_0}=\coord{\Gsubdiff{u}{\x_0}}{\x_0, u(\x_0)}$, which would conclude the corollary as $\xbar=\X{\x_0}{u(\x_0)}{\Dx \mountain(\x_0)}$ by \eqref{Twist} (recall, since $u$ is nice, by \eqref{G5} we have $(\x_0, \xbar, \z)\in\gendom$).
 
To this end, let $\pbar_0$ be an exposed point of $\subdiff{u}{\x_0}$, i.e. for some unit length $v_0\in\tanspM{\x_0}$,
\begin{align}\label{eqn: exposed point of subdifferential}
\inner{\pbar-\pbar_0}{v_0}<0,\qquad \forall\; \pbar\in\subdiff{u}{\x_0}\setminus\curly{\pbar_0}. 
\end{align}
We will show that $(\x_0, \pbar_0)$ is a limit in $\cotanspM{}$ of $(\x_k, \Dx u(\x_k))$ for some sequence $\x_k\to\x_0$. If this were the case, since $u$ is nice, by \eqref{Twist} and \eqref{G5} we can see that $\curly{\X{\x_k}{u(\x_k)}{\Dx u(\x_k)}}=\Gsubdiff{u}{\x_k}$ for each $k$. Then by continuity of $\G$ and $u$, we have that $\X{\x_0}{u(\x_0)}{\pbar_0}\in \Gsubdiff{u}{\x_0}$, thus we could conclude that any exposed point of $\subdiff{u}{\x_0}$ is contained in $\coord{\Gsubdiff{u}{\x_0}}{\x_0, u(\x_0)}$. Since by \cite[Theorem 18.7]{Roc70}, $\subdiff{u}{\x_0}$ is the convex hull of its exposed points, combining with Proposition~\ref{prop: local to global} we would obtain $\subdiff{u}{\x_0}\subset\coord{\Gsubdiff{u}{\x_0}}{\x_0, u(\x_0)}$. The reverse inclusion is immediate, hence this would complete the proof.

 Now by Remark~\ref{rem: nice functions are nice}, $u$ is differentiable almost everywhere, hence we can choose a sequence $v_k\in\tanspM{\x_0}$ such that $u$ is differentiable at $x_k:=\exp_{\x_0}{v_k}$ while $\tfrac{v_k}{\gnorm[\x_0]{v_k}}\to v_0$, and $(\x_k, \Dx u(\x_k))$ converges to $(\x_0, \pbar_\infty)$ in $\cotanspM{}$ for some $\pbar_\infty\in\cotanspM{\x_0}$. 
In particular,
\begin{align*}
 u(\x_k)&\geq u(\x_0)+\inner{\pbar_0}{v_k}+o(\gnorm[\x_0]{v_k}),\\
 u(\x_0)&\geq u(\x_k)+\inner{\Dx u(\x_k)}{\exp_{\x_k}^{-1}{\x_0}}+o(\gnorm[\x_k]{\exp_{\x_k}^{-1}{\x_0}}).
\end{align*}
Plugging the second inequality above into the first, canceling terms, and dividing both sides by $\gnorm[\x_0]{v_k}$, we obtain
\begin{align*}
 0\geq \inner{\pbar_0}{\frac{v_k}{\gnorm[\x_0]{v_k}}}+\inner{\Dx u(\x_k)}{\frac{\exp_{\x_k}^{-1}{\x_0}}{\gnorm[\x_0]{v_k}}}+\gnorm[\x_0]{v_k}^{-1}(o(\gnorm[\x_0]{v_k})+o(\gnorm[\x_k]{\exp_{\x_k}^{-1}{\x_0}})).
\end{align*}
By using geodesic normal coordinates around $\x_0$, we find taking $k\to\infty$ that this leads to
\begin{align*}
 0\geq\inner{\pbar_0-\pbar_\infty}{v_0}.
\end{align*}
However, by continuity, $\Xbar{\x_k}{u(\x_k)}{\Dx u(\x_k)}\to \xbar_\infty$ for some $\xbar_\infty\in\Gsubdiff{u}{\x_0}$. Since $u$ is nice, by \eqref{G5} we must have $(\x_0, \xbar_\infty, \H(\x_0, \xbar_\infty, u(\x_0))\in\gendom$, thus we see that 
\begin{align*}
\pbar_\infty=\Dx \G(\x_0, \xbar_\infty, \H(\x_0, \xbar_\infty, u(\x_0)))\in\subdiff{u}{\x_0},
\end{align*}
and by \eqref{eqn: exposed point of subdifferential} we must have $\pbar_0=\pbar_\infty$ as desired.
\end{proof}

\begin{DEF}\label{DEF: G-cone}
 Suppose $u$ is a \emph{nice} $\G$-convex function, $\mountain$ is $\G$-affine, and let $\sublevelset:=\{ \x\in\outerdom\mid u(\x)\leq \mountain(\x)\}$ with $\x_0\in\sublevelset^{\interior}$. Then the \emph{$\G$-cone with base $\sublevelset$, vertex $\x_0$, and height $\mountain(x_0)-u(x_0)$} is the function defined by
\begin{align*}
 \Gcone{\x_0, \sublevelset}(\x):=\sup\curly{\G(\x, \xbar, \H(\x_0, \xbar, u(\x_0)))\mid \xbar\in\outertarget,\ \G(\y, \xbar, \H(\x_0, \xbar, u(\x_0)))\leq \mountain(\y),\ \forall\; \y\in\sublevelset}.
\end{align*}
\end{DEF}
\begin{rem}\label{rem: Gcone}
 Since $u$ is $\G$-convex, clearly $\Gcone{\x_0, \sublevelset}(\x_0)=u(\x_0)$. Now $\Gcone{\x_0, \sublevelset}$ may not be $\G$-convex on $\outerdom$ (given $\x\in\outerdom$, it is not clear that there exists an $\xbar\in\outertarget$ for which $(\x, \xbar, \H(\x, \xbar, \Gcone{\x_0, \sublevelset}(\x)))\in\gendom$). However, we can see that since $u$ is nice, by \eqref{G5} we have at the vertex $\x_0$,
\begin{align}
 \Gsubdiff{\Gcone{\x_0, \sublevelset}}{\x_0}=\curly{\xbar\in\outertarget\mid\G(\y, \xbar, \H(\x_0, \xbar, u(\x_0)))\leq \mountain(\y),\ \forall\;\y\in\sublevelset}\neq\emptyset.\label{eqn: Gcone subdiff at vertex}
\end{align}
Also note, as long as $u$ is \emph{nice} the proof of Proposition~\ref{prop: local to global} yields that $\coord{\Gsubdiff{\Gcone{\x_0, \sublevelset}}{\x_0}}{\x_0, u(\x_0)}$ is convex.
\end{rem}
\begin{lem}\label{lem: G-dual inside G-subdifferential image}
  Suppose $u$, $\mountain$, $\x_0\in\sublevelset^{\interior}$ are as in Definition~\ref{DEF: G-cone}, and suppose $\sublevelset\subset\outerdom^{\interior}$. Then
  \begin{align*}
    \Gsubdiff{\Gcone{\x_0, \sublevelset}}{\x_0}\subset\Gsubdiff{u}{\sublevelset}.
  \end{align*}
\end{lem}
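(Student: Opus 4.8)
\medskip
\noindent\textbf{Proof proposal.}
The plan is as follows. Fix an arbitrary $\xbar\in\Gsubdiff{\Gcone{\x_0,\sublevelset}}{\x_0}$. I will slide a $\G$-affine function of the form $\G(\cdot,\xbar,\z)$ ``downward'' in the $\z$-variable until it touches $u$ from below at some point $\x_1$ lying in the \emph{interior} of $\sublevelset$, with the $\G$-affine function staying below $u$ throughout $\sublevelset^{\interior}$; then I will promote this \emph{local} support at $\x_1$ to the genuine membership $\xbar\in\Gsubdiff{u}{\x_1}$ by appealing to Corollary~\ref{cor: local to global}. Since $\x_1\in\sublevelset$, this gives $\xbar\in\Gsubdiff{u}{\sublevelset}$, as desired.

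First I would record what the hypothesis on $\xbar$ gives. By \eqref{eqn: Gcone subdiff at vertex} in Remark~\ref{rem: Gcone}, $\xbar\in\outertarget$ and, with $\z_0:=\H(\x_0,\xbar,u(\x_0))$, one has $\G(\x_0,\xbar,\z_0)=u(\x_0)$ and $\G(\y,\xbar,\z_0)\leq\mountain(\y)$ for all $\y\in\sublevelset$. Since $u$ is \emph{nice}, it is continuous on $\outerdom^{\cl}$ (Remark~\ref{rem: nice functions are nice}), so $\sublevelset=\curly{\x\in\outerdom\mid u(\x)\leq\mountain(\x)}$ is relatively closed in $\outerdom$, $u=\mountain$ on $\sublevelset^{\bdry}=\sublevelset^{\cl}\setminus\sublevelset^{\interior}$, and (using $\sublevelset^{\cl}\subset\outerdom$, the content of $\sublevelset\subset\outerdom^{\interior}$, together with boundedness of $\outerdom$) the set $\sublevelset^{\cl}$ is a compact subset of $\outerdom$; by continuity the inequality $\G(\cdot,\xbar,\z_0)\leq\mountain$ extends to $\sublevelset^{\cl}$.

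Next comes the sliding step. Because $u$ is nice and $\xbar\in\outertarget^{\cl}$, condition \eqref{G5} ensures $\H(\y,\xbar,u(\y))$ is defined for every $\y\in\sublevelset^{\cl}$ and depends continuously on $\y$ there. Set
\[\z_1:=\max_{\y\in\sublevelset^{\cl}}\H(\y,\xbar,u(\y)),\]
attained at some $\x_1\in\sublevelset^{\cl}$, so that $\G(\x_1,\xbar,\z_1)=u(\x_1)$. Since $\Gz<0$, for every $\y\in\sublevelset^{\cl}$ the inequality $\z_1\geq\H(\y,\xbar,u(\y))$ forces $\G(\y,\xbar,\z_1)\leq\G(\y,\xbar,\H(\y,\xbar,u(\y)))=u(\y)$, i.e. $\G(\cdot,\xbar,\z_1)\leq u$ on $\sublevelset^{\cl}$ with equality at $\x_1$. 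To see $\x_1$ may be taken in $\sublevelset^{\interior}$: for $\y\in\sublevelset^{\bdry}$ one has $u(\y)=\mountain(\y)\geq\G(\y,\xbar,\z_0)$, hence (again using $\Gz<0$) $\H(\y,\xbar,u(\y))\leq\z_0$, while $\H(\x_0,\xbar,u(\x_0))=\z_0$ and $\x_0\in\sublevelset^{\interior}$ by hypothesis. Thus $\z_1\geq\z_0\geq\sup_{\sublevelset^{\bdry}}\H(\cdot,\xbar,u(\cdot))$: if $\z_1>\z_0$ the maximum cannot be attained on $\sublevelset^{\bdry}$, so $\x_1\in\sublevelset^{\interior}$; if $\z_1=\z_0$ then $\x_0$ already attains it, and I take $\x_1=\x_0\in\sublevelset^{\interior}$. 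Either way, $\x_1\in\sublevelset^{\interior}\subset\outerdom$, $\G(\cdot,\xbar,\z_1)$ is a $\G$-affine function with $\G(\x_1,\xbar,\z_1)=u(\x_1)$, and $\G(\cdot,\xbar,\z_1)\leq u$ on the open neighborhood $\sublevelset^{\interior}$ of $\x_1$. Corollary~\ref{cor: local to global} then yields $\xbar\in\Gsubdiff{u}{\x_1}\subset\Gsubdiff{u}{\sublevelset}$ (note $(\x_1,\xbar,\z_1)\in\gendom$ automatically, since $\G(\x_1,\xbar,\z_1)=u(\x_1)\in(\Gfivelower,\Gfiveupper)$ by Remark~\ref{rem:Unif_consequence}), completing the proof.

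The one step that needs real care is the interior-selection argument in the last paragraph --- i.e. ruling out that the maximizer $\x_1$ of $\H(\cdot,\xbar,u(\cdot))$ lies on $\sublevelset^{\bdry}$ --- since it is exactly there that the hypothesis $\x_0\in\sublevelset^{\interior}$ and the defining inequality $\G(\cdot,\xbar,\z_0)\leq\mountain$ get used; the other main ingredient is the realization that global support on $\outerdom$ cannot be extracted directly and must be routed through Corollary~\ref{cor: local to global}. Everything else is monotonicity in $\z$ together with standard uses of \eqref{G5} and continuity of $u$.
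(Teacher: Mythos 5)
Your proposal is correct and follows essentially the same route as the paper's proof: maximize $\H(\cdot,\xbar,u(\cdot))$ over the (closure of the) section, use $\Gz<0$ to get support of $u$ from below, rule out a purely boundary maximizer via $u=\mountain$ and $\G(\cdot,\xbar,\z_0)\leq\mountain$ on $\sublevelset^{\bdry}$ (the paper phrases this as ``one may take the maximizer to be $\x_0$''), and then upgrade local support at the interior touching point to $\xbar\in\Gsubdiff{u}{\sublevelset}$ via Corollary~\ref{cor: local to global}. No further comments are needed.
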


\begin{proof}
  Fix $\xbar\in \Gsubdiff{\Gcone{\x_0, \sublevelset}}{\x_0}$ and define 
  \begin{align*}
    \zmax&\defin \max_{\x\in\sublevelset}\H(\x, \xbar, u(\x)),
  \end{align*}
then $\zmax=\H(\xmax, \xbar, u(\xmax))$ for some $\xmax \in \sublevelset^{\cl}$; since $u$ is nice, by \eqref{G5} it follows that $(\xmax, \xbar, \zmax)\in\gendom$.
  Since $\zmax \geq H(\x,\xbar,u(\x))$ for all $\x \in \sublevelset$ and $\G_z<0$, it follows that
  \begin{align*}
    \G(\x, \xbar, \zmax) \leq \G(x, \xbar, \H(\x, \xbar, u(\x)))=u(\x),\;\;\forall\;\x\in \sublevelset,
  \end{align*} 
  while
  \begin{align*}
    \G(\xmax, \xbar, \zmax) =u(\xmax).
  \end{align*}
  Now if $\xmax \in \sublevelset^{\bdry}$, we can calculate (recalling that $\xbar\in \Gsubdiff{\Gcone{\x_0, \sublevelset}}{\x_0}$)
\begin{align*}
\G(\xmax, \xbar, \H(\x_0, \xbar, u(\x_0)))&\leq \mountain(\xmax)\\
&=u(\xmax)=\G(\xmax, \xbar, \zmax)
\end{align*}
by the definition of $\zmax$. Then applying $\H(\xmax, \xbar, \cdot)$ to both sides, we have
\begin{align*}
 \H(\x_0, \xbar, u(\x_0))\geq \zmax,
\end{align*}
in other words we may actually choose $\xmax=\x_0$. Thus in any case, we may assume $\xmax \in \sublevelset^{\interior}$; then $\G(\x, \xbar, \zmax)$ locally supports $u$ from below in $\sublevelset$. In particular, $\xbar\in\Gsubdiff{u}{\sublevelset}$ by Corollary~\ref{cor: local to global}.
  \end{proof}

\begin{prop}\label{prop: sections are convex}
 Suppose $\mountain(\cdot):=\G(\cdot, \xbar, \z)$ is a \emph{nice} $\G$-affine function,
 and let $\sublevelset:=\curly{\x\in\outerdom^{\cl}\mid u(\x)\leq \mountain(\x)}$. Then $\coord{\sublevelset}{\xbar, \z}$ is convex.
\end{prop}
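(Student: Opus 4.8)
The plan is to reduce the statement to a claim about a single $\G$-segment and then run a connectedness argument on the parameter interval $[0,1]$, in the spirit of the proof of Proposition~\ref{prop: local to global}. First I would reformulate: since $\pmap{\xbar}{\z}{\cdot}$ is injective on $\curly{\x\mid(\x,\xbar,\z)\in\gendom}$ by \eqref{DualTwist}, and since the straight segment between two points $\pmap{\xbar}{\z}{\x_0},\pmap{\xbar}{\z}{\x_1}$ of $\coord{\sublevelset}{\xbar,\z}$ is, by Definition~\ref{DEF: G-segment}, exactly the image of the $\G$-segment $\x(s):=\Gseg{\x_0}{\x_1}{\xbar,\z}$, the set $\coord{\sublevelset}{\xbar,\z}$ is convex if and only if for all $\x_0,\x_1\in\sublevelset$ one has $u(\x(s))\le\mountain(\x(s))$ for every $s\in[0,1]$. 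Here this $\G$-segment is well defined and stays in $\outerdom^{\cl}$: because $\mountain$ is \emph{nice} we have $\G(\x_i,\xbar,\z)=\mountain(\x_i)\in(\Gfivelower,\Gfiveupper)$, so $(\x_i,\xbar,\z)\in\gendom$ by Remark~\ref{rem:Unif_consequence} and \eqref{DomConv} applies. Since $u$ is \emph{nice}, hence continuous on $\outerdom^{\cl}$ by Remark~\ref{rem: nice functions are nice}, the set $S':=\curly{s\in[0,1]\mid u(\x(s))\le\mountain(\x(s))}$ is closed and contains $0$ and $1$; the goal is $S'=[0,1]$.

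Arguing by contradiction, if $S'\neq[0,1]$ I would pick a connected component $(s_0,s_1)$ of the (relatively open, nonempty) complement; then $s_0,s_1\in S'$ and $f(s):=u(\x(s))-\mountain(\x(s))>0$ on $(s_0,s_1)$, so by continuity $f$ attains its maximum over $[s_0,s_1]$ at some $\hat s\in(s_0,s_1)$ with $f(\hat s)>0$. I would then choose $\hat\xbar\in\Gsubdiff{u}{\x(\hat s)}$ and set $\hat\z:=\H(\x(\hat s),\hat\xbar,u(\x(\hat s)))$, so that $\hat m:=\G(\cdot,\hat\xbar,\hat\z)$ is $\G$-affine, satisfies $\hat m(\x(\hat s))=u(\x(\hat s))$, and supports $u$ from below on $\outerdom^{\cl}$ (at a boundary point $\x(\hat s)$ this last property comes from Definition~\ref{DEF: G-subdifferentials} by a limiting argument, using continuity of $\G,\H,u$ and \eqref{G5}).

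The heart of the argument is to apply \eqref{QQConv} to the sub-$\G$-segment $\Gseg{\x(s_0)}{\x(s_1)}{\xbar,\z}$ (a reparametrization of $\x(\cdot)|_{[s_0,s_1]}$; its $\mountain$-values lie in the compact subinterval $[\min_{\outerdom^{\cl}}\mountain,\max_{\outerdom^{\cl}}\mountain]\subset(\Gfivelower,\Gfiveupper)$, so \eqref{QQConv} furnishes a finite $M>0$), with base focus $(\xbar,\z)$, second focus $\hat\xbar$, and both parameters $s,s'$ taken equal to the value $\hat\sigma\in(0,1)$ with $\tilde\x(\hat\sigma)=\x(\hat s)$. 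With these choices the left-hand side of \eqref{QQConv} is $\nu_{\mathrm{left}}(\x(\hat s))-\mountain(\x(\hat s))$, where $\nu_{\mathrm{left}}$ is the $\hat\xbar$-mountain agreeing with $\mountain$ at $\x(s_0)$; since $\hat m(\x(s_0))\le u(\x(s_0))\le\mountain(\x(s_0))=\nu_{\mathrm{left}}(\x(s_0))$ and $\G_\z<0$, the two $\hat\xbar$-mountains are ordered $\hat m\le\nu_{\mathrm{left}}$ everywhere, so the left-hand side is $\ge u(\x(\hat s))-\mountain(\x(\hat s))=f(\hat s)>0$. On the other hand the bracket on the right-hand side is $\nu_{\hat s}(\x(s_1))-\mountain(\x(s_1))$, where $\nu_{\hat s}$ is the $\hat\xbar$-mountain agreeing with $\mountain$ at $\x(\hat s)$; since $\nu_{\hat s}(\x(\hat s))=\mountain(\x(\hat s))<u(\x(\hat s))=\hat m(\x(\hat s))$, again $\G_\z<0$ gives $\nu_{\hat s}<\hat m$ everywhere, hence $\nu_{\hat s}(\x(s_1))<\hat m(\x(s_1))\le u(\x(s_1))\le\mountain(\x(s_1))$, so the right-hand side equals $\tfrac{M\hat\sigma}{1-\hat\sigma}$ times a negative number and is negative. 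This contradicts \eqref{QQConv}, so $S'=[0,1]$ and the proposition follows.

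I expect the main difficulty to be purely organizational: several $\G$-affine functions with focus $\hat\xbar$ enter simultaneously ($\hat m$, $\nu_{\mathrm{left}}=\G(\cdot,\hat\xbar,\H(\x(s_0),\hat\xbar,\mountain(\x(s_0))))$, and $\nu_{\hat s}=\G(\cdot,\hat\xbar,\H(\x(\hat s),\hat\xbar,\mountain(\x(\hat s))))$), and one must consistently invoke the monotonicity $\G_\z<0$ to upgrade a pointwise comparison at one point to a comparison everywhere, while matching the somewhat intricate notation in \eqref{QQConv}. The only other points needing care are the boundary case $\x(\hat s)\in\outerdom^{\bdry}$ noted above, and the routine verification that the hypotheses of \eqref{QQConv} (all intermediate heights well defined, values in a compact subinterval of $(\Gfivelower,\Gfiveupper)$, $\G$-segments staying in $\outerdom^{\cl}$) hold, which all follow from $\mountain$ being \emph{nice} together with \eqref{G5} and \eqref{DomConv}.
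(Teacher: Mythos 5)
Your proof is correct, but it takes a genuinely different route from the paper's. The paper never works with $u$ directly: it fixes an arbitrary $\G$-affine function $\mountainhat$, shows (after possibly swapping $\x_0$ and $\x_1$ so that the bracket appearing in \eqref{QQConv} is nonpositive) that $\coord{\curly{\mountainhat\leq\mountain}}{\xbar,\z}$ is convex, and then writes $\coord{\sublevelset}{\xbar,\z}$ as the intersection of these convex sets over the family of supporting functions of $u$. You instead argue by contradiction along a single $\G$-segment: you pick a point $\x(\hat s)$ in a component of $\curly{u>\mountain}$, insert a supporting mountain $\hat m$ of $u$ at that point into \eqref{QQConv} with $s=s'=\hat\sigma$, and use $\G_{\z}<0$ to force the left-hand side to be at least $u(\x(\hat s))-\mountain(\x(\hat s))>0$ while the bracket on the right is nonpositive; this is exactly the mechanism of the paper's proof of Proposition~\ref{prop: local to global} (which uses \eqref{DualQQConv}), transplanted to the primal side. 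One small point: as displayed, \eqref{QQConv} carries no positive part (unlike \eqref{DualQQConv}), but the version actually produced in Theorem~\ref{thm: G3w implies QQconv} and used elsewhere in the paper does; your contradiction survives either reading, since with the positive part the condition still forces your left-hand side to be $\leq 0$, whereas you have shown it is strictly positive.

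The more substantive difference is what each approach assumes about $u$. Your argument uses continuity of $u$ on $\outerdom^{\cl}$ and the existence of a globally supporting $\G$-affine function at $\x(\hat s)$, which may lie on $\outerdom^{\bdry}$; these are available when $u$ is \emph{nice}, as you explicitly assume, but the statement of the proposition imposes no hypothesis on $u$, and the paper later applies it with ``$u$'' replaced by a single $\G$-affine function that is not known to be \emph{nice} (in the proof of Lemma~\ref{lem: G-distance bound}). The paper's intersection argument buys exactly this generality, and sidesteps boundary subdifferentials entirely; your argument buys a self-contained, one-pass proof parallel to Proposition~\ref{prop: local to global}. Note that your scheme degenerates gracefully: running it with $u$ equal to a single $\G$-affine function (which is its own supporting function everywhere, so the continuity and boundary issues disappear) and then intersecting over the supporting functions of a general $\G$-convex $u$ recovers the full statement, and essentially reproduces the paper's proof.
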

\begin{proof}
We remark that since $\mountain$ is nice, \eqref{G5} implies $\coord{\outerdom^{\cl}}{\xbar, \z}$ is well-defined; in turn \eqref{DomConv} implies it is convex.

 Fix any arbitrary $\G$-affine function $\mountainhat(\cdot)=\G(\cdot, \xbarhat, \zhat)$, and let $\sublevelsethat:=\curly{\x\in\outerdom^{\cl}\mid\mountainhat(\x)\leq \mountain(\x)}$. Consider $\x_0$, $\x_1\in\sublevelsethat$, and let $\x(s):=\Gseg{\x_0}{\x_1}{\xbar, \z}$; again since $\mountain$ is nice, \eqref{G5} and \eqref{DomConv} implies $x(s)$ is well-defined and remains in $\outerdom^{\cl}$ for all $s\in[0, 1]$.
 
  Now suppose
  \begin{align*}
 \G(\x_1, \xbarhat, \H(\x_0, \xbarhat, \mountain(\x_0)))>  \G(\x_1, \xbar, \H(\x_0, \xbar, \mountain(\x_0)))=\mountain(\x_1).
 \end{align*} 
 Clearly the expression on the left is in the domain of $\H(\x_1, \xbarhat, \cdot)$, while $\mountain(\x_1)$ is as well since $\mountain$ is nice. Thus we can take $\G(\x_0, \xbarhat, \H(\x_1, \xbarhat, \cdot))$ of both sides (which preserves monotonicity), to obtain 
  \begin{align*}
\mountain(\x_0)=\G(\x_0, \xbarhat, \H(\x_0, \xbarhat, \mountain(\x_0)))>  \G(\x_0, \xbarhat,\H(\x_1, \xbarhat, \mountain(\x_1))),
 \end{align*}
 thus by possibly relabelling $\x_0$ and $\x_1$, we can assume that 
 \begin{align*}
 \G(\x_1, \xbarhat, \H(\x_0, \xbarhat, \mountain(\x_0)))\leq \mountain(\x_1).
 \end{align*}
Now since $\mountain$ is nice, $\Gfivelower<\inf_\outerdom\mountain\leq\sup_\outerdom\mountain<\Gfiveupper$. Thus we may apply \eqref{QQConv} along $\x(s)$ with $[\QQConvlower, \QQConvupper]=[\inf_\outerdom\mountain, \sup_\outerdom\mountain]$ (also with some associated constant $M\geq 1$).
Doing so we find that
\begin{align*}
 \mountain(x(s))&=\mountain(x(s))+Ms\brackets{\G(\x_1, \xbarhat, \H(\x_0, \xbarhat, \mountain(\x_0)))-\mountain(\x_1)}_+\\
 &\geq \G(x(s), \xbarhat, \H(\x_0, \xbarhat, \mountain(\x_0)))\\
 &\geq \G(x(s), \xbarhat, \H(\x_0, \xbarhat, \mountainhat(\x_0)))=\mountainhat(x(s)).
\end{align*}
Here the inequality in the last line is due to the fact that $\mountainhat(\x_0)\leq \mountain(\x_0)$, combined with monotonicity properties of $\H$ and $\G$ in the scalar parameters. As a result, we see that $\coord{\sublevelsethat}{\xbar, \z}$ is convex.

 Finally note that, $u=\sup \mountainhat$ for some collection of $\G$-affine functions $\mountainhat$. Thus we can see that $\coord{\sublevelset}{\xbar, \z}=\bigcap\coord{\sublevelsethat}{\xbar, \z}$, which by the first part of the proof is an intersection of convex sets and must be convex itself.
\end{proof}

\subsection{$\G$ and the Riemannian metric} From this point through the end of Section \ref{section: engulfing}, we assume that $\G$ satisfies \eqref{Twist}, \eqref{DualTwist}, \eqref{Nondeg}, and \eqref{QQConv}, \eqref{DualQQConv}, and let $u$ be a \emph{very nice} $\G$-convex function with associated \emph{very nice} interval $\Niceinterval\subset(\Gfivelower, \Gfiveupper)$.

\begin{rem}\label{rem: universal constants}
By an abuse of notation, we will often refer to a \emph{very nice constant}, by which we mean a constant that depends on $\Niceinterval$, the domains $\outerdom$, $\outertarget$, the dimension $n$, and the constant $\lipbound$ in \eqref{Lip} through the following quantities: the modulus of continuity of $E$ and $E^{-1}$, $\sup{\norm{\det{\nondegmatrixentries}}^{\pm 1}}$, $\sup{\norm{\det{\dualnondegmatrixentries}}^{\pm 1}}$, $\sup\Norm{\nondegmatrixentries}^{\pm 1}$, $\sup\Norm{\dualnondegmatrixentries}^{\pm 1}$ ($\|\cdot\|$ being the Hilbert-Schmidt norm of the matrix), $\inf\norm{\Gz}$, $\sup\norm{\Gz}$, $\inf\norm{\Hu}$, $\sup\norm{\Hu}$, and $M\geq 1$ corresponding to $\Niceinterval$ from \eqref{QQConv} and \eqref{DualQQConv}. All suprema and infima above are taken over $\x\in\outerdom$, $\xbar\in\outertarget$, $\u\in\Niceinterval$, and with the understanding that $\z=\H(\x, \xbar, \u)$; the above quantities can be assumed finite and nonzero by \eqref{Nondeg} and \eqref{G5}. The rationale for this terminology is that in various situations, $\outerdom$, $\outertarget$, $n$, and the various quantities involving $\G$ and $\H$ are fixed, with the only real dependence on the constant coming from the range of the scalar parameter $\u$ which will be constrained in the interval $\Niceinterval$; since we generally fix one \emph{very nice} function, the interval $\Niceinterval$ will be fixed as well.
\end{rem}

\begin{lem}\label{lem: comparability}
If $(\xbar, \z)\in\outertarget^{\cl}\times \real$ satisfies the condition 
\begin{align}\label{eqn: very nice focus}
\G(\cdot, \xbar, \z)\in\Niceinterval\text{ on all of }\outerdom^{\cl},
\end{align}
then $\pmap{\xbar}{\z}{\cdot}$ is a bi-Lipschitz mapping from $\outerdom^{\cl}$ to $\coord{\outerdom^{\cl}}{\xbar, \z}$. Moreover the Lipschitz constants of both this map and its inverse are bounded by some \emph{very nice} constant.

Similarly, if $(\x, \u)\in \outertarget^{\cl}\times\Niceinterval$, then $\pbarmap{\x}{\u}{\cdot}$ is a bi-Lipschitz mapping from $\outertarget^{\cl}$ to $\coord{\outertarget^{\cl}}{\x, \u}$, and the Lipschitz constants of $\pbarmap{\x}{\u}{\cdot}$ and its inverse are bounded by a \emph{very nice} constant.
\end{lem}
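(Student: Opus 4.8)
The plan is to exhibit $\pmap{\xbar}{\z}{\cdot}$ as a $C^1$-diffeomorphism onto a \emph{convex} image whose differential, and that of its inverse, have operator norms bounded by \emph{very nice} constants, and then to upgrade these pointwise bounds to genuine Lipschitz bounds. First I would note that by \eqref{eqn: very nice focus} the function $\mountain:=\G(\cdot,\xbar,\z)$ is a \emph{nice} $\G$-affine function, so $\G(\x,\xbar,\z)\in\Niceinterval\subset(\Gfivelower,\Gfiveupper)$ for every $\x\in\outerdom^{\cl}$; by Remark~\ref{rem:Unif_consequence} this forces $(\x,\xbar,\z)\in\gendom$ for all such $\x$. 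Hence $\pmap{\xbar}{\z}{\cdot}$ is defined on all of $\outerdom^{\cl}$, it is injective by \eqref{DualTwist}, and by Remark~\ref{rem: local coordinates} it is a $C^1$-diffeomorphism onto its image $\coord{\outerdom^{\cl}}{\xbar,\z}$, which is convex (exactly as in the opening lines of the proof of Proposition~\ref{prop: sections are convex}, using \eqref{DomConv}). Throughout $\outerdom^{\cl}$ the height parameter equals $\H(\x,\xbar,\mountain(\x))$ with $\mountain(\x)\in\Niceinterval$, so every quantity attached to $\G$ and $\H$ below is evaluated on precisely the compact set over which Remark~\ref{rem: universal constants} provides control.

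Next I would bound the differentials. By Remark~\ref{rem: local coordinates}, $D\pmap{\xbar}{\z}{\x}=-\dualnondegmatrix{\x}{\xbar}{\z}/\Gz(\x,\xbar,\z)$, so $\Norm{D\pmap{\xbar}{\z}{\x}}\le\Norm{\dualnondegmatrixentries}\cdot\norm{\Gz}^{-1}$, a \emph{very nice} constant $C_1$ by Remark~\ref{rem: universal constants}. For the inverse $\X{\xbar}{\z}{\cdot}$ (which is the inverse function of $\pmap{\xbar}{\z}{\cdot}$ by Definition~\ref{DEF: exponential mappings}), differentiating the identity $-\frac{\Dbar\G}{\Gz}(\X{\xbar}{\z}{\p},\xbar,\z)=\p$ — equivalently, reading off \eqref{eqn: xdot representation} — gives $D\X{\xbar}{\z}{\p}=-\Gz\,\dualnondegmatrixinv{\x}{\xbar}{\z}$ with $\x=\X{\xbar}{\z}{\p}$; since $\Norm{\dualnondegmatrixinv{}{}{}}$ is bounded in terms of $\Norm{\dualnondegmatrixentries}$ and $\norm{\det\dualnondegmatrixentries}^{-1}$ via the cofactor formula, this operator norm is at most a \emph{very nice} constant $C_2$.

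To pass to Lipschitz bounds, for the inverse map I would fix $\p_0,\p_1\in\coord{\outerdom^{\cl}}{\xbar,\z}$, set $\x_i=\X{\xbar}{\z}{\p_i}$, and use convexity of the image: the $\G$-segment $\x(s):=\Gseg{\x_0}{\x_1}{\xbar,\z}$ is well-defined, remains in $\outerdom^{\cl}$ by \eqref{DomConv}, and satisfies $\pmap{\xbar}{\z}{\x(s)}=(1-s)\p_0+s\p_1$; by \eqref{eqn: xdot representation} one has $\gnorm[\x(s)]{\xdot(s)}\le C_2\norm{\p_1-\p_0}$, and integrating in $s$ bounds the length of $\x(\cdot)$, so $\gdist{\x_0}{\x_1}\le C_2\norm{\p_0-\p_1}$. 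For the forward map, given $\x_0,\x_1\in\outerdom^{\cl}$ and any piecewise $C^1$ curve $\gamma\subset\outerdom^{\cl}$ joining them, the composition $t\mapsto\pmap{\xbar}{\z}{\gamma(t)}$ is $C^1$ (since $(\gamma(t),\xbar,\z)\in\gendom$ along $\gamma$), so $\norm{\pmap{\xbar}{\z}{\x_0}-\pmap{\xbar}{\z}{\x_1}}\le C_1 L_{\g{}}(\gamma)$; taking the infimum over such $\gamma$ gives the bound with the intrinsic distance $d_{\outerdom}$ of Remark~\ref{rem: existence of very nice solutions}, hence with $\gdist{\cdot}{\cdot}$ when the two are comparable on $\outerdom^{\cl}$. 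The dual statement for $\pbarmap{\x}{\u}{\cdot}$ follows in the same way, now using \eqref{DualDomConv} (which directly makes $\coord{\outertarget^{\cl}}{\x,\u}$ convex for $\u\in\Niceinterval$), the formula $D\pbarmap{\x}{\u}{\xbar}=\nondegmatrix{\x}{\xbar}{\H(\x,\xbar,\u)}$ from Remark~\ref{rem: local coordinates}, dual $\G$-segments together with \eqref{eqn: xbardot representation}, and the \emph{very nice} bounds on $\Norm{\nondegmatrixentries}^{\pm1}$ and $\norm{\det\nondegmatrixentries}^{-1}$ from Remark~\ref{rem: universal constants}.

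I expect the only genuine subtlety to be that final step: promoting pointwise differential bounds to Lipschitz bounds on $\outerdom^{\cl}$, which is only assumed path-connected rather than convex. Convexity of the image is exactly what rescues the inverse direction — it lets one travel along a $\G$-segment that stays inside $\outerdom^{\cl}$ and invoke Proposition~\ref{prop: motivation for nondegeneracy} — whereas the forward direction is immediate only for the intrinsic length distance $d_{\outerdom}$. Everything else — bounding $\Norm{\dualnondegmatrixinv{}{}{}}$ from determinant and norm bounds, and checking that each constant produced is \emph{very nice} in the sense of Remark~\ref{rem: universal constants} — is routine bookkeeping.
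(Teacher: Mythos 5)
Your bounds on the differentials and your treatment of the inverse direction match the paper's argument: traveling along the $\G$-segment $\Gseg{\x_1}{\x_2}{\xbar,\z}$, which is well-defined and stays in $\outerdom^{\cl}$ by \eqref{DomConv}, and integrating \eqref{eqn: xdot representation} is exactly how the paper obtains $\gdist{\x_1}{\x_2}\le L_g(\Gseg{\x_1}{\x_2}{\xbar,\z})\le C\gbarnorm[\xbar]{\pmap{\xbar}{\z}{\x_1}-\pmap{\xbar}{\z}{\x_2}}$. The forward direction, however, is where you stop short, and that is the genuine content of the lemma. You correctly get $\gbarnorm[\xbar]{\pmap{\xbar}{\z}{\x_1}-\pmap{\xbar}{\z}{\x_2}}\le C\,d_\outerdom(\x_1,\x_2)$ for the intrinsic distance, and then write that this yields the bound in $\gdist{\cdot}{\cdot}$ ``when the two are comparable'' --- but that comparability is not an assumption and does not follow from path-connectedness alone (a planar domain with an outward cusp or a thin slit has pairs of points at ambient distance $\epsilon$ but intrinsic distance of order one). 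Since the lemma, and its use in Corollary~\ref{cor: comparability}, is precisely a statement about the ambient distance $\gdist{\cdot}{\cdot}$, flagging this step as a ``subtlety'' without an argument leaves the main assertion unproven; the same gap recurs verbatim for the dual map on $\outertarget^{\cl}$.

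The paper spends roughly half of its proof closing exactly this gap. It argues by contradiction: if $\gdist{\x^1_k}{\x^2_k}/d_\outerdom(\x^1_k,\x^2_k)\to 0$, compactness and the very nice bound on $d_\outerdom$ coming from the $\G$-segment length force both sequences to collapse to a common point, which must lie on $\outerdom^{\bdry}$ (the interior case is excluded by the local Lipschitz bound you already have). It then proves, as an aside, that $\outerdom^{\bdry}$ is Lipschitz: one takes a $C^1$ extension of $\pmap{\xbar}{\z}{\cdot}$ near a boundary point, uses invertibility of its derivative via \eqref{Nondeg} to make it a local diffeomorphism, notes that $B\cap\coord{\outerdom}{\xbar,\z}$ is convex by \eqref{DomConv} and hence has Lipschitz boundary, and transfers this back with the cited theorem of Hofmann, Mitrea and Taylor. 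Finally it constructs an explicit curve $\gamma_k$ joining $\x^1_k$ to $\x^2_k$ inside $\outerdom^{\cl}$ by replacing the portion of the straight segment that exits the domain with its image on the Lipschitz graph, giving $d_\outerdom(\x^1_k,\x^2_k)\le C\gdist{\x^1_k}{\x^2_k}$ and the contradiction. This boundary-regularity and curve-construction argument is the missing idea you would need to supply; the remainder of your outline is sound and coincides with the paper's approach.
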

\begin{proof}
Before we begin, recall the definitions of $d_\outerdom$ and $L_g$ introduced in Remark~\ref{rem: existence of very nice solutions}. 
Fix $(\xbar, \z)\in\outertarget^{\cl}\times \real$ satisfying \eqref{eqn: very nice focus}. By \eqref{G5}, we then have $(\x, \xbar, \z)\in\gendom$ for any $\x\in\outerdom^{\cl}$. Fix $\x_1$, $\x_2\in\outerdom^{\cl}$, then by \eqref{DomConv} the $\G$-segment $\x(s):=\Gseg{\x_1}{\x_2}{\xbar, \z}$ is well-defined and remains in $\outerdom^{\cl}$, in particular it is differentiable for all $s\in[0, 1]$.

Then by \eqref{eqn: xdot representation} we calculate
\begin{align}\label{eqn: Gsegment length calculation}
 L_g(\Gseg{\x_1}{\x_2}{\xbar, \z})&=\int_0^1\gnorm[\x(s)]{-\Gz(\x(s), \xbar, \z)\dualnondegmatrixinv{\x(s)}{\xbar}{\z}(\pmap{\xbar}{\z}{\x_2}-\pmap{\xbar}{\z}{x_1})}ds.
\end{align}
Now since $\G(\x_1, \xbar, \z)\in\Niceinterval$, we see that $-\Gz(\x(s), \xbar, \z)$ has \emph{very nice}, positive upper and lower bounds, while the operator norms of $\dualnondegmatrixinv{\x(s)}{\xbar}{\z}$ and $\dualnondegmatrix{\x(s)}{\xbar}{\z}$ also have \emph{very nice} upper bounds. Thus we see for some \emph{very nice} $C>0$,
\begin{align*}
C^{-1} \gbarnorm[\xbar]{\pmap{\xbar}{\z}{\x_2}-\pmap{\xbar}{\z}{\x_1}}\leq  L_g(\Gseg{\x_1}{\x_2}{\xbar, \z})\leq C \gbarnorm[\xbar]{\pmap{\xbar}{\z}{\x_2}-\pmap{\xbar}{\z}{\x_1}}.
\end{align*}
Clearly we always have $\gdist{\x_1}{\x_2}\leq L_g(\Gseg{\x_1}{\x_2}{\xbar, \z})$, so this implies that $\X{\xbar}{\z}{\cdot}$ is globally Lipschitz on $\coord{\outerdom^{\cl}}{\xbar, \z}$ with a Lipschitz constant that is \emph{very nice}.

We now prove $\pmap{\xbar}{\z}{\cdot}$ is globally Lipschitz from $\outerdom^{\cl}$ to $\coord{\outerdom^{\cl}}{\xbar, \z}$, and its Lipschitz constant is bounded by some \emph{very nice} constant. Indeed, first note that this mapping is $C^1$ on $\outerdom^{\cl}$, with $C^1$ norm bounded by a \emph{very nice} constant, thus it is sufficient to show there exists a \emph{very nice} constant $C>0$ such that for any $\x_1$, $\x_2\in\outerdom^{\cl}$, there is a piecewise $C^1$ curve $\gamma$ connecting $\x_1$ to $\x_2$, remaining entirely within $\outerdom^{\cl}$, for which $C\gdist{\x_1}{\x_2}\geq L_g(\gamma)$. Suppose this is not the case, then there is a sequence $\x^1_k$, $\x^2_k$ for which $\frac{\gdist{\x^1_k}{\x^2_k}}{d_\outerdom(\x^1_k, \x^2_k)}\to 0$. By compactness of $\outerdom^{\cl}$, we can assume $\x^1_k$ and $x^2_k$ converge. By \eqref{eqn: Gsegment length calculation}, we see that $d_\outerdom(\x^1_k, \x^2_k)$ has a uniform, \emph{very nice} upper bound, hence it must be that $\gdist{\x^1_k}{\x^2_k}\to 0$. This implies that both $x^1_k$ and $x^2_k$ converge to some $x_\infty\in \outerdom^{\cl}$, while by continuity of $\pmap{\xbar}{\z}{\cdot}$ on $\outerdom^{\cl}$, we must have $\pmap{\xbar}{\z}{x^1_k}$ and $\pmap{\xbar}{\z}{x^2_k}$ converging to some $\p_\infty\in \coord{\outerdom^{\cl}}{\xbar, \z}$.
Additionally, note that since $\pmap{\xbar}{\z}{\cdot}$ has a \emph{very nice} upper bound on its $C^1$ norm, it is \emph{locally} Lipschitz in $\outerdom^{\interior}$ with a \emph{very nice} constant. Thus if $\x_\infty\in\outerdom^{\interior}$, by combining with \eqref{eqn: Gsegment length calculation} we would have for large enough $k$, 
\begin{align*}
\frac{\gdist{\x^1_k}{\x^2_k}}{d_\outerdom(\x^1_k, \x^2_k)}&\geq \frac{\gdist{\x^1_k}{\x^2_k}}{L_g(\Gseg{\x_1}{\x_2}{\xbar, \z})} \geq \frac{\gdist{\x^1_k}{\x^2_k}}{C \gbarnorm[\xbar]{\pmap{\xbar}{\z}{\x_2}-\pmap{\xbar}{\z}{\x_1}}}\geq C,
\end{align*}
 a contradiction. Thus it must be that the limiting points $\x_\infty\in \outerdom^{\bdry}$ and $\p_\infty\in \coord{\outerdom^{\bdry}}{\xbar, \z}$.

At this point we make an aside to show that the domain $\outerdom$ has a Lipschitz boundary in the sense that any point in $\outerdom^{\bdry}$ has an open neighborhood (in $\M$) on which it can be represented as the graph of a Lipschitz function in some local coordinate system, where the Lipschitz constant of this function is uniformly bounded. Since $\outerdom^{\bdry}$ is compact, it is clearly sufficient to show the boundary is locally Lipschitz. Fix a point $\x\in\outerdom^{\bdry}$, and a small open neighborhood $\mathcal{O}$ of $\x$ in $\M$. By the extension lemma (for example, \cite[Lemma 2.27]{LeeJ13}) there exists a $C^1$ extension of $\pmap{\xbar}{\z}{\cdot}$ to $\mathcal{O}$. Since the derivative of $\pmap{\xbar}{\z}{\cdot}$ is invertible on $\outerdom^{\cl}$ by \eqref{Nondeg}, by possibly shrinking $\mathcal{O}$ we can assume that this extension is also a $C^1$ diffeomorphism on $\mathcal{O}$. We continue to use the notation $\X{\xbar}{\z}{\cdot}$ to refer to the inverse of this extension. Now take a ball $B\subset \cotanspMbar{\xbar}$ centered at $\p:=\pmap{\xbar}{\z}{\x}$, small enough so its closure is contained in $\coord{\mathcal{O}}{\xbar, \z}$. Then $B\cap\coord{\outerdom}{\xbar, \z}$ is open, $\coord{\mathcal{O}}{\xbar, \z}$ is an open neighborhood of the closure of $B\cap\coord{\outerdom}{\xbar, \z}$, and $\p$ is contained in the boundary of $B\cap\coord{\outerdom}{\xbar, \z}$. Moreover, $B\cap\coord{\outerdom}{\xbar, \z}$ is convex by \eqref{DomConv}, hence has a locally Lipschitz boundary. Thus we can apply \cite[Theorem 4.1]{HofmannMitreaTaylor07} to find that $\outerdom^{\bdry}$ is locally Lipschitz near $\x$, finishing our aside.

Finally, we return to our main argument. Fix local coordinates near $\x_\infty$, using these coordinates we identify a neighborhood of $\x_\infty$ with a subset of $\real^n$. By the aside above, we find a neighborhood on which $\outerdom^{\cl}$ is written as the graph of a Lipschitz function $\Phi$, over some subset of $\real^{n-1}$ in these coordinates. If $k$ is large enough, then $x^1_k$ and $x^2_k$ are contained in this neighborhood. We now define a special curve $\gamma_k$. Draw a straight line segment between $x^1_k$ and $x^2_k$. If this segment does not intersect $\outerdom^{\bdry}$, then we take $\gamma_k$ to be this segment. Otherwise, between the first and last points where the segment intersects $\outerdom^{\bdry}$, take $\gamma_k$ as the image under $\Phi$ of the projection of this line segment onto $\real^{n-1}$. Clearly $\gamma_k$ is then a Lipschitz curve with $L_g(\gamma_k)\leq C\gdist{x^1_k}{x^2_k}$ for some constant $C>0$ depending only on the domain $\outerdom$ (independent of $k$). In turn, this implies a bound $d_\outerdom(\x^1_k, \x^2_k)\leq C\gdist{x^1_k}{x^2_k}$ on the intrinsic distance, thus we cannot have $\frac{\gdist{\x^1_k}{\x^2_k}}{d_\outerdom(\x^1_k, \x^2_k)}\to 0$, finishing our proof.

The statement concerning $\outertarget$ is proven similarly, but using \eqref{eqn: xbardot representation} and \eqref{eqn: zdot representation} in place of \eqref{eqn: xdot representation} in obtaining the analogue of \eqref{eqn: Gsegment length calculation}, and\eqref{DualDomConv} in place of \eqref{DomConv} in various places.
\end{proof}
The above lemma immediately yields the following corollary.
\begin{cor}\label{cor: comparability}
 Let us write $a\sim b$ to mean there exists a \emph{very nice} constant $C>0$ for which $C^{-1} a\leq b\leq Ca$. Then under \eqref{eqn: very nice focus}, we have
\begin{align}\label{eqn: source distance comparison}
 \gdist{\x_1}{\x_2}&\sim\gbarnorm[\xbar]{\pmap{\xbar}{\z}{\x_1}-\pmap{\xbar}{\z}{\x_2}},\quad \forall\;\x_1,\ \x_2\in\outerdom^{\cl}.
\end{align}
 Also for any $(\x, \u)\in \outertarget^{\cl}\times\Niceinterval$,
 \begin{align}\label{eqn: target distance comparison}
 \gbardist{\xbar_1}{\xbar_2}&\sim\gnorm[\x]{\pbarmap{\x}{\u}{\xbar_1}-\pbarmap{\x}{\u}{\xbar_2}},\quad \forall\;\xbar_1,\ \xbar_2\in\outertarget^{\cl}.
\end{align}
Finally, in each of the respective situations above, we have
\begin{align}
 \Leb{\arbitrary}&\sim \Leb{\coord{\arbitrary}{\xbar, \z}},\notag\\
 \Leb{\arbitrarybar}&\sim \Leb{\coord{\arbitrarybar}{\x, \u}}\label{eqn: volume comparability}
\end{align}
for any measurable $\arbitrary\subset\outerdom^{\cl}$ or $\arbitrarybar\subset\outertarget^{\cl}$.
\end{cor}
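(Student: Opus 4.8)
The plan is to deduce the whole statement directly from Lemma~\ref{lem: comparability}, in which all of the analytic work has already been carried out. Under the hypothesis \eqref{eqn: very nice focus}, that lemma provides that $\pmap{\xbar}{\z}{\cdot}$ is a bijection from $\outerdom^{\cl}$ onto $\coord{\outerdom^{\cl}}{\xbar, \z}$ which, together with its inverse $\X{\xbar}{\z}{\cdot}$, is Lipschitz with constant bounded by a single \emph{very nice} constant $C$; and the analogous bi-Lipschitz statement holds for $\pbarmap{\x}{\u}{\cdot}$ (and its inverse) in the dual situation.

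First I would establish \eqref{eqn: source distance comparison}. The Lipschitz bound for $\pmap{\xbar}{\z}{\cdot}$ gives $\gbarnorm[\xbar]{\pmap{\xbar}{\z}{\x_1} - \pmap{\xbar}{\z}{\x_2}} \le C\,\gdist{\x_1}{\x_2}$ for all $\x_1, \x_2 \in \outerdom^{\cl}$, while applying the Lipschitz bound for $\X{\xbar}{\z}{\cdot}$ at the points $\p_i := \pmap{\xbar}{\z}{\x_i} \in \coord{\outerdom^{\cl}}{\xbar, \z}$ gives the reverse inequality $\gdist{\x_1}{\x_2} = \gdist{\X{\xbar}{\z}{\p_1}}{\X{\xbar}{\z}{\p_2}} \le C\,\gbarnorm[\xbar]{\p_1 - \p_2}$. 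Together these are precisely \eqref{eqn: source distance comparison}, and \eqref{eqn: target distance comparison} follows in exactly the same way from the second half of Lemma~\ref{lem: comparability} applied to $\pbarmap{\x}{\u}{\cdot}$, with $\gbardist{\cdot}{\cdot}$ in place of $\gdist{\cdot}{\cdot}$.

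For the volume comparabilities \eqref{eqn: volume comparability} I would invoke that on an $n$-dimensional Riemannian manifold the Riemannian volume coincides with the $n$-dimensional Hausdorff measure of the geodesic distance, and likewise that the Riemannian volume on $\cotanspMbar{\xbar}$ is the Hausdorff measure of $\gbarnorm[\xbar]{\cdot}$; since any $L$-Lipschitz map between metric spaces increases $n$-dimensional Hausdorff measure by a factor at most $L^n$, applying this to $\pmap{\xbar}{\z}{\cdot}$ and to $\X{\xbar}{\z}{\cdot}$ yields $C^{-n}\Leb{\arbitrary} \le \Leb{\coord{\arbitrary}{\xbar, \z}} \le C^{n}\Leb{\arbitrary}$ for every measurable $\arbitrary \subset \outerdom^{\cl}$ (the image being measurable since the map is bi-Lipschitz), and the statement for $\arbitrarybar$ is the same argument applied to $\pbarmap{\x}{\u}{\cdot}$. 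Alternatively one can argue via the area formula together with Remark~\ref{rem: local coordinates}, which identifies the differential of $\pmap{\xbar}{\z}{\cdot}$ as $-\dualnondegmatrix{\x}{\xbar}{\z}/\Gz$, so that $\Leb{\coord{\arbitrary}{\xbar, \z}}$ equals the integral over $\arbitrary$ of $\norm{\det \dualnondegmatrix{\x}{\xbar}{\z}}\cdot\norm{\Gz}^{-n}$, whose integrand is bounded above and below by \emph{very nice} constants according to the definition in Remark~\ref{rem: universal constants}. There is essentially no obstacle in this corollary; the only two points meriting a word of justification are the identification of Riemannian volume with Hausdorff measure (so that the elementary scaling of Hausdorff measure under Lipschitz maps transfers across the manifold/vector-space divide) and the measurability of $\pmap{\xbar}{\z}{\arbitrary}$, both entirely standard.
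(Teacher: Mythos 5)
Your proposal is correct and matches the paper's intent: the paper simply states that Lemma~\ref{lem: comparability} ``immediately yields'' this corollary, and your argument fills in exactly those routine details (bi-Lipschitz bounds give the two distance comparisons, and the standard distortion of $n$-dimensional Hausdorff/Riemannian volume under a bi-Lipschitz map, or equivalently the area formula with the Jacobian bounds of Remark~\ref{rem: universal constants}, gives \eqref{eqn: volume comparability}). No gaps.
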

Finally, we present a lemma relating the difference of two $\G$-affine functions with the difference of their linearizations. The lemma relies on \eqref{DualQQConv} in a crucial way.
\begin{lem}\label{lem: linearization}
Let $\x_0\in\outerdom$, $\xbar_0$, $\xbar_1\in\outertarget$, $\u_0\in \Niceinterval$, $\z_0:=\H(\x_0, \xbar_0, \u_0)$ for $i=0$, $1$, and $\xbar(t):=\Gseg{\xbar_0}{\xbar_1}{\x_0, \u_0}$. Then there exists a \emph{very nice} constant $C>0$ such that for any $\x\in\outerdom$ satisfying $(\x, \xbar(t), \H(\x_0, \xbar(t), \u_0))\in\gendom$ for all $t\in[0, 1]$, we have
\begin{align}
 & \inner{\pmap{\xbar_0}{\z_0}{\x}-\pmap{\xbar_0}{\z_0}{\x_0}}{\nondegmatrixinv{\x_0}{\xbar_0}{\z_0}(\pbarmap{\x_0}{\u_0}{\xbar_1}-\pbarmap{\x_0}{\u_0}{\xbar_0})}\notag\\
 &\qquad\leq \frac{C}{1-t'} \brackets{\G(\x, \xbar_1, \H(\x_0, \xbar_1, \u_0))-\G(\x, \xbar(t'), \H(\x_0, \xbar(t'), \u_0))}_+, \quad\forall\;t'\in [0, 1).\label{eqn: linearization lower bound}
\end{align}
Additionally, for any $\x\in\outerdom$ such that $x(s):=\Gseg{\x_0}{\x}{\xbar_0, \z_0}$ is well-defined and contained in $\outerdom^{\cl}$, we have
\begin{align}
& \norm{\G(\x, \xbar(t), \H(\x_0, \xbar(t), \u_0))-\G(\x, \xbar_0, \z_0)}\notag\\
&\leq Ct\gbarnorm[\xbar_0]{\pmap{\xbar_0}{\z_0}{\x}-\pmap{\xbar_0}{\z_0}{\x_0}}\gnorm[\x_0]{\pbarmap{\x_0}{\u_0}{\xbar_1}-\pbarmap{\x_0}{\u_0}{\xbar_0}},\quad\forall\;\;t\in [0, 1].\label{eqn: linearization upper bound}
\end{align}
\end{lem}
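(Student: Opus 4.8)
The plan is to derive both inequalities from a single differential identity. Set
\begin{align*}
 g(\x, t) \defin \G(\x, \xbar(t), \z(t)), \qquad \z(t)\defin\H(\x_0, \xbar(t), \u_0),
\end{align*}
for $\x\in\outerdom$, $t\in[0,1]$. By construction $\G(\x_0,\xbar(t),\z(t))\equiv \u_0$, so $g(\x_0,\cdot)$ is constant and $\partial_t g(\x_0,\cdot)\equiv 0$. Differentiating $g$ in $t$, inserting $\zdot(t)$ from \eqref{eqn: zdot representation} of Proposition~\ref{prop: motivation for nondegeneracy}, and using the defining relation $\Dbar\G(\x,\xbar,\z) = -\Gz(\x,\xbar,\z)\pmap{\xbar}{\z}{\x}$, one obtains
\begin{align*}
 \partial_t g(\x, t) = -\Gz(\x,\xbar(t),\z(t))\,\inner{\pmap{\xbar(t)}{\z(t)}{\x} - \pmap{\xbar(t)}{\z(t)}{\x_0}}{\xbardot(t)}.
\end{align*}
At $t=0$, where \eqref{eqn: xbardot representation} gives $\xbardot(0)=\nondegmatrixinv{\x_0}{\xbar_0}{\z_0}(\pbarmap{\x_0}{\u_0}{\xbar_1}-\pbarmap{\x_0}{\u_0}{\xbar_0})$, the pairing on the right is precisely the quantity on the left of \eqref{eqn: linearization lower bound}; hence that quantity equals $\bigl(-\Gz(\x,\xbar_0,\z_0)\bigr)^{-1}\partial_t g(\x,0)$.

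\emph{The lower bound \eqref{eqn: linearization lower bound}.} The hypothesis on $\x$ is exactly the admissibility condition needed to apply \eqref{DualQQConv} with $\x_1:=\x$ and $[\QQConvlower,\QQConvupper]:=\Niceinterval$ (valid since $\u_0\in\Niceinterval$), which gives $g(\x,t)-g(\x,0)\le \tfrac{Mt}{1-t'}\brackets{g(\x,1)-g(\x,t')}_{+}$ for all $t\in[0,1]$, $t'\in[0,1)$. Dividing by $t$ and letting $t\to 0^+$ yields $\partial_t g(\x,0)\le \tfrac{M}{1-t'}\brackets{g(\x,1)-g(\x,t')}_{+}$, and unravelling $g(\x,1)$ and $g(\x,t')$ identifies the bracket with the one in \eqref{eqn: linearization lower bound}. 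If the left side of \eqref{eqn: linearization lower bound} is nonpositive the inequality is immediate; otherwise $\partial_t g(\x,0)>0$, and dividing by $-\Gz(\x,\xbar_0,\z_0)$ — bounded below by a \emph{very nice} constant (Remark~\ref{rem: universal constants}) — produces the claim with $C$ a multiple of $M$.

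\emph{The upper bound \eqref{eqn: linearization upper bound}.} Now the $\G$-segment $\x(s):=\Gseg{\x_0}{\x}{\xbar_0,\z_0}$ lies in $\outerdom^{\cl}$ by hypothesis, so, using $g(\x_0,\cdot)\equiv\u_0$ and differentiating along $\x(s)$ and then along $\xbar(\tau)$,
\begin{align*}
 g(\x,t)-g(\x,0)=\int_0^1\int_0^t \inner{\Dbar\Dx\G(\x(s),\xbar(\tau),\z(\tau))\,\xbardot(\tau)+\Dx\Gz(\x(s),\xbar(\tau),\z(\tau))\,\zdot(\tau)}{\xdot(s)}\,d\tau\,ds.
\end{align*}
Rewriting $\Dbar\Dx\G$ through \eqref{Nondeg} and substituting $\zdot(\tau)$ from \eqref{eqn: zdot representation}, the integrand becomes $\inner{\nondegmatrix{\x(s)}{\xbar(\tau)}{\z(\tau)}\xbardot(\tau)}{\xdot(s)}$ plus a term carrying the factor $\inner{\pmap{\xbar(\tau)}{\z(\tau)}{\x_0}-\pmap{\xbar(\tau)}{\z(\tau)}{\x(s)}}{\xbardot(\tau)}$. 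The first is bounded by a \emph{very nice} constant times $\gbarnorm[\xbar_0]{\pmap{\xbar_0}{\z_0}{\x}-\pmap{\xbar_0}{\z_0}{\x_0}}\gnorm[\x_0]{\pbarmap{\x_0}{\u_0}{\xbar_1}-\pbarmap{\x_0}{\u_0}{\xbar_0}}$ via \eqref{eqn: xdot representation}, \eqref{eqn: xbardot representation} and the \emph{very nice} bounds on $\nondegmatrixentries$, its inverse, and $\Gz$; for the second, $\pmap{\xbar_0}{\z_0}{\x(s)}-\pmap{\xbar_0}{\z_0}{\x_0}=s\bigl(\pmap{\xbar_0}{\z_0}{\x}-\pmap{\xbar_0}{\z_0}{\x_0}\bigr)$, so by Corollary~\ref{cor: comparability} both $\gdist{\x_0}{\x(s)}$ and the corresponding coordinate difference are comparable to $s\,\gbarnorm[\xbar_0]{\pmap{\xbar_0}{\z_0}{\x}-\pmap{\xbar_0}{\z_0}{\x_0}}$, and the extra factor from $\xdot(s)$ is absorbed into a \emph{very nice} constant using that $\outerdom$ is bounded. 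Both pieces are then bounded by a \emph{very nice} constant times the product on the right of \eqref{eqn: linearization upper bound}, and the inner integral over $\tau\in[0,t]$ supplies the factor $t$.

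\emph{Main obstacle.} The difficulty is bookkeeping rather than conceptual: every structural quantity above ($\nondegmatrix{\x}{\xbar}{\z}$ with its inverse and adjoint, $\Gz$, $\Dx\Gz$, and the coordinate maps $\pmap{\cdot}{\cdot}{\cdot}$, $\pbarmap{\cdot}{\cdot}{\cdot}$ together with their derivatives) must be evaluated at triples lying in the compact regime governed by $\Niceinterval$, so that the bounds invoked — including those of Lemma~\ref{lem: comparability} and Corollary~\ref{cor: comparability} — are genuine \emph{very nice} constants; in particular one needs the scalar arguments $\G(\x(s),\xbar_0,\z_0)$ and $\G(\x(s),\xbar(\tau),\z(\tau))$ to stay in $\Niceinterval$, which is where \eqref{G5}, \eqref{Lip}, \eqref{DomConv}, \eqref{DualDomConv} and the pinning relation $\G(\x_0,\xbar(\tau),\z(\tau))=\u_0$ are used. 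A secondary point is the mild regularity of $g$ (differentiability in $t$ and $\x$, and the passage to the limit $t\to 0^+$ above), which follows from the (mixed) $C^2$ hypothesis on $\G$ and the differentiability of $\G$-segments (Remark~\ref{rem: local coordinates}).
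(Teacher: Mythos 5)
Your proposal is correct and follows essentially the same route as the paper: for \eqref{eqn: linearization lower bound} you compute $\partial_t \G(\x,\xbar(t),\H(\x_0,\xbar(t),\u_0))\vert_{t=0}$ via \eqref{eqn: zdot representation}, identify it as $-\Gz(\x,\xbar_0,\z_0)$ times the left-hand side, and divide \eqref{DualQQConv} by $t$ and let $t\searrow 0$, exactly as the paper does; for \eqref{eqn: linearization upper bound} you use the same double-integral representation along the two $\G$-segments and bound the integrand with \eqref{eqn: xdot representation}--\eqref{eqn: zdot representation} and Cauchy--Schwarz. The only difference is cosmetic bookkeeping in the second term: you absorb the correction into $\nondegmatrixentries$ and then cite Corollary~\ref{cor: comparability} for the focus $(\xbar(\tau),\z(\tau))$, whose hypothesis \eqref{eqn: very nice focus} is not literally granted here, whereas the paper keeps $\Dbar\Dx\G$ and $\zdot$ intact and simply bounds the resulting bounded vectors; your extra factor is just as easily controlled by integrating $\pmap{\xbar(\tau)}{\z(\tau)}{\cdot}$ along the $\G$-segment (or by taking the $\gnorm[\x_0]{\p-\p_0}$ factor from $\xdot(s)$ as the paper does), so nothing essential changes.
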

\begin{proof}
To obtain the first inequality we first calculate (also using \eqref{eqn: xbardot representation}):
 \begin{align*}
  &\left.\diff{t}\G(\x, \xbar(t), \H(\x_0, \xbar(t), \u_0))\right\vert_{t=0}\\
  &=\inner{\Dbar \G(\x, \xbar_0, \z_0)+\Gz(\x, \xbar_0, \z_0)\Dbar\H(\x_0, \xbar_0, \u_0)}{\xbardot(0)}\\
  &=\Gz(\x, \xbar_0, \z_0)\inner{\frac{\Dbar \G}{\Gz}(\x, \xbar_0, \z_0)+\Dbar\H(\x_0, \xbar_0, \u_0)}{\xbardot(0)}\\
  &=-\Gz(\x, \xbar_0, \z_0)\inner{-\frac{\Dbar \G}{\Gz}(\x, \xbar_0, \z_0)+\frac{\Dbar \G}{\Gz}(\x_0, \xbar_0, \z_0)}{\nondegmatrixinv{\x_0}{\xbar_0}{\z_0}(\pbarmap{\x_0}{\u_0}{\xbar_1}-\pbarmap{\x_0}{\u_0}{\xbar_0})},
\end{align*}
and note $-\Gz(\x, \xbar_0, \z_0)=-\Gz(\x, \xbar_0, \H(\x_0, \xbar_0, \u_0))$ has strictly positive upper and lower bounds that are \emph{very nice}. The inequality \eqref{eqn: linearization lower bound} then follows by dividing \eqref{DualQQConv} through by $t>0$ and taking the limit as $t\searrow 0$.

The inequality \eqref{eqn: linearization upper bound} follows by a simple but tedious calculation:
 \begin{align*}
    &\G(\x,\xbar(t),\z(t))-\G(\x,\xbar_0,\z_0) \notag\\
    &= \G(\x(1), \xbar(t), \z(t))-\G(\x(1), \xbar(0), \z(0))+\G(\x(0), \xbar(t), \z(t))-\G(\x(0), \xbar(0), \z(0))\notag\\
    &=\int_0^1 \diff{s}\paren{\G(\x(s), \xbar(t), \z(t))-\G(\x(s), \xbar(0), \z(0))}ds\notag\\
    &=\int_0^1\inner{\Dx\G(\x(s),\xbar(t), \z(t))-\Dx\G(\x(s),\xbar(0), \z(0))}{\xdot(s)}ds\notag\\
    &=\int_0^t\int_0^1\pdiff{t'}\inner{\Dx\G(\x(s),\xbar(t'), \z(t'))}{\xdot(s)}ds dt'\notag\\
    &=\int_0^t\int_0^1\inner{\Dbar\Dx\G(\x(s),\xbar(t'), \z(t'))\xbardot(t')+\Dx\Gz(\x(s), \xbar(t'), \z(t'))\zdot(t')}{\xdot(s)}ds dt'.
  \end{align*}
  Now if we write 
\begin{align*}
\p:=\pmap{\xbar_0}{\z_0}{\x},\quad&\p_0:=\pmap{\xbar_0}{\z_0}{\x_0},\\
 \pbar_1:=\pbarmap{\x_0}{\u_0}{\xbar_1},\quad &\pbar_0:=\pbarmap{\x_0}{\u_0}{\xbar_0},
\end{align*}
by Proposition \ref{prop: motivation for nondegeneracy} \eqref{eqn: xdot representation}, \eqref{eqn: xbardot representation}, and \eqref{eqn: zdot representation}, the final expression in the calculation above can be written as  
\begin{align*}
 \int_0^t\int_0^1 \inner{M_{t', s}(\pbar_1-\pbar_0)}{\p-\p_0}+\inner{V_{t', s}}{\p-\p_0}\inner{\Vbar_{t'}}{\pbar_1-\pbar_0}ds dt'
\end{align*}
  for some linear transformations $M_{t', s}: \cotanspM{\x_0}\to\tanspM{\basexbarA}$, and vectors $V_{t', s}\in \tanspMbar{\basexbar}$, $\Vbar_{t'}\in\tanspM{\x_0}$. As $\u_0\in\Niceinterval$, a routine calculation yields that $\gbarnorm[\basexbar]{V_{t', s}}$, $\gnorm[\x_0]{\Vbar_{t'}}$, and the operator norm of $M_{t', s}$ have \emph{very nice} bounds, thus by applying Cauchy-Schwarz we obtain \eqref{eqn: linearization upper bound}.
\end{proof}

\section{An Aleksandrov-type estimate}\label{section: Aleksandrov estimate}

\begin{DEF}\label{DEF:notation_supporting_plane}
  If $\arbitrary\subset\cotanspMbar{\xbar}$ is convex and $\direction{}\in\S^{n-1}\subset \cotanspMbar{\xbar}$ is a unit direction, we denote \emph{the supporting plane to $\arbitrary$ with outward normal $\direction{}$} by $\plane{\arbitrary}{\direction{}}$.
\end{DEF}

  We also recall the standard notion from Riemannian geometry of the musical isomorphism
  \begin{DEF}\label{def:musical_isomorphism}
   If $v\in \cotanspMbar{\xbar}$ for some $\xbar\in \Mbar$, then we define $\raisecovect{v}\in \tanspMbar{\xbar}$ implicitly by the relation  
\begin{align*}
\inner{\raisecovect{v}}{w}= \innergbar[\xbar]{v}{w},\quad \forall\;w\in\cotanspMbar{\xbar}.
\end{align*}
The map $\raisecovect{}: \cotanspMbar{\xbar}\to\tanspMbar{\xbar}$ is called the \emph{musical isomorphism}.
  \end{DEF}	  
  
  \begin{rem}\label{rem:b_bet}
	  We also recall the following very simple elementary formula for the distance from a point in a set to a supporting plane of the set: if $\mathcal{A}\subset \cotanspMbar{\xbar}$ is convex, $p_0\in \mathcal{A}$, and $v\in \cotanspMbar{\xbar}$ is unit length for some $\xbar\in\Mbar$, then 
\begin{align*}
 \dist{p_0}{\plane{\mathcal{A}}{v}}=\sup_{p\in \mathcal{A}}{\innergbar[\xbar]{v}{p-p_0}}=\sup_{p\in \mathcal{A}^\bdry}{\innergbar[\xbar]{v}{p-p_0}}.
\end{align*}
  \end{rem}	  

\begin{thm}[John's Lemma]\label{John's lemma}
  If $\arbitrary\subset \real^n$ is a convex set with nonempty interior, there exists an ellipsoid $\ellipsoid$ whose center of mass coincides with that of $\arbitrary$, and a constant $\alpha(n)$ depending only on $n$ such that 
\begin{align*}
 \alpha(n)\ellipsoid\subset\arbitrary\subset\ellipsoid.
\end{align*}
\end{thm}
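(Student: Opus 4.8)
The plan is to reduce the statement to the classical John ellipsoid theorem by an affine normalization, and then to relocate the John ellipsoid to the centroid by a soft barycenter argument.

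First I would note that the assertion is invariant under an arbitrary invertible affine map $T$: taking centroids commutes with $T$, ellipsoids are carried to ellipsoids, and a dilation by a factor $\alpha(n)$ about the centroid of $\arbitrary$ is carried to the dilation by the same factor about the centroid of $T(\arbitrary)$. So it suffices to prove the statement after applying any convenient affine transformation. Accordingly, let $\ellipsoid_0\subseteq \arbitrary^{\cl}$ be an ellipsoid of maximal volume (one exists by a compactness argument, $\arbitrary$ being bounded with nonempty interior), and apply the affine map sending $\ellipsoid_0$ to the unit ball $B_1(0)$. Then $B_1(0)\subseteq\arbitrary$, and John's theorem gives the complementary inclusion $\arbitrary\subseteq B_n(0)$.

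Write $g$ for the centroid of $\arbitrary$. Since $g\in\arbitrary\subseteq B_n(0)$ we get $\arbitrary\subseteq B_{2n}(g)$, which is the outer inclusion with $\ellipsoid:=B_{2n}(g)$. For the inner inclusion the key claim is that $g$ stays a definite distance from $\partial\arbitrary$: there is a dimensional $c(n)>0$ with $B_{c(n)}(g)\subseteq\arbitrary$. To prove it, fix any supporting hyperplane $\halfspace$ of $\arbitrary$, set $\delta:=\dist{g}{\halfspace}$, and rotate coordinates so that $\halfspace=\{x_1=g_1+\delta\}$ with $\arbitrary\subseteq\{x_1\le g_1+\delta\}$. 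The barycenter identity $\int_\arbitrary(x_1-g_1)\,dx=0$ says that $I:=\int_{\arbitrary\cap\{x_1>g_1\}}(x_1-g_1)\,dx$ equals $\int_{\arbitrary\cap\{x_1<g_1\}}(g_1-x_1)\,dx$; the first expression is at most $\delta\,\Leb{\arbitrary}\le \delta\,\omega_n n^n$, while (using $B_1(0)\subseteq\arbitrary$) one of the two spherical caps $B_1(0)\cap\{x_1\le-\tfrac12\}$ or $B_1(0)\cap\{x_1\ge\tfrac12\}$ — the choice depending on the sign of $g_1$ — lies on the appropriate side of $\{x_1=g_1\}$ and contributes at least $\tfrac12\,|B_1(0)\cap\{x_1\ge\tfrac12\}|$ to $I$. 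Comparing the two bounds yields $\delta\ge c(n)$, and since $\arbitrary$ is the intersection of the half-spaces cut out by its supporting hyperplanes this gives $B_{c(n)}(g)\subseteq\arbitrary$. Taking $\alpha(n):=c(n)/(2n)$ we have $\alpha(n)\ellipsoid=B_{c(n)}(g)\subseteq\arbitrary\subseteq\ellipsoid$, and undoing the affine normalization (which carries $\ellipsoid$ back to an ellipsoid centered at the centroid of the original $\arbitrary$ and preserves the dilation ratio) completes the proof.

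The only substantive external ingredient is John's theorem in the form that a maximal inscribed ellipsoid normalized to $B_1(0)$ forces $\arbitrary\subseteq B_n(0)$; everything else is elementary. The one point I expect to need genuine care is the centroid-depth estimate of the previous paragraph: one must combine the barycenter condition with the normalized inscribed ball to rule out $g$ drifting toward $\partial\arbitrary$, and the clean way to arrange this is the case split on the sign of $g_1$ relative to the supporting hyperplane.
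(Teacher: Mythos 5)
Your argument is correct. The paper itself gives no proof of this statement: it is quoted as the classical (centered) John lemma and used as a black box, so there is nothing to compare line by line, and what you have written is a legitimate self-contained derivation of the centroid-centered form from the classical maximal-inscribed-ellipsoid theorem. Both of your reductions are sound: the affine normalization (centroids, ellipsoids, and dilations about the centroid all transform correctly under an invertible affine map), and the centroid-depth estimate, where the barycenter identity, played against a cap of the inscribed unit ball, gives a dimensional lower bound $\delta\geq c(n)$ on the distance from the centroid $g$ to every supporting hyperplane, hence $B_{c(n)}(g)\subset A$; combined with $A\subset B_n(0)\subset B_{2n}(g)$ this yields the statement with $E=B_{2n}(g)$ and $\alpha(n)=c(n)/(2n)$, and the case split on the sign of $g_1$ does exactly what is needed ($x_1-g_1\geq 1/2$ on the right cap when $g_1\leq 0$, $g_1-x_1\geq 1/2$ on the left cap when $g_1>0$). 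Two small remarks. First, the statement implicitly requires $A$ to be bounded (otherwise neither the centroid nor a containing ellipsoid exists); you assume this when invoking compactness, which is harmless since in the paper the lemma is only applied to sets already trapped in a ball. Second, the classical John theorem ($A\subset B_n(0)$ once the maximal inscribed ellipsoid is normalized to $B_1(0)$) carries the outer containment, so your proof is a recentering argument on top of that input rather than a proof from scratch; that is perfectly acceptable here, since the constant $\alpha(n)$ is never tracked and the paper treats the entire lemma as standard.
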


  In this section, we assume the hypotheses of Theorem~\ref{thm: G-aleksandrov estimate}. Namely, we fix a \emph{very nice} $\G$-convex function $\u$ with associated \emph{nice} interval $\Niceinterval$, a \emph{nice} $\G$-affine function $\basemountainA=\G(\cdot,\basexbarA,\basezA)$, and a point $\x_0\in\sublevelset^{\interior}$ where $\sublevelset := \curly{\x\in\outerdom \mid \u(\x)\leq \basemountainA(\x)}$. Again, since $\basemountainA$ is nice, by \eqref{G5} and \eqref{DomConv} we have that $\coord{\outerdom}{\basexbarA, \basezA}$ is well-defined and convex. We also assume that $\diam{(\sublevelset)}<\epsilon$ for some \emph{very nice} constant $\epsilon>0$, to be determined, and there exists some ball $B\subset \cotanspMbar{\basexbarA}$ such that $\sectioncoord\subset B\subset 3B\subset \coord{\outerdom}{\basexbarA, \basezA}$ (this ball $B$ does not have to be of uniform size).
 \begin{figure}[H]
  \centering
    \includegraphics[height=.45\textwidth]{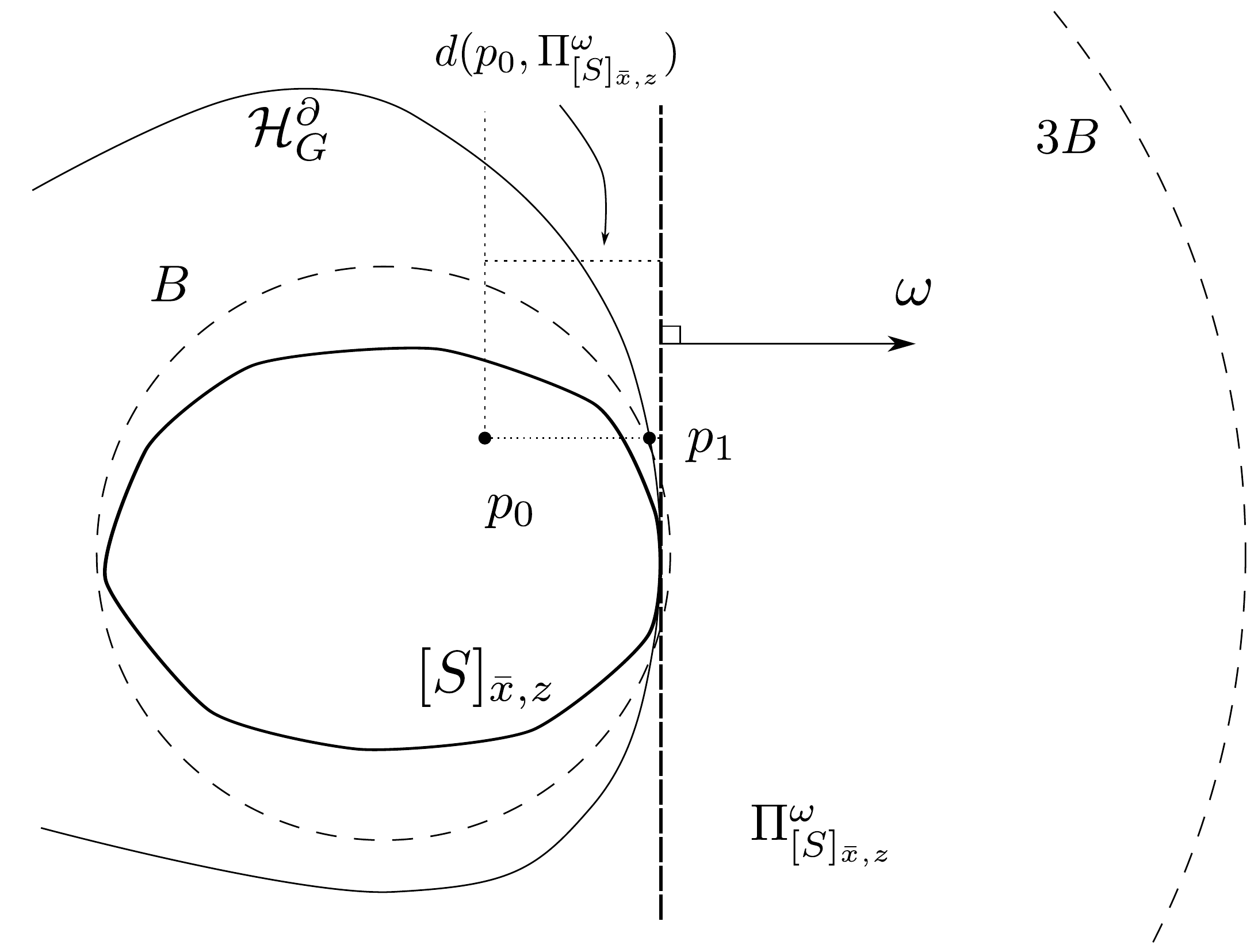}
     \caption{}\label{figure: Sec4fig1}
\end{figure}
   \begin{lem}\label{lem: G-distance bound}
    Given $\direction{} \in \mathbb{S}^n\subset \cotanspMbar{\basexbarA}$, there exists $\turnedfocus\in\Gsubdiff{\Gcone{\x_0, \sublevelset}}{\x_0}$ and a \emph{very nice} constant $C>0$ such that
	\begin{align}
      \norm{\frac{\nondegmatrix{\x_0}{\basexbarA}{\basezA}\raisecovect{\direction{}}}{\gnorm[\x_0]{\nondegmatrix{\x_0}{\basexbarA}{\basezA}\raisecovect{\direction{}}}}-\frac{\pbarmap{\x_0}{u(\x_0)}{\turnedfocus}-\pbarmap{\x_0}{u(\x_0)}{\basexbarA}}{\gnorm[\x_0]{\pbarmap{\x_0}{u(\x_0)}{\turnedfocus}-\pbarmap{\x_0}{u(\x_0)}{\basexbarA}}}}&< \frac{1}{2\sqrt{n}},\label{eqn: small rotation}\\
      C\gnorm[\x_0]{\pbarmap{\x_0}{u(\x_0)}{\turnedfocus}-\pbarmap{\x_0}{u(\x_0)}{\basexbarA}}&\geq \frac{\basemountainA(\x_0)-u(\x_0)}{\dist{\pmap{\basexbar}{\basez}{\x_0}}{\plane{\sectioncoord}{\direction{}}}}.\label{eqn: bound on direction in Gdual}
    \end{align}	  
  \end{lem}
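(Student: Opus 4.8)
\emph{Proof plan.} We may assume $u(\x_0)<\basemountainA(\x_0)$, as otherwise the right side of \eqref{eqn: bound on direction in Gdual} is zero; set $h:=\basemountainA(\x_0)-u(\x_0)>0$, $\p_0:=\pmap{\basexbarA}{\basezA}{\x_0}$, and $d:=\dist{\p_0}{\plane{\sectioncoord}{\direction{}}}$. By Proposition~\ref{prop: sections are convex}, $\sectioncoord$ is convex with $\p_0\in\sectioncoord^{\interior}$. Write $W:=\coord{\Gsubdiff{\Gcone{\x_0,\sublevelset}}{\x_0}}{\x_0,u(\x_0)}$; by Remark~\ref{rem: Gcone} this set is convex, by Lemma~\ref{lem: comparability} it is compact, and since $\G$ is strictly decreasing in $\z$ the $\G$-affine function $\G(\cdot,\basexbarA,\H(\x_0,\basexbarA,u(\x_0)))$ lies strictly below $\basemountainA$, so $\basexbarA\in\Gsubdiff{\Gcone{\x_0,\sublevelset}}{\x_0}$ with room to spare and $q_0:=\pbarmap{\x_0}{u(\x_0)}{\basexbarA}\in W^{\interior}$. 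Finally put $v:=\nondegmatrix{\x_0}{\basexbarA}{\basezA}\raisecovect{\direction{}}\in\cotanspM{\x_0}$; by \eqref{Nondeg} this is a nonzero vector whose length $\gnorm[\x_0]{v}$ is bounded above and below by very nice constants (the musical isomorphism is an isometry and $\direction{}$ is unit).

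The half-line $\curly{q_0+\delta v: \delta\geq 0}$ starts in $W^{\interior}$ and, $W$ being compact, leaves $W$ at some parameter $\delta^{\ast}\in(0,\infty)$. Since $\pbarmap{\x_0}{u(\x_0)}{\cdot}$ is a bi-Lipschitz bijection of $\outertarget^{\cl}$ onto its image (Lemma~\ref{lem: comparability}) and $q_0+\tfrac12\delta^{\ast}v$ lies in $W^{\interior}\subset\coord{\outertarget}{\x_0,u(\x_0)}$, there is a well-defined $\turnedfocus\in\outertarget$ with $\pbarmap{\x_0}{u(\x_0)}{\turnedfocus}=q_0+\tfrac12\delta^{\ast}v$, and $\turnedfocus\in\Gsubdiff{\Gcone{\x_0,\sublevelset}}{\x_0}$. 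As $\pbarmap{\x_0}{u(\x_0)}{\turnedfocus}-q_0=\tfrac12\delta^{\ast}v$ is a positive multiple of $v$, the left side of \eqref{eqn: small rotation} is $0<\tfrac{1}{2\sqrt{n}}$. (The only point needing attention here is that the relevant portion of the half-line stays in the chart $\coord{\outertarget^{\cl}}{\x_0,u(\x_0)}$; this follows from the a priori size bounds on $W$, and even if the half-line left the chart first one would simply take $\turnedfocus$ to be the chart-exit point, still in $W$ and still yielding the zero value.)

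It remains to prove \eqref{eqn: bound on direction in Gdual}; since $\gnorm[\x_0]{v}$ is very nice, it suffices to show $\delta^{\ast}\geq c\,h/d$ for a very nice $c>0$, i.e.\ that $q_0+\delta v\in W$ whenever $0\leq\delta<c\,h/d$. Fix such a $\delta$, let $\turnedfocus$ be the corresponding focus ($\pbarmap{\x_0}{u(\x_0)}{\turnedfocus}=q_0+\delta v$), $\turnedz:=\H(\x_0,\turnedfocus,u(\x_0))$, and $\tilde\basez:=\H(\x_0,\basexbarA,u(\x_0))$; by \eqref{eqn: Gcone subdiff at vertex} we must verify $\G(\y,\turnedfocus,\turnedz)\leq\basemountainA(\y)$ for all $\y\in\sublevelset$, which we write as $\bigl[\G(\y,\turnedfocus,\turnedz)-\G(\y,\basexbarA,\tilde\basez)\bigr]+\bigl[\G(\y,\basexbarA,\tilde\basez)-\basemountainA(\y)\bigr]$. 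The second bracket equals $\int_{\basezA}^{\tilde\basez}\Gz(\y,\basexbarA,\zeta)\,d\zeta$, and since $\tilde\basez>\basezA$, $\Gz<0$, and $|\Gz|$ is pinched between very nice positive constants, comparing with the same integral at $\y=\x_0$ (which equals $-h$) shows it is $\leq-c_0h$ for a very nice $c_0>0$. For the first bracket, integrate along the $\G$-segment of foci $\xbar(r):=\Gseg{\basexbarA}{\turnedfocus}{\x_0,u(\x_0)}$, with $\z(r):=\H(\x_0,\xbar(r),u(\x_0))$, using \eqref{eqn: xbardot representation} exactly as in the proof of Lemma~\ref{lem: linearization}:
\[
\G(\y,\turnedfocus,\turnedz)-\G(\y,\basexbarA,\tilde\basez)=-\int_0^1\Gz(\y,\xbar(r),\z(r))\,\inner{\pmap{\xbar(r)}{\z(r)}{\y}-\pmap{\xbar(r)}{\z(r)}{\x_0}}{\nondegmatrixinv{\x_0}{\xbar(r)}{\z(r)}(\delta v)}\,dr .
\]
Because $\nondegmatrixinv{\x_0}{\basexbarA}{\basezA}v=\raisecovect{\direction{}}$, the principal part of the integrand is the nonnegative quantity $\delta\,(-\Gz(\x_0,\basexbarA,\basezA))\,\innergbar[\basexbarA]{\pmap{\basexbarA}{\basezA}{\y}-\p_0}{\direction{}}$, which for $\y\in\sublevelset$ is at most $C\delta d$ by the definition of $d$ and Remark~\ref{rem:b_bet}; hence the first bracket is $\leq C\delta d+(\text{error})$, and $\G(\y,\turnedfocus,\turnedz)-\basemountainA(\y)\leq C\delta d-c_0h+(\text{error})\leq 0$ once $\delta\leq\tfrac{c_0h}{2Cd}$ and $\epsilon$ is small enough to absorb the error. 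Thus $\delta^{\ast}\geq\tfrac{c_0h}{2Cd}$, and $\gnorm[\x_0]{\pbarmap{\x_0}{u(\x_0)}{\turnedfocus}-q_0}=\tfrac12\delta^{\ast}\gnorm[\x_0]{v}\geq C^{-1}h/d$, which is \eqref{eqn: bound on direction in Gdual}.

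The main obstacle is the uniform control of the error term in the last estimate. The integrand mixes the coordinate system $\pmap{\basexbarA}{\basezA}{\cdot}$ (based at $\basexbarA$, height $\basezA$) with $\pbarmap{\x_0}{u(\x_0)}{\cdot}$ (based at $\x_0$, height $u(\x_0)$), and the matrices $\nondegmatrixinv{\x_0}{\xbar(r)}{\z(r)}$ vary as the focus runs over a $\G$-segment which is short only when $d$ is not too small; bounding the deviations of $\nondegmatrixinv{\x_0}{\xbar(r)}{\z(r)}v$ from $\raisecovect{\direction{}}$, and of the two coordinate systems near $\x_0$ from one another, by a \emph{small multiple of the principal term} $\delta d$ — rather than by an uncontrolled multiple of $\diam(\sectioncoord)$ — is the delicate step. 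The natural remedy, in the spirit of the rest of this section, is to first normalize by an affine map, via John's lemma applied to $\sectioncoord$, so that $\sectioncoord$ becomes comparable to a Euclidean ball; then $d\sim\diam(\sectioncoord)$ and all such discrepancies become $O(\epsilon)$ multiplicative errors, so the estimate closes for $\epsilon$ very nice and small. In the normalized picture, \eqref{eqn: bound on direction in Gdual} is just the quantitative form of the elementary fact that the polar dual of a convex body containing $\p_0$ in its interior, scaled by the cone height $h$, reaches distance $h/d$ from the slope of $\basemountainA$ in the direction dual to $\direction{}$, and $\nondegmatrix{\x_0}{\basexbarA}{\basezA}$ is precisely the map transporting the $\sectioncoord$-side direction $\direction{}$ to the slope-side direction at $\x_0$.
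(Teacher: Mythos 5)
Your construction differs from the paper's (you move the focus along the straight ray $q_0+\delta v$ in the $\pbarmap{\x_0}{u(\x_0)}{\cdot}$ chart, so \eqref{eqn: small rotation} is free and all the work goes into the exit-time bound $\delta^{\ast}\gtrsim h/d$), but the step you yourself flag as "the main obstacle" is a genuine gap, and the remedy you propose does not close it. To get $q_0+\delta v\in W$ for all $\delta\lesssim h/d$ you must show $\G(\y,\turnedfocus,\turnedz)\leq\basemountainA(\y)$ \emph{uniformly over} $\y\in\sublevelset$, and the error in replacing $\nondegmatrixinv{\x_0}{\xbar(r)}{\z(r)}v$ by $\raisecovect{\direction{}}$ and $\pmap{\xbar(r)}{\z(r)}{\cdot}$ by $\pmap{\basexbarA}{\basezA}{\cdot}$ is of size (matrix/coordinate discrepancy)$\times\delta\times\gbarnorm[\basexbarA]{\pmap{\basexbarA}{\basezA}{\y}-\p_0}$. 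The discrepancy is \emph{bounded but not small}: the focus travels a distance $\sim h/d$ (only bounded by a very nice constant, since $h\lesssim d$) and the height shifts from $\basezA$ to $\H(\x_0,\basexbarA,u(\x_0))$, a change of order $h$; and the last factor is only $\lesssim\diam(\sectioncoord)$, not $\lesssim d$. Since the section can be very eccentric, $\diam(\sectioncoord)\gg d$, absorption into $-c_0h$ only yields $\delta^{\ast}\gtrsim h/\diam(\sectioncoord)$, which is exactly the useless bound (Lemma~\ref{lem: S G-dual lower volume bound} needs the $1/d_i$ rate in each direction). The proposed fix via John's lemma does not rescue this: unlike the real Monge--Amp\`ere equation, a generated Jacobian equation is not affinely invariant, so after composing with a highly eccentric affine normalization the structural quantities behind the \emph{very nice} constants (bounds and moduli of continuity of $\nondegmatrixentries$, $\nondegmatrixentries^{-1}$, $\Gz$, the \eqref{QQConv}/\eqref{DualQQConv} constant $M$) degrade by factors of the eccentricity, so the claim that "all such discrepancies become $O(\epsilon)$ multiplicative errors" is unjustified.

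The paper avoids this uniform-over-the-section estimate altogether. It constructs the focus by tilting a supporting function of $u$ at a \emph{contact point} $\contactpoint\in\sublevelset^{\bdry}$ lying on $\plane{\sectioncoord}{\direction{}}$: a Clarke-subdifferential argument produces $t^{\ast}$ with $\turnedseg(t^{\ast})\in\Gsubdiff{u}{\contactpoint}$, one slides down to $t^{\ast\ast}$ so that the tilted $\G$-affine function passes through $(\x_0,u(\x_0))$, and membership $\turnedfocus\in\Gsubdiff{\Gcone{\x_0,\sublevelset}}{\x_0}$ then follows from the \eqref{DualQQConv}-based local-to-global argument of Proposition~\ref{prop: local to global} — no quantitative smallness over $\sublevelset$ is needed. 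The bound \eqref{eqn: bound on direction in Gdual} is then obtained from a \emph{single} application of \eqref{eqn: linearization upper bound} at the auxiliary point $\parallelpoint$ on the ray from $\p_0$ in direction $\direction{}$ hitting $\mathcal{H}_\G^{\bdry}$, where the factor $\dist{\p_0}{\plane{\sectioncoord}{\direction{}}}$ enters through $\gbarnorm[\basexbarA]{\parallelpointcoord-\p_0}\leq d$, while the lower bound $\gtrsim h$ comes from $\G(\parallelpoint,\turnedfocus,\Gconez)=\basemountainA(\parallelpoint)$; the price is that \eqref{eqn: small rotation} is no longer automatic and is proved by the separate continuity/compactness argument with the map $F$, which is where the hypothesis $\diam(\sublevelset)<\epsilon$ is used. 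If you want to salvage your route, you would need to prove the error bound with the factor $d$ in place of $\diam(\sectioncoord)$, which is essentially equivalent to redoing the paper's linearization-at-$\parallelpoint$ argument.
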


\begin{proof}
Fix an $\direction{} \in \mathbb{S}^n\subset \cotanspMbar{\basexbarA}$, and take any $\contactpoint\in\sublevelset^{\bdry}$ such that $\contactpointcoord\in\sectioncoord^{\bdry}\cap \plane{\sectioncoord}{\direction{}}$.  We intend to follow the proof in \cite[Lemma 4.7]{GK14}, but since $u$ may not be semi-convex we must find an appropriate alternative to \cite[Corollary 23.7.1]{Roc70} which was utilized there. First we define $U(\p):=u(\X{\basexbarA}{\basezA}{\p})-\basemountainA(\X{\basexbarA}{\basezA}{\p})$, it is clear that this function is $\Gtil$-convex on $\coord{\outerdom}{\basexbarA, \basezA}$ where 
\begin{align*}
\Gtil(\p, \xbar', \z'):=\G(\X{\basexbarA}{\basezA}{\p}, \xbar', \z')-\basemountainA(\X{\basexbarA}{\basezA}{\p}),
\end{align*}
 with $\gendomtil:=\curly{(\p, \xbar', \z')\mid (\X{\basexbarA}{\basezA}{\p}, \xbar', \z')\in\gendom}$. $\Gtil$ has the same $C^2$ regularity as $\G$, and satisfies the same set of conditions, including \eqref{QQConv} and \eqref{DualQQConv}. Thus as in the proof of Corollary~\ref{cor: local to global}, we can show that $\subdiff{U}{\p}=\curly{\lim_{k\to\infty}DU(\p_k)\mid \p_k\to\p}$. Now by \cite[Theorem 2.5.1]{Clarke90}, this implies that $\subdiff{U}{\p}=\partial^C U(\p)$, where $\partial^CU$ is the \emph{Clarke} or \emph{generalized subdifferential} of $U$ (see \cite[Chapter 2.1]{Clarke90}). Since $\sectioncoord=\curly{\p\in\coord{\outerdom^{\cl}}{\basexbarA, \basezA}\mid U(\p)\leq 0}$, by combining \cite[Theorem 2.4.7, Corollary 1]{Clarke90} and \cite[Proposition 2.4.4]{Clarke90} we find there exists a $t^*>0$ such that $t^*\direction{}\in\subdiff{U}{\contactpointcoord}$ (again, identifying $T^*_{\contactpointcoord}\cotanspMbar{\basexbarA}\cong\cotanspMbar{\basexbarA}\cong\tanspMbar{\basexbarA}$). Thus we can continue as in the proof of \cite[Lemma 4.7]{GK14}, to find that if 
\begin{align}
\turnedseg(t):&= \X{\contactpoint}{\basemountainA(\contactpoint)}{\pbarmap{\contactpoint}{\basemountainA(\contactpoint)}{\basexbarA}+t \nondegmatrix{\contactpoint}{\basexbarA}{\basezA}\raisecovect{\direction{}}}\notag\\
&=\X{\contactpoint}{u(\contactpoint)}{\pbarmap{\contactpoint}{u(\contactpoint)}{\basexbarA}+t \nondegmatrix{\contactpoint}{\basexbarA}{\basezA}\raisecovect{\direction{}}},\label{eqn: tilting at boundary}
\end{align}
then $\turnedseg(t^*)\in\Gsubdiff{u}{\contactpoint}$ (we have used here that $\basemountainA(\contactpoint)=u(\contactpoint)$ since $\sublevelset^\bdry\subset\outerdom^{\interior}$).

Then writing $\turnedz(t):=\H(\contactpoint, \turnedseg(t), u(\contactpoint))$, we see that $\G(\x_0, \turnedseg(t^*), \turnedz(t^*))\leq u(\x_0)$ while $\G(\cdot, \turnedseg(0), \turnedz(0))\equiv \basemountainA$, hence there exists some value $0\leq t^{**}\leq t^*$ for which 
\begin{align}\label{eqn: characterizing good focus}
\G(\x_0, \turnedseg(t^{**}), \turnedz(t^{**}))=u(\x_0);
\end{align}
 let us write
\begin{align*}
 \turnedfocus:&=\turnedseg(t^{**}),\ \Gconez:=\turnedz(t^{**}).
\end{align*}
Since $\basemountainA$ is nice, we may take $\outerdom^\prime:=\curly{\y\in\sublevelset\mid \sup_{t\in[0, t^*]}\G(y, \turnedseg(t), \turnedz(t))\leq \basemountainA(\y)}$ and $\turnedseg(t)$, $\turnedz(t)$ in place of $\xbar(t)$, $\z(t)$ in the proof of Proposition~\ref{prop: local to global} to see that $\G(\cdot, \turnedfocus, \Gconez)\leq \basemountainA(\cdot)$ on $\sublevelset$, or in other words (recalling \eqref{eqn: Gcone subdiff at vertex}) $\turnedfocus\in\Gsubdiff{\Gcone{\x_0, \sublevelset}}{\x_0}$ as desired.

Now recalling that $\sectioncoord\subset B$ for some ball, it is not hard to see that writing
\begin{align*}
 \p_0:=\pmap{\basexbar}{\basez}{\x_0},
\end{align*}
the orthogonal projection of $\p_0$ onto $\plane{\sectioncoord}{\direction{}}$ is contained in $3B\subset \coord{\outerdom}{\basexbarA, \basezA}$, and hence the whole line segment in between (by \eqref{DomConv} and since $\basemountainA$ is nice). At the same time, $\mathcal{H}_\G:=\coord{\curly{\G(\cdot, \turnedfocus, \Gconez)\leq \basemountainA(\cdot)}}{\basexbarA, \basezA}$ is convex by Proposition~\ref{prop: sections are convex}, and by differentiating $\G(\cdot, \turnedfocus, \Gconez)$ it can be seen that $\direction{}$ is an outer unit normal to $\mathcal{H}_\G$ at $\contactpointcoord$. Thus, there exists $\parallelpointcoord$ in the intersection of $\mathcal{H}_\G^\bdry$ with the ray $\curly{\p_0+s\direction{}\mid s\geq 0}$ with 
\begin{align*}
\parallelpoint:=\X{\basexbarA}{\basezA}{\parallelpointcoord}\in\outerdom,
\end{align*}
 and
\begin{align}
\gbarnorm[\basexbarA]{\parallelpointcoord-\p_0}&\leq \dist{\p_0}{\plane{\sectioncoord}{\direction{}}},\label{eqn: bound by plane distance}
\end{align}
(see Figure~\ref{figure: Sec4fig1}).
  Now let us write
  \begin{align*}
    \x(s) & := \Gseg{\x_0}{\parallelpoint}{\basexbarA,\basezA},\qquad\xbar(t)  := \Gseg{\basexbarA}{\turnedfocus}{\x_0,u(\x_0)},\\
	\z(t) & :=\H(\x_0,\xbar(t),u(\x_0))\\
	 \turnedfocuscoord:&=\pbarmap{\x_0}{u(\x_0)}{\turnedfocus},\qquad \pbar_0:=\pbarmap{\x_0}{u(\x_0)}{\basexbarA};
  \end{align*}
  note that $\z(1)=\Gconez$ by \eqref{eqn: characterizing good focus}. Additionally, since $u$ and $\basemountainA$ are \emph{nice}, by \eqref{G5}, \eqref{DomConv}, and \eqref{DualDomConv}, $\x(s)$, $\xbar(t)$ are well-defined and remain in $\outerdom^{\cl}$, $\outertarget^{\cl}$ respectively.
  Now if we write 
\begin{align*}
 \turnedfocuscoord:=\pbarmap{\x_0}{u(\x_0)}{\turnedfocus},\qquad &\pbar_0:=\pbarmap{\x_0}{u(\x_0)}{\basexbarA},
\end{align*}
by  \eqref{eqn: linearization upper bound} combined with \eqref{eqn: bound by plane distance}, for some \emph{very nice} $C>0$ we arrive at the inequality 
\begin{align}
 \G(\parallelpoint,\turnedfocus,\Gconez)-\G(\parallelpoint,\basexbarA,\z(0))&\leq C\gnorm[\basexbarA]{\turnedfocuscoord-\pbar_0}\gbarnorm[\x_0]{\parallelpointcoord-\p_0}\notag\\
 &\leq C\gnorm[\basexbarA]{\turnedfocuscoord-\pbar_0}\dist{\p_0}{\plane{\sectioncoord}{\direction{}}}.\label{eqn: double integral expression}
\end{align}
On the other hand recalling the choice of $\parallelpoint$, and since $u$ and $\mountain$ are \emph{nice}, 
\begin{align*}
\G(\parallelpoint,\turnedfocus,\Gconez)-\G(\parallelpoint,\basexbarA,\z(0))
&\geq C_1(\basemountainA(\x_0)-u(\x_0))+ \G(\parallelpoint, \turnedfocus, \Gconez)-\basemountainA(\parallelpoint)\\
&=C_1(\basemountainA(\x_0)-u(\x_0))
\end{align*}
for some \emph{very nice} $C_1>0$ by applying the mean value property; combining with \eqref{eqn: double integral expression} we thus arrive at \eqref{eqn: bound on direction in Gdual}.

Finally, define
\begin{align*}
  \origvect:=\nondegmatrix{\x_0}{\basexbarA}{\basezA}\raisecovect{\direction{}},\;\;\; \newvect :=(t^{**})^{-1}\left (\turnedfocuscoord-\pbar_0\right ).
\end{align*}
Thanks to Lemma \ref{lem: comparability}, we know that $\pbarmap{\x_0}{u(\x_0)}{\X{\contactpoint}{u(\contactpoint)}{\cdot}}$ is a Lipschitz map on $\cotanspM{\contactpoint}$ with a \emph{very nice} Lipschitz constant. Then we find for some \emph{very nice} $C>0$, 
\begin{align*}
&\gnorm[x_0]{\turnedfocuscoord-\pbar_0}\\
 &=\gnorm[x_0]{\pbarmap{\x_0}{u(\x_0)}{\X{\contactpoint}{u(\contactpoint)}{\pbarmap{\contactpoint}{u(\contactpoint)}{\basexbarA}+t^{**} \nondegmatrix{\contactpoint}{\basexbarA}{\basezA}\raisecovect{\direction{}}}}-\pbarmap{\x_0}{u(\x_0)}{\X{\contactpoint}{u(\contactpoint)}{\pbarmap{\contactpoint}{u(\contactpoint)}{\basexbarA}}}}\\
 &\leq C\gnorm[\contactpoint]{\pbarmap{\contactpoint}{u(\contactpoint)}{\basexbarA}+t^{**} \nondegmatrix{\contactpoint}{\basexbarA}{\basezA}\raisecovect{\direction{}}-\pbarmap{\contactpoint}{u(\contactpoint)}{\basexbarA}}\\
 &=Ct^{**}\gnorm[\contactpoint]{\nondegmatrix{\contactpoint}{\basexbarA}{\basezA}\raisecovect{\direction{}}}\\
 &\leq Ct^{**}
\end{align*}
(here we have also used \eqref{Nondeg} and the fact that $\direction{}$ has unit length). As a result,
\begin{align*}
 \gnorm[x_0]{\newvect}&\leq C.
\end{align*}
Now since $\direction{}$ is unit length, we see $\frac{1}{\gnorm[x_0]{\origvect}}\leq C$ for a \emph{very nice} $C\geq 1$. Now we claim that if $\diam\paren{\sublevelset}$ is smaller than a certain \emph{very nice} constant (recall also Lemma~\ref{lem: comparability}), we can ensure $\gnorm[x_0]{\origvect-\newvect}<\frac{1}{4C\sqrt{n}}$. 

To see this, consider the map
\begin{align*}
  F: \textnormal{Dom}(F) \subset \outerdom^{\cl}\times \outertarget^{\cl}\times T^*M \to T^*M,
\end{align*}
defined by
\begin{align*}
  F(x_1, \xbar, x_2,V) := \pbar_{\x_1,\u(\x_1)}\left (\exp^G_{\x_2,u(x_2)} ( \pbar_{\x_2,u(\x_2)}(\xbar)+V     )  \right) - \pbar_{\x_1,\u(\x_1)}(\xbar),\quad V\in\cotanspM{\x_2}.
\end{align*}
Note that 
\begin{align}
  F(\x_1, \x_2, 0) & = 0, \;\;\forall\;\x_1,\ \x_2,\label{eqn: first F}\\
  F(\x_0, \x_0, \xbar_0, V) & = V, \;\;\forall\; V\in \tanspM{\x_0}.	\label{eqn: second F}
\end{align}	
Moreover $F$ is differentiable with respect to the $(\x_2, V)$ variables along the fibers of $T^*M$, the derivative being continuous in all four variables. This continuity depends only on the modulus of continuity of $\nondegmatrix{\x}{\xbar}{\z}$ (and its inverse), and the modulus of continuity of $u$ which is controlled by a \emph{very nice} constant (see Remark~\ref{rem: nice functions are nice}). Thus all constants obtained below will also be \emph{very nice}. From this point on, we will always take $\x_1=\x_0$ and $\xbar=\xbar_0$ in $F$, thus we will write simply $F(\x, V)$ for brevity.

Now, the map $F$ was constructed so that the point
\begin{align*}
  \X{\contactpoint}{u(\contactpoint)}{\pbarmap{\contactpoint}{u(\contactpoint)}{\basexbarA}+t^{**} \nondegmatrix{\contactpoint}{\basexbarA}{\basezA}\raisecovect{\direction{}}}
\end{align*}
is the same as
\begin{align*}
  \exp_{\x_0,\u(x_0)}^G\left (\pbarmap{\x_0}{u(\x_0)}{\basexbarA}+ F(\x_{\direction{}},t^{**}\nondegmatrix{\contactpoint}{\basexbarA}{\basezA}\raisecovect{\direction{}})\right ).
\end{align*}
Thus, to obtain the desired bound all we have to do is show that
\begin{align*}		
  (t^{**})^{-1} \left ( F(\x_{\direction{}},t^{**}\nondegmatrix{\contactpoint}{\basexbarA}{\basezA}\raisecovect{\direction{}}) - t^{**}\nondegmatrix{\x_0}{\basexbarA}{\basezA}\raisecovect{\direction{}}\right )
\end{align*}
has a small enough length. We distinguish two cases, first, let us assume that $t^{**}\leq t_0$ for some $t_0>0$ to be determined below. Then, differentiating at $V=0$ and recalling \eqref{eqn: first F} leads to
\begin{align*}		
  & (t^{**})^{-1}  F(\x_{\direction{}},t^{**}\nondegmatrix{\contactpoint}{\basexbarA}{\basezA}\raisecovect{\direction{}}) -  \nondegmatrix{\x_0}{\basexbarA}{\basezA}\raisecovect{\direction{}} \\
  & = D_VF(\x_{\direction{}},0)\nondegmatrix{\contactpoint}{\basexbarA}{\basezA}\raisecovect{\direction{}}+o(1)-  \nondegmatrix{\x_0}{\basexbarA}{\basezA}\raisecovect{\direction{}}\\	
  & =  \left ( D_VF(\x_{\direction{}},0) \nondegmatrix{\contactpoint}{\basexbarA}{\basezA}-\nondegmatrix{\x_0}{\basexbarA}{\basezA} \right ) \raisecovect{\direction{}}+o(1).
\end{align*}
Where $o(1)$ represents a vector whose length goes to zero as $t_0 \to 0$. Then, $t_0>0$ is chosen as a \emph{very nice} constant such that
\begin{align*}
  & \gnorm[\x_{\direction{}}]{(t^{**})^{-1}  F(\x_{\direction{}},t^{**}\nondegmatrix{\contactpoint}{\basexbarA}{\basezA}\raisecovect{\direction{}} ) -  \nondegmatrix{\x_0}{\basexbarA}{\basezA}\raisecovect{\direction{}}}\\
  & \leq \gnorm[x_{\direction{}}]{\left ( D_VF(\x_{\direction{}},0)\nondegmatrix{\contactpoint}{\basexbarA}{\basezA}-\nondegmatrix{\x_0}{\basexbarA}{\basezA} \right )\raisecovect{\direction{}} } +\frac{1}{2}\frac{1}{4C\sqrt{n}}.
\end{align*}
On the other hand, since $F$ is continuously differentiable in $V$ we have that 
\begin{align*}
  \x \mapsto D_VF(\x,0)\nondegmatrix{\x}{\basexbarA}{\basezA}-\nondegmatrix{\x_0}{\basexbarA}{\basezA}
\end{align*}
defines a linear transformation-valued continuous map with respect to $\x$. Furthermore, a standard computation shows that this map is zero for $\x=\x_0$. It follows there exists a \emph{very nice} constant $\delta_0$ such that if $\dist{\x_{\direction{}}}{\x_0} \leq \delta_0$  then
\begin{align*}
  \gnorm[x_0]{\origvect-\newvect}<\frac{1}{4C\sqrt{n}}. 
\end{align*}	
This takes care of the case $t^{**}\leq t_0$. Now, suppose that $t^{**}\geq t_0$. Let us choose $\delta$ small enough so that
\begin{align*}
  \gnorm[\x_0]{F(\x_{\direction{}},t^{**}\nondegmatrix{\contactpoint}{\basexbarA}{\basezA}\raisecovect{\direction{}})- t^{**}\nondegmatrix{\x_0}{\basexbarA}{\basezA}\raisecovect{\direction{}}}
\leq \frac{t_0}{4C\sqrt{n}}	
\end{align*}
which is possible thanks to \eqref{eqn: second F} and the \emph{very nice} control on the modulus of continuity of $F$.\\
 
Having the desired bound on $\gnorm[x_0]{\origvect-\newvect}$, we can calculate
\begin{align*}
 \gnorm[x_0]{\frac{\origvect}{\gnorm[x_0]{\origvect}}-\frac{\newvect}{\gnorm[x_0]{\newvect}}}&\leq\frac{1}{\gnorm[x_0]{\origvect}} \paren{\gnorm[x_0]{\origvect-\newvect}+ \gnorm[x_0]{\newvect-\frac{\gnorm[x_0]{\origvect}}{\gnorm[x_0]{\newvect}}\newvect}}\\
 &\leq\frac{1}{\gnorm[x_0]{\origvect}} \paren{\gnorm[x_0]{\origvect-\newvect}+ \norm{\gnorm[x_0]{\origvect}-\gnorm[x_0]{\newvect}}}< \frac{1}{2\sqrt{n}}
\end{align*}
and we arrive at \eqref{eqn: small rotation} as desired.
\end{proof}

Next we apply the above lemma to a specific basis of $n$ directions: let $\direction{1}\in \cotanspMbar{\basexbarA}$ be the unit vector of interest in Theorem~\ref{thm: G-aleksandrov estimate} and $\curly{\direction{i}}_{i=2}^n$ be an orthonormal collection in $\cotanspMbar{\basexbarA}$ aligned with the axial directions of the John ellipsoid of $\sectioncoord$ in such a way that $\innergbar[\xbar]{\direction{1}}{\direction{i}}\leq \frac{1}{\sqrt{n}}$ for every $2\leq i\leq n$.
\begin{lem}\label{lem: S G-dual lower volume bound}
For the above choice of $\curly{\direction{i}}_{i=1}^n$, there exists a \emph{very nice} constant $C>0$ such that
  \begin{align*}
    C\Leb{\Gsubdiff{\Gcone{\x_0, \sublevelset}}{\x_0}} \geq (\basemountainA(\x_0)-u(\x_0))^n \prod \limits_{i=1}^n\frac{1}{\dist{\pmap{\basexbar}{\basez}{\x_0}}{\plane{\sectioncoord}{\direction{i}}}}.
  \end{align*}	  
\end{lem}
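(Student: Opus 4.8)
The plan is to use the two conclusions of Lemma \ref{lem: G-distance bound} applied to each of the $n$ directions $\direction{i}$, together with John's Lemma, to pack a (comparable image of a) box or ellipsoid into $\coord{\Gsubdiff{\Gcone{\x_0, \sublevelset}}{\x_0}}{\x_0, u(\x_0)}$, and then transfer the volume estimate back to $\Gsubdiff{\Gcone{\x_0, \sublevelset}}{\x_0}$ via Corollary \ref{cor: comparability}. First I would apply Lemma \ref{lem: G-distance bound} to each $\direction{i}$, $1 \leq i \leq n$, obtaining $\turnedfocus[i] \in \Gsubdiff{\Gcone{\x_0, \sublevelset}}{\x_0}$ and the length bound
\begin{align*}
  C \gnorm[\x_0]{\pbarmap{\x_0}{u(\x_0)}{\turnedfocus[i]}-\pbarmap{\x_0}{u(\x_0)}{\basexbarA}} \geq \frac{\basemountainA(\x_0)-u(\x_0)}{\dist{\pmap{\basexbar}{\basez}{\x_0}}{\plane{\sectioncoord}{\direction{i}}}},
\end{align*}
while the directions $\frac{\pbarmap{\x_0}{u(\x_0)}{\turnedfocus[i]}-\pbarmap{\x_0}{u(\x_0)}{\basexbarA}}{\gnorm[\x_0]{\cdot}}$ are within $\frac{1}{2\sqrt{n}}$ (in $\g{\x_0}$-norm) of the $\g{\x_0}$-orthonormal-ish system $\frac{\nondegmatrix{\x_0}{\basexbarA}{\basezA}\raisecovect{\direction{i}}}{\gnorm[\x_0]{\cdot}}$. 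Write $\pbar_i := \pbarmap{\x_0}{u(\x_0)}{\turnedfocus[i]}$ and $\pbar_0 := \pbarmap{\x_0}{u(\x_0)}{\basexbarA}$. By convexity of $\coord{\Gsubdiff{\Gcone{\x_0, \sublevelset}}{\x_0}}{\x_0, u(\x_0)}$ (Remark \ref{rem: Gcone}, via the proof of Proposition \ref{prop: local to global}), this set contains the convex hull of $\curly{\pbar_0, \pbar_1, \ldots, \pbar_n}$.

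The second step is the elementary linear algebra/geometry estimate: a simplex with one vertex at $\pbar_0$ and edge vectors $\pbar_i - \pbar_0$ has volume $\frac{1}{n!}\lvert\det(\pbar_1-\pbar_0,\ldots,\pbar_n-\pbar_0)\rvert$, and since each $\frac{\pbar_i-\pbar_0}{\gnorm[\x_0]{\pbar_i-\pbar_0}}$ is close (within $\frac{1}{2\sqrt n}$) to the corresponding vector of the system $\curly{\frac{\nondegmatrix{\x_0}{\basexbarA}{\basezA}\raisecovect{\direction{i}}}{\gnorm[\x_0]{\nondegmatrix{\x_0}{\basexbarA}{\basezA}\raisecovect{\direction{i}}}}}_{i=1}^n$ — which is a basis whose Gram determinant is bounded below by a \emph{very nice} constant (here the choice $\innergbar[\xbar]{\direction{1}}{\direction{i}}\leq \frac{1}{\sqrt n}$ guarantees $\curly{\direction{i}}$ is a genuine basis, hence so is its image under the invertible map $\nondegmatrix{\x_0}{\basexbarA}{\basezA}$, with bounds controlled by \eqref{Nondeg}) — a perturbation argument shows
\begin{align*}
  \lvert\det(\pbar_1-\pbar_0,\ldots,\pbar_n-\pbar_0)\rvert \geq C^{-1}\prod_{i=1}^n\gnorm[\x_0]{\pbar_i-\pbar_0}
\end{align*}
for a \emph{very nice} $C$. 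Combining with the length bounds gives
\begin{align*}
  \Leb{\coord{\Gsubdiff{\Gcone{\x_0, \sublevelset}}{\x_0}}{\x_0, u(\x_0)}} \geq C^{-1}\prod_{i=1}^n\gnorm[\x_0]{\pbar_i-\pbar_0} \geq C^{-1}(\basemountainA(\x_0)-u(\x_0))^n\prod_{i=1}^n\frac{1}{\dist{\pmap{\basexbar}{\basez}{\x_0}}{\plane{\sectioncoord}{\direction{i}}}},
\end{align*}
and finally Corollary \ref{cor: comparability}, equation \eqref{eqn: volume comparability} (noting $u(\x_0) \in \Niceinterval$ since $u$ is \emph{very nice}, and $\basexbarA$ together with the $\turnedfocus[i]$ lie in $\outertarget^{\cl}$), converts $\Leb{\coord{\Gsubdiff{\Gcone{\x_0, \sublevelset}}{\x_0}}{\x_0, u(\x_0)}}$ to $\Leb{\Gsubdiff{\Gcone{\x_0, \sublevelset}}{\x_0}}$ up to a \emph{very nice} factor, yielding the claim.

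The main obstacle I expect is bookkeeping the "very nice" dependence of the determinant lower bound: one must check that the angular closeness $\frac{1}{2\sqrt n}$ supplied by \eqref{eqn: small rotation}, combined with the uniform two-sided bounds on $\nondegmatrix{\x_0}{\basexbarA}{\basezA}$ and on the mutual angles of $\curly{\direction{i}}$, really forces the Gram determinant of $\curly{\frac{\pbar_i - \pbar_0}{\gnorm[\x_0]{\pbar_i-\pbar_0}}}$ away from zero by a constant depending only on $n$ and the \emph{very nice} data — i.e. a quantitative, dimension-dependent version of "a small perturbation of a basis with controlled conditioning is still a basis with controlled conditioning." This is routine but is the one place where the constants must be tracked carefully; everything else (convexity of the $\G$-dual image, the simplex volume formula, and the volume comparability) is either already established in the excerpt or completely standard.
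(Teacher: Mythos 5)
Your proposal is correct and follows essentially the same route as the paper: apply Lemma \ref{lem: G-distance bound} to each $\direction{i}$, use the convexity of $\coord{\Gsubdiff{\Gcone{\x_0, \sublevelset}}{\x_0}}{\x_0, u(\x_0)}$ to insert the simplex on $\curly{\pbar_0,\pbar_1,\dots,\pbar_n}$, bound its volume below by $\prod_i\gnorm[\x_0]{\pbar_i-\pbar_0}$ via \eqref{eqn: small rotation}, \eqref{Nondeg} and the angle condition on $\curly{\direction{i}}$, and finish with \eqref{eqn: bound on direction in Gdual} and \eqref{eqn: volume comparability}. (The invocation of John's Lemma in your first sentence is not actually needed, and is not used in the rest of your argument, just as in the paper.)
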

\begin{figure}[H]
  \centering
    \includegraphics[height=.45\textwidth]{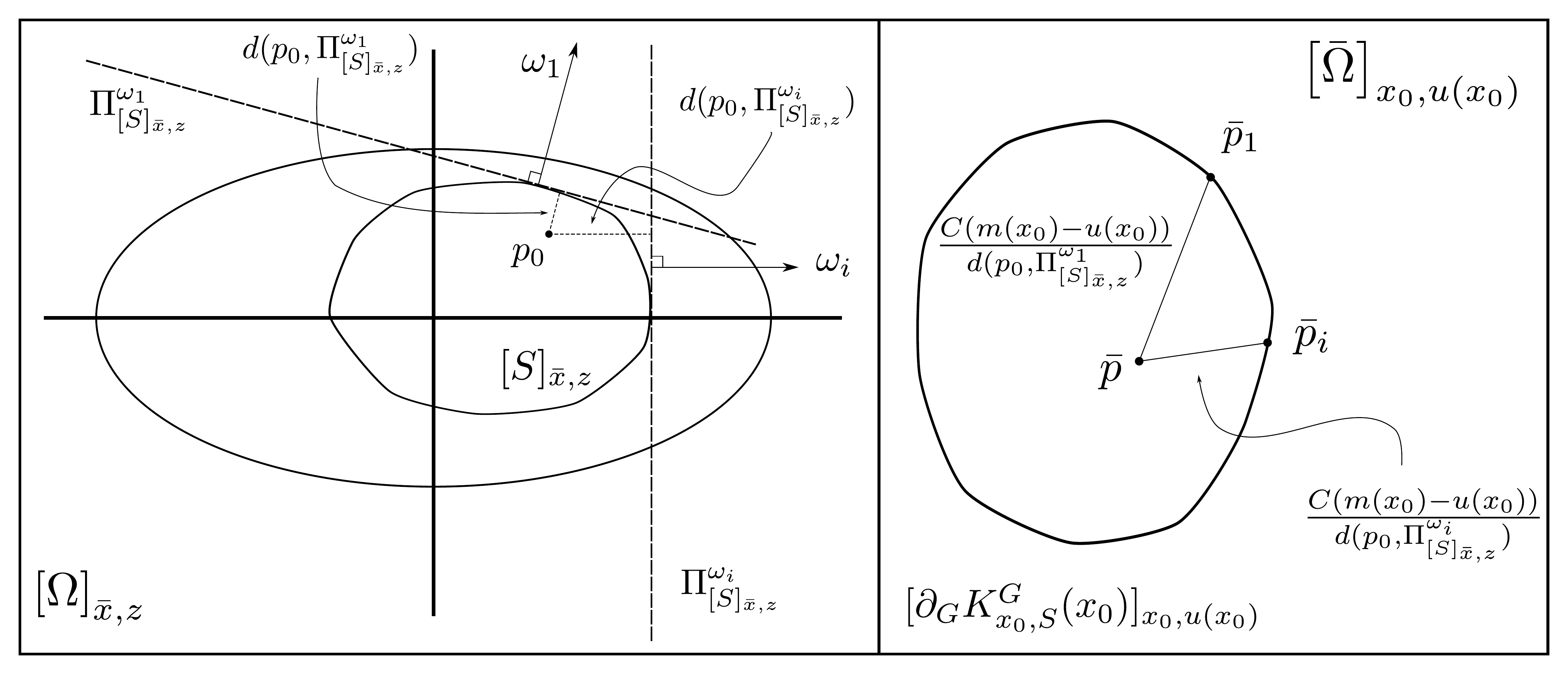}
     \caption{}\label{figure: Sec4fig2}
\end{figure}

\begin{proof}
Let us write $\xbar_i:=\xbar_{\direction{i}}$ which are obtained by applying Lemma~\ref{lem: G-distance bound} to the directions $\direction{i}$, $\pbar_i:=\pbarmap{\x_0}{u(\x_0)}{\xbar_i}$ for $1\leq i\leq n$, and also $\pbar:=\pbarmap{\x_0}{u(\x_0)}{\xbar}$. Then we have $\pbar$, $\pbar_i\in \coord{\Gsubdiff{\Gcone{\x_0, \sublevelset}}{\x_0}}{\x_0, u(\x_0)}$, hence by Remark~\ref{rem: Gcone}
\begin{align*}
\ch\curly{\pbar,\ \pbar_i\mid 1\leq i\leq n}\subset \coord{\Gsubdiff{\Gcone{\x_0, \sublevelset}}{\x_0}}{\x_0, u(\x_0)}.
\end{align*}
Now \eqref{eqn: small rotation} combined with \eqref{Nondeg} implies that the directions $\curly{\pbar_i-\pbar}_{i=1}^n$ span a parallelepiped whose volume is comparable by a \emph{very nice} constant to that of $\ch\curly{\gnorm[\x_0]{\pbar_i-\pbar}\direction{i}\mid 1\leq i\leq n}$, which in turn (due to our assumption on the angles between $\direction{i}$ and $\direction{1}$) has volume comparable to $\prod\limits_{i=1}^n\gnorm[\x_0]{\pbar_i-\pbar}$ by a constant depending only on $n$. Combining this with \eqref{eqn: bound on direction in Gdual} and recalling \eqref{eqn: volume comparability} from Remark~\ref{lem: comparability}, we obtain the claimed inequality.
\end{proof}
Just as in the proof of \cite[Lemma 4.8]{GK14}, and using Corollary~\ref{cor: comparability} (the estimate \eqref{eqn: volume comparability} in particular), we can obtain the following bound.
\begin{lem}\label{lem: volume bound}
There exists a \emph{very nice} constant such that
  \begin{align*}
    C\Leb{\sublevelset}\geq \supsegment{\sectioncoord}{\direction{1}} \prod \limits_{i=2}^n\dist{\plane{\sectioncoord}{\direction{i}}}{\plane{\sectioncoord}{-\direction{i}}}
  \end{align*}	
\end{lem}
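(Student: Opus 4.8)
The goal is a lower bound for $\Leb{\sublevelset}$ of the form $C^{-1}\supsegment{\sectioncoord}{\direction{1}}\prod_{i=2}^n\dist{\plane{\sectioncoord}{\direction{i}}}{\plane{\sectioncoord}{-\direction{i}}}$. The first move is to transfer the question from $\sublevelset\subset\outerdom$ to its coordinate image $\sectioncoord=\coord{\sublevelset}{\basexbarA,\basezA}$: by Corollary~\ref{cor: comparability}, specifically the volume comparability \eqref{eqn: volume comparability} (valid here because $\basemountainA$ is \emph{nice}, so \eqref{eqn: very nice focus} holds on $\outerdom^{\cl}$ and hence on $\sublevelset\subset\outerdom^{\cl}$), we have $\Leb{\sublevelset}\sim\Leb{\sectioncoord}$ up to a \emph{very nice} constant. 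So it suffices to prove the purely Euclidean statement
\[
  \Leb{\sectioncoord}\geq c(n)\,\supsegment{\sectioncoord}{\direction{1}}\prod_{i=2}^n\dist{\plane{\sectioncoord}{\direction{i}}}{\plane{\sectioncoord}{-\direction{i}}}
\]
for the convex body $\sectioncoord\subset\cotanspMbar{\basexbarA}\cong\real^n$, with $c(n)$ depending only on $n$ — and here I would follow \cite[Lemma 4.8]{GK14} essentially verbatim.

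For that Euclidean claim I would invoke John's Lemma (Theorem~\ref{John's lemma}): let $\ellipsoid$ be the John ellipsoid of $\sectioncoord$, so $\alpha(n)\ellipsoid\subset\sectioncoord\subset\ellipsoid$ with both sharing the same center of mass. By construction of the directions $\curly{\direction{i}}_{i=2}^n$ — they are aligned with the axial directions of $\ellipsoid$ — the widths $\dist{\plane{\sectioncoord}{\direction{i}}}{\plane{\sectioncoord}{-\direction{i}}}$ are comparable (within a factor depending only on $n$, using $\alpha(n)\ellipsoid\subset\sectioncoord\subset\ellipsoid$) to the lengths of the corresponding semi-axes of $\ellipsoid$, i.e.\ to $\dist{\plane{\ellipsoid}{\direction{i}}}{\plane{\ellipsoid}{-\direction{i}}}$. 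Meanwhile $\supsegment{\sectioncoord}{\direction{1}}$, the longest chord of $\sectioncoord$ in the $\direction{1}$ direction, is bounded above by the diameter of $\ellipsoid$, hence by (a dimensional constant times) the longest axis of $\ellipsoid$; and the $n-1$ remaining axes of $\ellipsoid$ control the rest. Multiplying these out and using $\Leb{\sectioncoord}\geq\Leb{\alpha(n)\ellipsoid}=\alpha(n)^n\Leb{\ellipsoid}$, while $\Leb{\ellipsoid}=\omega_n\prod_{i=1}^n(\text{semi-axis lengths})$, gives exactly the product estimate above with $c(n)$ absorbing $\alpha(n)$, $\omega_n$, and the angle/width comparison constants.

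The one point requiring a little care — and the only place the specific choice of $\curly{\direction{i}}$ matters beyond being an orthonormal frame — is showing that $\supsegment{\sectioncoord}{\direction{1}}$ times $\prod_{i=2}^n(\text{widths in }\direction{i})$ is genuinely controlled by $\Leb{\ellipsoid}$ rather than by some larger product of axis lengths: since $\direction{1}$ is an arbitrary unit vector (not an axis of $\ellipsoid$), one must check that its chord length is still bounded by a dimensional multiple of the single largest semi-axis, which is immediate because any chord of $\ellipsoid$ has length at most twice the largest semi-axis. I do not expect any genuine obstacle here; this lemma is a straightforward packaging of John's Lemma combined with the already-established volume comparability, and the proof transcribes \cite[Lemma 4.8]{GK14} with $\sectioncoord$ in place of the optimal-transport section.
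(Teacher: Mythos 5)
You take the same route as the paper: reduce to the coordinate image $\sectioncoord$ via the comparability estimate \eqref{eqn: volume comparability} and then run the Euclidean John-ellipsoid argument of \cite[Lemma 4.8]{GK14}. The reduction is fine, but the Euclidean step has a genuine gap, exactly at the point you flag and then wave away: bounding $\supsegment{\sectioncoord}{\direction{1}}$ by a dimensional multiple of the \emph{largest} semi-axis of the John ellipsoid $\ellipsoid$ does not suffice. Write $e_1,\dots,e_n$ for the axis directions of $\ellipsoid$ and $a_1,\dots,a_n$ for its semi-axes, labelled so that $\direction{i}=e_i$ for $2\leq i\leq n$; thus $e_1$ is the omitted axis. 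Your upper bounds only give $\supsegment{\sectioncoord}{\direction{1}}\prod_{i=2}^n \dist{\plane{\sectioncoord}{\direction{i}}}{\plane{\sectioncoord}{-\direction{i}}}\leq C(n)\,a_{\max}\,a_2\cdots a_n$, and this is controlled by $\Leb{\ellipsoid}\sim a_1a_2\cdots a_n$ only if $a_{\max}\leq C a_1$, which is false in general. Concretely, for $n=2$ take $\ellipsoid$ with semi-axes $a_1=\epsilon$ along $e_1$ and $a_2=1$ along $e_2$, with $\direction{1}=e_1$, $\direction{2}=e_2$: your chain bounds the product only by an absolute constant, while $\Leb{\sectioncoord}\sim\epsilon$, so the final multiplication step fails as $\epsilon\to 0$ (the lemma itself is of course still true there, because the actual $\direction{1}$-chord has length $\sim\epsilon$).

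What is missing is the one place the angle condition $\innergbar[\basexbarA]{\direction{1}}{\direction{i}}\leq \tfrac{1}{\sqrt{n}}$, $2\leq i\leq n$, actually enters. It forces $\direction{1}$ to have a component of size at least $\tfrac{1}{\sqrt{n}}$ along the \emph{omitted} axis, since $\innergbar[\basexbarA]{\direction{1}}{e_1}^2=1-\sum_{i=2}^{n}\innergbar[\basexbarA]{\direction{1}}{\direction{i}}^2\geq \tfrac{1}{n}$. Because $\sectioncoord\subset\ellipsoid$, any segment in $\sectioncoord$ parallel to $\direction{1}$ is no longer than the central chord of $\ellipsoid$ in that direction, whose length is $2\bigl(\sum_{j=1}^n \innergbar[\basexbarA]{\direction{1}}{e_j}^2 a_j^{-2}\bigr)^{-1/2}\leq 2a_1/\lvert\innergbar[\basexbarA]{\direction{1}}{e_1}\rvert\leq 2\sqrt{n}\,a_1$. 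With $\supsegment{\sectioncoord}{\direction{1}}\leq 2\sqrt{n}\,a_1$ in place of your bound, multiplying by the widths $\leq 2a_i$ for $i\geq 2$ and using $\Leb{\sectioncoord}\geq \alpha(n)^n\Leb{\ellipsoid}$ closes the Euclidean estimate with a purely dimensional constant, and then \eqref{eqn: volume comparability} converts it into the stated bound with a \emph{very nice} constant, which is how the cited argument of \cite[Lemma 4.8]{GK14} is meant to run.
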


It is now straightforward to combine the two last lemmas to obtain the analogue of the Aleksandrov estimate.

\begin{proof}[Proof of Theorem \ref{thm: G-aleksandrov estimate}]

Multiplying the inequalities from Lemmas \ref{lem: S G-dual lower volume bound} and \ref{lem: volume bound} yields,
\begin{align*}
  C\Leb{\Gsubdiff{\Gcone{\x_0, \sublevelset}}{\x_0}} \Leb{\sublevelset} & \geq (\basemountainA(\x_0)-u(\x_0))^n \frac{\supsegment{\sectioncoord}{\direction{1}}}{\dist{\pmap{\basexbar}{\basez}{\x_0}}{\plane{\sectioncoord}{\direction{1}}}}\prod \limits_{i=2}^n \frac{\dist{\plane{\sectioncoord}{\direction{i}}}{\plane{\sectioncoord}{-\direction{i}}}}{\dist{\pmap{\basexbar}{\basez}{\x_0}}{\plane{\sectioncoord}{\direction{i}}}},\\
    & \geq (\basemountainA(\x_0)-u(\x_0))^n \frac{\supsegment{\sectioncoord}{\direction{1}}}{\dist{\pmap{\basexbar}{\basez}{\x_0}}{\plane{\sectioncoord}{\direction{1}}}}.
\end{align*}
Rearranging and applying Lemma~\ref{lem: G-dual inside G-subdifferential image}, the theorem follows.
\end{proof}

\section{The sharp growth estimate}\label{section: sharp growth estimate}
In this section we will work toward proving the estimate Theorem~\ref{thm: Sharp growth}. The strategy of our proof will essentially follow \cite[Section 3]{GK14}, however we must redo \cite[Lemmas 3.8 and 3.10]{GK14} using our conditions \eqref{QQConv} and \eqref{DualQQConv}. Throughout this section, let us fix a $\G$-convex function $u$, $\basemountain$, and $\arbitrary\subset\basesection$ as in the hypotheses of Theorem~\ref{thm: Sharp growth}: namely that $u$ is \emph{very nice}, $\Nicelower<\basemountain<\Niceupper$ on $\outerdom^{\cl}$, $\dilationconst M\coord{\arbitrary}{\basexbar, \basez}\subset \coord{\basesection}{\basexbar, \basez}$ for some \emph{very nice} $\dilationconst>0$, and $\sup_{\arbitrary}{\basemountain}+ \sup_{\arbitrary}{(\basemountain-u)}< \Gfiveupper$ (we also remind the reader that $M$ is the constant in \eqref{QQConv} and \eqref{DualQQConv} associated to the choice $[\QQConvlower, \QQConvupper]=\Niceinterval$).

This first lemma replaces \cite[Lemmas 3.8]{GK14}; but contains a crucial difference. The underlying idea here is that we would like to control $\Leb{\Gsubdiff{u}{\arbitrary}}$ from above by the $\G$-subdifferential of some $\G$-cone at \emph{one point} (which is much better behaved). This amounts to showing that $\G$-affine functions supporting to $u$ also can be vertically shifted to support to a $\G$-cone; as in the Euclidean case, one cannot take the whole section $\basesection$ as the base of this $\G$-cone, a smaller dilate is taken to make sure the $\G$-cone is ``steep enough.'' However, in order to show the inclusion we must rely on \eqref{QQConv}, thus we essentially must consider a $\G$-cone whose vertex lies on $\basemountain$ instead of below it in $\basesection$. Since the dependence of $\G$ on the scalar parameter is nonlinear, this is no longer a vertical translation of the usual $\G$-cone, thus we must instead consider a related $\G$-dual set (compare Definitions~\ref{DEF: G-dual} and~\ref{DEF: G-cone}). We also comment here that we do not require condition \eqref{eqn: section not too deep} in the following proof.
\begin{lem}\label{lem: thin cone}
There is a choice of \emph{very nice} $\dilationconst>0$ for which 
\begin{align*}
 \Gsubdiff{u}{\arbitrary}\subset \Gdual{\arbitrary}{\cm, \basemountain, \supheight}
\end{align*}
where $\supheight:=\sup_{\arbitrary}(\basemountain-u)$ and $\pmap{\basexbar}{\basez}{\cm}$ is the center of mass of $\coord{\basesection}{\basexbar, \basez}$.
\end{lem}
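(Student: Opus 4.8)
The goal is to show that any $\G$-affine function supporting $u$ at a point of $\arbitrary$ can, after passing to an appropriate vertically-shifted focus (i.e. a member of the $\G$-dual set $\Gdual{\arbitrary}{\cm, \basemountain, \supheight}$), be made to lie below $\basemountain$ everywhere on $\arbitrary$. The plan is as follows. Fix $\y\in\arbitrary$ and $\xbar\in\Gsubdiff{u}{\y}$, so $\mountainhat(\cdot):=\G(\cdot, \xbar, \H(\y, \xbar, u(\y)))$ is supporting to $u$ at $\y$, hence $\mountainhat\le u\le\basemountain$ on $\arbitrary$ with $\mountainhat(\y)=u(\y)$. We must produce $\z$ with $\G(\cm, \xbar, \z)=\basemountain(\cm)$ and $\G(\cdot,\xbar,\z)\le\basemountain+\supheight$ on $\arbitrary$; equivalently we must estimate, for the $\G$-affine function $\mountaintilde(\cdot):=\G(\cdot,\xbar,\H(\cm,\xbar,\basemountain(\cm)))$ that agrees with $\basemountain$ at the center of mass $\cm$, how far $\mountaintilde$ can rise above $\basemountain$ on $\arbitrary$.

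The mechanism is \eqref{QQConv}. First I would pass to the coordinate chart $\pmap{\basexbar}{\basez}{\cdot}$, in which $\coord{\basesection}{\basexbar,\basez}$ is convex (Proposition~\ref{prop: sections are convex}), and use the dilation hypothesis $\dilationconst M\coord{\arbitrary}{\basexbar,\basez}\subset\coord{\basesection}{\basexbar,\basez}$ together with John's Lemma: for each $\y\in\arbitrary$, the ray from $\pmap{\basexbar}{\basez}{\cm}$ through $\pmap{\basexbar}{\basez}{\y}$, extended a factor $\sim\dilationconst M$ further, still lands inside $\coord{\basesection}{\basexbar,\basez}$. Thus along the $\G$-segment $\x(s):=\Gseg{\cm}{\y'}{\basexbar,\basez}$, where $\y'$ is this extended endpoint in $\basesection$, there is a parameter $s_\y\in[0,1]$ with $\x(s_\y)=\y$ and $1-s_\y\gtrsim 1-\tfrac{1}{\dilationconst M}$ — in particular $\tfrac{s_\y}{1-s_\y}\lesssim M^{-1}$ for $\dilationconst$ chosen large enough (a \emph{very nice} choice). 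Since $\basemountain$ is \emph{nice} and the relevant $\G$-segment stays in $\outerdom^{\cl}$ by \eqref{DomConv}, with $\G(\x(s),\basexbar,\basez)=\basemountain(\x(s))\in\Niceinterval$, I may apply \eqref{QQConv} with $\xbar_0=\basexbar$, $\xbar_1=\xbar$, $\z_0=\basez$, at the point $\x(s_\y)=\y$ and with $s'$ chosen so that $\x(s')=\y'\in\basesection$. This yields
\begin{align*}
 \mountaintilde(\y)-\basemountain(\y)=\G(\y,\xbar,\z_1)-\G(\y,\basexbar,\basez)\le\frac{Ms_\y}{1-s'}\brackets{\G(\y',\xbar,\cdot)-\G(\y',\basexbar,\basez)},
\end{align*}
where $\z_1:=\H(\cm,\xbar,\basemountain(\cm))$ and the inner bracket is the analogous quantity at $\y'\in\basesection$. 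Since $\y'\in\basesection$ we have $u(\y')\le\basemountain(\y')$, and since $\mountainhat$ is supporting, $\G(\y',\xbar,\cdot)\le u(\y')\le\basemountain(\y')$ in the relevant monotone sense, so the bracket is controlled by $\basemountain(\y')-\G(\y',\basexbar,\basez)=0$ up to the gap $\sup_\arbitrary(\basemountain-u)=\supheight$; more precisely the bracket is $\le\supheight$ (or even $\le 0$, depending on how $s'$ and $\y'$ are arranged). With $\tfrac{Ms_\y}{1-s'}\le 1$ from the dilation choice, this gives $\mountaintilde(\y)\le\basemountain(\y)+\supheight$, i.e. $\xbar\in\Gdual{\arbitrary}{\cm,\basemountain,\supheight}$.

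The main obstacle I expect is bookkeeping the scalar parameters and the monotonicity of $\G$ and $\H$ in their third argument so that the chain ``$\mountainhat$ supports $u$ $\Rightarrow$ the shifted focus lies below $\basemountain+\supheight$'' is airtight: one has to carefully track that $\z_1=\H(\cm,\xbar,\basemountain(\cm))$, compare it with $\H(\y,\xbar,u(\y))$ and with $\H(\y',\xbar,\basemountain(\y'))$, and make sure every application of $\H(\cdot,\xbar,\cdot)$ or $\G(\cdot,\xbar,\cdot)$ is to a value inside its domain (which is where \eqref{G5} and \emph{nice}-ness of $\basemountain$, and of $u$ on $\arbitrary$ via \eqref{eqn: section not too deep} are used, even though the statement says \eqref{eqn: section not too deep} is not needed for the inclusion itself). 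A secondary point requiring care is the quantitative choice of $\dilationconst$: one needs it uniform over all $\y\in\arbitrary$ and all supporting $\xbar$, which follows because the only $\y$-dependence is through $s_\y$, and the ratio $\tfrac{s_\y}{1-s'}$ is uniformly $\lesssim(\dilationconst M)^{-1}$ by convexity of $\coord{\basesection}{\basexbar,\basez}$ and John's Lemma — exactly as in \cite[Lemma 3.8]{GK14}, with \eqref{QQConv} replacing Loeper's maximum principle.
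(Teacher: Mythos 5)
Your route is the same in spirit as the paper's (this is the analogue of \cite[Lemma 3.8]{GK14}): replace the supporting function $\mountainhat(\cdot)=\G(\cdot,\xbar,\H(\y,\xbar,u(\y)))$ by the shifted function $\mountaintilde(\cdot)=\G(\cdot,\xbar,\H(\cm,\xbar,\basemountain(\cm)))$ matched to $\basemountain$ at $\cm$, and control $\mountaintilde-\basemountain$ on $\arbitrary$ by applying \eqref{QQConv} along a $\G$-segment from $\cm$ through the evaluation point out into the big section $\basesection$, the dilation hypothesis making the factor $Ms/(1-s')$ of size $\lesssim \dilationconst^{-1}$. The genuine gap is in your bound on the bracket on the right-hand side of \eqref{QQConv}. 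That bracket is the value at the far endpoint $\y'\in\basesection$ of the $\G$-affine function with focus $\xbar$ matched to $\basemountain$ at $\x(s')$ (at $\cm$ if you take $s'=0$), minus $\basemountain(\y')$; it is \emph{not} the supporting function $\mountainhat$. The supporting property only gives $\mountainhat\le u\le\basemountain$ on $\basesection$, while the shifted function sits above $\mountainhat$ by an amount comparable, via the mean-value bounds $\infbound,\supbound$ on $\Gz\Hu$ as in \eqref{eqn: sharp growth mean value theorem}, to the gap $\basemountain(\cm)-\mountainhat(\cm)$ -- and that gap is precisely what you do not yet control: a priori it can be much larger than $\supheight=\sup_{\arbitrary}(\basemountain-u)$, because $\mountainhat$ touches $u$ only at the single point $\y\in\arbitrary$ and $\basemountain-\mountainhat$ can grow away from $\y$ (the dependence on the scalar slot is nonlinear, so the ``shift'' is not a vertical translation). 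Hence your assertion that the bracket is $\le\supheight$ (or even $\le 0$) does not follow, and the conclusion collapses at exactly this step.

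The paper closes this hole with a bootstrap that your sketch omits: besides the application of \eqref{QQConv} along the segment through the point where $\mountaintilde-\basemountain$ is largest (the paper's $\maxpoint$; note the estimate is needed at every point of $\arbitrary$, not just at the touching point), there is a \emph{second} application of \eqref{QQConv} along the segment from $\cm$ through the touching point $\sptpoint$ itself, where one does know $\basemountain-\mountainhat=\basemountain-u\le\supheight$. Combining that with the two-sided comparison \eqref{eqn: mountain lower bound}--\eqref{eqn: mountain upper bound} between $\mountainhat$ and its shift, and running a short case analysis on the signs of the boundary brackets, yields $\basemountain(\cm)-\mountainhat(\cm)\le C\supheight$; only then does the first application give $\mountaintilde\le\basemountain+\supheight$ on $\arbitrary$, and the specific \emph{very nice} choice $\dilationconst=2\supbound/\infbound$ is what makes the two steps fit together. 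Two smaller points: your choice ``$s'$ with $\x(s')=\y'$'' is inadmissible since $\y'=\x(1)$ while \eqref{QQConv} requires $s'\in[0,1)$ (take $s'=0$, as the paper does); and John's Lemma is not needed -- the parameter bound $s_\y\le(\dilationconst M)^{-1}$ follows directly from the dilation hypothesis and convexity of $\coord{\basesection}{\basexbar,\basez}$.
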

\begin{proof}
Again we comment that by the assumptions on $\basemountain$ combined with \eqref{G5}, we have $(\x, \basexbar, \basez)\in\gendom$ for every $\x\in\outerdom$; in particular $\coord{\basesection}{\basexbar, \basez}$ is well-defined.

 Fix some $\sptpoint\in \arbitrary$ and $\sptxbar\in\Gsubdiff{u}{\sptpoint}$, and let $\sptmountain(\cdot):=\G(\cdot, \sptxbar, \H(\sptpoint, \sptxbar, u(\sptpoint)))$; thus $\sptmountain$ is \emph{nice} and supporting to $u$ from below at $\sptpoint$, and $\Nicelower\leq\sptmountain\leq \Niceupper$ on $\outerdom^{\cl}$. Also let $\sptmountainshift(\cdot):=\G(\cdot, \sptxbar, \H(\cm, \sptxbar, \basemountain(\cm)))$, and let $\maxpoint$ be the point in $\arbitrary^{\cl}$ where the difference $\sptmountainshift-\basemountain$ is maximized. In order to show that $\sptxbar\in \Gdual{\arbitrary}{\cm, \basemountain, \supheight}$, our goal is to show that $\sptmountainshift(\maxpoint)-\basemountain(\maxpoint)\leq \supheight$.
 
 We note here by using the mean value theorem in the scalar parameter, 
\begin{align}
\sptmountainshift(x)-\sptmountain(x)
&=\G(\x, \sptxbar, \H(\cm, \sptxbar, \basemountain(\cm)))-\G(\x, \sptxbar, \H(\cm, \sptxbar, \sptmountain(\cm)))\notag\\
&=\Gz(\x, \sptxbar, \H(\cm, \sptxbar, \mountain_\theta))\Hu(\cm, \sptxbar, \mountain_\theta)(\basemountain(\cm)-\sptmountain(\cm))\label{eqn: sharp growth mean value theorem}
\end{align}
 where $\mountain_\theta:=(1-\theta)\basemountain(\cm) +\theta\sptmountain(\cm)$ for some $\theta\in[0, 1]$. 
By our assumptions $\mountain_\theta\in \Niceinterval$, in particular the product $\Gz\Hu$ in \eqref{eqn: sharp growth mean value theorem} has strictly positive, \emph{very nice} upper and lower bounds, which we write $\supbound$ and $\infbound$. 
Thus we arrive at the inequalities
\begin{align}
 \infbound(\basemountain(\cm)-\sptmountain(\cm))+\sptmountain(x)&\leq\sptmountainshift(x),\label{eqn: mountain lower bound}\\
\supbound(\basemountain(\cm)-\sptmountain(\cm))+\sptmountain(x)&\geq \sptmountainshift(x)\label{eqn: mountain upper bound}
\end{align}
 for any $x$.
 
 Next we can see there exist points $\bdrysptpoint$, $\bdrymaxpoint\in\basesection^\bdry$ so that $\sptpoint$ and $\maxpoint$ lie on $\Gseg{\cm}{\bdrysptpoint}{\basexbar, \basez}$ and $\Gseg{\cm}{\bdrymaxpoint}{\basexbar, \basez}$ respectively (let us write $\sptseg(s):=\Gseg{\cm}{\bdrysptpoint}{\basexbar, \basez}$ and $\maxseg(s):=\Gseg{\cm}{\bdrymaxpoint}{\basexbar, \basez}$). Moreover, since $\dilationconst M\coord{\arbitrary}{\basexbar, \basez}\subset \coord{\basesection}{\basexbar, \basez}$,  there exist $0<\sptparam, \maxparam<\tfrac{1}{\dilationconst M}$  for which $\sptseg(\sptparam)=\sptpoint$ and $\maxseg(\maxparam)=\maxpoint$. By the boundedness assumptions on $\basemountain$ and \eqref{DomConv} both of these $\G$-segments are well-defined, and by Proposition~\ref{prop: sections are convex} lie entirely in $\coord{\basesection}{\basexbar, \basez}$. 
Additionally, the boundedness assumptions on $\basemountain$ allow us to apply \eqref{QQConv} along both of these $\G$-segments as below.
 
By \eqref{QQConv} along $\maxseg(s)$, we obtain
\begin{align}
 \sptmountainshift(\maxpoint)-\basemountain(\maxpoint)&\leq M\maxparam[\sptmountainshift(\bdrymaxpoint)-\basemountain(\bdrymaxpoint)]_+\notag\\
 &\leq \dilationconst^{-1}[\sptmountainshift(\bdrymaxpoint)-\basemountain(\bdrymaxpoint)]_+.\label{eqn: QQConv in first case}
\end{align}
At this point let us take 
\begin{align*}
 \dilationconst:=\frac{2\supbound}{\infbound},
\end{align*}
which again is \emph{very nice}; we then consider a number of cases.
 
Case 1: If $\sptmountainshift(\bdrymaxpoint)\leq\basemountain(\bdrymaxpoint)$, then the above inequality already implies $ \sptmountainshift(\maxpoint)-\basemountain(\maxpoint)\leq 0\leq \supheight$ and we are finished.

Case 2: Otherwise we can take $x=\bdrymaxpoint$ in \eqref{eqn: mountain upper bound} and combine with \eqref{eqn: QQConv in first case} to obtain
\begin{align}
 \sptmountainshift(\maxpoint)-\basemountain(\maxpoint)&\leq \dilationconst^{-1}[\supbound(\basemountain(\cm)-\sptmountain(\cm))+\sptmountain(\bdrymaxpoint)-\basemountain(\bdrymaxpoint)]\notag\\
 &\leq \dilationconst^{-1}\supbound(\basemountain(\cm)-\sptmountain(\cm))\notag\\
 &=\frac{\infbound}{2}(\basemountain(\cm)-\sptmountain(\cm)),\label{eqn: case 2 bound}
\end{align}
the second inequality is due to the fact that $\sptmountain(\bdrymaxpoint)\leq u(\bdrymaxpoint)\leq \basemountain(\bdrymaxpoint)$ since $\bdrymaxpoint\in\basesection$.

At this point we can apply \eqref{QQConv} along $\sptseg(s)$ to obtain as above,
\begin{align*}
\sptmountainshift(\sptpoint)-\basemountain(\sptpoint)&\leq M\sptparam[\sptmountainshift(\bdrysptpoint)-\basemountain(\bdrysptpoint)]_+\notag\\
 &\leq \dilationconst^{-1}[\sptmountainshift(\bdrysptpoint)-\basemountain(\bdrysptpoint)]_+.
 \end{align*}
 Combining this time with $x=\sptpoint$ in \eqref{eqn: mountain lower bound}, we see that
\begin{align}
 &\dilationconst^{-1}[\sptmountainshift(\bdrysptpoint)-\basemountain(\bdrysptpoint)]_+\notag\\
 &\geq \infbound(\basemountain(\cm)-\sptmountain(\cm))+\sptmountain(\sptpoint)-\basemountain(\sptpoint)\notag\\
 &=\infbound(\basemountain(\cm)-\sptmountain(\cm))-(\basemountain(\sptpoint)-u(\sptpoint))\notag\\
 &\geq \infbound(\basemountain(\cm)-\sptmountain(\cm))-\supheight.\label{eqn: QQConv in second case}
\end{align}

Case 2a: If $\sptmountainshift(\bdrysptpoint)\leq\basemountain(\bdrysptpoint)$, by rearranging the above we see $\basemountain(\cm)-\sptmountain(\cm)\leq \infbound^{-1}\supheight$, which 
combined with \eqref{eqn: case 2 bound} yields
\begin{align*}
\sptmountainshift(\maxpoint)-\basemountain(\maxpoint)&\leq \frac{\infbound}{2}\infbound^{-1}\supheight\leq \supheight
\end{align*}
as desired.

Case 2b: Otherwise in the final case, we once again apply \eqref{eqn: mountain upper bound} with $x=\bdrysptpoint$ and combine with \eqref{eqn: QQConv in second case} to obtain (using that $\bdrysptpoint\in\basesection$ as well),
\begin{align*}
\infbound(\basemountain(\cm)-\sptmountain(\cm))&\leq \supheight+\dilationconst^{-1} \supbound(\basemountain(\cm)-\sptmountain(\cm))\\
&=\supheight+\frac{\infbound}{2}(\basemountain(\cm)-\sptmountain(\cm))
\end{align*}
or rearranging,
\begin{align*}
\frac{\infbound}{2}(\basemountain(\cm)-\sptmountain(\cm))\leq \supheight
\end{align*}
Clearly combining this bound with \eqref{eqn: case 2 bound} gives $\sptmountainshift(\maxpoint)-\basemountain(\maxpoint)\leq \supheight$, finishing the proof.
\end{proof}

With the above Lemma~\ref{lem: thin cone} and Lemma~\ref{lem: linearization} in hand we can connect the $\G$-dual set with the usual polar dual from convex geometry (defined below), in the appropriate coordinates defined via \eqref{Twist}; this easily leads to our claimed estimate in Theorem~\ref{thm: Sharp growth}.

\begin{DEF}\label{DEF: polar dual}
  Let $V$ be an linear space, $\arbitrary\subset V$, $\p_0 \in \arbitrary^{\interior}$, $\q_0\in V^*$, and $\lambda>0$. The \emph{polar dual of $\arbitrary$ of scale $\lambda$, center $\p_0$, and base $\q_0$}, denoted $\polardual{\arbitrary}{\p_0, \q_0, \lambda}\subset V^*$, is the set given by
  \begin{align*}
    \polardual{\arbitrary}{\p_0, \q_0, \lambda}\defin\curly{\q\in V^*\mid\inner{\q-\q_0}{\p-\p_0}\leq \lambda,\ \forall\; p\in \arbitrary}.
  \end{align*}
\end{DEF}

\begin{lem}\label{lem: G-dual in polar dual}
There exists a \emph{very nice} $C>0$ such that
\begin{align*}
\Leb{ \Gdual{\arbitrary}{\cm, \basemountain, \supheight}}\leq C\Leb{\arbitrary}^{-1}\supheight^n.
\end{align*}
\end{lem}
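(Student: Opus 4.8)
The plan is to push the $\G$-dual set into the linearizing coordinates of Definition~\ref{DEF: G-coordinates}, where \eqref{DualQQConv}, via Lemma~\ref{lem: linearization}, shows it sits inside an honest polar dual in the sense of Definition~\ref{DEF: polar dual}; the claimed volume bound is then the reverse Santal\'o inequality \eqref{eqn:Blashke-Santalo} (obtained through John's Lemma, Theorem~\ref{John's lemma}), transported back by Corollary~\ref{cor: comparability}. Set $\u_0:=\basemountain(\cm)$; since $\Nicelower\le\basemountain\le\Niceupper$ on $\outerdom^{\cl}$ we have $\u_0\in\Niceinterval$ and $(\basexbar,\basez)$ satisfies \eqref{eqn: very nice focus}. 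Hence Corollary~\ref{cor: comparability} (estimate \eqref{eqn: volume comparability}) gives $\Leb{\arbitrary}\sim\Leb{\coord{\arbitrary}{\basexbar,\basez}}$ and $\Leb{\Gdual{\arbitrary}{\cm,\basemountain,\supheight}}\sim\Leb{\coord{\Gdual{\arbitrary}{\cm,\basemountain,\supheight}}{\cm,\u_0}}$, so it suffices to bound $\Leb{\coord{\Gdual{\arbitrary}{\cm,\basemountain,\supheight}}{\cm,\u_0}}$ by $C\Leb{\coord{\arbitrary}{\basexbar,\basez}}^{-1}\supheight^n$.

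Next I would establish the polar-dual inclusion. First, $\basexbar\in\Gdual{\arbitrary}{\cm,\basemountain,\supheight}$: since $\H(\cm,\basexbar,\u_0)=\basez$, one has $\G(\y,\basexbar,\basez)=\basemountain(\y)\le\basemountain(\y)+\supheight$ for all $\y\in\arbitrary$. Writing $p_0:=\pmap{\basexbar}{\basez}{\cm}$ (the center of mass of $\coord{\arbitrary}{\basexbar,\basez}$) and $q_0:=\pbarmap{\cm}{\u_0}{\basexbar}$, this says $q_0\in\coord{\Gdual{\arbitrary}{\cm,\basemountain,\supheight}}{\cm,\u_0}$. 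By the second part of Proposition~\ref{prop: local to global} (valid because $\cm\in\arbitrary^{\interior}$, $\Gfivelower<\basemountain<\Gfiveupper$, and $\sup_\arbitrary\basemountain+\supheight<\Gfiveupper$ by \eqref{eqn: section not too deep}), the set $\coord{\Gdual{\arbitrary}{\cm,\basemountain,\supheight}}{\cm,\u_0}$ is convex; therefore, for any $\xbar\in\Gdual{\arbitrary}{\cm,\basemountain,\supheight}$ the entire $\G$-segment $\xbar(t):=\Gseg{\basexbar}{\xbar}{\cm,\u_0}$ remains in $\Gdual{\arbitrary}{\cm,\basemountain,\supheight}$. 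I would then apply Lemma~\ref{lem: linearization}, inequality \eqref{eqn: linearization lower bound}, with $\x_0=\cm$, $\xbar_0=\basexbar$, $\xbar_1=\xbar$, $\u_0=\basemountain(\cm)$ (so $\z_0=\basez$), at $t'=0$ and with $\x=\y$ ranging over $\arbitrary$: using $\G(\y,\basexbar,\basez)=\basemountain(\y)$ and $\G(\y,\xbar,\H(\cm,\xbar,\u_0))\le\basemountain(\y)+\supheight$, we obtain
\begin{align*}
  \inner{\pmap{\basexbar}{\basez}{\y}-p_0}{\nondegmatrixinv{\cm}{\basexbar}{\basez}(\pbarmap{\cm}{\u_0}{\xbar}-q_0)}\le C\supheight,\qquad\forall\;\y\in\arbitrary,
\end{align*}
for a \emph{very nice} $C$. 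Under the canonical pairing of $\cotanspMbar{\basexbar}$ with $\tanspMbar{\basexbar}$ this exactly says $\nondegmatrixinv{\cm}{\basexbar}{\basez}\!\left(\coord{\Gdual{\arbitrary}{\cm,\basemountain,\supheight}}{\cm,\u_0}-q_0\right)\subset\polardual{(\coord{\arbitrary}{\basexbar,\basez}-p_0)}{0,0,C\supheight}$.

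The main obstacle is verifying the standing hypothesis of Lemma~\ref{lem: linearization}, i.e. that $(\y,\xbar(t),\H(\cm,\xbar(t),\u_0))\in\gendom$ for every $t\in[0,1]$ and $\y\in\arbitrary$. The upper side is free: $\xbar(t)\in\Gdual{\arbitrary}{\cm,\basemountain,\supheight}$ forces $\G(\y,\xbar(t),\H(\cm,\xbar(t),\u_0))\le\sup_\arbitrary\basemountain+\supheight<\Gfiveupper$ by \eqref{eqn: section not too deep}. For the lower side I would run the continuity/bootstrap argument from Remark~\ref{rem: nice functions are nice}: the $\G$-affine function $\G(\cdot,\xbar(t),\H(\cm,\xbar(t),\u_0))$ takes the value $\u_0\in\Niceinterval$ at $\cm$, and the Lipschitz bound \eqref{Lip} (together with the smallness of $\arbitrary$ inside a single section, which is where the \emph{very nice} structure enters) confines it to $(\Gfivelower,\Gfiveupper)$; then $(\y,\xbar(t),\H(\cm,\xbar(t),\u_0))\in\gendom$ by Remark~\ref{rem:Unif_consequence}. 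Granting this, the rest is immediate: $|\det\nondegmatrixinv{\cm}{\basexbar}{\basez}|$ is bounded below by a \emph{very nice} constant, the polar body $\polardual{(\coord{\arbitrary}{\basexbar,\basez}-p_0)}{0,0,C\supheight}$ has volume $(C\supheight)^n$ times that of the classical polar of $\coord{\arbitrary}{\basexbar,\basez}-p_0$, and since $p_0$ is the center of mass of $\coord{\arbitrary}{\basexbar,\basez}$, John's Lemma (Theorem~\ref{John's lemma}) together with \eqref{eqn:Blashke-Santalo} bounds the latter by $C\Leb{\coord{\arbitrary}{\basexbar,\basez}}^{-1}$ (replacing $\coord{\arbitrary}{\basexbar,\basez}$ by its convex hull if it is merely connected, which only increases its volume). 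Combining with the first paragraph gives $\Leb{\Gdual{\arbitrary}{\cm,\basemountain,\supheight}}\le C\Leb{\arbitrary}^{-1}\supheight^n$, as desired.
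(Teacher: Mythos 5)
Your overall route is the same as the paper's: reduce to cotangent coordinates, use Proposition~\ref{prop: local to global} to put the whole $\G$-segment $\xbar(t)=\Gseg{\basexbar}{\xbar}{\cm,\basemountain(\cm)}$ inside the $\G$-dual, apply the linearization inequality \eqref{eqn: linearization lower bound} to get an inclusion into a polar dual, and finish with a John/Santal\'o volume bound (the paper quotes \cite[Lemma 3.9]{GK14} instead of invoking \eqref{eqn:Blashke-Santalo} directly, after passing to the convex hull exactly as you do). You also correctly isolate the one nontrivial step: verifying the hypothesis of Lemma~\ref{lem: linearization}, namely $(\y,\xbar(t),\H(\cm,\xbar(t),\basemountain(\cm)))\in\gendom$ for all $t\in[0,1]$ and $\y\in\arbitrary$. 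The upper bound $\G(\y,\xbar(t),\H(\cm,\xbar(t),\basemountain(\cm)))\le\basemountain(\y)+\supheight<\Gfiveupper$ is indeed free from membership in the $\G$-dual and \eqref{eqn: section not too deep}.

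The gap is in your treatment of the lower bound. You propose to confine $\G(\cdot,\xbar(t),\H(\cm,\xbar(t),\basemountain(\cm)))$ to $(\Gfivelower,\Gfiveupper)$ by the Lipschitz bound \eqref{Lip} ``together with the smallness of $\arbitrary$ inside a single section,'' but Theorem~\ref{thm: Sharp growth} (and hence this lemma) carries \emph{no} smallness hypothesis on $\basesection$ or $\arbitrary$ — unlike Theorem~\ref{thm: G-aleksandrov estimate}, there is no $\diam$ restriction, and nothing like the inequality $\lipbound<\tfrac{\Gfiveupper-\Gfivelower}{2\diam_{\outerdom}(\outerdom)}$ of Remark~\ref{rem: existence of very nice solutions} is assumed. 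The argument of Remark~\ref{rem: nice functions are nice} only controls the value within a ball of radius $\sim\epsilon/\lipbound$ of $\cm$, so \eqref{Lip} alone cannot prevent $\G(\y,\xbar(t),\cdot)$ from dropping to $\Gfivelower$ before $\y$ ranges over all of $\arbitrary$. The paper closes this for each fixed pair $(\y,\xbar)$ by a different mechanism: one may assume the linearized pairing $\inner{\pmap{\basexbar}{\basez}{\y}-\pmap{\basexbar}{\basez}{\cm}}{\nondegmatrixinv{\cm}{\basexbar}{\basez}(\pbarmap{\cm}{\basemountain(\cm)}{\xbar}-\pbarmap{\cm}{\basemountain(\cm)}{\basexbar})}$ is positive (otherwise the inequality to be proved is trivial); by the derivative computation in Lemma~\ref{lem: linearization} this makes $t\mapsto\G(\y,\xbar(t),\z(t))$ strictly increasing at $t=0$, and on the maximal interval where this function stays $\ge\Nicelower$ condition \eqref{DualQQConv} (legitimately applicable there by \eqref{G5}) forbids strict interior local maxima; since the function starts at $\basemountain(\y)\ge\Nicelower$ and increases initially, it can never come back down to $\Nicelower$, so the interval is all of $[0,1]$ and $(\y,\xbar(t),\z(t))\in\gendom$ follows from Remark~\ref{rem:Unif_consequence}. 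Your proof becomes correct if you replace the Lipschitz/smallness step by this pointwise ``WLOG positive pairing $+$ \eqref{DualQQConv} no-interior-maximum'' argument — note the polar-dual inclusion only needs the inequality pair by pair, so the case distinction costs nothing.
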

\begin{proof}
 Fix $\Gdualybar\in\Gdual{\arbitrary}{\cm, \basemountain, \supheight}$ and $\x\in\arbitrary$; recall that $\basemountain(\cdot)=\G(\cdot, \basexbar, \basez)$. Note by \eqref{G5} and \eqref{Nondeg}, we can see that $\nondegmatrixinv{\cm}{\basexbar}{\basez}$ is well-defined. We claim that 
\begin{align}\label{eqn: sharp growth claim inequality 1}
 \inner{\pmap{\basexbar}{\basez}{\x}-\pcm}{\nondegmatrixinv{\cm}{\basexbar}{\basez}\pbarmap{\cm}{\basemountain(\cm)}{\Gdualybar}-\baseq}\leq \polarheightconst \supheight
\end{align}
for some \emph{very nice} $\polarheightconst>0$, where 
\begin{align*}
\pcm:&=\pmap{\basexbar}{\basez}{\cm},\\
\baseq:&=\nondegmatrixinv{\cm}{\basexbar}{\basez}\pbarmap{\cm}{\basemountain(\cm)}{\basexbar}.
\end{align*}

  First fix an $\x\in\arbitrary$, let $\xbar(t):=\Gseg{\basexbar}{\Gdualybar}{\cm, \basemountain(\cm)}$ and write $\z(t):=\H(\cm, \xbar(t), \basemountain(\cm))$; since $\basemountain(\cm)\in \Niceinterval$, by \eqref{DualDomConv} and \eqref{G5} we see $\xbar(t)$ is well-defined and remains in $\outertarget^{\cl}$.
  Now, we can assume 
  \begin{align*}
  \inner{\pmap{\basexbar}{\basez}{\x}-\pcm}{\nondegmatrixinv{\cm}{\basexbar}{\basez}\pbarmap{\cm}{\basemountain(\cm)}{\Gdualybar}-\baseq}>0,
  \end{align*}
   otherwise \eqref{eqn: sharp growth claim inequality 1} is immediate. First, it is clear that $\basexbar\in\Gdual{\arbitrary}{\cm, \basemountain, \supheight}$ so the second claim in Proposition~\ref{prop: local to global} implies that $\Gseg{\xbar}{\ybar}{\cm, \basemountain(\cm)}\subset\Gdual{\arbitrary}{\cm, \basemountain, \supheight}$ (recall by our assumption \eqref{eqn: section not too deep}, we have $\displaystyle\sup_{\arbitrary}\basemountain+\supheight<\Gfiveupper$). Combining with \eqref{eqn: section not too deep}, for any $t\in[0, 1]$ we must have
  \begin{align*}
  \G(\x, \xbar(t), \z(t))\leq \basemountain(\x)+\supheight<\Gfiveupper.
  \end{align*}
Next let $[0, \firsttime]\subset [0, 1]$ be the maximal subinterval ($\firsttime$ necessarily strictly positive) on which $\G(\x, \xbar(t), \z(t))\geq\Nicelower$. By \eqref{G5}, we can apply \eqref{DualQQConv} along $\xbar(t)$ on $[0, \firsttime]$ (after reparametrizing) to see that $\G(\x, \xbar(t), \z(t))$ cannot have any strict local maxima in $(0, \firsttime)$. The calculation in Lemma~\ref{lem: linearization} shows $\vertbar{\diff{t}\G(\x, \xbar(t), \z(t))}_{t=0}>0$ by our assumption, thus we must actually have $t_0=1$. As a result
  \begin{align*}
  \Nicelower\leq \G(\x, \xbar(t), \z(t))<\Gfiveupper,
  \end{align*} 
or by \eqref{G5}, $(\x, \xbar(t), \z(t))\in\gendom$ for all $t\in[0, 1]$. 
 We can thus apply \eqref{eqn: linearization lower bound} from Lemma~\ref{lem: linearization} to obtain for a \emph{very nice} $C_0>0$, 
\begin{align*}
 &\inner{\pmap{\basexbar}{\basez}{\x}-\pcm}{\nondegmatrixinv{\cm}{\basexbar}{\basez}\pbarmap{\cm}{\basemountain(\cm)}{\Gdualybar}-\baseq}\\
   &\qquad\leq C_0M \brackets{\G(\x, \xbar(1), \z(1))-\G(\x, \xbar(0), \z(0))}_+\\
&\qquad =C_0M \brackets{\G(\x, \Gdualybar, \H(\cm, \Gdualybar, \basemountain(\cm)))-\basemountain(\x)}\\
&\qquad\leq C_0 M \supheight,
\end{align*}
and we obtain \eqref{eqn: sharp growth claim inequality 1} with $\polarheightconst:=C_0M$.
As a result we see this implies that
\begin{align*}
\nondegmatrixinv{\cm}{\basexbar}{\basez} \coord{\Gdual{\arbitrary}{\cm, \basemountain, \supheight}}{\cm, \basemountain(\cm)}\subset \polardual{\paren{\coord{\arbitrary}{\basexbar, \basez}}}{\pcm, \baseq, \polarheightconst\supheight},
\end{align*}
thus by taking the volume of both sides (recall also Lemma~\ref{lem: comparability} and Corollary~\ref{cor: comparability}) and combining with \cite[Lemma 3.9]{GK14} (note we do not need $\coord{\arbitrary}{\basexbar, \basez}$ to be convex, as we can apply the result to the convex hull of $\coord{\arbitrary}{\basexbar, \basez}$ to obtain the same inequality, using that the polar dual of a set is unchanged by taking its convex hull) we obtain the lemma for another choice of \emph{very nice} $C>0$.
\end{proof}

\begin{proof}[Proof of Theorem~\ref{thm: Sharp growth}]
 Combining the above Lemmas~\ref{lem: thin cone} and~\ref{lem: G-dual in polar dual}, the theorem is immediate.
\end{proof}

\section{Localization and strict $\G$-convexity}\label{section: localization}

  In this section and the next one we shall use the estimates from Theorems \ref{thm: G-aleksandrov estimate} and \ref{thm: Sharp growth} to prove the strict $\G$-convexity of a $\G$-convex solution to a $\G$-Monge-Amp\'ere equation with a nondegenerate $\G$-Monge-Amp\'ere measure (recall Definition \ref{def:Aleksandrov_solutions}).

   It is assumed that the support of the $\G$-Monge-Amp\'ere measure (denoted $\innerdom^{\cl}$) lies in the interior of $\outerdom$. Moreover, it is assumed that $\innertarget := \Gsubdiff{u}{\innerdom}$ is such that $\innertarget^{\cl}\subset \outertarget^{\interior}$ and is $\G$-convex with respect to $(\x,\u(x))$ for all $x\in \innerdom$.
   
  \begin{REM} One expects the strict $\G$-convexity to also hold in a situation where $\outerdom$ and $\outertarget$ are strictly $\G$-convex, as oppose to assuming that the closure of $\innerdom$ is contained in the interior of $\outerdom$. This is for instance what is done in the work of Figalli, Kim, and McCann \cite{FKM13} in the case of optimal transport.
  \end{REM}

	Moreover, we will assume for the rest of the section that $u$ is a \emph{very nice} $\G$-convex function. Recall that this assumption is needed, even if the data is smooth (as discussed in Section~\ref{section: literature reflector problems}). Thus, for the rest of Section \ref{section: localization} we will fix
	\begin{align}\label{eqn:Localization_u_is_very_nice}
	  u:\outerdom \to \mathbb{R},\;\;\textnormal{\emph{very nice}, and solving \eqref{eqn: generated Jacobian equation} in the sense of Definition \ref{def:Aleksandrov_solutions} for some } \Lambda>0.
	\end{align}
	
	The first of our theorems in this section says that ``singularities'' (in the sense of failure of strict $\G$-convexity), if they happen at all, must propagate all the way to the boundary of $\outerdom$.
    \begin{thm}\label{thm: localization}
         Let $u$ be as in \eqref{eqn:Localization_u_is_very_nice}. If $\xbar_0 \in \innertarget$ and $\z_0$  are such that $\mountain_0(\cdot) = \G(\cdot,\xbar_0,\z_0)$ is supporting to $u$ at some $x_0\in\innerdom^{\interior}$, then the set 
         \begin{equation*}
              \sublevelset_0:=\{u=\mountain_0 \}
         \end{equation*}
         is a single point, or else every extremal point of $\coord{\sublevelset_0}{\xbar_0,z_0}$ is contained on the boundary of $\coord{\outerdom}{\xbar_0,z_0}$.
    \end{thm}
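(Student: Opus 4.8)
\textbf{Proof proposal for Theorem~\ref{thm: localization}.}

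The plan is to argue by contradiction in the spirit of Caffarelli's localization argument for the real Monge--Amp\`ere equation (see \cite{Caf90}), transported to the present setting via the coordinates $\pmap{\xbar_0}{\z_0}{\cdot}$ and the pointwise estimates of Theorems~\ref{thm: G-aleksandrov estimate} and~\ref{thm: Sharp growth}. Suppose $\sublevelset_0$ is not a single point, yet some extremal point $\x_e$ of $\coord{\sublevelset_0}{\xbar_0, \z_0}$ lies in $\coord{\outerdom}{\xbar_0,\z_0}^{\interior}$. Working in the convex set $\sublevelsetcoord{\xbar_0,\z_0}$ (which is convex by Proposition~\ref{prop: sections are convex} since $\mountain_0$ is \emph{nice}, $u$ being \emph{very nice}), one exposes $\x_e$ by tilting: replace $\mountain_0$ by a nearby supporting $\G$-affine function $\mountain_t(\cdot) = \G(\cdot,\xbar_t,\z_t)$ with $\xbar_t\in\Gsubdiff{u}{\x_e}$, chosen so that the new contact set $\sublevelset_t := \{u=\mountain_t\}$ has $\x_e$ as an extremal point and so that $\coord{\sublevelset_t}{\xbar_t,\z_t}$ has, in suitable units, eccentricity growing without bound as $t\to 0$ — this is where the assumption that $\x_e$ is interior to $\outerdom$, together with $\innerdom^{\cl}\subset\outerdom^{\interior}$ and $\innertarget^{\cl}\subset\outertarget^{\interior}$, is used to guarantee the tilted $\G$-affine functions remain admissible and still touch $u$ near $\x_e$.

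The key step is then to run the two pointwise estimates against each other on the sublevel sets $\sublevelset_h := \{u \le \mountain_t + h\}$ for small $h>0$. After normalizing $\coord{\sublevelset_h}{\xbar_t,\z_t}$ by the linear map coming from its John ellipsoid (Theorem~\ref{John's lemma}), Theorem~\ref{thm: Sharp growth} applied to a fixed fraction of $\sublevelset_h$ (using Definition~\ref{def:Aleksandrov_solutions} and Remark~\ref{rem: weak solutions with bounded RHS} to turn $\Leb{\Gsubdiff{u}{\cdot}}$ into $\Leb{\cdot}$) gives a lower bound $\sup_{\sublevelset_h}(\mountain_t+h - u)^n \ge c\,\Leb{\sublevelset_h}^2$, while Theorem~\ref{thm: G-aleksandrov estimate}, applied at the point where $u - \mountain_t$ is most negative and in the direction $\direction{1}$ of the \emph{long} axis of the normalized section, gives an upper bound of the form $(\mountain_t(\x_0)+h - u(\x_0))^n \le C\,\dfrac{\dist{\p_0}{\plane{\cdot}{\direction{1}}}}{\supsegment{\cdot}{\direction{1}}}\,\Leb{\sublevelset_h}^2$. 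Because $\x_e$ is an extremal but interior point of $\sublevelset_0$, one can choose the evaluation point $\x_0$ and the direction $\direction{1}$ so that the ratio $\dist{\p_0}{\plane{\cdot}{\direction{1}}}/\supsegment{\cdot}{\direction{1}}$ is forced to $0$ as $h\to 0$ (the section degenerates transversally to $\direction{1}$ near $\x_e$), producing a strict contradiction with the lower bound once $h$ is small enough — provided $\diam(\sublevelset_h)<\epsilon$, which holds for small $h$ since $u$ is continuous (Remark~\ref{rem: nice functions are nice}).

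The main obstacle, and the part requiring the most care, is the \emph{geometric bookkeeping of the tilting and normalization}: one must verify that after replacing $\mountain_0$ by $\mountain_t$ and then applying the John-ellipsoid normalization, all the hypotheses of Theorems~\ref{thm: G-aleksandrov estimate} and~\ref{thm: Sharp growth} genuinely hold uniformly — in particular the nesting condition $\sectioncoord\subset B\subset 3B\subset\coord{\outerdom}{\xbar_t,\z_t}$, the dilation condition \eqref{eqn: dilation condition} with the \eqref{QQConv} constant $M$, the depth conditions \eqref{eqn: section not too deep} and \eqref{eqn: very nice focus}, and the requirement that the normalized sections stay \emph{very nice} so that Lemma~\ref{lem: comparability} and Corollary~\ref{cor: comparability} convert Riemannian volumes to Euclidean volumes of the coordinate images with \emph{very nice} constants. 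Here the hypothesis that $\innertarget^{\cl}\subset\outertarget^{\interior}$ is essential: it gives room to tilt the focus $\xbar_t$ without leaving $\outertarget$, and the $\G$-convexity of $\coord{\innertarget}{\x_0,u(\x_0)}$ ensures the tilted foci can be connected to $\xbar_0$ by $\G$-segments along which \eqref{QQConv}/\eqref{DualQQConv} apply. Once this uniformity is established, the estimate-against-estimate contradiction is essentially the same as in \cite[Section 4]{Caf90} and \cite[Section 8]{FKM13}.
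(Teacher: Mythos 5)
Your overall strategy --- contradiction, tilting the $\G$-affine function near the offending point, and playing Theorem~\ref{thm: Sharp growth} against Theorem~\ref{thm: G-aleksandrov estimate} on the resulting sections --- is indeed the paper's strategy, but the two steps at the heart of your mechanism do not work as stated. First, you tilt by choosing $\xbar_t\in\Gsubdiff{u}{\x_e}$ and look at the new \emph{contact sets} $\{u=\mountain_t\}$. But $\Gsubdiff{u}{\x_e}$ may be a single point (e.g.\ if $u$ is differentiable at $\x_e$), so there may be nothing to tilt to, and even when there is, one has no control on the shape or eccentricity of contact sets of other supporting functions. The paper instead fixes an \emph{exposed} point $\p_e$ of $\coord{\sublevelset_0}{\xbar_0,\z_0}$ together with a strict normal $\e{0}$ chosen (via \cite[Lemma 7.4]{GK14}) so that a segment $\p_e-\lambda\e{0}$ also lies in the contact set, moves the focus along the $\G$-segment $\Xbar{\x_e}{u(\x_e)}{\pbar_0+t\nondegmatrix{\x_e}{\xbar_0}{\z_0}\raisecovect{\e{0}}}$ and \emph{lifts} the value by $\perturbparam t$; the resulting $\mountain_t$ are not supporting --- they lie strictly above $u$ at $\x_e$ --- and one works with the sublevel sets $\sublevelset_t=\{u\le\mountain_t\}$ (Lemmas~\ref{lem: localization claim} and \ref{lem: chopping convergence}), with \eqref{QQConv}/\eqref{DualQQConv} through Lemma~\ref{lem: linearization} controlling $\mountain_t-\mountain_0$. (Also, $\mountain_t+h$ is not $\G$-affine; heights must be adjusted through $\H$.)

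Second, your route to the contradiction is to apply the Aleksandrov estimate at the point where $\mountain_t-u$ is maximal, in the direction $\direction{1}$ of the long axis, claiming the ratio $\dist{\p_0}{\plane{\cdot}{\direction{1}}}/\supsegment{\cdot}{\direction{1}}$ vanishes because the section degenerates transversally to $\direction{1}$. Transversal thinness does not make that ratio small: both numerator and denominator are measured along $\direction{1}$, and the maximum point of $\mountain_t-u$ generally sits well inside the section, where its distance to the supporting plane with normal $\direction{1}$ is comparable to $\supsegment{\cdot}{\direction{1}}$. What actually forces the ratio to zero is evaluating at $\x_e$ itself, with direction $\e{t}\approx\e{0}$: since $\e{0}$ is a strict normal at the exposed point, the distance from $\pmap{\xbar_t}{\z_t}{\x_e}$ to the supporting plane collapses, \eqref{eqn: supporting plane collapses}, while the denominator stays bounded below because a fixed piece of the contact segment in direction $-\e{0}$ remains in every $\sublevelset_t$. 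Evaluating at $\x_e$ is only useful because the lift $\perturbparam t$ yields the uniform comparability \eqref{eqn: lower bound on localization ratios}, $\mountain_t(\x_e)-u(\x_e)\ge\epsilon_0\sup_{\sublevelset_t}(\mountain_t-u)$, which is exactly what lets the Aleksandrov bound at $\x_e$ contradict the sharp-growth lower bound for $\sup_{\sublevelset_t}(\mountain_t-u)^n$. Your proposal contains neither this comparability nor a correct reason for the vanishing ratio; these are the two points on which the proof turns, whereas the bookkeeping you flag (niceness, \eqref{eqn: section not too deep}, the dilation condition) is genuinely needed but is the easier part and is handled in the paper exactly as you anticipate.
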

Using this result we will prove Theorem~\ref{thm:strict_convexity} later in the section.

    \subsection{Some elementary tools}  Let us review some notions from convex geometry (see for example,~\cite{Roc70}) and linear algebra.

    \begin{DEF}\label{def: normal cones}
         Suppose that $\mathcal{\arbitrary}$ is a convex subset of $\cotanspMbar{\xbar}$ and $p_e\in  \mathcal{\arbitrary}^\bdry$. Then, the \emph{strict normal cone of $\mathcal{\arbitrary}$ at $p_e$} and \emph{normal cone of $\mathcal{\arbitrary}$ at $p_e$} are defined by
         \begin{align*}
              \normal^0_{p_e}\left(\mathcal{\arbitrary}\right):&=\{q\in \cotanspMbar{\xbar}\mid \innergbar{q}{p-p_e} <0,\ \forall p\neq p_e\in \mathcal{\arbitrary}\},\\
              \normal_{p_e}\left(\mathcal{\arbitrary}\right):&=\{q\in \cotanspMbar{\xbar}\mid \innergbar{q}{p-p_e} \leq 0,\ \forall p\in \mathcal{\arbitrary}\}.
         \end{align*}
         If $\normal^0_{p_e}\left(\mathcal{\arbitrary}\right)$ is nonempty, $p_e$ is called an \emph{exposed point} of $\mathcal{\arbitrary}$.
    \end{DEF}

    \begin{rem}\label{rem: remark on normal cones}
         It is well known, $\normal_{p_e}\left(\mathcal{\arbitrary}\right)$ and $\normal^0_{p_e}\left(\mathcal{\arbitrary}\right)$ are convex cones. Also $\normal_{p_e}\left(\mathcal{\arbitrary}\right)$ is closed, and contains $0$ and at least one nonzero vector for any $p_e\in\mathcal{\arbitrary}^\bdry$.
    \end{rem}

    \subsection{Tilting and chopping.} The proof of Theorem \ref{thm: localization} goes by a contradiction. If $\sublevelset_0$ has more than one point and also contains an interior exposed point (when seen in cotangent coordinates), then one may find sections $\sublevelset_t := \{u\leq \mountain_t\}$ ($t$ small and positive) with a geometry that contradicts the combined estimates from Theorem \ref{thm: G-aleksandrov estimate} and Theorem \ref{thm: Sharp growth}. The sections $\sublevelset_t$ will be obtained by adequately ``chopping'' the original contact set $\sublevelset_0$ with a family of $\G$-affine functions $\mountain_t$ which are obtained by ``tilting'' the original function $\mountain_0$.

  The next two lemmas deal with the selection of the family of $\G$-affine functions $\mountain_t$. We do not yet need the fact that $u$ is an Aleksandrov solution here, just the fact that it is \emph{very nice}.
  \begin{lem}\label{lem: localization claim}
    Let $\mountain_0(\cdot):=\G(\cdot, \xbar_0, \z_0)$ be a $\G$-affine function supporting to $u$ somewhere in $\outerdom$ with $\xbar_0\in \innertarget$, and define 
    \begin{align*}
      \sublevelset_0:&=\curly{u=\mountain_0}.
    \end{align*}
    Also, suppose $\p_e$ is an exposed point of $\subzerocoord$, that $\e{0} \in \normal^0_{\p_e}\paren{\subzerocoord}$ is unit length, and $\subzero$ contains at least two points. 
    Then for any fixed $\perturb>0$ there exists a family of \emph{nice} $\G$-affine functions $\curly{\mountain^\perturb_t}_{t>0}$, (depending on $\sublevelset_0$ and $\e{0}$), such that for all small enough $t>0$ we have
    \begin{align}
      \mountain_0(\x_e)=u(\x_e)&<\mountain^\perturb_t(\x_e),\label{eqn: exposed point is inside chopped section}\\
      \coord{\sublevelset_{\perturb, t}}{\xbar_0, \z_0}&\subset B_{\perturb}(\p_e),\label{eqn: chopped sections close to exposed point}\\
      \Nicelower<&\mountain_t(x)<\Niceupper,\quad \forall\;x\in \outerdom.\label{eqn: delta tiltings are nice}
    \end{align}
    where $\sublevelset_{\perturb, t}:=\{u\leq \mountain^\perturb_t\}$.
  \end{lem}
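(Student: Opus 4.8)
The statement to establish is Lemma~\ref{lem: localization claim}: given an exposed point $\p_e$ of $\subzerocoord$ with unit normal $\e{0}$, we must construct a family of \emph{nice} $\G$-affine functions $\mountain^\perturb_t$ whose sublevel sets $\sublevelset_{\perturb, t}$ shrink to $\p_e$ while still strictly containing $\x_e$. The natural idea, following the Euclidean ``tilting'' construction (as in \cite{Caf90, FKM13, GK14}), is to rotate the focus $\xbar_0$ along a $\G$-segment in the direction dictated by $\e{0}$ and then lower the height slightly. Concretely, I would set $\xbar(r) := \Gseg{\xbar_0}{\cdot}{\x_0, u(\x_0)}$ for a $\G$-segment whose initial velocity $\xbardot(0)$ corresponds, via Proposition~\ref{prop: motivation for nondegeneracy} (specifically \eqref{eqn: xbardot representation} and the formula for the derivative of $\pmap{\xbar}{\z}{\cdot}$ in Remark~\ref{rem: local coordinates}), to the covector $\e{0}$ pulled back through $\nondegmatrix{\x_0}{\xbar_0}{\z_0}$ and $\dualnondegmatrix{\x_0}{\xbar_0}{\z_0}$; then set $\z(r) := \H(\x_0, \xbar(r), u(\x_0))$ so that the tilted $\G$-affine function still equals $u$ at $\x_0$. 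The tilted mountain $\mountain_r(\cdot) := \G(\cdot, \xbar(r), \z(r))$ should, to first order in $r$, add a term comparable to $r\innergbar[\xbar_0]{\e{0}}{\pmap{\xbar_0}{\z_0}{\cdot}-\p_e}$ to $\mountain_0$ — positive inside the strict normal cone direction, hence lifting $\mountain_0$ above $u$ away from $\p_e$ (using that $\e{0}\in\normal^0_{\p_e}(\subzerocoord)$). Then $\mountain^\perturb_t$ would be obtained by combining a fixed small tilt with a downward vertical shift in the $\z$-parameter of size $O(t)$, playing $t$ and $\perturb$ off each other.

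The key steps, in order: (1) write down the tilted family $\mountain_r$ and compute its linearization at $\x_0$ in cotangent coordinates, using Lemma~\ref{lem: linearization} (estimate \eqref{eqn: linearization upper bound}, and the lower bound \eqref{eqn: linearization lower bound}) to control $\mountain_r - \mountain_0$ from above and below by quantities comparable to $r\,\gbarnorm[\xbar_0]{\pmap{\xbar_0}{\z_0}{\cdot}-\p_0}$; (2) use that $\e{0}$ lies in the \emph{strict} normal cone to conclude that on $\subzero\setminus\{\x_e\}$ (a compact set staying a definite distance from the supporting hyperplane $\plane{\subzerocoord}{\e{0}}$) we have $\mountain_r - u \geq c r > 0$ for small $r$, while near $\x_e$ the difference $\mountain_r - u$ is only $O(r\cdot\text{dist to hyperplane})$; (3) subtract a vertical shift: replace $\z(r)$ by $\H(\x_0,\xbar(r), u(\x_0) - t)$ (or an equivalent scalar adjustment), choosing the shift proportional to $t$, so that $\sublevelset_{\perturb, t} = \{u \le \mountain^\perturb_t\}$ is forced into a thin slab around $\plane{\subzerocoord}{\e{0}}$ of width $O(t/r)$ intersected with a neighborhood of $\subzerocoord$, and hence contained in $\ball{\perturb}{\p_e}$ once $t$ is small relative to $r$ and $\perturb$; simultaneously arrange the shift to be small enough that $\x_e$ remains strictly inside, giving \eqref{eqn: exposed point is inside chopped section} and \eqref{eqn: chopped sections close to exposed point}; (4) verify \eqref{eqn: delta tiltings are nice}: since $u$ is \emph{very nice} with associated interval $\Niceinterval \subset (\Gfivelower, \Gfiveupper)$, and $\mountain_0$ is \emph{nice}, for small $r$ and $t$ the perturbed $\mountain^\perturb_t$ stays within a slightly enlarged compact subinterval of $(\Gfivelower, \Gfiveupper)$ by continuity of $\G$ and \eqref{Lip}; one also needs $(\x, \xbar(r), \z(r))\in\gendom$, which follows from \eqref{G5} once niceness is checked, together with \eqref{DualDomConv} to ensure the $\G$-segment defining $\xbar(r)$ is well-defined and stays in $\outertarget^{\cl}$.

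The main obstacle I anticipate is step (3): carefully balancing the two parameters $r$ (tilt magnitude) and $t$ (vertical shift) so that \emph{simultaneously} the sublevel set is squeezed into $\ball{\perturb}{\p_e}$ \emph{and} $\x_e$ stays strictly inside it, all while keeping $\mountain^\perturb_t$ nice — the point being that $\perturb$ is given first, then one fixes $r = r(\perturb)$ small, and only then lets $t \to 0$. The delicate quantitative input is that the strict normal cone condition gives a \emph{linear} separation rate away from $\p_e$ but only a \emph{sublinear} (indeed, vanishing) rate at $\p_e$ itself, so the thin-slab geometry genuinely requires the bi-Lipschitz comparability of $\pmap{\xbar_0}{\z_0}{\cdot}$ with Riemannian distance from Lemma~\ref{lem: comparability} and Corollary~\ref{cor: comparability}, plus the two-sided linearization bounds of Lemma~\ref{lem: linearization}, to convert the convex-geometric picture in $\cotanspMbar{\xbar_0}$ back to statements about $u$ on $\outerdom$. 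A secondary technical nuisance is that $u$ need not be semiconvex (since $\G$ is only $C^2$ in the mixed sense), so any argument invoking supporting paraboloids must instead route through the $\G$-affine supporting functions directly and Corollary~\ref{cor: local to global}.
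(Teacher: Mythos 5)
Your construction has the vertical perturbation pointing the wrong way, and this is not a cosmetic issue. Your own first-order formula says the tilt adds roughly $r\,\innergbar[\xbar_0]{\e{0}}{\pmap{\xbar_0}{\z_0}{\cdot}-\p_e}$ to $\mountain_0$; since $\e{0}\in \normal^0_{\p_e}\paren{\subzerocoord}$, this quantity is strictly \emph{negative} on $\subzerocoord\setminus\curly{\p_e}$, so on $\subzero\setminus\curly{\x_e}$ the tilted function drops below $u$ — the opposite of your step (2) claim that $\mountain_r-u\geq cr>0$ there (that inequality would correspond to tilting along $-\e{0}$, which cannot localize the section at $\p_e$ at all). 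Worse, in step (3) you then \emph{lower} the tilted function by a shift $\sim t$. But \eqref{eqn: exposed point is inside chopped section} requires $\mountain^\perturb_t(\x_e)>u(\x_e)$: if the tilt is anchored at $\x_e$ (or if your supporting point $\x_0$ happens to equal $\x_e$), the unshifted tilted function equals $u(\x_e)$ at $\x_e$, so any downward shift expels $\x_e$ from the section. Anchoring at another contact point $\x_0$ with $\p_0\neq\p_e$ does not save your calibration either: to first order the binding constraint $\curly{\mountain_0\leq\mountain^\perturb_t}$ becomes $\curly{r\,\innergbar[\xbar_0]{\e{0}}{p-\p_0}\geq t}$, a halfspace bounded near the parallel hyperplane through $\p_0$, not a thin slab at the supporting hyperplane through $\p_e$; and in your regime ($\perturb$ given, $r=r(\perturb)$ fixed, then $t\to 0$) the sections converge to the fixed $r$-tilted section, which contains the entire part of $\subzero$ on the $+\e{0}$ side of $\p_0$ — points that may be far from $\p_e$ (picture $\subzerocoord$ a disk with $\p_0$ its center). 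So \eqref{eqn: chopped sections close to exposed point} fails for the scheme as stated; making a downward-shift version work would force $t$ to be calibrated to $r\,\innergbar[\xbar_0]{\e{0}}{\p_e-\p_0}$ up to errors $o(r)$, which is not what you propose.

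The paper's proof uses one parameter and the opposite shift: tilt at the exposed point, $\xbar(t)=\Xbar{\x_e}{u(\x_e)}{\pbar_0+t\nondegmatrix{\x_e}{\xbar_0}{\z_0}\raisecovect{\e{0}}}$, and \emph{raise} the height, $\z_{\perturb}(t)=\H(\x_e,\xbar(t),u(\x_e)+\perturbparam t)$, with $\perturbparam$ a small constant chosen after the strict-normal-cone constant $\perturbparamtilde$; then $\mountain^\perturb_t(\x_e)=u(\x_e)+\perturbparam t$, so \eqref{eqn: exposed point is inside chopped section} is automatic. The containment \eqref{eqn: chopped sections close to exposed point} needs \emph{two} facts used together: first, $\sublevelset_{\perturb,t}\subset\curly{\mountain_0\leq\mountain^\perturb_t}$, and since $t^{-1}\paren{\G(\cdot,\xbar(t),\z(t))-\mountain_0}$ converges uniformly to $\innergbar[\xbar_0]{\pmap{\xbar_0}{\z_0}{\cdot}-\p_e}{\e{0}}$ while the shift contributes at most $C\perturbparam t$ (mean value theorem in the scalar slot), this set lies in the halfspace $\curly{\innergbar[\xbar_0]{p-\p_e}{\e{0}}\geq-\perturbparamtilde/2}$ once $\perturbparam$ is small; second, because both tilt and shift are $O(t)$, $\coord{\sublevelset_{\perturb,t}}{\xbar_0,\z_0}$ lies in an $o(1)$-neighborhood of $\subzerocoord$ as $t\to 0$. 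The slab alone is not enough — your fixed-$r$, $t\to 0$ scheme loses exactly this second containment — and \eqref{eqn: delta tiltings are nice} then follows from the same uniform convergence plus $u$ being \emph{very nice}. Your toolbox (Lemma~\ref{lem: linearization}, Lemma~\ref{lem: comparability}, Corollary~\ref{cor: local to global}) is the right one, but the orientation of the shift and the two-parameter calibration are a genuine gap.
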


    \begin{proof}
         Let us write 
         \begin{align*}
              \x_e:=\X{\xbar_0}{\z_0}{\p_e},\quad \pbar_0:=\pbarmap{\x_e}{\u(\x_e)}{\xbar_0},
         \end{align*}
         and note that since $\mountain_0$ is supporting to $u$ at $\x_e$ we have
\begin{align*}
 \z_0=\H(\x_e, \xbar_0, u(\x_e)).
\end{align*}
         We will now define $\mountain^\perturb_t$. Note that by \eqref{Nondeg}, we have $\nondegmatrix{\x_e}{\xbar_0}{\z_0}\raisecovect{\e{0}}\neq 0$ (see Definition \ref{def:musical_isomorphism} for the definition of $\raisecovect{\e{0}}$). 
         Since $\innertarget^{\cl}\subset\outertarget^{\interior}$, for $t>0$ sufficiently small, $\pbar_0+t\nondegmatrix{\x_e}{\xbar_0}{\z_0}\raisecovect{\e{0}}$ remains in $\outertargetcoord{\x_e, u(\x_e)}$, hence 
         \begin{equation*}
              \xbar(t):=\Xbar{\x_e}{u(\x_e)}{\pbar_0+t\nondegmatrix{\x_e}{\xbar_0}{\z_0}\raisecovect{\e{0}}}
         \end{equation*}
         is a well-defined $\G$-segment for such $t$ (we comment here that the smallness of $t$ does not need to be \emph{very nice}, in fact it is allowed to depend on $\x_e$, $\sublevelset_0$, and $\e{0}$, and we may have need to take it smaller later in this proof). Also define
\begin{align*}
 \z(t):&=\H(\x_e, \xbar(t), u(\x_e))=\Z{\x_e}{\pbar_0+t\nondegmatrix{\x_e}{\xbar_0}{\z_0}\raisecovect{\e{0}}}{u(\x_e)},\\
 \z_{\perturb}(t):&=\H(\x_e, \xbar(t), u(\x_e)+\perturbparam t)=\Z{\x_e}{\pbar_0+t\nondegmatrix{\x_e}{\xbar_0}{\z_0}\raisecovect{\e{0}}}{u(\x_e)+\perturbparam t},
\end{align*}
for some small $\perturbparam>0$ to be chosen later.
         Now we consider the $\G$-affine functions
         \begin{align*}
              \mountain^\perturb_t(\x):=\G(\x, \xbar(t), \z_{\perturb}(t)),
         \end{align*}
         note \eqref{eqn: exposed point is inside chopped section} follows immediately. 
         First, a simple compactness argument yields 
\begin{align}\label{eqn: close to zero section}
 \coord{\sublevelset_{\perturb, t}}{\xbar_0, \z_0}\subset\nbhdof{r(t)}{\coord{\sublevelset_0}{\xbar_0, \z_0}}
\end{align}
for some $r(t)=o(1)$ as $t\to 0$ (with $\perturbparam$ fixed), while again a compactness argument along with the inclusion $\e{0}\in \normal^0_{\p_e}\paren{\subzerocoord}$ gives existence of a $\perturbparamtilde>0$ such that
\begin{align}\label{eqn: small chopping zero section}
 \coord{\sublevelset_{0}}{\xbar_0, \z_0}\cap \curly{p\in\cotanspMbar{\xbar_0}\mid\innergbar{p-p_e}{\e{0}}\geq -\perturbparamtilde} \subset B_{\perturb/2}(\p_e).
\end{align}
Next since $u$ is \emph{very nice}, we see that if $\perturbparam$ is sufficiently small, then $[u(\x_e), u(x_e)+\perturbparam t]\subset \Niceinterval$. Thus by using the mean value property as in the calculation of \eqref{eqn: sharp growth mean value theorem} there exists a \emph{very nice} $C_1>0$ such that 
\begin{align}\label{eqn: first tilting close to tilting}
 \lvert\G(\x, \xbar(t), \z_{\perturb}(t))-  \G(\x, \xbar(t), \z(t))\rvert \leq C_1 \perturbparam t,
\end{align}
and in turn if $\mountain_0(\x)\leq\mountain^\perturb_t(\x)$ we have
         \begin{align*}
         0&\leq \G(\x, \xbar(t), \z_{\perturb}(t))-\G(\x, \xbar_0, \z_0)\\
         &\leq C_1\perturbparam t +\G(\x, \xbar(t), \z(t))-\G(\x, \xbar_0, \z_0).
         \end{align*}
         At the same time, since $\xbar(s)$ remains entirely in $\outertarget$ by \eqref{DualDomConv} and $u$ is \emph{very nice}, the quantity $-\Gz(\x, \xbar(s), \z(s))$ is bounded away from zero and infinity by a \emph{very nice} constant. Thus for some \emph{very nice} $C>0$, (also using \eqref{eqn: xbardot representation}, \eqref{eqn: zdot representation})
\begin{align}
 -C_1\perturbparam 
 &\leq t^{-1}\int_0^t\diff{s}\G(\x, \xbar(s), \z(s))ds\notag\\
 &=t^{-1}\int_0^t (-\Gz(\x, \xbar(s), \z(s)))\inner{-\frac{\Dbar\G}{\Gz}(\x, \xbar(s), \z(s))+\frac{\Dbar\G}{\Gz}(\x_e, \xbar(s), \z(s))}{\xbardot(s)}ds\notag\\
 &\leq Ct^{-1}\int_0^t\inner{\pmap{\xbar(s)}{\z(s)}{\x}-\pmap{\xbar(s)}{\z(s)}{\x_e}}{\nondegmatrixinv{\x_e}{\xbar(s)}{\z(s)}\nondegmatrix{\x_e}{\xbar_0}{\z_0}\raisecovect{\e{0}}}ds\notag\\
 &= C\inner{\pmap{\xbar(s')}{\z(s')}{\x}-\pmap{\xbar(s')}{\z(s')}{\x_e}}{\nondegmatrixinv{\x_e}{\xbar(s')}{\z(s')}\nondegmatrix{\x_e}{\xbar_0}{\z_0}\raisecovect{\e{0}}},\label{eqn: halfspace containment}
\end{align}
for some $s'\in [0, t]$. We pause to note here, $s'$ is determined by $t$, thus this last expression can be viewed as a family of functions in the variable $\x\in \outerdom^{\cl}$, parametrized by $t\geq 0$. By the $C^2$ assumption we have on $\G$, as $t$ approaches $0$ the expression converges uniformly in $\x\in \outerdom^{\cl}$ to the quantity
\begin{align*}
 \inner{\pmap{\xbar_0}{\z_0}{\x}-\pmap{\xbar_0}{\z_0}{\x_e}}{\nondegmatrixinv{\x_e}{\xbar_0}{\z_0}\nondegmatrix{\x_e}{\xbar_0}{\z_0}\raisecovect{\e{0}}}=\innergbar{\pmap{\xbar_0}{\z_0}{\x}-\p_e}{\e{0}}.
\end{align*}
As a result, first taking $\perturbparam$ small, we have for all $t>0$ small, the inclusion
\begin{align*}
 \coord{\curly{\mountain_0\leq \mountain^t_{\perturb}}}{\xbar_0, \z_0}\subset \curly{p\in\cotanspMbar{\xbar_0}\mid\innergbar{p-p_e}{\e{0}}\geq -\frac{\perturbparamtilde}{2}}.
\end{align*}
Since $\mountain_0$ is supporting to $u$ we have
\begin{align*}
 \sublevelset_{\perturb, t}\subset\curly{\mountain_0\leq \mountain^t_{\perturb}},
\end{align*}
combining with \eqref{eqn: close to zero section} and \eqref{eqn: small chopping zero section} we obtain \eqref{eqn: chopped sections close to exposed point}.

Finally, this last argument of uniform convergence shows that if $t$ is taken sufficiently small, 
\begin{align}\label{eqn: uniform closeness of first tilting}
\sup_{\y\in\outerdom}\norm{\G(\y, \xbar(t), \z(t))-\mountain_0(\y)}\text{ is small},
\end{align}
 hence combined with \eqref{eqn: first tilting close to tilting} and the fact that $u$ is \emph{very nice}, we can ensure \eqref{eqn: delta tiltings are nice} holds.
\end{proof}

	In the next lemma the notation $\plane{\subcoord}{\pm \w{}}$ is again used (see Definition \ref{DEF:notation_supporting_plane}).	

    \begin{lem}\label{lem: chopping convergence}
         Let $\mountain_0$, $\sublevelset_0$, $\p_e$, and $\x_e$ be as in Lemma~\ref{lem: localization claim} above, and suppose $\x_e\in\outerdom^{\interior}$. Then, for each $\perturb>0$, we can find (each of the following depending on $\sublevelset_0$), some $t_0>0$, a family of $\G$-affine functions $m_t(\cdot):=\G(\cdot, \xbar_t, \z_t)$, a family of unit length $\e{t}\in\cotanspMbar{\xbar_t}$ defined for $t\in[0, t_0]$, and some $\epsilon_0>0$ which satisfy the following for all $t\in [0, t_0]$:
         \begin{align}
	          \mountain_t(\x_e)>u(\x_e),\quad&\lim_{t\searrow 0}\mountain_t(\x_e)= u(\x_e),\label{eqn: chopped sections have interior}\\
	          \coord{\sublevelset_{t}}{\xbar_0, \z_0}&\subset B_{\perturb}(\p_e),\label{eqn: chopped sections close to exposed point 2}\\
	          \Nicelower<\mountain_t(x)&<\Niceupper,\quad \forall\;x\in \outerdom,\label{eqn: tiltings are nice}\\
              \min \curly{\frac{\mountain_t(\x_e)-u(\x_e)}{\sup \limits_{\sublevelset_t} (\mountain_t-u)}\;,\;\frac{\mountain_t(\x_e)-u(\x_e)}{\sup \limits_{\sublevelset_t} (\mountain_t-u)+u(\x)-\mountain_t(\x)}} &\geq \epsilon_0,\qquad \forall x\in\subzero\setminus\bigcup_{t\in(0, t_0]}\sublevelset_t\label{eqn: lower bound on localization ratios}\\
              \lim \limits_{t\to 0^+} \frac{\dist{\pmap{\xbar_t}{\z_t}{\x_e}}{\plane{\coord{\sublevelset_t}{\xbar_t, \z_t}}{\e{t}}  \cup \plane{\coord{\sublevelset_t}{\xbar_t, \z_t}}{-\e{t}} }}{\supsegment{\coord{\sublevelset_t}{\xbar_t, \z_t}}{\e{t}}}  &= 0,\label{eqn: supporting plane collapses}.
         \end{align}
         Here we have written $\sublevelset_t:=\{u\leq \mountain_t\}$. 
    \end{lem}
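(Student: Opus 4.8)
\textbf{Proof proposal for Lemma~\ref{lem: chopping convergence}.}

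The plan is to take the family $\{\mountain^\perturb_t\}_{t>0}$ produced by Lemma~\ref{lem: localization claim} (for the given $\perturb$, with a suitably small auxiliary parameter $\perturbparam$ to be fixed) and to verify that, after possibly shrinking $t_0$, it satisfies all of \eqref{eqn: chopped sections have interior}--\eqref{eqn: supporting plane collapses} with the appropriate choice of $\xbar_t$, $\z_t$, $\e{t}$. Concretely, I would set $\mountain_t := \mountain^\perturb_t$, $\xbar_t := \xbar(t)$, $\z_t := \z_\perturb(t)$ in the notation of the proof of Lemma~\ref{lem: localization claim}, and let $\e{t}\in\cotanspMbar{\xbar_t}$ be the unit normal chosen so that the two supporting hyperplanes $\plane{\coord{\sublevelset_t}{\xbar_t,\z_t}}{\pm\e{t}}$ are the pair of parallel supporting hyperplanes realizing the \emph{minimal} width of the convex set $\coord{\sublevelset_t}{\xbar_t,\z_t}$ (which is convex by Proposition~\ref{prop: sections are convex}, using that $\mountain_t$ is nice by \eqref{eqn: delta tiltings are nice}). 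With these choices, \eqref{eqn: chopped sections have interior} is immediate from \eqref{eqn: exposed point is inside chopped section} together with the continuity statement \eqref{eqn: uniform closeness of first tilting}--\eqref{eqn: first tilting close to tilting} (which forces $\mountain_t(\x_e)\to u(\x_e)$ as $t\searrow 0$ once $\perturbparam$ is fixed), \eqref{eqn: chopped sections close to exposed point 2} is exactly \eqref{eqn: chopped sections close to exposed point}, and \eqref{eqn: tiltings are nice} is exactly \eqref{eqn: delta tiltings are nice}.

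The two substantive points are \eqref{eqn: lower bound on localization ratios} and \eqref{eqn: supporting plane collapses}. For \eqref{eqn: lower bound on localization ratios}, I would argue that $\bigcup_{t\in(0,t_0]}\sublevelset_t$ exhausts, in the limit, a full neighborhood (within $\subzero$) of $\x_e$ relative to the exposed direction $\e{0}$; more precisely, using the uniform convergence of the functions $\x\mapsto t^{-1}(\mountain_t(\x)-\mountain_0(\x))$ (up to the $C_1\perturbparam$ error from \eqref{eqn: first tilting close to tilting}) to the affine-in-coordinates functional $\innergbar{\pmap{\xbar_0}{\z_0}{\x}-\p_e}{\e{0}}$ established inside the proof of Lemma~\ref{lem: localization claim}, any point $\x\in\subzero$ with $\innergbar{\pmap{\xbar_0}{\z_0}{\x}-\p_e}{\e{0}}$ strictly positive lies in $\sublevelset_t$ for all small $t$; hence a point $\x\in\subzero\setminus\bigcup_{t}\sublevelset_t$ must have $\innergbar{\pmap{\xbar_0}{\z_0}{\x}-\p_e}{\e{0}}$ bounded above by a multiple of $\perturbparam$ (essentially nonpositive up to the error). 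On the other hand $\mountain_t(\x_e)-u(\x_e)$ is comparable to $t$ (again by the derivative computation and \eqref{Nondeg}), and $\sup_{\sublevelset_t}(\mountain_t-u)\le \mountain_t(\x_e)-u(\x_e) + \osc_{\sublevelset_t}\mountain_t + \osc_{\sublevelset_t}u$, with the oscillation terms controlled via Lemma~\ref{lem: comparability}/Corollary~\ref{cor: comparability} and \eqref{eqn: chopped sections close to exposed point 2} to be $o(t)$ plus a term comparable to $t$; combining these gives a uniform lower bound $\epsilon_0$ on both ratios for $\x$ outside the union. This mirrors the reasoning in \cite[Section 3]{GK14} and in Caffarelli's localization, transported through the bi-Lipschitz coordinate change of Lemma~\ref{lem: comparability}.

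For \eqref{eqn: supporting plane collapses}, the idea is that as $t\searrow 0$ the section $\coord{\sublevelset_t}{\xbar_t,\z_t}$ degenerates onto (a subset of) the face of $\coord{\subzero}{\xbar_0,\z_0}$ that is ``seen'' from the exposed point $\p_e$ in the direction $\e{0}$, which — because $\subzero$ has more than one point — has positive diameter transverse to $\e{0}$ but shrinks to zero thickness in the $\e{0}$-direction. Precisely: the length $\supsegment{\coord{\sublevelset_t}{\xbar_t,\z_t}}{\e{t}}$ of the longest segment in the minimal-width direction stays bounded below by a positive constant (since $\sublevelset_t$ contains points at definite distance from $\x_e$, using that $\subzero$ is not a singleton and the bi-Lipschitz estimates), while the distance from $\pmap{\xbar_t}{\z_t}{\x_e}$ to the two supporting hyperplanes $\plane{\coord{\sublevelset_t}{\xbar_t,\z_t}}{\pm\e{t}}$ — which is bounded by the total width in the $\e{t}$-direction — tends to $0$ as $t\to 0$, because that width is controlled by $\sup_{\sublevelset_t}(\mountain_t - \mountain_0)\sim t$ via the same first-order expansion and the fact that $\mountain_0\le u$ on $\subzero$ pins the set in the $\e{t}$-direction. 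Taking the ratio gives \eqref{eqn: supporting plane collapses}. The main obstacle I anticipate is making the last degeneration quantitative: one must rule out the section becoming ``thin in all directions'' (which would make the minimal-width direction ill-behaved and the denominator in \eqref{eqn: supporting plane collapses} small too), and this is precisely where the hypothesis that $\subzero$ contains at least two points and $\p_e$ is an \emph{exposed} (not merely extremal) point is used — it guarantees a fixed transverse extent. I would handle this by a compactness/contradiction argument: if \eqref{eqn: supporting plane collapses} failed along a sequence $t_k\to 0$, rescale the sections by their minimal width, extract a limiting convex body, and derive a contradiction with the exposedness of $\p_e$ and the convergence of $\coord{\sublevelset_{t_k}}{\xbar_0,\z_0}$ into $B_\perturb(\p_e)\cap\coord{\subzero}{\xbar_0,\z_0}$ granted by \eqref{eqn: chopped sections close to exposed point 2}.
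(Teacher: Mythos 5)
Your choices of $\mountain_t$, $\xbar_t$, $\z_t$ match the paper's, and items \eqref{eqn: chopped sections have interior}--\eqref{eqn: tiltings are nice} are handled the same way; but there are two genuine gaps. The main one is your choice of $\e{t}$ as the minimal-width direction of $\coord{\sublevelset_t}{\xbar_t,\z_t}$: with that choice \eqref{eqn: supporting plane collapses} can simply fail. Take for instance $\coord{\subzero}{\xbar_0,\z_0}$ equal to a segment emanating from $\p_e$; then $\coord{\sublevelset_t}{\xbar_t,\z_t}$ collapses onto a short piece of that segment, its minimal-width direction is (asymptotically) \emph{orthogonal} to the segment, the supsegment in that direction tends to $0$ (so your claim that it ``stays bounded below by a positive constant'' is false), and the ratio in \eqref{eqn: supporting plane collapses} stays of order one. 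The paper instead fixes $\e{0}$ via \cite[Lemma 7.4]{GK14} so that $\p_e-\lambda\e{0}\in\coord{\subzero}{\xbar_0,\z_0}$ for $\lambda\in[0,\lambda_0]$, and defines $\e{t}$ as the coordinate transport of (essentially) $\e{0}$, so that $\e{t}\to\e{0}$; the denominator is then bounded below because a \emph{specific} second point $\x_{cp}\in\subzero\setminus\{\x_e\}$, lying in the $-\e{0}$ direction within a slab of width $O(\perturbparam)$, is shown to belong to every $\sublevelset_t$ via the \eqref{DualQQConv}-based lower bound \eqref{eqn: linearization lower bound}. Your appeal to ``$\subzero$ is not a singleton'' does not supply this: the chopping removes every point of $\subzero$ outside that slab, so without the segment property of $\e{0}$ and the persistence argument for $\x_{cp}$ there may be no point of $\subzero$ at definite distance from $\x_e$ surviving in $\sublevelset_t$ in the needed direction. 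The numerator is then killed by a compactness argument using strict normality of $\e{0}$ (close in spirit to your contradiction sketch), but it hinges on $\e{t}\to\e{0}$.

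The second gap is in \eqref{eqn: lower bound on localization ratios}: the bound $\sup_{\sublevelset_t}(\mountain_t-u)\leq(\mountain_t(\x_e)-u(\x_e))+\osc_{\sublevelset_t}\mountain_t+\osc_{\sublevelset_t}u$ with oscillations ``$o(t)$ plus $O(t)$'' is not available, since $\sublevelset_t$ does not shrink as $t\to0$ (it converges to a set of diameter comparable to $\perturb$, and $u$, $\mountain_t$ are merely Lipschitz there), so this route only yields a denominator of size $O(\perturb)$ and a ratio tending to $0$. The cancellation must be kept: since $\mountain_0\leq u$ everywhere, $\sup_{\sublevelset_t}(\mountain_t-u)\leq\sup_{\outerdom}(\mountain_t-\mountain_0)\leq Ct(1+\perturbparam)$ (this is the uniform estimate coming from \eqref{eqn: first tilting close to tilting} and the computation behind \eqref{eqn: halfspace containment}), while $\mountain_t(\x_e)-u(\x_e)=\perturbparam t$ exactly; the second ratio is handled the same way after replacing $u(\x)-\mountain_t(\x)$ by $\mountain_0(\x)-\mountain_t(\x)$. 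With these two repairs your outline coincides with the paper's proof.
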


    \begin{proof}
         Fix $\delta>0$. By \cite[Lemma 7.4]{GK14}, there exists a unit length $\e{0}\in \normal^0_{\p_e}\paren{\subzerocoord}$ and $\lambda_0>0$ such that $\p_e-\lambda \e{0}\in \subzerocoord$ for all $\lambda \in [0, \lambda_0]$. Let $\mountain_t:=\mountain^\delta_t$ be obtained by applying Lemma~\ref{lem: localization claim} with this choice of $\e{0}$ and $\delta$, and we may assume both that $t_0\leq \lambda_0$ and $t_0$ is small enough to obtain all the properties detailed in Lemma~\ref{lem: localization claim} when $t\leq t_0$. Also let $\xbar_t:=\xbar(t)$ and $\z_t:=\z_{\perturb}(t)$ as defined in the proof of Lemma~\ref{lem: localization claim} above. Then \eqref{eqn: exposed point is inside chopped section} immediately implies \eqref{eqn: chopped sections have interior}, \eqref{eqn: chopped sections close to exposed point} implies \eqref{eqn: chopped sections close to exposed point 2}, \eqref{eqn: delta tiltings are nice} implies \eqref{eqn: tiltings are nice}, and each $\mountain_t$ is \emph{nice}.
               
         Now we will show \eqref{eqn: lower bound on localization ratios}. First, by \eqref{eqn: first tilting close to tilting} and a calculation similar to \eqref{eqn: halfspace containment}, Cauchy-Schwarz, \eqref{Nondeg}, and Lemma \ref{lem: comparability} we find a \emph{very nice} $C>0$ for which
\begin{align*}
 \sup\limits_{\outerdom }(\mountain_t -\mountain_0)&=\sup\limits_{y\in\outerdom }\brackets{\G(y, \xbar_t, \z_t)-\G(y, \xbar_0, \z_0)}\\
 &\leq Ct(1+\perturbparam).
\end{align*}
          Recalling that $\mountain_0\leq u$, we obtain
         \begin{align*}
              \frac{\mountain_t (\x_e)-u(\x_e)}{\sup\limits_{\sublevelset_t }{(\mountain_t -u)}}&\geq \frac{\mountain_t (\x_e)-u(\x_e)}{\sup\limits_{\sublevelset_t }(\mountain_t -\mountain_0)}\\
&\geq \frac{\perturbparam t}{Ct(1+\perturbparam)}\\
              &\geq \frac{\perturbparam }{C(1+\perturbparam)}.
         \end{align*}
Next note that since $x\not\in\bigcup_{t\in(0, t_0]}\sublevelset_t$ the denominator of the second expression in the minimum in \eqref{eqn: lower bound on localization ratios} is always strictly positive. Then since $x\in\sublevelset_0$ we have
\begin{align*}
\sup\limits_{\sublevelset_t }{(\mountain_t -u)}+u(x)-\mountain_t(x)\leq \sup\limits_{\sublevelset_t }(\mountain_t -\mountain_0)+\mountain_0(x)-\mountain_t(x),
\end{align*}
 by an argument much as above we obtain \eqref{eqn: lower bound on localization ratios} for the choice
         \begin{align*}
         	  \epsilon_0=\frac{\perturbparam }{2C(1+\perturbparam)}.
         \end{align*}
        We now work toward showing \eqref{eqn: supporting plane collapses}, to this end take any $\x_{cp}\in\subzero$. Recalling \eqref{eqn: uniform closeness of first tilting}, we can apply \eqref{eqn: linearization lower bound} in Lemma~\ref{lem: linearization} and use the mean value theorem as in \eqref{eqn: first tilting close to tilting} to find a \emph{very nice} $C>0$ such that
\begin{align*}
\mountain_t(\x_{cp})-u(\x_{cp})
 &=\mountain_t(\x_{cp})-\mountain_0(\x_{cp})\\
 &= \G(\x_{cp}, \xbar(t), \z_{\perturb}(t))-\G(\x_{cp}, \xbar_0, \z_0)\\
 &\geq \G(\x_{cp}, \xbar(t), \z(t))-\G(\x_{cp}, \xbar_0, \z_0)+C\perturbparam t\\
 &\geq  \frac{C}{M}\inner{\pmap{\xbar_0}{\z_0}{\x_{cp}}-\p_e}{\nondegmatrixinv{\x_e}{\xbar_0}{\z_0}(\pbarmap{\x_e}{u(\x_e)}{\xbar(t)}-\pbar_0)}+C\perturbparam t\\
 &=t\frac{ C}{M}\inner{\pmap{\xbar_0}{\z_0}{\x_{cp}}-\p_e}{\raisecovect{\e{0}}}+C\perturbparam t\\
&= t\frac{ C}{M}\innergbar[\xbar_0]{\pmap{\xbar_0}{\z_0}{\x_{cp}}-\p_e}{\e{0}}+C\perturbparam t.
\end{align*}
Since $\subzero$ contains at least one point besides $\x_e$ and $\subzerocoord$ is convex by Proposition~\ref{prop: sections are convex} (recall $\mountain_0$ is assumed \emph{nice}), we may choose $\x_{cp}\in\subzero$, $\x_{cp}\neq\x_e$ in such a way that the final expression in the above calculation is always nonnegative. In particular
\begin{align}\label{eqn: all tiltings contain another point}
 \x_{cp}\in\sublevelset_t, \quad t\in[0, t_0].
\end{align}
Finally, we define 
\begin{align*}
 \e{t}:=\frac{\pmap{\xbar_t}{\z_t}{\X{\xbar_0}{\z_0}{\p_e+l_0\e{0}}}-\pmap{\xbar_t}{\z_t}{\x_e}}{\gbarnorm[\xbar_t]{\pmap{\xbar_t}{\z_t}{\X{\xbar_0}{\z_0}{\p_e+l_0\e{0}}}-\pmap{\xbar_t}{\z_t}{\x_e}}}\in\cotanspMbar{\xbar_t}
\end{align*}
for some sufficiently small $l_0>0$ such that the above expression is defined. Suppose by contradiction that \eqref{eqn: supporting plane collapses} fails, then there exists $\epsilon>0$ and a sequence of $t_k>0$ going to zero such that 
         \begin{align}
         \epsilon\leq \frac{\dist{\p^k_e}{\plane{\coord{\sublevelset_k}{\xbar_k, \z_k}}{\e{k}}}}{\supsegment{\coord{\sublevelset_k}{\xbar_k, \z_k}}{\e{k}}},\quad \forall\;k\label{eqn: collapsing support plane contradiction}
         \end{align} 
         where for ease of notation, we write $\sublevelset_k:=\sublevelset_{t_k}$, $\xbar_k:=\xbar_{t_k}$, $\z_k:=\z_{t_k}$, $\p^k_e:=\pmap{\xbar_{t_k}}{\z_{t_k}}{\x_e}$, and $\e{k}:=\e{t_k}$. By compactness, we can assume all of these sequences converge, it is clear that $\xbar_k\to \xbar_0$, $\z_k\to\z_0$, and $\e{k}\to \e{0}$ and $\p^k_e\to\p_e$ (in $\cotanspMbar{}$). Now we can see that        
\begin{align}\label{eqn: poor man's blowup term lower bound}
 \lim_{k\to\infty}\supsegment{\coord{\sublevelset_k}{\xbar_k, \z_k}}{\e{k}}=\supsegment{\subzerocoord}{\e{0}}\geq t_0>0
\end{align}
        by our choice of $\e{0}$. 
        Now recalling 	Remark~\ref{rem:b_bet}, we obtain the existence of a sequence $\p_k\in \coord{\sublevelset_{k}}{\xbar_k, \z_k}$ such that for all $k$
        \begin{align*}
        \dist{\p^k_e}{\plane{\coord{\sublevelset_k}{\xbar_k, \z_k}}{\e{k}}}=\innergbar[\xbar_k]{\p_k-\p^k_e}{\e{k}},
        \end{align*}
         by compactness of $\outerdom^{\cl}$ we may assume that $\X{\xbar_k}{\z_k}{\p_k}$ converges to some $\x_\infty\in\outerdom^{\cl}$ as $k\to\infty$; we easily see $\x_\infty\in \subzero$. Then combining with \eqref{eqn: poor man's blowup term lower bound}, rearranging \eqref{eqn: collapsing support plane contradiction}, and passing to the limit, we would obtain
         \begin{align*}
              0< \epsilon t_0\leq \innergbar[\xbar_0]{\pmap{\xbar_0}{\z_0}{\x_\infty}-\p_e}{\e{0}}.
         \end{align*}
However, as $\e{0}\in\normal^0_{\p_e}\paren{\subzerocoord}$ this implies $\pmap{\xbar_0}{\z_0}{\x_\infty}=\p_e$ immediately giving a contradiction. Thus we have shown \eqref{eqn: supporting plane collapses} finishing the proof.

\end{proof}

    \subsection{Proof of Theorem \ref{thm: localization}} From this point on, the rest of the proof is analogous to the argument in \cite{GK14}, specifically the proofs of \cite[Theorem 5.7]{GK14}, and \cite[Lemmas 5.8 and 5.9]{GK14}, using Lemma~\ref{lem: chopping convergence} in place of \cite[Lemma 5.5]{GK14}.
    
    Some points of note. The sets $\spt{\rho}$ and $\spt{\bar\rho}$, and $\partial_c u$ from \cite{GK14} should be replaced by $\innerdom$, $\innertarget$, and $\partial_G u$ respectively, while Theorem~\ref{thm: G-aleksandrov estimate} and Theorem~\ref{thm: Sharp growth} should take the places of \cite[Theorem 4.1, Lemma 3.7]{GK14}. By \eqref{eqn: tiltings are nice} and \eqref{eqn: chopped sections close to exposed point 2} (choosing a small enough $\delta>0$), we can apply Theorem~\ref{thm: G-aleksandrov estimate} to the sections $\sublevelset_t$ when $t$ is sufficiently small from Lemma~\ref{lem: chopping convergence}. Also we see that by \eqref{eqn: lower bound on localization ratios} and \eqref{eqn: tiltings are nice} we will have   
\begin{align*}
 \sup_{\sublevelset_t} \mountain_t+\sup_{\sublevelset_t}(\mountain_t-u)&\leq \Niceupper+\epsilon_0^{-1}(\mountain_t(\x_e)-u(\x_e))
\end{align*}
thus by the second part of \eqref{eqn: chopped sections have interior}, for $t>0$ small enough we obtain \eqref{eqn: section not too deep}; hence we can also apply Theorem~\ref{thm: Sharp growth}.
    
    Finally, the set $S^{\rm big}_t$ appearing in the proof of \cite[Lemma 5.9]{GK14} should be redefined as   
\begin{align*}
 S^{\rm big}_t:=\curly{\x\in \outerdom\mid u(\x)\leq \G(\x, \xbar_t, \H(\x_0, \xbar_t, u(\x_0)))},
\end{align*}
    where $\mountain_t(\cdot)=\G(\cdot, \xbar_t, \z_t)$ and for some choice of $\x_0\not \in \sublevelset_t$. Here, we note that as in \eqref{eqn: first tilting close to tilting}, we have 
\begin{align*}
 \lvert \G(\x, \xbar_t, \H(\x_0, \xbar_t, u(\x_0)))-\mountain_t(\x)\rvert <C(u(x_0)-\mountain_t(x_0))
\end{align*}
thus combining with \eqref{eqn: tiltings are nice} and choosing $\x_0$ close enough to the boundary of $\sublevelset_t$, we can ensure $\Nicelower<\G(\x, \xbar_t, \H(\x_0, \xbar_t, u(\x_0))<\Niceupper$ and \eqref{eqn: section not too deep}, for all $x\in \outerdom$ and $t>0$ small. With this choice of $\x_0$, we are able to apply Theorem~\ref{thm: Sharp growth} to $\G(\x, \xbar_t, \H(\x_0, \xbar_t, u(\x_0))$, and the proof of  \cite[Lemma 5.9]{GK14} can now be followed.
   
   \begin{flushright} $\square$ \end{flushright}

\subsection{Strict convexity} For the remainder of this section we fix $\x_0\in\innerdom^{\interior}$, $\xbar_0\in\Gsubdiff{u}{\x_0}$, and also write
    \begin{align*}
\z_0:=\H(\x_0, \xbar_0, u(\x_0)),\quad    \mountain_0(\cdot):=\G(\cdot, \xbar_0, \z_0),\\
         \p_0:=\pmap{\xbar_0}{\z_0}{\x_0}, \quad 
         \pbar_0:=\pbarmap{\x_0}{u(\x_0)}{\xbar_0}=\pbarmap{\x_0}{\mountain_0(\x_0)}{\xbar_0}.
    \end{align*}
    Additionally, in this section we will be using the Riemannian inner product $\innerg[x_0]{\cdot}{\cdot}$ on $\cotanspM{x_0}$.

    \begin{lem}\label{lem: good direction a3w}
         Suppose that the conditions of Theorem~\ref{thm:strict_convexity} hold and $\contact$ contains more than one point. Then there is some nonzero $\qbar_0 \in \cotanspMbar{x_0}$ such that
         \begin{align}\label{eqn: cone is in interior a3w}
              \paren{B_r(\pbar_0)\setminus B_{\frac{r}{2}}(\pbar_0)}\cap\cone{\qbar_0}{r} \subset \innertargetcoord{\x_0, u(\x_0)}^{\interior}
         \end{align}
         for all sufficiently small and positive $r$. Here, $\cone{\qbar_0}{r}$ denotes the cone
         \begin{align}\label{eqn: cone def a3w}
              \cone{\qbar_0}{r}:= \curly{\pbar\in \outertargetcoord{\x_0, u(\x_0)} \mid r\innerg[x_0]{\pbar-\pbar_0}{\frac{\qbar_0}{\gnorm[x_0]{\qbar_0}}}\geq \gnorm[x_0]{\proj{\qbar_0^\perp}{\pbar-\pbar_0}} },
         \end{align}
         and $\proj{\qbar_0^\perp}{\pbar}$ is the projection of $\pbar$ onto the $(n-1)$--dimensional affine space containing $\pbar_0$, which is $\g{x_0}$-orthogonal to $\qbar_0$. Moreover, the linear function on $\outerdomcoord{\xbar_0, \z_0}$ defined by
         \begin{align}\label{eqn: linear function def}
              \linear{}{p}:= \inner{\p}{\frac{\nondegmatrixinv{x_0}{\xbar_0}{\z_0}\qbar_0}{\gnorm[x_0]{\qbar_0}}}
         \end{align}
         attains a unique maximum on $\contactcoord$.
    \end{lem}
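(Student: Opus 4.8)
The plan is to establish Lemma~\ref{lem: good direction a3w} by combining the localization result (Theorem~\ref{thm: localization}) with the convexity of $\coord{\innertarget}{\x_0, u(\x_0)}$ assumed in Theorem~\ref{thm:strict_convexity}, following the corresponding argument in \cite[Lemmas 7.1--7.3]{GK14} adapted to the generating function setting.

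\begin{proof}[Proof sketch]
Since $\x_0\in\innerdom^{\interior}$ and $\innertarget^{\cl}\subset\outertarget^{\interior}$, the point $\pbar_0=\pbarmap{\x_0}{u(\x_0)}{\xbar_0}$ lies in $\coord{\innertarget}{\x_0,u(\x_0)}^{\interior}$ if and only if $u$ is strictly $\G$-convex at $\x_0$; if $\contact$ contains more than one point this fails, so $\pbar_0\in\coord{\innertarget}{\x_0,u(\x_0)}^{\bdry}$. By hypothesis $\coord{\innertarget}{\x_0,u(\x_0)}$ is a convex set (with nonempty interior, since $\x_0\in\innerdom^{\interior}$ and $\Gsubdiff{u}{\cdot}$ has full-dimensional image there), so by Remark~\ref{rem: remark on normal cones} its normal cone $\normal_{\pbar_0}(\coord{\innertarget}{\x_0,u(\x_0)})$ contains a nonzero vector. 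The first step is to choose $\qbar_0$ to be $(-1)$ times such a nonzero outer normal to $\coord{\innertarget}{\x_0,u(\x_0)}$ at $\pbar_0$; possibly after a small perturbation one arranges, via a standard argument on supporting hyperplanes of convex sets (cf.\ \cite[Lemma 7.4]{GK14} and \cite[Theorem 18.7]{Roc70}), that $\pbar_0$ is actually in the relative interior of the face exposed by $\qbar_0$, which is what will let the cone $\cone{\qbar_0}{r}$ reach into the interior. Geometrically, $\qbar_0$ points from the boundary point $\pbar_0$ \emph{into} $\coord{\innertarget}{\x_0,u(\x_0)}$, and the convexity of the target set guarantees that a sufficiently narrow solid cone opening in the $\qbar_0$ direction, with its tip slightly advanced from $\pbar_0$, stays inside the open convex set.

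The second step is to verify the inclusion \eqref{eqn: cone is in interior a3w} quantitatively. Fix a point $\pbar_1\in\coord{\innertarget}{\x_0,u(\x_0)}^{\interior}$ lying on the ray $\pbar_0+s\qbar_0$, $s>0$ (such a point exists by the choice of $\qbar_0$ above), and let $B_\eta(\pbar_1)\subset\coord{\innertarget}{\x_0,u(\x_0)}^{\interior}$ for some $\eta>0$. By convexity, the closed convex hull of $\{\pbar_0\}\cup B_\eta(\pbar_1)$ is contained in $\coord{\innertarget}{\x_0,u(\x_0)}$, and its interior (minus the single point $\pbar_0$) lies in the open set; an elementary computation with similar triangles shows this convex hull contains a set of the form $\{\pbar : r\innerg[x_0]{\pbar-\pbar_0}{\qbar_0/\gnorm[x_0]{\qbar_0}}\geq\gnorm[x_0]{\proj{\qbar_0^\perp}{\pbar-\pbar_0}}\}\cap B_r(\pbar_0)$ for all small $r$, which after removing the ball $B_{r/2}(\pbar_0)$ gives exactly the left-hand side of \eqref{eqn: cone is in interior a3w}, now contained in the interior. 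This is the routine part, matching the corresponding estimate in \cite{GK14}.

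The third and final step is the statement about the linear function $\linear{}{\cdot}$ defined in \eqref{eqn: linear function def}. By Remark~\ref{rem: local coordinates}, the derivative of $\pbarmap{\x_0}{u(\x_0)}{\cdot}$ at $\xbar_0$ is $\nondegmatrix{\x_0}{\xbar_0}{\z_0}$, so $\linear{}{\p}=\inner{\p}{\nondegmatrixinv{\x_0}{\xbar_0}{\z_0}\qbar_0}/\gnorm[x_0]{\qbar_0}$ corresponds, after pushing forward by $\pmap{\xbar_0}{\z_0}{\cdot}$, to the linear functional on $\coord{\outertarget}{\x_0,u(\x_0)}$ given by pairing with $\qbar_0/\gnorm[x_0]{\qbar_0}$ via the adjoint relation of Definition~\ref{def: nondeg}; more precisely, $\inner{\nondegmatrix{\x_0}{\xbar_0}{\z_0}\Vbar}{\nondegmatrixinv{\x_0}{\xbar_0}{\z_0}\qbar_0}=\inner{\Vbar}{\dualnondegmatrix{\x_0}{\xbar_0}{\z_0}\nondegmatrixinv{\x_0}{\xbar_0}{\z_0}\qbar_0}$. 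Since $\qbar_0$ (as an element of $\cotanspMbar{\xbar_0}$, identified via the musical isomorphism) is an outer normal to $\coord{\innertarget}{\x_0,u(\x_0)}$ at $\pbar_0$ exposing $\pbar_0$ as its unique maximizer, and $\coord{\contact}{\xbar_0,\z_0}$ is mapped under $\pbarmap{\x_0}{u(\x_0)}{\pmap{\xbar_0}{\z_0}{\cdot}^{-1}}$-type correspondence into $\coord{\innertarget}{\x_0,u(\x_0)}$ with $\pbar_0$ as the image of $\x_0$ (and $\Gsubdiff{u}{\contact}$ lying in $\innertarget$ by Lemma~\ref{lem: G-dual inside G-subdifferential image}), it follows that $\linear{}{\cdot}$ attains its maximum over $\coord{\contact}{\xbar_0,\z_0}$ uniquely at $\p_0$. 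The main obstacle is this last identification: one must carefully track how $\contact$ sits inside $\innerdom$, that $\Gsubdiff{u}{\x}\cap\coord{\innertarget}{\x_0,u(\x_0)}$ behaves correctly along the contact set, and that the coordinate changes intertwining $\pmap{\xbar_0}{\z_0}{\cdot}$ and $\pbarmap{\x_0}{u(\x_0)}{\cdot}$ preserve the relevant normal-cone structure; this is where the hypotheses $\innerdom^{\cl}\subset\outerdom^{\interior}$, $\innertarget^{\cl}\subset\outertarget^{\interior}$, and the convexity of $\coord{\innertarget}{\x_0,u(\x_0)}$ are all used, exactly as in \cite[Lemma 7.2]{GK14}.
\end{proof}
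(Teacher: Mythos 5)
The paper's own proof of this lemma is a one-line deferral to \cite[Lemma 6.3]{GK14}, so your sketch must stand on its own, and its decisive third step is wrong. You conclude that $\linear{}{\cdot}$ attains its maximum over $\contactcoord$ \emph{uniquely at $\p_0$}. That is impossible under the hypotheses: if $\contact$ contains more than one point, Theorem~\ref{thm: localization} forces every extremal (in particular every exposed) point of $\contactcoord$ to lie on $\outerdomcoord{\xbar_0,\z_0}^{\bdry}$, whereas $\p_0=\pmap{\xbar_0}{\z_0}{\x_0}$ is an interior point of $\outerdomcoord{\xbar_0,\z_0}$ because $\x_0\in\innerdom^{\interior}$ and $\innerdom^{\cl}\subset\outerdom^{\interior}$. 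Indeed Lemma~\ref{lem: halfspace inequality} shows the maximizer $\pmax$ sits on the boundary, and the contradiction in the proof of Theorem~\ref{thm:strict_convexity} relies precisely on $\pmax$ being far from $\innerdom$; with maximizer $\p_0$ that argument would collapse. The mechanism you invoke also does not exist: in your first step $\qbar_0$ is an \emph{inward} direction, yet in the third step it is treated as an outer normal ``exposing $\pbar_0$'' in $\coord{\innertarget}{\x_0,u(\x_0)}$ (a boundary point, even one in the relative interior of an exposed face, need not be a unique maximizer), and the ``correspondence'' carrying $\contactcoord$ into $\coord{\innertarget}{\x_0,u(\x_0)}$ is the set-valued map $\x\mapsto\Gsubdiff{u}{\x}$, not a linear map intertwining normal cones; since $\xbar_0\in\Gsubdiff{u}{\x}$ for \emph{every} $\x\in\contact$, no exposure property can be transported through it.

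There is also an unjustified, essentially circular, claim at the start: that $\pbar_0\in\coord{\innertarget}{\x_0,u(\x_0)}^{\interior}$ if and only if $u$ is strictly $\G$-convex at $\x_0$; the direction you use amounts to (most of) Theorem~\ref{thm:strict_convexity} itself and is nowhere proved. It is also unnecessary: if $\pbar_0$ is interior, \eqref{eqn: cone is in interior a3w} holds for every direction and all small $r$. What your sketch is actually missing is the argument that a \emph{single} $\qbar_0$ can satisfy both conclusions simultaneously. The standard route is: the directions $\qbar_0$ for which $\pbar_0+s\qbar_0$ enters $\coord{\innertarget}{\x_0,u(\x_0)}^{\interior}$ for all small $s>0$ form a nonempty open cone (everything, in the interior case), while the vectors $w\in\tanspMbar{\xbar_0}$ for which $\p\mapsto\inner{\p}{w}$ has a unique maximizer on the compact convex set $\contactcoord$ are dense (they are the differentiability points of its support function); since $\nondegmatrixinv{\x_0}{\xbar_0}{\z_0}$ is a linear isomorphism by \eqref{Nondeg}, these two sets of admissible $\qbar_0$ intersect, and for such a choice your second step (which is correct) yields \eqref{eqn: cone is in interior a3w} while the unique maximum holds by construction.
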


    \begin{proof}
        This proof is essentially identical to that of \cite[Lemma 6.3]{GK14}.
    \end{proof}

    \begin{lem}\label{lem: halfspace inequality}
         Suppose $\qbar_0\in \outertargetcoord{\x_0, u(\x_0)}$ is chosen as in Lemma~\ref{lem: good direction a3w} above, $\linear{}{p}$ is defined by \eqref{eqn: linear function def}, and $\contact$ contains more than one point. Then if $\pmax\in\subzerocoord$ is the unique point where $\linear{}{\cdot}$ attains its maximum over $\subzerocoord$, we have
         \begin{align}\label{eqn: max at boundary a3w}
              \pmax\in \subzerocoord\cap \outerdomcoord{\xbar_0, \z_0}^\bdry.
         \end{align} 
         Additionally we have the inequality
         \begin{align}\label{eqn: halfspace inequality a3w}
              \inf_{x}\linear{}{\pmap{\xbar_0}{\z_0}{\x}}> \linear{}{\pmax}-o(1),\qquad r\to 0,
         \end{align}
         where for each $r>0$ small, the infimum is taken over the set of $\x\not \in\contact$ satisfying
         \begin{align*}
              \coord{\partial_{\G} u(\x)}{\x_0, \u(\x_0)}\cap\cone{\qbar_0}{r}\cap \paren{B_{r}(\pbar_0)\setminus B_{r/2}(\pbar_0)}\neq \emptyset.
         \end{align*}
    \end{lem}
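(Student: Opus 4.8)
The plan is to follow the blueprint of \cite[Lemma 6.4]{GK14}, but with all convexity and distance arguments routed through the $\G$-coordinate machinery established in Sections~\ref{section: elements of generating functions}--\ref{section: sharp growth estimate}. Throughout, fix $\qbar_0$ as produced by Lemma~\ref{lem: good direction a3w} and write $\linear{}{\cdot}$ for the linear function on $\outerdomcoord{\xbar_0, \z_0}$ from \eqref{eqn: linear function def}; note that by construction of $\nondegmatrix{\x_0}{\xbar_0}{\z_0}$ the functional $\linear{}{\cdot}$ is, up to a positive \emph{very nice} constant, dual-paired with the direction $\qbar_0$ under the nondegeneracy identification, so its gradient is a fixed nonzero covector.

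\emph{Step 1: the maximum is attained on the boundary.} Suppose for contradiction $\pmax\in\subzerocoord\cap\outerdomcoord{\xbar_0,\z_0}^{\interior}$, i.e. $\xmax:=\X{\xbar_0}{\z_0}{\pmax}\in\outerdom^{\interior}$. By Lemma~\ref{lem: good direction a3w}, $\pmax$ is the \emph{unique} maximizer of $\linear{}{\cdot}$ over $\subzerocoord$, and since $\subzero$ has more than one point, $\subzerocoord$ is a convex set (Proposition~\ref{prop: sections are convex}) with $\pmax$ as an exposed point whose exposing normal we may take to be (a positive multiple of) the gradient of $\linear{}{\cdot}$. Now $\xmax\in\innerdom^{\interior}$ because $\xmax\in\outerdom^{\interior}$ and — by the hypothesis that $\contact$ has more than one point and that $\xmax$ is an interior exposed point of $\contactcoord$ — $\xmax$ lies in the support of the $\G$-Monge-Amp\`ere measure (otherwise a neighborhood of $\xmax$ would have zero $\G$-subdifferential measure, forcing $u$ to agree with $\mountain_0$ on that neighborhood; but then, by Corollary~\ref{cor: local to global} and the local flatness, $\xmax$ could not be an exposed point of $\contactcoord$). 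Thus Theorem~\ref{thm: localization} applies to $\mountain_0$ at $\xmax\in\innerdom^{\interior}$: since $\subzero$ is not a single point, every extremal point of $\contactcoord$ lies on $\outerdomcoord{\xbar_0,\z_0}^{\bdry}$. But $\pmax$ is an extremal (indeed exposed) point of $\contactcoord$ lying in $\outerdomcoord{\xbar_0,\z_0}^{\interior}$, a contradiction. This gives \eqref{eqn: max at boundary a3w}.

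\emph{Step 2: the halfspace inequality.} For each small $r>0$ let $\x\not\in\contact$ be a point in the indicated set, so there is $\xbar\in\partial_\G u(\x)$ with $\coord{\xbar}{\x_0,u(\x_0)}=:\pbar\in\cone{\qbar_0}{r}\cap(B_r(\pbar_0)\setminus B_{r/2}(\pbar_0))$; by \eqref{eqn: cone is in interior a3w}, $\pbar\in\innertargetcoord{\x_0,u(\x_0)}^{\interior}$. Let $\ztil:=\H(\x_0,\xbar,u(\x_0))$, so $\G(\cdot,\xbar,\ztil)$ is a $\G$-affine function with $\G(\x_0,\xbar,\ztil)=u(\x_0)=\mountain_0(\x_0)$. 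Since $\xbar\in\partial_\G u(\x)$, the function $\G(\cdot,\xbar,\H(\x,\xbar,u(\x)))$ supports $u$ from below; I want to compare it with $\mountain_0$ at the point where $\linear{}{\cdot}$ achieves its max. The key point is that $\pbar$ is close to $\pbar_0$ (within $B_r(\pbar_0)$) and sits inside the cone $\cone{\qbar_0}{r}$, which forces the \emph{derivative at $\xbar_0$ along $\xbar$} of the family $\G(\x,\cdot,\cdot)$ to have a controlled sign against $\linear{}{\cdot}$. Concretely, applying the linearization estimates of Lemma~\ref{lem: linearization} (in particular \eqref{eqn: linearization upper bound}) along the $\G$-segment $\xbar(t):=\Gseg{\xbar_0}{\xbar}{\x_0,u(\x_0)}$, one gets, for all $\x\in\outerdom$,
\begin{align*}
  \G(\x,\xbar,\ztil)-\mountain_0(\x)
  = \inner{\pmap{\xbar_0}{\z_0}{\x}-\p_0}{\nondegmatrixinv{\x_0}{\xbar_0}{\z_0}(\pbar-\pbar_0)} + O(r^2),
\end{align*}
uniformly in $\x$, where the error $O(r^2)$ is \emph{very nice}. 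Since $u(\x)=\G(\x,\xbar,\H(\x,\xbar,u(\x)))\le \G(\x,\xbar,\ztil)$ when $\H(\x,\xbar,u(\x))\ge\ztil$ (using $\G_z<0$) — and the defining relation $\G(\x_0,\xbar,\ztil)=u(\x_0)\le\G(\x_0,\xbar,\H(\x,\xbar,u(\x)))$ forces exactly this — at the point $\xmax$ from Step~1 we obtain
\begin{align*}
  0 \le u(\xmax)-\mountain_0(\xmax) \le \inner{\pmax-\p_0}{\nondegmatrixinv{\x_0}{\xbar_0}{\z_0}(\pbar-\pbar_0)} + O(r^2).
\end{align*}
Dividing by $\gnorm[\x_0]{\pbar-\pbar_0}\ge r/2$ (so errors become $O(r)$) and using that, by the cone condition \eqref{eqn: cone def a3w}, the unit vector $(\pbar-\pbar_0)/\gnorm[\x_0]{\pbar-\pbar_0}$ is within $O(r)$ of $\qbar_0/\gnorm[\x_0]{\qbar_0}$, the right side becomes $\linear{}{\pmax}\gnorm[\x_0]{\pbar-\pbar_0}/\gnorm[\x_0]{\qbar_0}+O(r)$-type bound; rearranging and using the same linearization for a generic $\x$ in place of $\xmax$ yields $\linear{}{\pmap{\xbar_0}{\z_0}{\x}}\ge \linear{}{\pmax}-o(1)$ as $r\to 0$. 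Taking the infimum over all admissible $\x$ gives \eqref{eqn: halfspace inequality a3w}.

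\emph{Main obstacle.} The delicate point is Step~2: one must convert the \emph{cone membership} of $\pbar$ — an infinitesimal, linearized condition in the cotangent fibre $\cotanspMbar{x_0}$ — into a genuine inequality for the nonlinear $\G$-affine functions $\G(\cdot,\xbar,\ztil)$ valid uniformly over all of $\outerdom$, not just near $\x_0$. This is exactly where \eqref{DualQQConv} is needed (through Lemma~\ref{lem: linearization}): without the quantitative quasiconvexity one cannot control the difference $\G(\x,\xbar,\ztil)-\mountain_0(\x)$ globally by its first-order term at $\xbar_0$, and the error terms could swamp the $O(r)$ gain. Care must also be taken that all implicit constants (in the $O(\cdot)$ symbols, and in the comparison between $\gbarnorm[\xbar_0]{\cdot}$ and $\gnorm[\x_0]{\cdot}$ via Corollary~\ref{cor: comparability}) are \emph{very nice} so that the $o(1)$ in \eqref{eqn: halfspace inequality a3w} genuinely tends to $0$ as $r\to 0$ independently of which $\x$ realizes the infimum. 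Beyond this, Step~1 is essentially a clean citation of Theorem~\ref{thm: localization} once one checks $\xmax\in\innerdom^{\interior}$, and the argument for that checkpoint is the same as in \cite[Lemma 6.4]{GK14}.
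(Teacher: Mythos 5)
The main gap is in your Step~2. The displayed ``expansion'' $\G(\x,\xbar,\ztil)-\mountain_0(\x)=\inner{\pmap{\xbar_0}{\z_0}{\x}-\p_0}{\nondegmatrixinv{\x_0}{\xbar_0}{\z_0}(\pbar-\pbar_0)}+O(r^2)$, uniformly in $\x\in\outerdom$, is not what Lemma~\ref{lem: linearization} provides and is not available under the standing hypotheses: \eqref{eqn: linearization upper bound} is only a one--sided, first--order bound with a \emph{very nice} constant (and the actual first--order coefficient carries the factor $-\Gz(\x,\xbar_0,\z_0)\neq 1$), \eqref{eqn: linearization lower bound} is one--sided through the \eqref{DualQQConv} bracket, and a genuine two--sided second--order Taylor expansion along $\xbar(t)$ would need pure second derivatives of $\G$ in $\xbar$, which the paper never assumes. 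Worse, the inequality you extract at $\xmax$, namely $0\le u(\xmax)-\mountain_0(\xmax)\le\inner{\pmax-\p_0}{\nondegmatrixinv{\x_0}{\xbar_0}{\z_0}(\pbar-\pbar_0)}+O(r^2)$, is a \emph{lower} bound on the pairing, whereas \eqref{eqn: halfspace inequality a3w} requires an \emph{upper} bound on $\linear{}{\pmax}-\linear{}{\pmap{\xbar_0}{\z_0}{\x}}$, hence on that pairing. The function actually supporting $u$ from below is $\mountain_r(\cdot)=\G(\cdot,\xbar,\zhat)$ with $\zhat=\H(\x,\xbar,u(\x))$, anchored at $\x$; the function $\G(\cdot,\xbar,\ztil)$ you use is anchored at $\x_0$ and lies \emph{above} $\mountain_r$, so $\mountain_r(\xmax)\le u(\xmax)$ gives no control on $\G(\xmax,\xbar,\ztil)-\mountain_0(\xmax)$ unless you bound $\zhat-\ztil$, equivalently $\mountain_0(\x_0)-\mountain_r(\x_0)$. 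Nothing in your argument does this, and it is exactly the crux of the paper's proof: since $\mountain_r-\mountain_0$ is $\le 0$ at $\x_0$, $>0$ at $\x$, and $\outerdom$ is path--connected, there is a crossing point $\x_0'$ with $\mountain_r(\x_0')=\mountain_0(\x_0')$, and \eqref{eqn: linearization upper bound} re--anchored at $\x_0'$ yields $\norm{\mountain_0(\x_0)-\mountain_r(\x_0)}\le C\gnorm[\x_0]{\pbar-\pbar_0}\le Cr$. Only with this in hand does \eqref{eqn: linearization lower bound}, applied at $\xmax$ after verifying (via \eqref{eqn: linearization upper bound} and shrinking $r_0$) that $(\xmax,\xbar(t),\H(\x_0,\xbar(t),u(\x_0)))\in\gendom$ for all $t$, give the required upper bound, which is then decomposed along $\qbar_0$ and its $\g{\x_0}$--orthogonal complement, divided by $\innerg[x_0]{\pbar-\pbar_0}{\qbar_0/\gnorm[x_0]{\qbar_0}}>0$, and closed using the cone condition \eqref{eqn: cone def a3w}. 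Without the crossing--point estimate the error terms are not controlled relative to the direction $\qbar_0$ and the $o(1)$ in \eqref{eqn: halfspace inequality a3w} does not follow.

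Your Step~1 reaches the correct conclusion with the correct tool, but the detour through ``$\xmax\in\innerdom^{\interior}$'' is both unnecessary and unjustified. Theorem~\ref{thm: localization} is invoked with the standing data of the section: $\mountain_0$ supports $u$ at the fixed $\x_0\in\innerdom^{\interior}$ with $\xbar_0\in\Gsubdiff{u}{\x_0}\subset\innertarget$, and its conclusion already places \emph{every} extremal point of $\subzerocoord$ on $\outerdomcoord{\xbar_0,\z_0}^{\bdry}$, wherever those points sit; there is no need to relocate the supporting point to $\xmax$. Moreover, your justification that a neighborhood of $\xmax$ carrying zero $\G$--Monge--Amp\`ere measure would force $u=\mountain_0$ on that neighborhood is not a valid implication. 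It suffices to note that the unique maximizer $\pmax$ of the linear function $\linear{}{\cdot}$ on the convex set $\subzerocoord$ is an extremal point and to quote Theorem~\ref{thm: localization}, which is precisely what the paper does.
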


    \begin{proof}
                 Since the maximum of a linear function on a convex set must be attained at at least one of its extremal points, $\pmax$ must be an extremal point of $\contactcoord$. However, since $\contact$ contains more than one point by assumption, Theorem~\ref{thm: localization} yields \eqref{eqn: max at boundary a3w}.
        
                We now work towards the inequality \eqref{eqn: halfspace inequality a3w}. Fix some $r_0>0$ to be determined, take $r\in (0, r_0)$, and let $x\not \in \contact$. Also suppose $\pbar_r\in \coord{\Gsubdiff{u}{\x}}{\x_0, u(\x_0)}\cap \cone{\qbar_0}{r}\cap \paren{B_r(\pbar_0)\setminus B_{\frac{r}{2}}(\pbar_0)}$ and define
\begin{align*}
 \p:&=\pmap{\xbar_0}{\z_0}{\x}, \qquad \xbar_r:=\Xbar{\x_0}{u(\x_0)}{\pbar_r},\qquad \xmax:=\X{\xbar_0}{\z_0}{\pmax},\\
 \xbar(t):&=\Gseg{\xbar_0}{\xbar_r}{\x_0, u(\x_0)},\qquad
 \z(t):=\H(\x_0, \xbar(t), u(\x_0))=\H(\x_0, \xbar(t), \mountain_0(\x_0)),\\
 \mountain_r(\cdot):&=\G(\cdot, \xbar_r, \H(\x, \xbar_r, u(\x))).
\end{align*}
Now since $\u$ is \emph{very nice}, $\mountain_0(\cdot)\in \Niceinterval$ on $\outerdom^{\cl}$. Then by \eqref{DomConv} we can apply \eqref{eqn: linearization upper bound} to find some \emph{very nice} constant $C>0$ such that
\begin{align*}
 \norm{\G(\xmax, \xbar(t), \z(t))-\mountain_0(\xmax)}<Ct\gbarnorm[\xbar_0]{\pmax-\p}\gnorm[\x_0]{\pbar_r-\pbar_0}<Cr_0.
\end{align*}
In particular, if $r_0$ is sufficiently small we must have $(\xmax, \xbar(t), \z(t))\in \gendom$ for all $t\in [0, 1]$, \emph{for any choice of $\pbar_r$ and $\x$}.
Next since $\mountain_r$ is supporting to $u$, we must have for some \emph{very nice} constant $C>0$ that
\begin{align*}
         0&=u(\xmax)-\mountain_0(\xmax)\\
         &\geq\mountain_r(\xmax)-\mountain_0(\xmax)\\
         &=\G(\xmax, \xbar_r, \H(\x_0, \xbar_r, \mountain_r(\x_0)))-\mountain_0(\xmax)\\
         &\geq  -C(\mountain_0(\x_0)-\mountain_r(\x_0))+\G(\xmax, \xbar_r, \H(\x_0, \xbar_r, u(\x_0)))-\G(\xmax, \xbar_0, \H(\x_0, \xbar_0, u(\x_0))).
         \end{align*}
         Here we have used the fact that $u(\x_0)=\mountain_0(\x_0)$, and the \emph{very nice} $C$ arises once again from using the mean value theorem and the facts that both $\mountain_0$ and $\mountain_r$ lie in $\Niceinterval$.

Note that since $\mountain_r(\x)=u(\x)>\mountain_0(\x)$ while $\mountain_r(\x_0)\leq u(\x_0)=\mountain_0(\x_0)$ and $\outerdom$ is assumed path-connected, there exists some $\x_0'\in \outerdom$ such that 
\begin{align*}
 \mountain_r(\x'_0)=\mountain_0(\x'_0).
\end{align*}
Thus by \eqref{DomConv} again we can apply \eqref{eqn: linearization upper bound}, and calculate
\begin{align*}
\norm{\mountain_0(\x_0)-\mountain_r(\x_0)}&=\norm{\G(\x_0, \xbar_r, \H(\x'_0, \xbar_r, \mountain_0(\x'_0)))-\G(\x_0, \xbar_0, \H(\x'_0, \xbar_0, \mountain_0(\x'_0)))}\\
&\leq C\gbarnorm[\xbar_0]{\pmap{\xbar_0}{\H(\x'_0, \xbar_0, \mountain_0(\x'_0))}{\x_0}-\pmap{\xbar_0}{\H(\x'_0, \xbar_0, \mountain_0(\x'_0))}{\x'_0}}\gnorm[\x'_0]{\pbarmap{\x'_0}{\mountain_0(\x'_0)}{\xbar_r}-\pbarmap{\x'_0}{\mountain_0(\x'_0)}{\xbar_0}}\\
&\leq C\gnorm[\x_0]{\pbar_r-\pbar_0}\leq Cr,
\end{align*}
where we have used the bound on $\mountain_0$, \eqref{eqn: target distance comparison}, \eqref{eqn: source distance comparison}, and boundedness of $\outerdom$ to obtain the final inequality.
We may then apply Lemma~\ref{lem: linearization}, \eqref{eqn: linearization lower bound} to conclude
\begin{align}
 Cr&\geq \inner{\pmax-\p}{\nondegmatrixinv{\x_0}{\xbar_0}{\z_0}(\pbar_r-\pbar_0)}\notag\\
 &=\inner{\pmax-\p}{\innerg[x_0]{\pbar_r-\pbar_0}{\frac{\qbar_0}{\gnorm[x_0]{\qbar_0}}}\frac{\nondegmatrixinv{\x_0}{\xbar_0}{\z_0}\qbar_0}{\gnorm[x_0]{\qbar_0}}+\nondegmatrixinv{\x_0}{\xbar_0}{\z_0}\proj{\qbar_0^\perp}{\pbar_r-\pbar_0}}\notag\\
 &\geq \innerg[x_0]{\pbar_r-\pbar_0}{\frac{\qbar_0}{\gnorm[x_0]{\qbar_0}}}\linear{}{\pmax-\p}-C\gnorm[x_0]{\proj{\qbar_0^\perp}{\pbar_r-\pbar_0}}
\end{align}
again for some \emph{very nice} $C>0$. We now prove that
         \begin{align*}
              \innerg[x_0]{\pbar_r-\pbar_0}{\qbar_0}> 0.
         \end{align*} 
Indeed, $\innerg[x_0]{\pbar_r-\pbar_0}{\qbar_0}\geq 0$ as $\pbar_r\in \cone{\qbar_0}{r}$, but $\innerg[x_0]{\pbar_r-\pbar_0}{\qbar_0}=0$ would imply $\pbar_r=\pbar_0$ which would contradict $\pbar_r\not\in B_{r/2}(\pbar_0)$. Thus we may divide by $\innerg[x_0]{\pbar_r-\pbar_0}{\frac{\qbar_0}{\gnorm[x_0]{\qbar_0}}}$, rearrange, and use that $\pbar_r\in \cone{\qbar_0}{r}$ to obtain
         \begin{align*}
              \linear{}{\p}&\geq \linear{}{\pmax}-C\paren{\frac{\gnorm[x_0]{\proj{\qbar_0^\perp}{\pbar_r-\pbar_0}}}{\innerg[x_0]{\pbar_r-\pbar_0}{\frac{\qbar_0}{\gnorm[x_0]{\qbar_0}}}}}-Cr\\
              &\geq \linear{}{\pmax}-Cr,
         \end{align*}
proving \eqref{eqn: halfspace inequality a3w}.
    \end{proof}
     
    \begin{cor}\label{cor: preimage trapping a3w}
         Suppose that the conditions of Lemma~\ref{lem: good direction a3w} hold. Let $\qbar_0\in \cotanspM{x_0}$ and $\pmax\in \subzerocoord\cap\outerdomcoord{\xbar_0, \z_0}^\bdry$ satisfy the conclusions of Lemma~\ref{lem: halfspace inequality}, and let $\cone{\qbar_0}{r}$ be as defined by \eqref{eqn: cone def a3w}. Then given any $\epsilon>0$, there exists $r_\epsilon>0$ such that for any $x\in\outerdom^{\cl}\setminus \contact$ satisfying
         \begin{align*}
              \coord{\Gsubdiff{u}{\x}}{\x_0, u(\x_0)}\cap \cone{\qbar_0}{r_\epsilon}\cap \left(B_{r_\epsilon}(\pbar_0)\setminus B_{\frac{r_\epsilon}{2}}(\pbar_0)\right)\neq \emptyset,
         \end{align*}
         we must have
         \begin{align*}
              \gbarnorm[\xbar_0]{\pmap{\xbar_0}{\z_0}{\x}-\pmax}<\epsilon.
         \end{align*} 
    \end{cor}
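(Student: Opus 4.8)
The plan is to argue by contradiction via a compactness and \emph{argmax}-stability argument, reproducing the corresponding step of \cite{GK14}. Suppose the conclusion fails: then there exist $\epsilon>0$, a sequence $r_k\searrow 0$, and points $\x_k\in\outerdom^{\cl}\setminus\contact$, each admitting a witness
\[
  \pbar_k\in\coord{\Gsubdiff{u}{\x_k}}{\x_0,u(\x_0)}\cap\cone{\qbar_0}{r_k}\cap\paren{B_{r_k}(\pbar_0)\setminus B_{r_k/2}(\pbar_0)},
\]
while $\gbarnorm[\xbar_0]{\pmap{\xbar_0}{\z_0}{\x_k}-\pmax}\ge\epsilon$ for all $k$. (If some $\x_k\in\outerdom^{\bdry}$, replace it by a nearby interior point carrying a nearby witness, using Definition~\ref{DEF: G-subdifferentials}.) First I would use compactness of $\outerdom^{\cl}$ to pass to a subsequence with $\x_k\to\x_\infty\in\outerdom^{\cl}$; since $\pbar_k\in B_{r_k}(\pbar_0)$, automatically $\pbar_k\to\pbar_0$, so the associated foci $\xbar_k:=\Xbar{\x_0}{u(\x_0)}{\pbar_k}\to\xbar_0$.

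The first substantive step is to show $\x_\infty\in\contact$. For each $k$, $\mountain_k(\cdot):=\G(\cdot,\xbar_k,\H(\x_k,\xbar_k,u(\x_k)))$ is supporting to $u$ from below at $\x_k$, and since $u$ is \emph{very nice} each $\mountain_k$ is \emph{nice}; so by \eqref{G5} and the continuity of $\G$, $\H$, $u$ one can extract a further subsequence along which $\mountain_k\to\mountain_\infty:=\G(\cdot,\xbar_0,\z_\infty)$ uniformly on $\outerdom^{\cl}$, for some $\z_\infty\in\real$, with $\mountain_\infty\le u$ on $\outerdom$ and $\mountain_\infty(\x_\infty)=u(\x_\infty)$. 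Evaluating at $\x_0$ gives $\G(\x_0,\xbar_0,\z_\infty)\le u(\x_0)=\G(\x_0,\xbar_0,\z_0)$, so $\Gz<0$ forces $\z_\infty\ge\z_0$ and hence $\mountain_\infty\le\mountain_0\le u$ everywhere; then $\mountain_0(\x_\infty)\ge\mountain_\infty(\x_\infty)=u(\x_\infty)\ge\mountain_0(\x_\infty)$, so $u(\x_\infty)=\mountain_0(\x_\infty)$, i.e.\ $\x_\infty\in\contact$. (Equivalently one can invoke the closed-graph property of $\x\mapsto\Gsubdiff{u}{\x}$.) In particular, since $\mountain_0$ is \emph{nice}, \eqref{G5} makes $\pmap{\xbar_0}{\z_0}{\cdot}$ well-defined at $\x_\infty$, and $\pmap{\xbar_0}{\z_0}{\x_\infty}\in\subzerocoord$.

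The second step is to compare values of the linear function $\linear{}{\cdot}$. Since $\x_k$ meets the hypotheses of Lemma~\ref{lem: halfspace inequality} with $r=r_k$, estimate \eqref{eqn: halfspace inequality a3w} yields $\linear{}{\pmap{\xbar_0}{\z_0}{\x_k}}>\linear{}{\pmax}-o(1)$ as $k\to\infty$, and letting $k\to\infty$ (using continuity of $\pmap{\xbar_0}{\z_0}{\cdot}$ and of $\linear{}{\cdot}$) gives $\linear{}{\pmap{\xbar_0}{\z_0}{\x_\infty}}\ge\linear{}{\pmax}$. But by Lemma~\ref{lem: good direction a3w} (as used in Lemma~\ref{lem: halfspace inequality}), $\pmax$ is the \emph{unique} maximizer of $\linear{}{\cdot}$ over $\subzerocoord$, and $\pmap{\xbar_0}{\z_0}{\x_\infty}\in\subzerocoord$; hence $\pmap{\xbar_0}{\z_0}{\x_\infty}=\pmax$. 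Then continuity of $\pmap{\xbar_0}{\z_0}{\cdot}$ gives $\pmap{\xbar_0}{\z_0}{\x_k}\to\pmap{\xbar_0}{\z_0}{\x_\infty}=\pmax$, contradicting $\gbarnorm[\xbar_0]{\pmap{\xbar_0}{\z_0}{\x_k}-\pmax}\ge\epsilon$, and the proof is complete.

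I expect the only genuine obstacle to be making the limit in the first step airtight: one must verify that all arguments of $\G$ and $\H$ stay within the admissible domains along the sequence and that $\mountain_k\to\mountain_\infty$ uniformly up to $\outerdom^{\bdry}$. This is routine given \eqref{G5} and the \emph{very nice} hypothesis --- it is of the same flavour as the limiting arguments already used in the proofs of Corollary~\ref{cor: local to global} and Lemma~\ref{lem: chopping convergence} --- while everything else (compactness, uniqueness of the argmax) is soft.
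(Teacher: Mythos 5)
Your proposal is correct and follows essentially the same route as the paper: a contradiction argument via compactness, showing the limit point $\x_\infty$ lies in $\contact$ (a closed-graph-type limit of supporting $\G$-affine functions, which you run by comparing heights at $\x_0$ using $\Gz<0$, while the paper evaluates at $\xmax$ and uses monotonicity of $\H$ and $\G$ in the scalar variable --- a cosmetic difference), and then passing to the limit in \eqref{eqn: halfspace inequality a3w} to contradict the uniqueness of $\pmax$ as the maximizer of $\linear{}{\cdot}$ over $\subzerocoord$.
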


    \begin{proof}
         Let $\linear{}{\cdot}$ be defined by \eqref{eqn: linear function def}. The proof is by a compactness argument. Suppose by contradiction that the corollary fails, then for some $\epsilon_0>0$, there is a sequence of $r_k>0$ decreasing to $0$ as $k\to \infty$, and sequences $\{x_k\}_{k=1}^\infty\subset \outerdom^{\cl}\setminus \contact$, and $\pbar_k\in \cone{\qbar_0}{r_k}\cap \left(B_{r_k}(\pbar_0)\setminus B_{\frac{r_k}{2}}(\pbar_0)\right)$ such that 
         \begin{align}
              \pbar_k&\in\coord{\Gsubdiff{u}{\x_k)}}{\x_0, u(\x_0)},\notag\\
              \gbarnorm[\xbar_0]{\pmap{\xbar_0}{\z_0}{\x_k}-\pmax}&\geq \epsilon_0\label{eqn: contradiction assumption a3w}
         \end{align}
         for all $k$. It is clear that $\pbar_k\to\pbar_0$ as $k\to\infty$, and by the compactness of $\outerdomcoord{\xbar_0, \z_0}^{\cl}$ a subsequence $\x_k\to \x_\infty$ for some $x_\infty\in \outerdom^{\cl}$. Writing         
\begin{align*}
 \xmax:=\X{\xbar_0}{\z_0}{\pmax},\qquad \xbar_k:=\Xbar{\x_0}{u(\x_0)}{\pbar_k},
\end{align*}
since $\xmax\in \contact$ we calculate 
         \begin{align*}
              \mountain_0(\xmax)=u(\xmax)
              &\geq \G(\xmax, \xbar_k, \H(\x_k, \xbar_k, u(\x_k)))\\
              &\to \G(\xmax, \xbar_0, \H(\x_\infty, \xbar_0, u(\x_\infty)))
         \end{align*}
         as $k\to \infty$. Then taking $ \G(\x_\infty, \xbar_0, \H(\xmax, \xbar_0, \cdot))$ of both sides we have
\begin{align*}
\mountain_0(\x_\infty)&= \G(\x_\infty, \xbar_0, \H(\xmax, \xbar_0, \mountain_0(\xmax)))\\
&\geq u(\x_\infty)
\end{align*}
 or in other words, $x_\infty\in \contact$. However, since $x_k$ satisfies \eqref{eqn: halfspace inequality a3w} with $r=r_k$, taking $k\to\infty$ implies $\linear{}{\pmap{\xbar_0}{\z_0}{\x_\infty}}\geq \linear{}{\pmax}$. Now by \eqref{eqn: contradiction assumption a3w} we have $\pmap{\xbar_0}{\z_0}{\x_\infty}\neq \pmax$, which contradicts the uniqueness of $\pmax$ as the maximizer of $\linear{}{\cdot}$ over $\subzerocoord$.
    \end{proof}

\subsection{Proof of Theorem~\ref{thm:strict_convexity}} Suppose the theorem fails and the contact set $\contact$ contains more than one point. Since $\innerdom$ is compactly contained in $\outerdom$, we take $\epsilon$ such that
    \begin{align}\label{eqn: smaller than distance}
      0<\epsilon <\dist{\innerdomcoord{\xbar_0, \z_0}}{\outerdomcoord{\xbar_0, \z_0}^\bdry}.
    \end{align}
    Next take $\qbar_0\in \cotanspM{x_0}$ obtained from applying Lemma~\ref{lem: good direction a3w}, and $r_\epsilon>0$ associated to our choice of $\epsilon$ by Corollary~\ref{cor: preimage trapping a3w}. Then by \eqref{eqn: cone is in interior a3w} there exists a set $A\subset \innerdom$ such that         
    \begin{align*}
      \coord{\Gsubdiff{u}{A}}{\x_0, u(\x_0)}= \cone{\qbar_0}{r_\epsilon}\cap \paren{B_{r_\epsilon}(\pbar_0)\setminus B_{\frac{r_\epsilon}{2}}(\pbar_0)},
    \end{align*}
    while by Corollary~\ref{cor: preimage trapping a3w} and \eqref{eqn: smaller than distance} we must have 
    \begin{align*}
      A\subset \subzero\cap \paren{\outerdom^{\cl}\setminus \innerdom}.
    \end{align*}
    Then since $u$ is an Aleksandrov solution (and also recalling \eqref{eqn: volume comparability}), for some constant $C>0$ depending on $\Niceinterval$ we have 
    \begin{align*}
      0  &  <  \Leb{\cone{\qbar_0}{r_\epsilon}\cap \paren{B_{r_\epsilon}(\pbar_0)\setminus B_{\frac{r_\epsilon}{2}}(\pbar_0)}}\\&\leq C \Leb{A\cap \innerdom}\\
         &  \leq C\Leb{\subzero}=0,
    \end{align*}
    thus finishing the proof by contradiction.

\begin{flushright}$\square$\end{flushright}
	
\section{Engulfing property and H\"older regularity for the gradient}\label{section: engulfing}

\subsection{Engulfing property of sections} In this section we shall prove Theorem \ref{thm:C1alpha_regularity}. We follow here a method first introduced by Forzani and Maldonado to obtain explicit $C^{1,\alpha}$ bounds for the real Monge-Amp\'ere equation \cite{FM04}. This was later adapted by Figalli, Kim and McCann to prove $C^{1,\alpha}$ regularity of the potential in optimal transport  \cite[Section 9]{FKM13}.

For $\x\in\innerdom^{\interior}$, $\xbar\in \Gsubdiff{u}{x}$, and $h>0$, we will use the notation
\begin{align}
  \sublevelset(\x,\xbar,h) & := \curly{ y\in\outerdom\mid u(y) \leq \mountain_h(y)},\\
  \mountain_h(\cdot) & :=\G(\cdot,\xbar,\z_h),\\
  \z_h & := \H(\x,\xbar,u(x)+h).
\end{align}
We comment here that since $u$ is assumed \emph{very nice}, for any $h>0$ sufficiently small we will have $\mountain_h\in \Niceinterval$ on all of $\outerdom$. Additionally, Theorem~\ref{thm:strict_convexity} implies we may assume the section $\sublevelset(\x,\xbar,h)$ is contained in a ball of arbitrarily small diameter, entirely contained in $\innerdom^{\interior}$; also condition \eqref{eqn: section not too deep} will hold on any subset of $\sublevelset(\x,\xbar,h)$. As a result we can apply Theorem~\ref{thm: G-aleksandrov estimate} to the sections $\sublevelset(\x,\xbar,h)$ as long as $h$ is small, and Theorem~\ref{thm: Sharp growth} to any $A\subset \sublevelset(\x,\xbar,h)$ satisfying \eqref{eqn: dilation condition}. Furthermore, as $u$ is an Aleksandrov solution this implies that for any subset $A\subset \sublevelset(\x,\xbar,h)$ we will always have $\Leb{A}\sim \Leb{\Gsubdiff{u}{A}}$. We point out here that the strict $\G$-convexity of $u$, Theorem~\ref{thm:strict_convexity}, is essential here, as it allows us to actually apply our Aleksandrov estimate Theorem~\ref{thm: G-aleksandrov estimate} to all sections with small enough height.

Since later, we will be concerned with a dilation of the section $\sublevelset(\x_0, \xbar_0, h)$ with respect to $\pmap{\xbar_0}{\z_h}{\x_0}$ (instead of the center of mass of the section), we begin with a preliminary result showing that $\pmap{\xbar_0}{\z_h}{\x_0}$ is actually fairly close to the center of mass.
\begin{prop}\label{prop:crossections_are_wellcentered}
  There exists a \emph{very nice} $\gamma\in(0,1)$ and $h_0>0$ such that for any $h\in (0, h_0)$,
  \begin{align*}
    \pmap{\xbar_0}{\z_h}{\x_0} \in \gamma \coord{\sublevelset(\x_0,\xbar_0,h)}{\xbar_0,\z_h},
  \end{align*}
  where the dilation above is with respect to the center of mass of $\coord{\sublevelset(\x_0,\xbar_0,h)}{\xbar_0,z_h}$.
\end{prop}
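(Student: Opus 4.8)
The plan is to argue by contradiction, playing the Aleksandrov-type estimate (Theorem~\ref{thm: G-aleksandrov estimate}) against the sharp growth estimate (Theorem~\ref{thm: Sharp growth}). Write $\sublevelset:=\sublevelset(\x_0,\xbar_0,h)$, $\sublevelsethat:=\coord{\sublevelset}{\xbar_0,\z_h}$, $\p_0:=\pmap{\xbar_0}{\z_h}{\x_0}$, and let $\mountain_0(\cdot):=\G(\cdot,\xbar_0,\z_0)$ with $\z_0:=\H(\x_0,\xbar_0,u(\x_0))$ be the $\G$-affine function supporting $u$ at $\x_0$; note $\mountain_h(\x_0)-u(\x_0)=h$ by construction and $u\ge\mountain_0$. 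As recorded in the paragraph preceding this proposition, for all sufficiently small $h>0$ the section $\sublevelset$ lies in an arbitrarily small ball inside $\innerdom^{\interior}$ (by the strict $\G$-convexity, Theorem~\ref{thm:strict_convexity}), $\mountain_h$ is \emph{nice} with range in $\Niceinterval$, $\sublevelsethat$ is convex (Proposition~\ref{prop: sections are convex}), condition~\eqref{eqn: section not too deep} holds on every subset of $\sublevelset$, and, $u$ being an Aleksandrov solution with bounded right hand side, $\Leb{\Gsubdiff{u}{\arbitrary}}\sim\Leb{\arbitrary}$ for every $\arbitrary\subset\sublevelset$. Finally, the mean value theorem in the scalar variable (cf.\ \eqref{eqn: sharp growth mean value theorem}) applied to $\mountain_h-\mountain_0$ gives $\sup_{\sublevelset}(\mountain_h-u)\le\sup_{\sublevelset}(\mountain_h-\mountain_0)\le Ch$ for a \emph{very nice} $C$, since $\z_h-\z_0=O(h)$ and the relevant scalar parameters stay in $\Niceinterval$.

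First I would combine the two pointwise estimates into a direction-uniform control on $\p_0$ that does not depend on $h$. Applying Theorem~\ref{thm: Sharp growth} to the set $\arbitrary\subset\sublevelset$ with $\coord{\arbitrary}{\xbar_0,\z_h}$ equal to the image of $\sublevelsethat$ under contraction by $(\dilationconst M)^{-1}$ toward its center of mass (so that $\dilationconst M\coord{\arbitrary}{\xbar_0,\z_h}=\sublevelsethat$, and the center of mass of $\coord{\arbitrary}{\xbar_0,\z_h}$ is that same point), together with $\sup_{\arbitrary}(\mountain_h-u)\le Ch$, $\Leb{\arbitrary}\sim\Leb{\coord{\arbitrary}{\xbar_0,\z_h}}\sim\Leb{\sublevelset}$, and $\Leb{\Gsubdiff{u}{\arbitrary}}\sim\Leb{\arbitrary}$, yields a \emph{very nice} constant $C_1$ with $\Leb{\sublevelset}^2\le C_1 h^n$. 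On the other hand, Theorem~\ref{thm: G-aleksandrov estimate} applied with an \emph{arbitrary} unit $\direction{}\in\S^{n-1}\subset\cotanspMbar{\xbar_0}$, together with $\Leb{\Gsubdiff{u}{\sublevelset}}\sim\Leb{\sublevelset}$, gives
\begin{align*}
 h^n=(\mountain_h(\x_0)-u(\x_0))^n\le C\,\frac{\dist{\p_0}{\plane{\sublevelsethat}{\direction{}}}}{\supsegment{\sublevelsethat}{\direction{}}}\,\Leb{\sublevelset}^2\le CC_1\,\frac{\dist{\p_0}{\plane{\sublevelsethat}{\direction{}}}}{\supsegment{\sublevelsethat}{\direction{}}}\, h^n,
\end{align*}
and cancelling $h^n$ produces a \emph{very nice} $c_0>0$ with $\dist{\p_0}{\plane{\sublevelsethat}{\direction{}}}\ge c_0\,\supsegment{\sublevelsethat}{\direction{}}$ for every unit $\direction{}$, uniformly in $h$.

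It then remains to convert this into the claimed inclusion, which is a purely convex-geometric statement: if a bounded open convex set $\sublevelsethat\subset\cotanspMbar{\xbar_0}$ and $\p_0\in\sublevelsethat$ satisfy $\dist{\p_0}{\plane{\sublevelsethat}{\direction{}}}\ge c_0\,\supsegment{\sublevelsethat}{\direction{}}$ for all unit $\direction{}$, then $\p_0$ lies in the dilation $\gamma\sublevelsethat$ about the center of mass of $\sublevelsethat$ for some $\gamma=\gamma(n,c_0)\in(0,1)$. I would prove this with John's lemma (Theorem~\ref{John's lemma}): writing $\mu$ for the Minkowski gauge of $\p_0$ relative to $\sublevelsethat$ based at the center of mass and $\q\in\sublevelsethat^\bdry$ for the corresponding boundary point, one chooses $\direction{}$ judiciously—an outer normal at $\q$ when $\q$ is sufficiently exposed, or a direction roughly along the long axis of the John ellipsoid towards which $\p_0$ is displaced when $\sublevelsethat$ is eccentric and $\q$ sits on a flat part of $\sublevelsethat^\bdry$—to obtain $\dist{\p_0}{\plane{\sublevelsethat}{\direction{}}}\le C(n)(1-\mu)\,\supsegment{\sublevelsethat}{\direction{}}$. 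Combined with the lower bound this forces $\mu\le 1-c_0/C(n)<1$, so $\gamma:=1-c_0/C(n)$ works; recalling $\p_0=\pmap{\xbar_0}{\z_h}{\x_0}$ and $\sublevelsethat=\coord{\sublevelset(\x_0,\xbar_0,h)}{\xbar_0,\z_h}$ finishes the proof.

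The step I expect to be the main obstacle is the last one: making $\gamma$ genuinely \emph{uniform}, i.e.\ insensitive to how eccentric the section $\sublevelsethat$ is. This is exactly why the Aleksandrov estimate is formulated with the ratio $\dist{\p_0}{\plane{\sublevelsethat}{\direction{}}}/\supsegment{\sublevelsethat}{\direction{}}$ rather than with the width, and why the direction $\direction{}$ fed into Theorem~\ref{thm: G-aleksandrov estimate} must be chosen with care—the naive choice (the radial outer normal at $\q$) can fail to detect off-centeredness when $\sublevelsethat$ is a thin slab, whereas a direction along its elongation succeeds. Verifying the hypotheses of Theorems~\ref{thm: G-aleksandrov estimate} and~\ref{thm: Sharp growth} uniformly in small $h$ is also somewhat delicate, but this is essentially already carried out in the discussion preceding the proposition and in the proof of Theorem~\ref{thm:strict_convexity}.
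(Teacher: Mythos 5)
Your overall strategy is the same as the paper's: play Theorem~\ref{thm: Sharp growth} (applied to the $(\dilationconst M)^{-1}$-contraction of $\coord{\sublevelset(\x_0,\xbar_0,h)}{\xbar_0,\z_h}$ about its barycenter, together with the mean-value bound $\sup_{\sublevelset}(\mountain_h-u)\le Ch$ and the Aleksandrov-solution comparability) against Theorem~\ref{thm: G-aleksandrov estimate}, to conclude that $\p_0=\pmap{\xbar_0}{\z_h}{\x_0}$ cannot sit too close to the boundary of the section in barycentric dilations. The first two steps of your argument — $\Leb{\sublevelset}^2\le C_1h^n$ and the direction-uniform lower bound $\dist{\p_0}{\plane{\sublevelsethat}{\direction{}}}\ge c_0\,\supsegment{\sublevelsethat}{\direction{}}$ for every unit $\direction{}$ — are correct and match the paper's computation, including the verification of \eqref{eqn: dilation condition}, \eqref{eqn: section not too deep} and the hypotheses of the Aleksandrov estimate for small $h$.

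The gap is exactly where you anticipated it, and it is not merely a technical nuisance: the convex-geometric inequality you propose to prove, namely that for a judicious direction one has $\dist{\p_0}{\plane{\sublevelsethat}{\direction{}}}\le C(n)(1-\mu)\,\supsegment{\sublevelsethat}{\direction{}}$ with $\mu$ the barycentric gauge of $\p_0$, is \emph{false} for $n\ge 2$. The correct statement is the main theorem of \cite{FKM13b}, in which the factor $(1-\mu)$ must be replaced by $(1-\mu)^{1/2^{n-1}}$, and that exponent is shown there to be sharp; accordingly, no two-case choice of direction via John's lemma (outer normal at the gauge point versus long axis of the John ellipsoid) can yield the linear bound, and the actual proof in \cite{FKM13b} is an induction on dimension rather than a single application of John's lemma. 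This is precisely the ingredient the paper imports as a black box: it combines Theorem~\ref{thm: G-aleksandrov estimate} with the result of \cite{FKM13b} to get $h^n\le c_0\Leb{\sublevelset_h}^2(1-t_0)^{1/2^{n-1}}$, where $t_0$ is the smallest barycentric dilation of the section containing $\p_0$, and then uses the sharp growth bound $h^n\ge c_1\Leb{\sublevelset_h}^2$ to conclude $t_0\le 1-(c_1/c_0)^{2^{n-1}}$. Your argument is repaired, with no change of structure, by citing that result in place of your hand-made lemma: since only some fixed modulus in $(1-\mu)$ is needed, your uniform bound $c_0$ on the ratio then forces $\mu\le 1-(c_0/C(n))^{2^{n-1}}$, so the proposition holds with $\gamma:=1-(c_0/C(n))^{2^{n-1}}$, which is still a \emph{very nice} constant; only the quantitative form of $\gamma$ changes from what you wrote.
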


\begin{proof}
 We will write $\sublevelset_h:=\sublevelset(\x_0,\xbar_0,h)$ for the duration of this proof. 
 
 Let us define
  \begin{align*}
    t_0:= \inf  \curly{ t\in[0,1]\;\mid\; \pmap{\xbar_0}{\z_h}{\x_0}\in t\coord{\sublevelset_h}{\xbar_0,\z_h}	 },
  \end{align*}	
our goal is then to prove $t_0\leq \gamma<1$ for some \emph{very nice} $\gamma$; note we may assume, say, $t_0>\frac{1}{2}$ otherwise we are already done. Then by combining Theorem~\ref{thm: G-aleksandrov estimate} and the main result of \cite{FKM13b}, (and recalling that $u$ is an Aleksandrov solution) we obtain a \emph{very nice} $c_0>0$ such that 
  \begin{align}\label{eqn: upper bound in section}
(\mountain_{h}(\x_0)-\u(x_0))^n \leq c_0 \Leb{\sublevelset_h}^2(1-t_0\;)^{\frac{1}{2^{n-1}}}.	
  \end{align}	
On the other hand, (as we have done many times) by the mean value theorem applied in the scalar parameter, if $h<h_0$ with some small, \emph{very nice} $h_0>0$ then for some \emph{very nice} $C$ we can see that
  \begin{align*}
    \mountain_{h}(\x) & = \G(\x,\xbar_0,\H(\x_0,\xbar_0,\u(\x_0)+h))\\
	  & \leq \G(\x,\xbar_0,\H(\x_0,\xbar_0,\u(\x_0)))+Ch = \mountain_0(\x)+Ch,
  \end{align*}	  
  here the fact that $u$ is \emph{very nice} allows us to assume $C$ is also \emph{very nice}. 
Since $\mountain_0\leq u$ everywhere, the above leads to
  \begin{align*}
    \sup\limits_{\sublevelset_h} (\mountain_h-\u) \leq Ch = C(\mountain_{h}(\x_0)-\u(x_0)).
  \end{align*}
By shrinking $h_0$ further if necessary, with a \emph{very nice} dependance, and using that $u$ is \emph{very nice}, we see $\mountain_h$ and $\arbitrary=\X{\xbar_0}{\z_h}{(KM)^{-1}\coord{\sublevelset_h}{\xbar_0,\z_h}}$ will satisfy condition \eqref{eqn: section not too deep}. Thus applying Theorem~\ref{thm: Sharp growth} and combining with the above inequality yields another \emph{very nice} $c_1>0$ such that 
  \begin{align*}
    (\mountain_{h}(\x_0)-\u(x_0))^n  \geq c_1 \Leb{\sublevelset_h}^2.   	
  \end{align*}	
  Combining the above inequality with \eqref{eqn: upper bound in section}, it follows that 
\begin{align*}
 \paren{\frac{c_1}{c_0}}^{2^{n-1}}\leq 1-t_0 \implies t_0\leq 1-\paren{\frac{c_1}{c_0}}^{2^{n-1}}.
\end{align*}
Thus the proposition is proven with the choice $\gamma := 1-\left (\frac{c_1}{c_0}\right )^{2^{n-1}}<1$, which is also seen to be \emph{very nice}.
\end{proof}

The next lemma proves a rather strong property of a solution $u$: sections of different heights are roughly homothetic to one another (in cotangent coordinates). The lemma uses in a crucial way the strict $\G$-convexity of $u$, which guarantees that $\sublevelset(\x,\xbar,h)$ is contained in a neighborhood of $h$ when $h$ is small enough (as discussed at the beginning of the section).

\begin{lem}\label{lem:comparison_of_sections}	
  There exists a \emph{very nice} constant $\beta\in (0, 1)$ such that
\begin{align*}
\coord{\sublevelset(\x,\xbar,h)}{\xbar,\z_{2 h}} \subset \beta\coord{\sublevelset(\x,\xbar, 2 h)}{\xbar,\z_{2 h}},\;\;\forall\;h\in(0,h_0).
\end{align*}
Here the dilation $\beta\coord{\sublevelset(\x,\xbar, 2 h)}{\xbar,\z_{2 h}}$ is with respect to $\pmap{\xbar}{\z_{2h}}{\x_0}$.
\end{lem}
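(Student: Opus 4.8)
The plan is to prove a containment between sublevel sets of heights $h$ and $2h$ by combining the pointwise estimates of Theorems~\ref{thm: G-aleksandrov estimate} and~\ref{thm: Sharp growth} with the centering property just established in Proposition~\ref{prop:crossections_are_wellcentered}. Write $\sublevelset_h:=\sublevelset(\x,\xbar,h)$ and work in the coordinates $\pmap{\xbar}{\z_{2h}}{\cdot}$. First I would observe that $\sublevelset_h\subset\sublevelset_{2h}$ trivially, so both are convex (in these coordinates, by Proposition~\ref{prop: sections are convex}) and, by strict $\G$-convexity (Theorem~\ref{thm:strict_convexity}) together with the discussion at the start of this section, for $h<h_0$ they are contained in an arbitrarily small ball inside $\innerdom^{\interior}$; in particular the Aleksandrov and sharp growth estimates apply to both, and $\Leb{\Gsubdiff{u}{\arbitrary}}\sim\Leb{\arbitrary}$ for any $\arbitrary\subset\sublevelset_{2h}$.

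The key quantitative step is a volume comparison $\Leb{\coord{\sublevelset_{2h}}{\xbar,\z_{2h}}}\geq \kappa\Leb{\coord{\sublevelset_h}{\xbar,\z_{2h}}}$ for a \emph{very nice} $\kappa>1$. To get this, apply Theorem~\ref{thm: Sharp growth} to $\sublevelset_{2h}$ with $\arbitrary$ a suitable \emph{very nice} dilate of $\sublevelset_h$ about $\pmap{\xbar}{\z_{2h}}{\x}$ (which, by Proposition~\ref{prop:crossections_are_wellcentered} applied at height $2h$, is comparable to the center of mass, so condition \eqref{eqn: dilation condition} holds once $\Leb{\sublevelset_h}$ is a small enough fraction of $\Leb{\sublevelset_{2h}}$; condition \eqref{eqn: section not too deep} follows as in the proof of Proposition~\ref{prop:crossections_are_wellcentered} from $\mountain_{2h}\leq\mountain_0+Ch$ and niceness), obtaining $\sup_{\arbitrary}(\mountain_{2h}-u)^n\gtrsim \Leb{\sublevelset_h}^2$; and apply Theorem~\ref{thm: G-aleksandrov estimate} to $\sublevelset_{2h}$ at the vertex $\x$ to get $(\mountain_{2h}(\x)-u(\x))^n = h^n\lesssim \Leb{\sublevelset_{2h}}^2 \cdot (\text{plane ratio})$. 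The crucial point is that $\sup_{\sublevelset_h}(\mountain_{2h}-u)\leq Ch$ (again from $\mountain_{2h}\le\mountain_0+Ch$ and $\mountain_0\le u$), so $\sup_{\arbitrary}(\mountain_{2h}-u)\lesssim h$, while on the other hand $\mountain_{2h}(\x)-u(\x)=h$; chaining these inequalities forces $\Leb{\sublevelset_{2h}}\geq c\Leb{\sublevelset_h}$ with a definite gap, i.e. $\Leb{\sublevelset_{2h}}\ge\kappa\Leb{\sublevelset_h}$ for some \emph{very nice} $\kappa>1$ — here one must be slightly careful and iterate or use that the two estimates would be \emph{incompatible} if the volumes were too close, exactly as in the derivation of $\gamma<1$ in Proposition~\ref{prop:crossections_are_wellcentered}.

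From the volume gap I would conclude the homothety by a John-ellipsoid argument (Theorem~\ref{John's lemma}). Both $\coord{\sublevelset_h}{\xbar,\z_{2h}}$ and $\coord{\sublevelset_{2h}}{\xbar,\z_{2h}}$ are convex and, by Proposition~\ref{prop:crossections_are_wellcentered}, their ``centers'' $\pmap{\xbar}{\z_{2h}}{\x}$ are \emph{very nice}ly interior to each; moreover $\sublevelset_h\subset\sublevelset_{2h}$. A convex body contained in another convex body, with comparable centering and with volume bounded away by a fixed factor, must be contained in a fixed dilate (strictly less than $1$) of the larger one about the common interior point: renormalize by John's lemma so that $\coord{\sublevelset_{2h}}{\xbar,\z_{2h}}$ lies between $\alpha(n)E$ and $E$ for a Euclidean ball $E$, note the inner body then has volume at most $\kappa^{-1}\Leb{E}$ up to dimensional constants, and a dilate of it about a point at \emph{very nice} distance from $\partial E$ that still lies in $\alpha(n)E$ can be taken of ratio $\beta<1$ with $\beta$ depending only on $n$, $\kappa$, and the centering constant. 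This yields the claimed $\beta$, which is \emph{very nice}.

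I expect the main obstacle to be the bookkeeping in the volume-gap step: one has to ensure that the dilate $\arbitrary$ of $\sublevelset_h$ chosen for Theorem~\ref{thm: Sharp growth} genuinely satisfies \eqref{eqn: dilation condition} relative to $\sublevelset_{2h}$ (which requires knowing $\Leb{\sublevelset_h}$ is already a controlled fraction of $\Leb{\sublevelset_{2h}}$ — a mild circularity that is broken exactly as in Proposition~\ref{prop:crossections_are_wellcentered}, by first dispatching the case where the volumes are already far apart), and that all the constants $C$ coming from the mean-value comparisons $\mountain_{2h}\le\mountain_0+Ch$ remain \emph{very nice} and uniform in $h<h_0$. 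The John-ellipsoid conclusion at the end is standard convex geometry and should be routine.
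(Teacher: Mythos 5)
Your reduction to a volume gap plus a convex-geometry step does not work, and the failure is in both halves. First, the purely convex-geometric claim at the end is false: if $\coord{\sublevelset_h}{\xbar,\z_{2h}}\subset\coord{\sublevelset_{2h}}{\xbar,\z_{2h}}$ are nested convex bodies sharing a well-centered interior point and $\Leb{\coord{\sublevelset_{2h}}{\xbar,\z_{2h}}}\geq\kappa\Leb{\coord{\sublevelset_h}{\xbar,\z_{2h}}}$ with $\kappa>1$, the inner body may still touch the boundary of the outer one — take the outer body to be a ball and the inner body a thin ``needle'' through the common center reaching the boundary; it has arbitrarily small relative volume and is perfectly centered, yet it is contained in no dilate $\beta<1$ of the ball. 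No John-ellipsoid normalization removes this, because normalizing the outer body does not prevent the inner body from being a sliver that extends to $\partial\coord{\sublevelset_{2h}}{\xbar,\z_{2h}}$. Second, the volume gap itself is not obtainable the way you propose: Theorems~\ref{thm: Sharp growth} and~\ref{thm: G-aleksandrov estimate} only give the one-sided bounds $\Leb{\sublevelset(\x,\xbar,h)}\leq C h^{n/2}$ and $\Leb{\sublevelset(\x,\xbar,2h)}\geq c\,h^{n/2}$ with unrelated \emph{very nice} constants $c\ll C$ in general, so chaining them yields only two-sided comparability of the volumes, not a factor $\kappa>1$; there is no ``incompatibility'' to exploit when the volumes are merely close, since both estimates are non-sharp inequalities in the same direction. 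The mechanism that produced $\gamma<1$ in Proposition~\ref{prop:crossections_are_wellcentered} was not a volume gap but the degenerating factor $(1-t_0)^{1/2^{n-1}}$ coming from the localization result of \cite{FKM13b} inserted into the Aleksandrov estimate, and that is exactly the ingredient your outline omits.

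The paper's proof is pointwise rather than volumetric: fix $y$ with $\pmap{\xbar}{\z_{2h}}{y}$ in $\coord{\sublevelset(\x,\xbar,2h)}{\xbar,\z_{2h}}$ but outside its $\beta$-dilate (hence, by Proposition~\ref{prop:crossections_are_wellcentered}, outside a \emph{very nice} dilate with respect to the center of mass). Theorem~\ref{thm: G-aleksandrov estimate} combined with the quantitative result of \cite{FKM13b} — which says that for such near-boundary points the ratio $\dist{\p_0}{\plane{\sectioncoord}{\direction{1}}}/\supsegment{\sectioncoord}{\direction{1}}$ can be taken $\lesssim(1-\beta)^{1/2^{n-1}}$ in a suitable direction — gives $\mountain_{2h}(y)-u(y)\leq C(1-\beta)^{1/2^{n-1}}\Leb{\sublevelset(\x,\xbar,2h)}^{2/n}\leq C(1-\beta)^{1/2^{n-1}}h$, using the sharp growth estimate only for the crude bound $\Leb{\sublevelset(\x,\xbar,2h)}\leq C(2h)^{n/2}$. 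On the other hand the mean value theorem in the scalar variable gives the uniform vertical separation $\mountain_{2h}\geq\mountain_h+c_0h$ on $\outerdom$. Taking $1-\beta$ small (a \emph{very nice} choice) forces $u(y)>\mountain_h(y)$, i.e.\ $y\notin\sublevelset(\x,\xbar,h)$, which is the contrapositive of the claimed inclusion. If you want to salvage your outline, you must replace the volume-gap-plus-John step by this use of the Aleksandrov estimate \emph{at points outside the $\beta$-dilate}, where the plane-distance ratio degenerates; that is where the quantitative dependence on $\beta$ enters.
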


\begin{proof}
Clearly it suffices to show that if $\pmap{\xbar}{\z_{2h}}{y} \in \coord{\sublevelset(\x,\xbar, 2h)}{\xbar,\z_{2 h}}\setminus \paren{\beta\coord{\sublevelset(\x,\xbar,2 h)}{\xbar,\z_{2 h}}}$ then $y\not\in \sublevelset(\x,\xbar,h)$. 

We now work toward this claim, assume we have such a $y$. At the end of the proof, we will wish to take $\beta$ close to $1$, hence there is no harm in assuming from the start that $\beta\geq \frac{1}{2}$. In particular, by Proposition~\ref{prop:crossections_are_wellcentered} above we can see that $\pmap{\xbar_0}{\z_{2h}}{y}$ is outside of some \emph{very nice} dilate of $\coord{\sublevelset(\x,\xbar, 2h)}{\xbar,\z_{2 h}}$ \emph{with respect to its center of mass}. Thus we can once again combine Theorem~\ref{thm: G-aleksandrov estimate} and the main result of \cite{FKM13b} to obtain a \emph{very nice} constant $C$ such that
  \begin{align*}
    \mountain_{2 h}(y)-u(y)\leq C(1-\beta)^{\frac{1}{2^{n-1}}}\Leb{\sublevelset(\x,\xbar,2 h)}^{2/n}.  	  
  \end{align*}	  
  On the other hand, as in the proof of Proposition~\ref{prop:crossections_are_wellcentered} above, for $h<h_0$ we may apply Theorem~\ref{thm: Sharp growth} to obtain $\Leb{\sublevelset(\x,\xbar,2 h)}\leq C(2 h)^{n/2}$, thus combined with the above yields
  \begin{align*}
    \mountain_{2 h}(y)-u(y) \leq C(1-\beta)^{\frac{1}{2^{n-1}}} h.	  
  \end{align*}	  
Rearranging, we have
  \begin{align*}
    u(y) & \geq \mountain_{2 h}(y)-C(1-\beta)^{\frac{1}{2^{n-1}}}h \\
	  & = \G(y,\xbar,\H(\x,\xbar,u(\x)+2h))-2C(1-\beta)^{\frac{1}{2^{n-1}}}h\\
	  & \geq \G(y,\xbar,\H(\x,\xbar,u(\x)+h))+c_0h-2C(1-\beta)^{\frac{1}{2^{n-1}}}h\\
	  & = \mountain_{h}(y)+(c_0 -2C(1-\beta)^{\frac{1}{2^{n-1}}})h  
  \end{align*}	  
  where again, possibly shrinking $h_0$ in a \emph{very nice} manner, again the mean value theorem yields the third line above, with a \emph{very nice} constant $c_0$. 
 Finally by taking $1-\beta$ small enough we have $\u(y)\geq \mountain_{h}(y)$, it then follows that $y\not\in \sublevelset(\x,\xbar,h)$.
\end{proof}

The lemma above says that sections are comparable under affine rescaling (in the right system of cotangent coordinates). The fact that sections are comparable in this manner is a very strong assertion for a $\G$-convex function, and it will imply explicit $C^{1,\alpha}$ estimates (as well as $\G$-convexity estimates).

The next lemma is an analogue of the ``engulfing property'' (see \cite[Theorem 4]{FM04} and \cite[Theorem 9.3]{FKM13} for the Euclidean case and the optimal transport case, respectively).

\begin{lem}\label{lem:engulfing_property}
  There are constants $\Lambda>1$, $t_0>0$ with the following property: if $\x_0,\x_1\in \innerdom$ and $\xbar_0,\xbar_1\in\innertarget$ are such that $\x_1 \in \sublevelset(\x_0,\xbar_0,h)$ for some $h<h_0$, then
  \begin{align*}
    \x_0\in \sublevelset(\x_1,\xbar_1,\Lambda h)	  	
  \end{align*}	  
\end{lem}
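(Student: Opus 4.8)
\textbf{Proof plan for Lemma~\ref{lem:engulfing_property} (the engulfing property).}
The plan is to follow the scheme of \cite[Theorem 4]{FM04} and \cite[Theorem 9.3]{FKM13}, transplanted to the $\G$-convex setting using the tools already assembled in this section. Fix $\x_0,\x_1\in\innerdom$, $\xbar_0\in\Gsubdiff{u}{\x_0}$, $\xbar_1\in\Gsubdiff{u}{\x_1}$, with $\x_1\in\sublevelset(\x_0,\xbar_0,h)$ for $h<h_0$. First I would reduce the claim to a statement purely about the values of $u$ along a single $\G$-segment. Namely, writing $\mountain_{1,k}(\cdot):=\G(\cdot,\xbar_1,\H(\x_1,\xbar_1,u(\x_1)+kh))$, I want to bound $u(\x_0)-\mountain_{1,k}(\x_0)$ from above, for a suitable multiple $k=\Lambda$, showing it is $\le 0$. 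The natural route is to connect $\x_0$ and $\x_1$ by the $\G$-segment $\x(s):=\Gseg{\x_1}{\x_0}{\xbar_1,\z_1}$ where $\z_1:=\H(\x_1,\xbar_1,u(\x_1))$ (well-defined and staying in $\outerdom^{\cl}$ by \eqref{DomConv}, using that $u$ is \emph{very nice}), and estimate the growth of $\mountain_h^{(0)}(\cdot):=\G(\cdot,\xbar_0,\z_h)$ — whose sublevel set is $\sublevelset(\x_0,\xbar_0,h)$ containing $\x_1$ — relative to $u$ along this segment.

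The main quantitative input is the affine comparability of sections in cotangent coordinates, Lemma~\ref{lem:comparison_of_sections}, iterated: $\coord{\sublevelset(\x_1,\xbar_1,h)}{\xbar_1,\z_{2^Nh}}\subset \beta^N\coord{\sublevelset(\x_1,\xbar_1,2^Nh)}{\xbar_1,\z_{2^Nh}}$, together with Proposition~\ref{prop:crossections_are_wellcentered} which keeps $\pmap{\xbar_1}{\z_{2^Nh}}{\x_1}$ well inside the section relative to the center of mass. The point $\x_0$ lies in $\sublevelset(\x_0,\xbar_0,h)$ (trivially) and near $\x_1$ because $\x_1\in\sublevelset(\x_0,\xbar_0,h)$ forces, via the Aleksandrov estimate Theorem~\ref{thm: G-aleksandrov estimate} and the sharp growth estimate Theorem~\ref{thm: Sharp growth} (both applicable since $u$ is a \emph{very nice} Aleksandrov solution and, by Theorem~\ref{thm:strict_convexity}, $\sublevelset(\x_0,\xbar_0,h)$ shrinks to a point as $h\to 0$), the inclusion $\sublevelset(\x_0,\xbar_0,h)\subset$ a ball of radius $\sim h^{1/2}$ in the $\pmap{\xbar_0}{\z_h}{\cdot}$-coordinates, hence (via bi-Lipschitz comparability, Lemma~\ref{lem: comparability} and Corollary~\ref{cor: comparability}) also in the $\pmap{\xbar_1}{\z_{2^Nh}}{\cdot}$-coordinates for the relevant $N$. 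Combining: for $N$ a \emph{very nice} constant (chosen so that $\beta^{-N}$ beats the ratio of the radii of the two sections), $\x_0$ must lie in $\sublevelset(\x_1,\xbar_1,2^Nh)$. Setting $\Lambda:=2^N$ gives the lemma.

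More concretely, the chain I expect to assemble is: (i) $\pmap{\xbar_0}{\z_h}{\x_0}\in\coord{\sublevelset(\x_0,\xbar_0,h)}{\xbar_0,\z_h}$ and $\pmap{\xbar_0}{\z_h}{\x_1}\in\coord{\sublevelset(\x_0,\xbar_0,h)}{\xbar_0,\z_h}$, and by Proposition~\ref{prop:crossections_are_wellcentered} applied at $\x_0$ the vertex is $\gamma$-interior, so $\gbardist{\x_0}{\x_1}\le C h^{1/2}$ after using $\Leb{\sublevelset(\x_0,\xbar_0,h)}\le C h^{n/2}$ from Theorem~\ref{thm: Sharp growth} and $\Leb{\sublevelset(\x_0,\xbar_0,h)}\ge C^{-1}h^{n/2}$ from Theorem~\ref{thm: G-aleksandrov estimate} together with John's lemma to control diameter by $(\text{volume})^{1/n}$; (ii) likewise $\sublevelset(\x_1,\xbar_1,2^Nh)$ contains a ball of radius $\ge C^{-1}(2^Nh)^{1/2}$ around $\x_1$ in the appropriate coordinates; (iii) pick $N$ \emph{very nice} so $C^{-1}(2^Nh)^{1/2}>C h^{1/2}$, whence $\x_0\in\sublevelset(\x_1,\xbar_1,2^Nh)$ once the cotangent coordinate systems at $(\xbar_0,\z_h)$ and $(\xbar_1,\z_{2^Nh})$ are compared via Corollary~\ref{cor: comparability}. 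The main obstacle, I expect, is exactly this last comparison of \emph{two different} cotangent coordinate systems: unlike the optimal transport case where one works in a fixed coordinate chart, here each section lives most naturally in its own $\pmap{\xbar}{\z}{\cdot}$-coordinates, and the estimates controlling ``ball of radius $r$ in one system'' versus ``ball of radius $r'$ in the other'' must be tracked carefully through Lemma~\ref{lem: comparability}, making sure all the Lipschitz constants that appear are \emph{very nice} (which they are, since all the relevant scalar parameters stay in $\Niceinterval$ for $h<h_0$). A secondary technical point is verifying that $\xbar_1\in\innertarget$ and $\z_{kh}$ stay in the \emph{very nice} range so that all three of Theorems~\ref{thm: G-aleksandrov estimate}, \ref{thm: Sharp growth}, \ref{thm:strict_convexity} genuinely apply to the sections centered at $\x_1$; this is handled by the discussion at the start of the section plus shrinking $h_0$ in a \emph{very nice} way.
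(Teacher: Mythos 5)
There is a genuine gap, and it sits exactly at steps (i)--(iii) of your plan. From Theorems~\ref{thm: G-aleksandrov estimate} and~\ref{thm: Sharp growth} (plus the Aleksandrov solution property and John's lemma) you can indeed extract two-sided \emph{volume} bounds $\Leb{\sublevelset(\x_0,\xbar_0,h)}\sim h^{n/2}$, but volume bounds together with John's lemma do \emph{not} give $\sublevelset(\x_0,\xbar_0,h)\subset B_{Ch^{1/2}}$ nor an inner ball $B_{c(\Lambda h)^{1/2}}\subset\sublevelset(\x_1,\xbar_1,\Lambda h)$: the John ellipsoids of the sections can be arbitrarily eccentric, and nothing in the hypotheses controls their eccentricity uniformly as $h\to 0$. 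In fact, the two-sided ball inclusion you invoke is precisely the statement \eqref{eqn: sublevel sets regular solution} discussed in Section~\ref{section: Mahler volume}, which is \emph{equivalent} to $C^{1,1}$ regularity and strong convexity; under the present assumptions (bounded right-hand side, only \eqref{QQConv}/\eqref{G3w}) one only proves $C^{1,\alpha}$, and the ball inclusions are false in general — the introduction explicitly contrasts \eqref{eqn: sublevel sets regular solution} with the weaker measure statement \eqref{eqn: sublevel sets measure}, which is all that the pointwise estimates yield. Theorem~\ref{thm:strict_convexity} gives that sections shrink to a point, but with no rate, so it cannot substitute for the missing $h^{1/2}$ diameter bound. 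Consequently the comparison ``$\gbardist{\x_0}{\x_1}\le Ch^{1/2}$ versus inradius $\ge C^{-1}(2^Nh)^{1/2}$'' cannot be run, and choosing $N$ large does not rescue it.

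The paper's proof avoids shape information altogether. It takes the $\G$-segment $\x(s)=\Gseg{\x_0}{\x^\partial_1}{\xbar_0,\z_{2h}}$ from $\x_0$ through $\x_1=\x(s_1)$ to a boundary point $\x^\partial_1$ of $\sublevelset(\x_0,\xbar_0,2h)$; Lemma~\ref{lem:comparison_of_sections} (used once, not iterated) gives $s_1\le\beta<1$, so the factor $\tfrac{s_1}{1-s_1}$ in \eqref{QQConv} is bounded. Applying \eqref{QQConv} along this segment with the focus $\xbar_1\in\Gsubdiff{u}{\x_1}$, together with mean-value manipulations in the scalar variable and $u(\x^\partial_1)=\mountain_{2h}(\x^\partial_1)$, yields $\G(\x_1,\xbar_1,\H(\x_0,\xbar_1,u(\x_0)+2h))\le u(\x_1)+\Lambda h$, and composing with $\G(\x_0,\xbar_1,\H(\x_1,\xbar_1,\cdot))$ gives $\x_0\in\sublevelset(\x_1,\xbar_1,\Lambda h)$. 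The indispensable ingredient is the quantitative quasiconvexity \eqref{QQConv} (the analogue of Loeper's maximum principle), which transfers supporting behavior between the two foci $\xbar_0$ and $\xbar_1$; your plan never uses it, and without such an input the engulfing property cannot be deduced from measure estimates alone.
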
	

\begin{proof} 
Let $\x^\partial_1\in \sublevelset(\x_0, \xbar_0, 2h)^\bdry$ be such that the $\G$-segment $x(s):=\Gseg{\x_0}{\x^\partial_1}{\xbar, \z_{2h}}$ contains the point $\x_1=\x(s_1)$. Then Lemma \ref{lem:comparison_of_sections} implies that $s_1\leq \beta$. We now claim that for some \emph{very nice} $C>0$,
\begin{align*}
  \G(\x_1,\xbar_1,\H(\x_0,\xbar_1,u(\x_0)+2h))&\leq  \mountain_{2h}(\x_1)+Ch.
\end{align*}
Indeed, if  $\G(\x_1,\xbar_1,\H(\x_0,\xbar_1,u(\x_0)+2h))\leq  \mountain_{2h}(\x_1)$ we are already done. Otherwise we note that since $u$ is \emph{very nice}, we may apply \eqref{QQConv} with $s=s'=s_1$, $\z_0:=\z_{2h}$, and $\x^\partial_1$ in place of $\x_1$ there to conclude 
\begin{align*}
& \G(\x_1,\xbar_1,\H(\x_0,\xbar_1,u(\x_0)+2h))- \mountain_{2h}(\x_1)\\
&\leq \frac{Cs_1}{1-s_1}\paren{\G(\x^\partial_1, \xbar_1, \H(\x_1, \xbar_1, \mountain_{2h}(\x_1)))-\mountain_{2h}(\x^\partial_1)}\\
&\leq \frac{C\beta}{1-\beta}\paren{\G(\x^\partial_1, \xbar_1, \H(\x_1, \xbar_1, \mountain_{2h}(\x_1)))-\mountain_{2h}(\x^\partial_1)}.
\end{align*}
Then since $\mountain_{2h}(\x_1)-u(x_1)\leq 2h$, for small enough $h$ we can again use the mean value theorem to find a \emph{very nice} $C$ such that
\begin{align*}
 &\G(\x^\partial_1, \xbar_1, \H(\x_1, \xbar_1, \mountain_{2h}(\x_1)))-\mountain_{2h}(\x^\partial_1)\\
 &=\G(\x^\partial_1, \xbar_1, \H(\x_1, \xbar_1, u(\x_1)))-\mountain_{2h}(\x^\partial_1)+C(\mountain_{2h}(\x_1)-u(x_1))\\
 &\leq u(\x_1^\partial)-\mountain_{2h}(\x^\partial_1)+Ch\leq Ch,
\end{align*}
proving the claim.

Using this claim, the mean value theorem again, and that $\xbar_0\in \Gsubdiff{u}{\x_0}$ we have a \emph{very nice} $\Lambda>0$ such that
  \begin{align*}
    \G(\x_1,\xbar_1,\H(\x_0,\xbar_1,u(\x_0)+2h))&\leq \G(\x_1,\xbar_0,\H(\x_0,\xbar_0,u(\x_0)+2h))+Ch\\
    &\leq  \G(\x_1, \xbar_0, \H(\x_0, \xbar_0, u(\x_0)))+\Lambda h\\
    &\leq u(\x_1)+\Lambda h.
  \end{align*}	  
  Finally, applying $\G(\x_0, \xbar_1, \H(\x_1, \xbar_1, \cdot))$ to both sides of the above inequality and using the monotonicity of this function in the scalar variable, we obtain  
\begin{align*}
u(\x_0)&< u(\x_0)+2h\\
&=\G(\x_0, \xbar_1, \H(\x_1, \xbar_1, \G(\x_1,\xbar_1,\H(\x_0,\xbar_1,u(\x_0)+2h))))\\
&\leq \G(\x_0, \xbar_1, \H(\x_1, \xbar_1, u(\x_1)+\Lambda h)),
\end{align*}
(as long as $h$ is sufficiently small in a \emph{very nice} manner, the expression $u(\x_1)+\Lambda h$ is indeed contained in the domain of $\G(\x_0, \xbar_1, \H(\x_1, \xbar_1, \cdot))$). 
This proves $\x_0\in \sublevelset(\x_1,\xbar_1,\Lambda h)$.
  \end{proof}

\begin{lem}\label{lem:characterization_engulfing}
  Let $\Lambda$ be as in Lemma \ref{lem:engulfing_property}. There exist \emph{very nice} $\Lambda_1>0$ and $d_0>0$ such that 
  if $\gdist{\x_0}{\x_1}<d_0$, $\xbar_0\in\Gsubdiff{u}{x_0}$, and $\xbar_1\in\Gsubdiff{u}{x_1}$, then
  \begin{align}\label{eqn: monotonicity gain}
  \frac{1}{\Lambda_1}(u(x_1)-\G(x_1, \xbar_0, \H(\x_0, \xbar_0, u(\x_0))))\leq u(\x_0)-\G(\x_0, \xbar_1, \H(\x_1, \xbar_1, u(\x_1))).
  \end{align}
 \end{lem}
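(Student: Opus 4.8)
The plan is to derive \eqref{eqn: monotonicity gain} from the engulfing property of Lemma~\ref{lem:engulfing_property} together with the elementary monotonicity of $\G$ and $\H$ in the scalar variable. First I would introduce the quantity $h:=u(\x_0)-\G(\x_0, \xbar_1, \H(\x_1, \xbar_1, u(\x_1)))$; since $\xbar_1\in\Gsubdiff{u}{\x_1}$ the $\G$-affine function $\G(\cdot, \xbar_1, \H(\x_1, \xbar_1, u(\x_1)))$ supports $u$ from below, so $h\geq 0$, and by the mean value theorem in the scalar parameter (using that $u$ is \emph{very nice}, so all the relevant scalar values lie in $\Niceinterval$ and $\G_z\H_u$ has \emph{very nice} bounds) we see $\G(\x_1, \xbar_1, \H(\x_0, \xbar_1, u(\x_0)))\leq u(\x_1)+C h$ for a \emph{very nice} $C$; equivalently, if we set $\z_h:=\H(\x_0,\xbar_1,u(\x_0))$ then $\x_0\in\sublevelset(\x_1,\xbar_1, h')$ roughly with $h'\sim h$. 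The point is to observe that $\x_0$ lies in a section of $u$ based at $\x_1$ with focus $\xbar_1$ of height comparable to $h$.

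Next I would invoke Lemma~\ref{lem:engulfing_property} in the reversed roles: having $\x_0$ in a section of height $\sim h$ based at $(\x_1,\xbar_1)$ gives, for $h<h_0$, that $\x_1\in\sublevelset(\x_0,\xbar_0,\Lambda h)$ for some \emph{very nice} $\Lambda>1$ (after first checking that $\gdist{\x_0}{\x_1}<d_0$ small enough forces the relevant heights to be small, using the local Lipschitz bound on $u$ from Remark~\ref{rem: nice functions are nice} and the comparability estimates of Corollary~\ref{cor: comparability}, so that $h\lesssim \gdist{\x_0}{\x_1}$ and hence $h<h_0$). Unwinding the definition of $\sublevelset(\x_0,\xbar_0,\Lambda h)$, this means $u(\x_1)\leq \G(\x_1,\xbar_0,\H(\x_0,\xbar_0,u(\x_0)+\Lambda h))$. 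Applying the mean value theorem in the scalar parameter once more to expand $\G(\x_1,\xbar_0,\H(\x_0,\xbar_0,u(\x_0)+\Lambda h))$ around $\G(\x_1,\xbar_0,\H(\x_0,\xbar_0,u(\x_0)))=\G(\x_1,\xbar_0,\z_0)$ (again with \emph{very nice} bounds on $\G_z\H_u$), we obtain $u(\x_1)-\G(\x_1,\xbar_0,\H(\x_0,\xbar_0,u(\x_0)))\leq C'\Lambda h$. Since $h=u(\x_0)-\G(\x_0,\xbar_1,\H(\x_1,\xbar_1,u(\x_1)))$ by definition, setting $\Lambda_1:=C'\Lambda$ yields \eqref{eqn: monotonicity gain}.

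I would organize the write-up so that both one-sided bounds needed to pass between ``$\x_0$ in a section based at $\x_1$'' and ``$\x_1$ in a section based at $\x_0$'' are explicit: the forward direction uses the mean value theorem to absorb the difference between $\H(\x_1,\xbar_1,u(\x_1))$ and $\H(\x_0,\xbar_1,u(\x_0))$ into a \emph{very nice} multiple of $h$ (as already done repeatedly, e.g. \eqref{eqn: first tilting close to tilting}), while the reverse direction is a direct quotation of Lemma~\ref{lem:engulfing_property}. The main obstacle, and the step requiring the most care, is verifying that the hypothesis $\gdist{\x_0}{\x_1}<d_0$ guarantees the heights are below $h_0$: one needs that $h=u(\x_0)-\G(\x_0,\xbar_1,\H(\x_1,\xbar_1,u(\x_1)))$ is controlled by $\gdist{\x_0}{\x_1}$, which follows because $\G(\cdot,\xbar_1,\H(\x_1,\xbar_1,u(\x_1)))$ agrees with $u$ at $\x_1$ and has a \emph{very nice} Lipschitz constant (via \eqref{Lip}), while $u$ itself is \emph{very nice} and hence locally Lipschitz; one must also ensure that the section $\sublevelset(\x_0,\xbar_0,\Lambda h)$ stays within $\innerdom^\interior$ so that Lemma~\ref{lem:engulfing_property} genuinely applies, which is where the strict $\G$-convexity (Theorem~\ref{thm:strict_convexity}) enters, shrinking $d_0$ if necessary. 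Everything else is a routine chain of scalar mean-value-theorem estimates.
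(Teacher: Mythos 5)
Your proposal is correct and is essentially the paper's own argument: both proofs convert the right-hand quantity into a section height via the scalar mean value theorem (using the \emph{very nice} bounds on $\Gz\Hu$ and the Lipschitz control from \eqref{Lip} to keep heights below $h_0$), apply the engulfing property of Lemma~\ref{lem:engulfing_property} to conclude $\x_1\in\sublevelset(\x_0,\xbar_0,\Lambda h')$, and convert back with a second mean value estimate. The only difference is organizational: the paper works with the $\epsilon$-perturbed height $\tau_\epsilon:=\G(\x_1,\xbar_1,\H(\x_0,\xbar_1,u(\x_0)+\epsilon))-u(\x_1)$, which makes the section membership of $\x_0$ exact and strictly positive (handling the degenerate case $h=0$) and defers the comparison of $\tau_0$ with $u(\x_0)-\G(\x_0,\xbar_1,\H(\x_1,\xbar_1,u(\x_1)))$ to the end, whereas you perform that comparison first; your version should include a small limiting argument (or the same $\epsilon$-trick) to cover the case where the height vanishes.
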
	
	
\begin{proof}
For $\epsilon>0$ let us write 
\begin{align*}
\tau_\epsilon:=\G(\x_1, \xbar_1, \H(\x_0, \xbar_1, u(\x_0)+\epsilon))-u(\x_1)>0.
\end{align*}
We will eventually take $\epsilon\searrow 0$, so we may assume it is as small as we want, additionally if we assume that $\gdist{\x_0}{\x_1}<d_0$ for some $d_0>0$ depending on the Lipschitz norm of $u$ (which in turn, is controlled by the constant $\lipbound$ in \eqref{Lip}, recall Remark~\ref{rem: nice functions are nice}), we will have $u(\x_0)+\Lambda\tau_\epsilon\in \Niceinterval$. Then 
\begin{align*}
 \G(\x_0, \xbar_1, \H(\x_1, \xbar_1, u(\x_1)+\tau_\epsilon))&=\G(\x_0, \xbar_1, \H(\x_1, \xbar_1, \G(\x_1, \xbar_1, \H(\x_0, \xbar_1, u(\x_0)+\epsilon))))\\
 &=u(\x_0)+\epsilon
\end{align*}
hence $x_0\in \sublevelset(\x_1, \xbar_1, \tau_\epsilon)$, thus by Lemma~\ref{lem:engulfing_property} we must have $\x_1\in \sublevelset(\x_0, \xbar_0, \Lambda\tau_\epsilon)$. This means by the mean value theorem, for a \emph{very nice} $C>0$ we have 
\begin{align*}
 u(\x_1)\leq \G(\x_1, \xbar_0, \H(\x_0, \xbar_0, u(\x_0)+\Lambda\tau_\epsilon))\leq\G(\x_1, \xbar_0, \H(\x_0, \xbar_0, u(\x_0))+C\Lambda\tau_\epsilon,
\end{align*}
or rearranging and taking $\epsilon$ to $0$,
\begin{align*}
 \frac{1}{C\Lambda}(u(\x_1)-\G(\x_1, \xbar_0, \H(\x_0, \xbar_0, u(\x_0)))&\leq\G(\x_1, \xbar_1, \H(\x_0, \xbar_1, u(\x_0)))-u(\x_1).
\end{align*}
Now by using the mean value theorem again, we calculate
\begin{align*}
& \G(\x_1, \xbar_1, \H(\x_0, \xbar_1, u(\x_0)))-u(\x_1)\\
&=\G(\x_1, \xbar_1, \H(\x_0, \xbar_1, u(\x_0)))-\G(\x_1, \xbar_1, \H(\x_1, \xbar_1, u(\x_1)))\\
&=\G(\x_1, \xbar_1, \H(\x_0, \xbar_1, u(\x_0)))-\G(\x_1, \xbar_1, \H(\x_0, \xbar_1, \G(\x_0, \xbar_1, \H(\x_1, \xbar_1, u(\x_1)))))\\
 &\leq C (u(\x_0))-\G(\x_0, \xbar_1, \H(\x_1, \xbar_1, u(\x_1)))).
\end{align*}
The constant $C$ in the final inequality above can seen to be \emph{very nice} for the following reason: by possibly shrinking $d_0$ depending on $\lipbound$ in \eqref{Lip}, we can ensure that $\G(\x_0, \xbar_1, \H(\x_1, \xbar_1, u(\x_1)))$ is sufficiently close to $u(\x_0)$, since $u$ is \emph{very nice} we can ensure $C$ is also \emph{very nice}. Combining these above two inequalities yields \eqref{eqn: monotonicity gain} for $\Lambda_1=C\Lambda$.
\end{proof}		
	
\subsection{Proof of Theorem \ref{thm:C1alpha_regularity}}

 Fix a point $\x_0\in \innerdom^{\interior}$ and an $\xbar_0\in \Gsubdiff{u}{\x_0}$. We shall now show that for some \emph{very nice} $C>0$, 
  \begin{align*}
    u(\x)-\G(\x,\xbar_0,\z_0)\leq \frac{C}{\alpha-\beta}\gdist{\x}{\x_0}^{1+\beta}
  \end{align*}	
for all $x$ in a small neighborhood of $\x_0$, where $\beta<\alpha$ and $\beta\leq \Lambda_1^{-1}$, where $\Lambda_1$ is the constant in \eqref{lem:characterization_engulfing}.
  As $\G(\cdot,\xbar,\z)$ is uniformly $C^{1,\alpha}$ in $\x$,  the $C^{1,\beta}$ regularity of $u$ follows by a standard argument.

  Fix an $\x_1$ with $\gdist{\x_0}{\x_1}<d_0$ and let $\x_g(s)$ be the (unique) unit speed geodesic from $\x_0$ to $\x_1$. The engulfing property, used via Lemma \ref{lem:characterization_engulfing}, will lead us to a differential inequality for $\phi(s)$, where
  \begin{align*}
    \phi(s) := u(\x_g(s))-\G(\x_g(s),\xbar_0,\z_0).	
  \end{align*}	
  To make the idea of the proof clear, let us go over it first in the special case where $\u \in C^1$.\\

 \noindent \textbf{Proof when $u$ is $C^1$.} Let us define
  \begin{align*}
    \xbar_s & := \Gsubdiff{u}{\x_g(s)},\\
	\z_s & := \H(\x_g(s),\xbar_s,\u(\x_g(s)));
  \end{align*}
  note as $u$ is $C^1$, the set $\Gsubdiff{u}{\x_g(s)}$ is actually single valued for each $s$. 
  Differentiating $\phi$ ($\u$ is $C^1$, and the chain rule applies) and using $\xbar_s$, $\z_s$ as defined above, 
  \begin{align*}
    \phi'(s) & = \inner{Du(\x_g(s))-D\G(\x_g(s),\xbar_0,\z_0)}{\xdot_g(s)}\\ 
      & = \inner{D \G(\x_g(s),\xbar_s,\z_s) -DG(\x_g(s),\xbar_0,\z_0))}{\xdot_g(s)}.
  \end{align*}	
  Now, with a $C$ given by the $C^{1,\alpha}$-norm of $\G$ with respect to the first variable (and uniformly in the other two),
  \begin{align*}
    \G(\x_g(s),\xbar_0,\z_0) & \leq \G(\x_0,\xbar_0,\z_0) + s\inner{D\G(\x_g(s),\xbar_0,\z_0)}{\xdot_g(s)}+Cs^{1+\alpha}\\
    \G(\x_g(s),\xbar_s,\z_s) & \geq \G(\x_0,\xbar_s,\z_s) + s\inner{D\G(\x_g(s),\xbar_s,\z_s)}{\xdot_g(s)}-Cs^{1+\alpha}.
  \end{align*}
  Thus
  \begin{align*}
	  & s\inner{D\G(\x_g(s),\xbar_s,\z_s)-D\G(\x_g(s),\xbar_0,\z_0)}{\xdot_g(s)}+2Cs^{1+\alpha}\\
	  &\geq \G(\x_g(s),\xbar_s,\z_s) - \G(\x_g(s),\xbar_0,\z_0) -\G(\x_0,\xbar_s,\z_s) + \G(\x_0,\xbar_0,\z_0)\\
	  &= u(\x_g(s)) - \G(\x_g(s),\xbar_0,\z_0) -\G(\x_0,\xbar_s,\z_s) + u(\x_0)\\
	  &=\phi(s)-\G(\x_0,\xbar_s,\z_s) + u(\x_0).
  \end{align*}
  On the other hand, by Lemma~\ref{lem:characterization_engulfing},
  \begin{align*}
u(\x_0)-\G(\x_0,\xbar_s,\z_s) & \geq \Lambda_1^{-1}(\u(\x_g(s)) - \G(\x_g(s),\xbar_0,\z_0))\\
	  &  = \Lambda_1^{-1}\phi(s),
  \end{align*}
thus by combining the above and rearranging, we have
  \begin{align*}
    s\phi'(s)-(1+\Lambda_1^{-1})\phi(s)+Cs^{1+\alpha}\geq 0. 
  \end{align*}	  
  Using the elementary identity,		
  \begin{align*}
    \frac{d}{ds}\left ( \frac{\phi(s)}{s^{1+\beta}} \right ) = \frac{1}{s^{2+\beta}}\left (s\phi'(s)-(1+\beta)\phi(s) \right )	
  \end{align*}			
  with any choice of $\beta$ such that $\beta<\alpha$ and $\beta\leq \Lambda_1^{-1}$ yields 
  \begin{align*}
    \frac{d}{ds}\paren{\frac{\phi(s)}{s^{1+\beta}}}+ Cs^{\alpha-\beta-1}\geq 0.	
  \end{align*}			
  In particular, for any $s<\gdist{\x_0}{\x_1}$, by integration we obtain
  \begin{align}\label{eqn: differential inequality holder}		
    \frac{\phi(\gdist{\x_0}{\x_1})}{\gdist{\x_0}{\x_1}^{1+\beta}}+\frac{C\gdist{\x_0}{\x_1}^{\alpha-\beta}}{\alpha-\beta}\geq  \frac{\phi(s)}{s^{1+\beta}}+ \frac{Cs^{\alpha-\beta}}{\alpha-\beta} \geq  \frac{\phi(s)}{s^{1+\beta}}.
  \end{align}

At the same time, $\phi$ is bounded by a \emph{very nice} constant due to the fact that $u$ is \emph{very nice}. Thus rearranging \eqref{eqn: differential inequality holder} we have for some \emph{very nice} $C>0$ that
  \begin{align*}
    \phi(s)&\leq C\paren{\frac{1}{\alpha-\beta}+\frac{1}{\gdist{\x_0}{\x_1}^{1+\beta}}}s^{1+\beta}\leq \frac{Cs^{1+\beta}}{(\alpha-\beta)\gdist{\x_0}{\x_1}^{1+\beta}};
  \end{align*}
in terms of $u$ this says that if $\x$ lies on the geodesic connecting $\x_0$ to $\x_1$, then
  \begin{align*}
    0\leq u(\x)-\G(\x,\xbar_0,\z_0) \leq\frac{C}{(\alpha-\beta)\gdist{\x_0}{\x_1}^{1+\beta}}\gdist{\x}{\x_0}^{1+\beta}.
  \end{align*}	
  By considering $\x_1$ in a small geodesic sphere centered at $\x_0$, we obtain the $C^{1,\beta}$ estimate for $\u$.\\
  	
  \noindent \textbf{Proof for arbitrary $\u$.} Let us define  
\begin{align*}
 U(p):&=u(\X{\xbar_0}{\z_0}{\p})-\G(\X{\xbar_0}{\z_0}{\p}, \xbar_0, \z_0)\\
 \p_g(s):&=\pmap{\xbar_0}{\z_0}{\x_g(s)}.
\end{align*}
  Then $\p_g: \real \to \cotanspMbar{\xbar_0}$ is $C^1$ on $(0, 1)$, and $U: \cotanspMbar{\xbar_0}\to \real$ is Lipschitz, and since $\phi(s)=U(\p_g(s))$, one of the chain rules for the Clarke subdifferential \cite[Theorem 2.3.10]{Clarke90} combined with \cite[Proposition 3.3.4 and Corollary]{Clarke90} yields for all $s\in (0, 1)$,
  \begin{align}\label{eqn: clarke containment}
\clarke{\phi}{s} \subset \ch \curly{\inner{q_s}{D\p_g(s)}\mid q_s\in \clarke{U}{\p_g(s)}}.
  \end{align}	
Again by \cite[Theorem 2.5.1]{Clarke90} combined with the representation $\subdiff{U}{\p}=\curly{\lim_{k\to\infty}DU(\p_k)\mid \p_k\to\p}$ (proven as in Corollary~\ref{cor: local to global}), we see that $\subdiff{U}{\p_g(s)}=\clarke{U}{\p_g(s)}$ for all $s$. A tedious but routine calculation now yields that 
\begin{align*}
q_s\in\subdiff{U}{\p_g(s)}&\iff q_s=\paren{D_{\p_g(s)}\X{\xbar_0}{\z_0}{\cdot}}^\ast (\pbar_s-\Dx\G(\x_g(s), \xbar_0, \z_0))
\end{align*}
 for some $\pbar_s\in \subdiff{u}{x(s)}$. Here $\empty^\ast$ is the transpose map, defined by duality using the evaluation map by 
\begin{align*}
 \inner{\paren{D_{\p_g(s)}\X{\xbar_0}{\z_0}{\cdot}}^\ast w^\ast }{v}=\inner{w^\ast}{\paren{D_{\p_g(s)}\X{\xbar_0}{\z_0}{\cdot}}v},\quad \forall\; w^\ast\in \cotanspM{\X{\xbar_0}{\z_0}{\p_g(s)}},\ v\in \tansp{\p_g(s)}{\cotanspMbar{\xbar_0}}.
\end{align*}
 Recalling that $\pmap{\xbar_0}{\z_0}{\cdot}$ is the inverse of $\X{\xbar_0}{\z_0}{\cdot}$ we can rewrite \eqref{eqn: clarke containment} as 
\begin{align*}
\clarke{\phi}{s} \subset \ch \curly{\inner{\pbar_s-\Dx\G(\x_g(s), \xbar_0, \z_0)}{\xdot(s)}\mid \pbar_s\in \subdiff{u}{\x_g(s)}}.
  \end{align*}	
  Now for each $s\in (0, 1)$ any $\pbar_s\in \subdiff{u}{\x_g(s)}$, and \emph{any} choice of $\xbar_s\in \Gsubdiff{u}{\x_g(s)}$, a similar argument as the case when $u$ is assumed $C^1$ yields
  \begin{align*}
s\inner{\pbar_s-D\G(\x_g(s),\xbar_0,\z_0)}{\xdot(s)}&=s\inner{D\G(\x_g(s),\xbar_s,\z_s)-D\G(\x_g(s),\xbar_0,\z_0)}{\xdot(s)}\\
& \geq (1+\Lambda_1^{-1})(\u(\x_g(s))- \G(\x_g(s),\xbar_0,\z_0) )-Cs^{1+\alpha}\\
	&  =(1+\Lambda_1^{-1})\phi(s)-Cs^{1+\alpha},
  \end{align*}
thus it follows that for $s\in (0, 1)$,
  \begin{align*}		
\clarke{\phi}{s} \subset\paren{s^{-1}(1+\Lambda_1^{-1})\phi(s)-Cs^{\alpha},\infty}. 		
  \end{align*}
Finally, for those $s$ at which $\phi$ is differentiable \cite[Proposition 2.2.2]{Clarke90} gives $\phi'(s)\in \clarke{\phi}{s}$, hence at such $s$ we have
  \begin{align*}		
    s\phi'(s) \geq (1+\Lambda_1^{-1})\phi(s)-Cs^{1+\alpha}	.
  \end{align*}		
Since $\phi$ is Lipschitz, the above inequality holds for a.e. $s\in [0, \gdist{\x_0}{\x_1}]$, from here on the proof follows by integration, arguing as in the case where $u$ is $C^1$.

  \begin{flushright}$\square$\end{flushright}		
	
\section{The analogue of the MTW tensor, and its relation to \eqref{QQConv}-\eqref{DualQQConv}}\label{section: G3w implies QQConv}

    In \cite{Tru14}, Trudinger defines the condition \eqref{G3w} below. The condition reduces to the Ma-Trudinger-Wang ((MTW) or sometimes (A3w)) condition on the cost function $c$ from the theory of optimal transport (the case $\G(\x, \xbar, \z)=-c(\x, \xbar)-\z$, see Section~\ref{section: OT}), which is central in questions of regularity. The (MTW) condition and certain stronger variations were used in \cite{MTW05, LTW10} to prove local and in \cite{TW09} to prove global \emph{a priori} $C^2$ estimates of solutions to optimal transport problems, leading to $C^{2, \alpha}$ regularity. In \cite{FKM13}, it is shown under (MTW) that pointwise estimates of Aleksandrov type hold, which can be used to show $C^{1, \alpha}$ regularity of solutions to optimal transport (see also \cite{Liu09}). In a previous paper \cite{GK14}, we introduce conditions called (QQConv), which can be used as starting points to again prove Aleksandrov type estimates in optimal transport, but with lower ($C^3$) regularity of the cost function. In \cite{GK14} we also show the (MTW) condition implies (QQConv); the conditions \eqref{QQConv} and \eqref{DualQQConv} we introduce in this paper reduce to (QQConv) in the optimal transport case. It is also shown by Loeper in \cite{Loe09} that the (MTW) condition leads to certain geometric consequences, and when the cost function is $C^4$, it is \emph{necessary} to obtain regularity of solutions to optimal transport. Trudinger uses \eqref{G3w} in \cite{Tru14} to obtain \emph{a priori} $C^2$ estimates for $\G$-convex solutions of the generated Jacobian equations \eqref{eqn: generated Jacobian equation}.

The goal of this section is to demonstrate that conditions \eqref{QQConv} and \eqref{DualQQConv} are reasonable: in the case of smoother generating function $\G$, the conditions follow from Trudinger's regularity condition \eqref{G3w} below.

In this section, we assume that $\G$ is $C^4$, in the sense indicated in Theorem~\ref{thm: G3w implies QQconv} (i.e., all derivatives up to order 4, where at most two derivatives fall on any single variable at once, are continuous). We first define Trudinger's condition \eqref{G3w}.
\begin{DEF}\label{DEF: G3w}
Fix $\x\in\outerdom$, $(\pbar, \u)\in \curly{(\Dx \G, \G)(\x, \xbar, \z)\mid (\x, \xbar, \z)\in\gendom}$, and a local coordinate system near $\x$ (denoted by $\x^i$) on $\M$; and define 
\begin{align*}
 \A{i}{j}(\x, \pbar, \u):=\G_{\x^i\x^j}(\x, \Xbar{\x}{\u}{\pbar}, \Z{\x}{\pbar}{\u})
\end{align*}
where subscripts refer to coordinate derivatives.

We say \emph{$\G$ satisfies \eqref{G3w}}  if for any such triple $(\x, \pbar, \u)$, and any $\V\in\tanspM{x}$ and $\eta\in\cotanspM{x}$ satisfying $\inner{\eta}{\V}=0$, we have
\begin{align}\label{G3w}\tag{G3w}
\tensor[(\x, \pbar, \u)]{\eta}{\eta}{\V}{\V}:= \Dxx_{\pbar_k\pbar_l}\A{i}{j}(\x, \pbar, \u)\V^i\V^j\eta_k\eta_l\geq 0,
\end{align}
here $\pbar_i$ denote the coordinates induced by $\x^i$ on the cotangent bundle $\cotanspM{}$, and $\Dxx_{\pbar_k\pbar_l}$ are second derivatives with respect to these coordinates. Likewise, we say \emph{$\G$ satisfies \eqref{G3s}} if there is some $c_0>0$ such that for any triple $(\x, \pbar, \u)$ and any $\V\in\tanspM{x}$ and $\eta\in\cotanspM{x}$ satisfying $\inner{\eta}{\V}=0$, we have
\begin{align}\label{G3s}\tag{G3s}
\tensor[(\x, \pbar, \u)]{\eta}{\eta}{\V}{\V}:= \Dxx_{\pbar_k\pbar_l}\A{i}{j}(\x, \pbar, \u)\V^i\V^j\eta_k\eta_l\geq c_0\gnorm{\eta}^2 \gnorm{\V}^2.
\end{align}
Similarly, for fixed $\xbar\in\outertarget$, $\z\in\real$, and $\p\in \curly{-\frac{\Dbar\G}{\Gz}(\x, \xbar, \z)\mid (\x, \xbar, \z)\in\gendom}$, define 
\begin{align*}
 \Adual{k}{l}(\p, \xbar, \z):&=-\brackets{\Dbar_{\xbar^k}\paren{\frac{\Dbar_{\xbar^l} \G}{\Gz}}(\x, \xbar, \z)+\Dx_\z\paren{\frac{\Dbar_{\xbar^k} \G}{\Gz}}(\x, \xbar, \z)\p_l}_{\x=\X{\xbar}{\z}{\p}}\\
 &=\H_{\xbar^k\xbar^l}(\X{\xbar}{\z}{\p}, \xbar, \G(\X{\xbar}{\z}{\p}, \xbar, \z)).
\end{align*}
Then we say \emph{$\G$ satisfies \eqref{dual-G3w}} if for any such triple $(\p, \xbar, \z)$, and any $\Vbar\in\tanspMbar{\xbar}$ and $\etabar\in\cotanspMbar{\xbar}$ satisfying $\inner{\etabar}{\Vbar}=0$, we have
\begin{align}\label{dual-G3w}\tag{G$3^*$w}
\dualtensor[(\p, \xbar, \z)]{\etabar}{\etabar}{\Vbar}{\Vbar}:= \Dxx_{\p_i\p_j}\Adual{k}{l}(\p, \xbar, \z)\etabar_i\etabar_j\Vbar^k\Vbar^l\geq 0.
\end{align}
\end{DEF}

\begin{rem}\label{rem: tensorial condition}
 We recall here that for any fixed $(\x, \pbar, \u)$, the expression in the definition of $\tensor[(\x, \pbar, \u)]{\cdot}{\cdot}{\cdot}{\cdot}$ is a $(2, 2)$-tensor over $\tanspM{x}\times\tanspM{x}\times\cotanspM{x}\times\cotanspM{x}$; hence actually independent of choice of coordinate systems. Indeed, fix $(\x_0, \pbar_0, \u_0)$. We take two coordinate systems $x^i$ and $y^i$ near $x_0$ on $\M$; $\x^i$ and $y^i$ induce coordinates on $\cotanspM{}$ locally near $(\x_0, \pbar_0)$, we denote these by $(\x^i, \pbar_i)$ and $(y^i, \qbar_i)$; the relation being $\qbar_i=\pbar_k\pdiff[x^k]{\y^i}$. Then if $(\x, \pbar)=(\y, \qbar)$ are coordinate representations of the same point in $\cotanspM{}$,
\begin{align*}
\A{i}{j}(\x, \pbar, \u)&=\G_{\y^\alpha\y^\beta}(\y,\Xbar{\x}{\u}{\pbar}, \Z{\x}{\pbar}{\u})\pdiff[\y^\alpha]{\x^i}\pdiff[\y^\beta]{\x^j}+\G_{\y^\alpha}(\y,\Xbar{\x}{\u}{\pbar}, \Z{\x}{\pbar}{\u})\twicepdiff[\y^\alpha]{\x^i}{\x^j}\\
&=\G_{\y^\alpha\y^\beta}(\y,\Xbar{\x}{\u}{\pbar}, \Z{\x}{\pbar}{\u})\pdiff[\y^\alpha]{\x^i}\pdiff[\y^\beta]{\x^j}+\G_{\x^\beta}(\x,\Xbar{\x}{\u}{\pbar}, \Z{\x}{\pbar}{\u})\pdiff[\x^\beta]{\y^\alpha}\twicepdiff[\y^\alpha]{\x^i}{\x^j}\\
&=\G_{\y^\alpha\y^\beta}(\y,\Xbar{\x}{\u}{\pbar}, \Z{\x}{\pbar}{\u})\pdiff[\y^\alpha]{\x^i}\pdiff[\y^\beta]{\x^j}+\pbar_\beta\pdiff[\x^\beta]{\y^\alpha}\twicepdiff[\y^\alpha]{\x^i}{\x^j}.
\end{align*}
Since the second term in the last expression above is linear in the $\pbar_i$ coordinates, it will vanish under two differentiations in those variables. Hence we obtain
\begin{align*}
 \Dx_{\pbar_k\pbar_l}\A{i}{j}(\x, \pbar, \u)&= \Dx_{\pbar_k\pbar_l}G_{\y^\alpha\y^\beta}(\y,\Xbar{\x}{\u}{\pbar}, \Z{\x}{\pbar}{\u})\pdiff[\y^\alpha]{\x^i}\pdiff[\y^\beta]{\x^j}\\
 &=\Dx_{\qbar_r\qbar_s}G_{\y^\alpha\y^\beta}(\y,\Xbar{\y}{\u}{\qbar}, \Z{\y}{\qbar}{\u})\pdiff[\y^\alpha]{\x^i}\pdiff[\y^\beta]{\x^j}\pdiff[\x^k]{\y^r}\pdiff[\x^l]{\y^s}
\end{align*}
and thus the expression giving $\tensor[(\x, \pbar, \u)]{\cdot}{\cdot}{\cdot}{\cdot}$ transforms according to the transformation law for $(2, 2)$-tensors as claimed. A similar argument holds for $\dualtensor[(\p, \xbar, \z)]{\cdot}{\cdot}{\cdot}{\cdot}$.
\end{rem}

There are numerous known cases in optimal transport when \eqref{G3w} holds: see \cite{Loe11, KM12, LV10, FR09, DG10, FRV12} (examples arising in Riemannian geometry) and \cite{MTW05, TW09, LeeMcCann11, LeeLi12} (more general cost functions). Some cases that do not fit into the optimal transport framework that satisfy \eqref{G3w} are demonstrated in \cite[Section 4]{JiangTru14} (near-field parallel beam reflection and near-field refraction) and \cite{Tru14} (near-field point source reflector with flat target). One interesting future direction is to explore how in the near-field point source reflector problem with a more general target, the conditions for regularity presented in \cite{KarWang10} fit within the framework of generated Jacobian equations.

We now devote the remainder of this section toward proving Theorem~\ref{thm: G3w implies QQconv}. In order to do so, we first obtain \eqref{DualQQConv} by adapting the strategy in \cite[Lemma 2.23]{GK14}, which is originally motivated by a computation in \cite[Proposition 4.6]{KM10} stemming from the optimal transport case. The major difference is, of course that the added nonlinear dependency of $\G$ on the scalar variable $\z$ highly complicates matters. Additionally, due to the restriction of the domains $\gendom$ and $\gendomdual$, we must be extremely careful in our computations to ensure that the quantities involved are always well-defined; this is another subtlety not present in the optimal transport case. Finally, since the ``source'' and 	``target'' domains $\outerdom$ and $\outertarget$ do not play an exactly symmetric role, we must carefully exploit the duality relation between $\G$ and $\H$ to obtain \eqref{QQConv} from \eqref{DualQQConv}. Henceforth we will fix a compact subinterval $[\QQConvlower, \QQConvupper]\subset (\Gfivelower, \Gfiveupper)$. 

We comment here that in the lemma below, we will actually make use of \eqref{dual-G3w} instead of \eqref{G3w}; by \cite[Theorem 3.1]{Tru14} it is known that if $\G$ satisfies \eqref{G3w}, then it also satisfies \eqref{dual-G3w}, and vice versa.
\begin{lem}\label{lem: cataclysm}
Suppose $\G$, $\outerdom$, and $\outertarget$ satisfy the hypotheses of Theorem~\ref{thm: G3w implies QQconv} and let $\u_0\in [\QQConvlower, \QQConvupper]$, $x_0$, $x_1 \in \outerdom^{\cl}$, $\xbar_0$, $\xbar_1\in\outertarget^{\cl}$, and $\xbar(t):=\Gseg{\xbar_0}{\xbar_1}{\x_0, \u_0}$. 
Also define
\begin{align*}
f(t) := \G(\x_1, \xbar(t), \H(\x_0, \xbar(t), \u_0)),\;\;t \in[0,1].
\end{align*}
Then, if $(\x_1, \xbar(t), \H(\x_0, \xbar(t), \u_0))\in\gendom$ for all $t\in[0, 1]$, there is a constant $C>0$ depending on $[\QQConvlower, \QQConvupper]$, various derivatives of $\G$, $\outerdom$, and $\outertarget$, but independent of $\x_0$, $\x_1$, $\xbar_0$, and $\xbar_1$ such that
\begin{align*}
f^{\prime\prime}(t) \geq -C\norm{f^{\prime}(t)}, \;\;\forall \;t \in(0,1).	
\end{align*}		
\end{lem}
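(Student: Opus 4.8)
The plan is to derive a second-order differential inequality for $f(t) = \G(\x_1, \xbar(t), \H(\x_0, \xbar(t), \u_0))$ by computing $f'$ and $f''$ explicitly in local coordinates, and then recognizing the ``bad'' part of $f''$ (the part not controlled by $|f'|$) as precisely a quantity that is nonnegative by \eqref{dual-G3w}. I would begin by setting up local coordinates $\xbar^k$ near the image of the $\G$-segment $\xbar(t)$ and $\x^i$ near $\x_0$, $\x_1$, and abbreviating $\z(t) := \H(\x_0, \xbar(t), \u_0)$, $\z_1(t) := \H(\x_1, \xbar(t), f(t))$. The key structural fact I would exploit is that, after passing to the $\p$-coordinates $\p := -\frac{\Dbar\G}{\Gz}$ associated to the fixed pair $(\xbar(t), \z(t))$ via \eqref{DualTwist}, the function $f$ becomes a value of $\G$ evaluated at a point that moves along a \emph{straight line} in those coordinates (this is the content of Proposition~\ref{prop: motivation for nondegeneracy}, specifically \eqref{eqn: xbardot representation} and \eqref{eqn: zdot representation}), which is what makes the second-derivative computation tractable.

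The core computation: using $\xbardot(t) = \nondegmatrixinv{\x_0}{\xbar(t)}{\z(t)}(\pbar_1 - \pbar_0)$ and $\zdot(t) = \inner{-\frac{\Dbar\G}{\Gz}(\x_0, \xbar(t), \z(t))}{\xbardot(t)}$ from Proposition~\ref{prop: motivation for nondegeneracy}, I would differentiate $f$ once to get $f'(t) = \inner{\Dbar\G(\x_1, \xbar(t), \z(t)) + \Gz(\x_1, \xbar(t), \z(t))\Dbar\H(\x_0, \xbar(t), \u_0)}{\xbardot(t)}$, which simplifies (as in the first display of the proof of Lemma~\ref{lem: linearization}) to $-\Gz(\x_1, \xbar(t), \z(t))\inner{\p_t^{\x_1} - \p_t^{\x_0}}{\xbardot(t)}$ where $\p_t^{\x} := -\frac{\Dbar\G}{\Gz}(\x, \xbar(t), \z(t))$. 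Differentiating again, I would collect all terms; the terms involving $\ddot{\xbar}$ can be handled using that $\xbar(t)$ is a $\G$-segment with respect to $(\x_0, \u_0)$ (so $\p_t^{\x_0}$ is affine in $t$, killing one batch of terms after differentiating the identity $-\frac{\Dbar\G}{\Gz}(\x_0, \xbar(t), \z(t)) = (1-t)\pbar_0 + t\pbar_1$), and the terms where the second derivative in the $\z$-direction on $\G(\x_1, \cdot, \cdot)$ or on $\H(\x_0, \cdot, \cdot)$ appears can be absorbed into $C|f'(t)|$ because, for $\u_0 \in [\QQConvlower, \QQConvupper]$ and all relevant points in the compact closures $\outerdom^{\cl}$, $\outertarget^{\cl}$, the quantities $\Gz$, $\Hu$, $\nondegmatrixentries$, $\nondegmatrixentries^{-1}$ and their moduli of continuity are bounded away from zero and infinity (the ``very nice constant'' bookkeeping of Remark~\ref{rem: universal constants}, restricted to the fixed compact interval). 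After this reduction, the genuinely second-order remainder should be of the form $\Dxx_{\p_i\p_j}\Adual{k}{l}(\p, \xbar(t), \z(t))\,\etabar_i\etabar_j\,\Vbar^k\Vbar^l$ evaluated at $\p = \p_t^{\x_1}$ — or rather at the line connecting $\p_t^{\x_0}$ and $\p_t^{\x_1}$ — with $\Vbar = \xbardot(t)$ and $\etabar$ chosen so that the orthogonality $\inner{\etabar}{\Vbar} = 0$ needed for \eqref{dual-G3w} holds after projecting off the component of $\etabar$ along $\Vbar$ (the along-$\Vbar$ component again contributing only to the $|f'|$-controlled terms, since it factors through $f'$).

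The main obstacle I expect is twofold. First, the bookkeeping: unlike the optimal transport case where $\G(\x,\xbar,\z) = -c(\x,\xbar) - \z$ is affine in $\z$, here the nonlinear $\z$-dependence means that $\z(t)$, $\z_1(t)$ themselves have nontrivial first and second derivatives in $t$, generating many cross terms (involving $\Dx\Gz$, $\Gzz$, $\Dbar\Gz$, $\Hu$, $\H_{\u\u}$, mixed derivatives of $\H$) that must each be checked to be either identically zero on the $\G$-segment or bounded by $C|f'(t)|$; organizing this so the \eqref{dual-G3w} term emerges cleanly is the delicate part. Second, one must verify throughout that every point at which $\G$ or $\H$ or their derivatives are evaluated genuinely lies in $\gendom$ or $\gendomdual$ — this is guaranteed at $(\x_1, \xbar(t), \z(t))$ by hypothesis, at $(\x_0, \xbar(t), \z(t))$ because $\xbar(t)$ is a $\G$-segment with respect to $(\x_0, \u_0)$ and $\u_0 \in (\Gfivelower, \Gfiveupper)$ via \eqref{G5}/\eqref{DualDomConv}, and for the intermediate line between $\p_t^{\x_0}$ and $\p_t^{\x_1}$ one needs a short convexity/continuity argument using \eqref{DomConv} — but these domain-validity checks must be threaded carefully through the calculation rather than done as an afterthought. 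Once the identity is in hand, the inequality $f''(t) \geq -C|f'(t)|$ follows immediately from \eqref{dual-G3w} applied to the non-negative remainder term, with $C$ depending only on $[\QQConvlower, \QQConvupper]$ and the fixed geometric data as claimed.
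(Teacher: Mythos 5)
Your setup is consistent with the paper in its ingredients (the formula $f'(t)=-\Gz(\x_1,\xbar(t),\z(t))\inner{\p^{\x_1}_t-\p^{\x_0}_t}{\xbardot(t)}$ via \eqref{eqn: xbardot representation}--\eqref{eqn: zdot representation}, the absorption of the $\z$-nonlinearity into $C\norm{f'}$ on the compact interval, and the orthogonal decomposition before invoking \eqref{dual-G3w}), but there is a genuine gap at the central step: a direct pointwise computation of $f''(t)$ cannot produce a term of the form $\Dxx_{\p_i\p_j}\Adual{k}{l}\,\etabar_i\etabar_j\Vbar^k\Vbar^l$. That tensor contains two derivatives in the $\p$-variable, i.e.\ a genuine second-order variation along the source direction, whereas $f''(t)$ only contains the difference of first-order quantities evaluated at the two endpoints $\x_0$ and $\x_1$: modulo terms controlled by $\norm{f'}$, your remainder is of the form $\inner{\partial_t^2(\p^{\x_1}_t-\p^{\x_0}_t)}{\xbardot(t)}$, a finite difference in $\p$, not a second $\p$-derivative. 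Your parenthetical ``or rather at the line connecting $\p^{\x_0}_t$ and $\p^{\x_1}_t$'' gestures at the missing piece but supplies no mechanism by which \eqref{dual-G3w} actually gets applied; this mechanism is precisely the nontrivial content of the lemma.

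The paper bridges this by a two-parameter (Kim--McCann type) argument: for each frozen $t_0$ it introduces the $\G$-segment $\x(s;t_0)$ from $\x_0$ to $\x_1$ with respect to $(\xbar(t_0),\z(t_0))$ and the normalized auxiliary function $g(t,s;t_0)=-\G(\x(s;t_0),\xbar(t),\z(t))/\Gz(\x(s;t_0),\xbar(t_0),\z(t_0))$, then shows that $s\mapsto \partial_t^2 g(t,s;t_0)\vert_{t=t_0}+Cs^2\norm{\inner{\p_1(t_0)-\p_0(t_0)}{\xbardot(t_0)}}$ is convex in $s$ --- this is where \eqref{dual-G3w} enters, through the fourth mixed derivative $\partial^4_{ss tt}g$ --- and vanishes together with its first $s$-derivative at $s=0$; nonnegativity at $s=1$ then yields $f''(t_0)\geq -C\norm{f'(t_0)}$. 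The vanishing at $s=0$ is not automatic: it uses $\x(0;t_0)\equiv\x_0$, the identity $\G(\x_0,\xbar(t),\z(t))\equiv\u_0$, and crucially the division by $\Gz$ at the frozen time $t_0$, which makes $\partial_s g\vert_{s=0}$ affine in $t$ so that its second $t$-derivative drops out. Without this auxiliary construction, the $s=0$ cancellations, and the convexity-in-$s$ step (or an equivalent Taylor expansion in $s$ with integral remainder), the pointwise condition \eqref{dual-G3w} cannot be brought to bear on $f''$, so your proposed reduction does not close as written.
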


\begin{proof}
First by \eqref{DualDomConv} and \eqref{G5}, note that $\xbar(t)$ is well-defined and contained in $\outertarget^{\cl}$, and in particular 
\begin{align*}
(\x_0,\xbar(t_0) ,\H(\x_0, \xbar(t), \u_0))\in\gendom, \qquad \forall\;t\in[0, 1].
\end{align*}
For $s$, $t$, $t_0 \in [0,1]$ we introduce the function
\begin{align*}
g(t,s;t_0)=-\frac{\G(\x(s; t_0), \xbar(t), \z(t))}{\Gz(\x(s; t_0), \xbar(t_0), \z(t_0))},
\end{align*}	
where
\begin{align*}
\pbar_0:&=\Dx \G (\x_0, \xbar_0, \H(\x_0, \xbar_0, \u_0)), \qquad
\pbar_1:=\Dx \G (\x_0, \xbar_1, \H(\x_0, \xbar_1, \u_0)),\\
 \z(t):&=\H(\x_0, \xbar(t), \u_0)=\Z{\x_0}{(1-t)\pbar_0+t\pbar_1}{\u_0},
 \end{align*}
 \begin{align*}
\p_0(t_0):&=-\frac{\Dbar \G}{\Gz}(\x_0,\xbar(t_0) ,\z(t_0)),\qquad
\p_1(t_0):=-\frac{\Dbar \G}{\Gz}(\x_1,\xbar(t_0) ,\z(t_0)),\\
 \x(s; t_0):&=\X{\xbar(t_0)}{\z(t_0)}{(1-s)\p_0(t_0)+s\p_1(t_0)}.
\end{align*}
Since $(\x_1,\xbar(t_0) ,\z(t_0))\in\gendom$ for all $t_0\in[0, 1]$ by assumption, \eqref{DomConv} ensures that $\x(s; t_0)=\Gseg{\x_0}{\x_1}{\xbar(t_0), \z(t_0)}$ is well-defined and in particular,
\begin{align}\label{eqn: segment family well-defined}
(\x(s; t_0), \xbar(t_0), \z(t_0))\in\gendom 
\end{align}
 for all $s$, $t_0\in[0, 1]$.
 Also by \eqref{DualTwist} and the definitions of $\p_0(t_0)$, $\p_1(t_0)$, and $\x(s; t_0)$, we must have
\begin{align*}
\x(0; t_0)\equiv \x_0,&\qquad \x(1; t_0)\equiv \x_1
\end{align*}
independent of $t_0\in[0,1]$, thus
\begin{align*}
f(t) = -\Gz(\x_1, \xbar(t_0), \z(t_0))g(t,1;t_0), \;\;\forall\;t,t_0\in[0,1].	
\end{align*}
We shall now show that for some constant $C>0$, which will be the same one as in the statement of this lemma,
\begin{align}\label{eqn: auxiliary convex function}
\vertbar{\twicepdiffsamevar{t}g(t,s;t_0)}_{t=t_0}+s^2C\norm{\inner{\p_1(t_0)-\p_0(t_0)}{\xbardot(t_0)}}\end{align}
is a convex function of $s\in[0, 1]$ which vanishes to first order at $s=0$; hence it is nonnegative for all $s\in[0, 1]$. Taking $s=1$ we then obtain
\begin{align*}
f^{\prime\prime}(t_0) &= -\Gz(\x_1, \xbar(t_0), \z(t_0))\vertbar{\twicepdiffsamevar{t}g(t,1;t_0)}_{t=t_0} \\
&\geq C\Gz(\x_1, \xbar(t_0), \z(t_0))\norm{\inner{\p_1(t_0)-\p_0(t_0)}{\xbardot(t_0)}}, \;\;\forall\;t_0\in[0,1].	
\end{align*}
Further, we shall see by \eqref{eqn: sderivative is h} below with $s=1$,
\begin{align*}
\norm{f^{\prime}(t_0)} &= -\Gz(\x_1, \xbar(t_0), \z(t_0))\norm{\inner{\p_1(t_0)-\p_0(t_0)}{\xbardot(t_0)}},	\;\;\forall\;t_0\in[0,1],
\end{align*}
since $\Gz<0$; thus the lemma will be proved once we check the claims made on  \eqref{eqn: auxiliary convex function}; this we do in four steps. 

\bigskip
\noindent \emph{Step 1.} First we show that for every $t_0\in[0,1]$ we have
\begin{align}
\pdiff{t}g(t,s;t_0)\vert_{t=t_0} &= s\inner{\p_1(t_0)-\p_0(t_0)}{\xbardot(t_0)}.\label{eqn: sderivative is h}
\end{align}
Indeed by \eqref{eqn: zdot representation}, 
\begin{align}
\pdiff{t}g(t,s; t_0)
&=-\frac{\inner{\Dbar \G(\x(s; t_0), \xbar(t), \z(t))}{\xbardot(t)}+\Gz(\x(s; t_0), \xbar(t), \z(t))\zdot(t)}{\Gz(\x(s; t_0), \xbar(t_0), \z(t_0))}\notag\\
 &=\frac{\Gz(\x(s; t_0), \xbar(t), \z(t))}{\Gz(\x(s; t_0), \xbar(t_0), \z(t_0))}\inner{-\frac{\Dbar \G}{\Gz}(\x(s; t_0), \xbar(t), \z(t))+\frac{\Dbar\G}{\Gz}(\x_0, \xbar(t), \z(t))}{\xbardot(t)},\label{eqn: t derivative expression}
\end{align}
thus taking $t=t_0$ we have \eqref{eqn: sderivative is h}.

\bigskip
 \noindent \emph{Step 2.} Now, we make a series of calculations in the flavor of \cite[Proposition 4.6]{KM10}; for our second step we show the zeroth and first order parts of the expression \eqref{eqn: auxiliary convex function} are zero at $s=0$. First by further differentiating \eqref{eqn: t derivative expression} in $t$ and taking $t=t_0$, we see
\begin{align}
 &\vertbar{\twicepdiffsamevar{t}g(t,s;t_0)}_{t=t_0} \notag\\
&=s\inner{\p_1(t_0)-\p_0(t_0)}{\xbarddot(t_0)}\notag\\
&+\frac{s\inner{\p_1(t_0)-\p_0(t_0)}{\xbardot(t_0)}}{\Gz(\x(s; t_0),\xbar(t_0), \z(t_0))}\vertbar{\pdiff{t}\Gz(\x(s; t_0),\xbar(t), \z(t))}_{t=t_0}\notag\\
&+\vertbar{\pdiff{t}\inner{-\frac{\Dbar \G}{\Gz}(\x(s; t_0), \xbar(t), \z(t))}{\xbardot(t_0)}}_{t=t_0}+\vertbar{\pdiff{t}\inner{\frac{\Dbar\G}{\Gz}(\x_0, \xbar(t), \z(t))}{\xbardot(t_0)}}_{t=t_0}\notag\\
&=I+II+III+IV.\label{eqn: second derivative expression}
\end{align}
Since $\x(0; t_0)\equiv \x_0$, we immediately see that for any $t$, $t_0\in[0, 1]$, taking $s=0$ in the above expression,
\begin{align*}
 \vertbar{\twicepdiffsamevar{t}g(t,s;t_0)}_{s=0, t=t_0}=0.
\end{align*}
On the other hand,
\begin{align*}
\vertbar{\pdiff{s}g(t, s; t_0)}_{s=0}
&=-\frac{\inner{\Dx\G(\x_0, \xbar(t), \z(t))}{\xdot(0; t_0)}}{\Gz(\x_0, \xbar(t_0), \z(t_0))}-\rho(t_0)\G(x_0, \xbar(t), \z(t))\\
&=\frac{\inner{(1-t)\pbar_0+t\pbar_1}{\xdot(0; t_0)}}{\Gz(\x_0, \xbar(t_0), \z(t_0))}-\rho(t_0)\u_0
\end{align*} 
where $\rho(t_0)$ is some expression independent of $t$, thus differentiating the above twice in $t$ we see
\begin{align*}
 \vertbar{\frac{\partial^3}{\partial s\partial t^2}g(t,s;t_0)}_{s=0, t=t_0}=0
\end{align*}
for all $t$, $t_0\in[0, 1]$.

 \bigskip
\noindent \emph{Step 3.} Next note that since $\G$ satisfies \eqref{G3w}, by \cite[Theorem 3.1]{Tru14} it satisfies \eqref{dual-G3w}. Then as $\dualtensor{\cdot}{\cdot}{\cdot}{\cdot}$ is a $(2, 2)$-tensor by Remark~\ref{rem: tensorial condition}, \eqref{dual-G3w} (and recalling \eqref{eqn: segment family well-defined}) implies there exists a constant $C>0$ depending only on $\outerdom$, $\outertarget$, $[\QQConvlower, \QQConvupper]$, and derivatives of $\G$ and $\H$ up to order $4$ (independent of $s$ and $t_0$) such that,
\begin{align}
T^*_{s; t_0}
:&=\dualtensor[((1-s)\p_0(t_0)+s\p_1(t_0), \xbar(t_0), \z(t_0))]{\p_1(t_0)-\p_0(t_0)}{\p_1(t_0)-\p_0(t_0)}{\xbardot(t_0)}{\xbardot(t_0) } \notag\\
&\geq-C \norm{\inner{\p_1(t_0)-\p_0(t_0)}{\xbardot(t_0)}}.\label{eqn: MTW bound}
\end{align}
\bigskip
\noindent \emph{Step 4.} Finally, we work toward showing the convexity of \eqref{eqn: auxiliary convex function} in the $s$ variable. Toward this end, we first claim that
\begin{align}\label{eqn: mixed fourth derivative}
&\vertbar{\frac{\partial^4}{\partial s^2\partial t^2}g(t,s;t_0)}_{t=t_0}=T^*_{s; t_0}\notag\\
&+\inner{\p_1(t_0)-\p_0(t_0)}{\xbardot(t_0)}\twicepdiffsamevar{s}\paren{\frac{s\inner{\Gzz[(2-s)\p_0(t_0)+s\p_1(t_0)]+2\Dbar\Gz}{\xbardot(t_0)}}{\Gz}},
\end{align}
where the arguments of $\Dbar \Gz$, $\Gzz$, and $\Gz$ are $(\x(s; t_0), \xbar(t_0), \z(t_0))$.
Since $I+IV$ in \eqref{eqn: second derivative expression} are affine in the variable $s$, after taking two derivatives in $s$ the terms vanish and do not appear in the expression \eqref{eqn: mixed fourth derivative}. On the other hand (hereafter, derivatives of $\G$ are to be evaluated at $(x(s; t_0), \xbar(t_0), \z(t_0))$; we suppress this notation for brevity),
\begin{align*}
 III&=-\inner{\Dbar\paren{\frac{\Dbar \G}{\Gz}}\xbardot(t_0)+\paren{\frac{\Dbar \G}{\Gz}}_\z\zdot(t_0)}{\xbardot(t_0)}\\
 &=-\inner{\Dbar\paren{\frac{\Dbar \G}{\Gz}}\xbardot(t_0)}{\xbardot(t_0)}-\inner{\paren{\frac{\Dbar \G}{\Gz}}_\z}{\xbardot(t_0)}\inner{\p_0(t_0)}{\xbardot(t_0)}
\end{align*}
 by \eqref{eqn: zdot representation}, while
\begin{align*}
\vertbar{\pdiff{t}\Gz(\x(s; t_0),\xbar(t), \z(t))}_{t=t_0}&=\inner{\Dbar\Gz}{\xbardot(t_0)}+\Gzz\zdot(t_0)\\
&=\inner{\Dbar\Gz}{\xbardot(t_0)}+\Gzz\inner{\p_0(t_0)}{\xbardot(t_0)},
\end{align*}
thus
\begin{align*}
 II+III&=-\inner{\Dbar\paren{\frac{\Dbar \G}{\Gz}}\xbardot(t_0)}{\xbardot(t_0)}-\inner{\paren{\frac{\Dbar \G}{\Gz}}_\z}{\xbardot(t_0)}\inner{\p_0(t_0)}{\xbardot(t_0)}\\
 &+\frac{s\inner{\p_1(t_0)-\p_0(t_0)}{\xbardot(t_0)}}{\Gz} (\inner{\Dbar \Gz}{\xbardot(t_0)} +\Gzz\inner{\p_0(t_0)}{\xbardot(t_0)})\\
&=-\inner{\Dbar\paren{\frac{\Dbar \G}{\Gz}}\xbardot(t_0)}{\xbardot(t_0)}-\inner{\paren{\frac{\Dbar \G}{\Gz}}_\z}{\xbardot(t_0)}\inner{(1-s)\p_0(t_0)+s\p_1(t_0)}{\xbardot(t_0)}\\
&+\frac{2s \inner{\Dbar \Gz}{\xbardot(t_0)}}{\Gz}\inner{\p_1(t_0)-\p_0(t_0)}{\xbardot(t_0)}\\
&+\frac{s\Gzz\inner{(2-s)\p_0(t_0)+s\p_1(t_0)}{\xbardot(t_0)}}{\Gz}\inner{\p_1(t_0)-\p_0(t_0)}{\xbardot(t_0)}.
 \end{align*}
 By comparing to \eqref{dual-G3w} we see differentiating the first two terms above twice in $s$ yields the $T^*_{s; t_0}$ term in \eqref{eqn: mixed fourth derivative}, we obtain the full expression \eqref{eqn: mixed fourth derivative}.
 
 Now the absolute value of the final factor multiplying $\inner{\p_1(t_0)-\p_0(t_0)}{\xbardot(t_0)}$ in \eqref{eqn: mixed fourth derivative} has an upper bound depending on the quantities $\Norm{\Dx\Gz}$, $\Norm{\Dxx\Gz}$, $\Norm{\Dx\Gzz}$, $\Norm{\Dxx\Gzz}$, $\Norm{\Dx\Dbar \Gz}$, $\Norm{\Dxx\Dbar \Gz}$, $\norm{\Gz}^{-1}$, $\gnorm[\x(s; t_0)]{\xdot(s; t_0)}$, $\gnorm[\x(s; t_0)]{\xddot(s; t_0)}$, $\gbarnorm[\xbar(t_0)]{\p_0(t_0)}$, $\gbarnorm[\xbar(t_0)]{\p_1(t_0)}$, and $\gbarnorm[\xbar(t_0)]{\xbardot(t_0)}$; by the compactness of $\outerdom^{\cl}$ and $\outertarget^{\cl}$, all of these quantities have a uniform upper bound depending on the interval $[\QQConvlower, \QQConvupper]$ but independent of $\x_0$, $\x_1$, $\xbar_0$, and $\xbar_1$ (also through \eqref{Nondeg} via \eqref{eqn: xdot representation}). Thus combining \eqref{eqn: mixed fourth derivative} with \eqref{eqn: MTW bound} we obtain
\begin{align*}
\vertbar{\frac{\partial^2}{\partial s^2\partial t^2}g(t,s;t_0)}_{t=t_0}  \geq -2C\norm{\inner{\p_1(t_0)-\p_0(t_0)}{\xbardot(t_0)}},\;\;\forall\;s\in(0,1)
\end{align*}
for some $C>0$ with only dependencies as claimed in the statement of the lemma. This last inequality is nothing else but the fact that the auxiliary function  \eqref{eqn: auxiliary convex function} is indeed convex in $s$ for any fixed $t_0$.
\end{proof}

\begin{cor}\label{cor: gLp redux}
Let $f(t)$ be as in the previous lemma. Then there exists a $M\geq 1$ depending on $[\QQConvlower, \QQConvupper]$ but independent of $\x_0$, $\x_1$, $\xbar_0$, and $\xbar_1$ such that
\begin{align*}
f(t)-f(0)\leq \frac{Mt}{1-t'}\brackets{f(1)-f(t')}_+,\;\;\forall\; t\in [0, 1],\ t'\in [0,1).
\end{align*}
\end{cor}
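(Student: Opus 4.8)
The plan is to derive the claimed inequality purely from the conclusion of Lemma~\ref{lem: cataclysm}, namely that $f$ is a $C^2$ function on $[0,1]$ satisfying the pointwise differential inequality $f''(t) \geq -C|f'(t)|$ for all $t \in (0,1)$, with $C$ a constant depending only on $[\QQConvlower, \QQConvupper]$ and the data of $\G$, $\outerdom$, $\outertarget$. This is exactly the type of ODE lemma that appears in \cite{GK14} (the optimal transport analogue), so the strategy is to reuse that argument essentially verbatim. The point is that an inequality of the form $f'' \geq -C|f'|$ forces $f$ to be, up to controllable multiplicative and additive adjustments, ``quasiconvex'' along the segment: the positive part of its increment near any interior point is controlled by its increment at the endpoints.

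First I would reduce to a normalized situation. If $f(1) \leq f(t')$ for the relevant $t'$, then the right-hand side is zero (after taking positive part only if $f(1) - f(t') \geq 0$; when it is negative the bracket $[\cdot]_+$ vanishes), so I must show $f(t) - f(0) \leq 0$ for all $t$; more precisely one shows that if $f$ ever exceeds $f(0)$ in the interior then it must also exceed at an endpoint, using that $f'' \geq -C|f'|$ rules out an interior strict maximum being ``too high'' relative to the boundary values --- this is where a Gronwall-type estimate on $|f'|$ enters. In the generic case I would consider the auxiliary function obtained by multiplying $f$ by an exponential factor $e^{\pm Ct}$ (or $e^{\pm C(1-t)}$) chosen so that the resulting function is genuinely convex or concave on the relevant subinterval: indeed if $g(t) = f(t) e^{Ct}$ then $g'' = (f'' + 2Cf' + C^2 f)e^{Ct}$, which one can arrange to be nonnegative on the region where $f' \geq 0$, and symmetrically with $e^{-Ct}$ on the region where $f' \leq 0$. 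Splitting $[0,1]$ at the points where $f'$ changes sign and applying convexity of these exponentially-weighted functions on each piece, together with the uniform bounds $e^{\pm C} \leq e^C =: M_0$, yields the desired comparison between the interior increment $f(t) - f(0)$ and the endpoint increment $f(1) - f(t')$, with $M$ absorbing $M_0$ and the factors coming from $1/(1-t')$.

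The main obstacle --- though it is really bookkeeping rather than a genuine difficulty --- is handling the sign changes of $f'$ and the interplay between the two cutoff points $t$ and $t'$ cleanly, and making sure the constant $M$ genuinely depends only on $[\QQConvlower, \QQConvupper]$ and not on the particular points $\x_0, \x_1, \xbar_0, \xbar_1$: this is immediate since $C$ in Lemma~\ref{lem: cataclysm} has exactly that dependence, and the ODE argument produces $M$ as an explicit function of $C$ alone (e.g.\ $M = e^{C}(1+C)$ or similar). I would organize the proof as: (i) record that by Lemma~\ref{lem: cataclysm} $f \in C^2$ and $f'' \geq -C|f'|$; (ii) prove the abstract claim that any such $f$ on $[0,1]$ satisfies the stated inequality, treating separately the cases according to whether $f'(0)$ and the behavior of $f$ near its interior maximum; (iii) conclude. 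I do not expect to need any further property of $\G$ beyond what Lemma~\ref{lem: cataclysm} already encodes, and the final statement of the corollary then follows directly, which is also precisely the form of the hypothesis \eqref{DualQQConv} needed (after identifying $f(t)$ with $\G(\x_1, \xbar(t), \H(\x_0, \xbar(t), \u_0))$ and noting the bracket with positive part on the right-hand side matches).
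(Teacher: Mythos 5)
Your overall framing is right: the corollary is a pure ODE consequence of Lemma~\ref{lem: cataclysm} (only $f''\ge -C\lvert f'\rvert$ is used, with $C$ having the stated dependence), and the paper's proof operates at exactly that level of generality. But the specific mechanism you propose fails. If $g(t)=f(t)e^{Ct}$ then $g''=(f''+2Cf'+C^2f)e^{Ct}$, and on the region $\{f'\ge 0\}$ the hypothesis only gives $g''\ge (Cf'+C^2 f)e^{Ct}$, which has no sign because $f$ itself has no sign (it is a value of $\G$, not a normalized quantity), so you cannot ``arrange'' convexity as claimed. Even after the natural fix of subtracting a constant (say working with $e^{Ct}(f-f(0))$ on an interval where $f$ is nondecreasing), convexity of the weighted function compares $f(t)-f(0)$ with exponentially distorted increments that mix in $f(1)-f(0)$ rather than $f(1)-f(t')$, and one does not recover the stated bound $\frac{Mt}{1-t'}\brackets{f(1)-f(t')}_+$ with $M$ depending only on $C$. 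The correct elementary weighting is on $f'$, not on $f$: on any interval where $f'>0$ one has $(\log f')'\ge -C$, hence the Harnack-type bound $f'(s_1)\le e^{C(s_2-s_1)}f'(s_2)\le e^C f'(s_2)$ for $s_1\le s_2$ in that interval.

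Two further ingredients of the paper's argument are absent from your sketch and are genuinely needed. First, the propagation property: if $f'(t^*)>0$ for some $t^*$, then $f'>0$ on all of $(t^*,1]$ (proved by integrating $(\log f')'\ge -C$ up to a would-be first zero of $f'$). This is what rules out interior strict local maxima, settles the degenerate case $f(1)\le f(t')$ (forcing $f(t)\le f(0)$ there), and controls the positive part and the sign changes of $f'$ without the delicate splitting you envision. Second, the factor $\frac{t}{1-t'}$ comes from Cauchy's mean value theorem applied to the two reparametrizations $\tilde f(\tilde t)=f(t\tilde t)$ and $\hat f(\tilde t)=f(t'+\tilde t(1-t'))$, which gives
\begin{align*}
\frac{f(t)-f(0)}{f(1)-f(t')}=\frac{t\,f'(t\tilde t)}{(1-t')\,f'(t'+\tilde t(1-t'))}
\end{align*}
with the \emph{same} intermediate $\tilde t$ in numerator and denominator; since $t\tilde t\le t'+\tilde t(1-t')$, the derivative-Harnack estimate applies directly and yields $M=e^C$ (with a short limiting argument when $f'(t')\le 0$). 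Without this device, or an equivalent careful averaging of the Harnack bound on $f'$, your step ``convexity on pieces yields the desired comparison'' is a substantial unproven leap rather than bookkeeping.
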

\begin{proof}
First, we shall show that
\begin{align}\label{eqn:gLpredux f' positivity propagation}
  \textnormal{ if } \exists\; t^*\in[0,1] \textnormal{ s.t. } f^{\prime}(t^*)>0, \textnormal{ then } f^{\prime}(t)>0,\;\;\forall\;t\in (t^*,1].
\end{align}
To see this, let us adapt an argument found at the end of the proof of \cite[Theorem 12.46]{Vil09} as follows: suppose that \eqref{eqn:gLpredux f' positivity propagation} is false, then $f^{\prime}(t)\leq 0$ for some $t \in(t^*, 1]$. In this case, there exists $t_0\in (t^*,1]$ which is the first zero of $f'$ after $t^*$, that is
\begin{align*}
  t_0:=\inf\curly{t\in[t^*, 1]\mid f^{\prime}(t)=0}. 
\end{align*}  
Since $f^{\prime}(t)>0$ for $t\in(t^*, t_0)$, Lemma~\ref{lem: cataclysm} gives that $\diff{t}\log{ f^{\prime}(t)}=\frac{f^{\prime\prime}(t)}{f^{\prime}(t)}\geq -C$ for any $t\in (t^*,t_0)$. Integrating this inequality yields
\begin{align*}
  \log{f^{\prime}(t)}\geq \log{f^{\prime}(t^*)}-C(t-t^*),\;\;\forall\;t\in (t^*,t_0).
\end{align*}
Taking $t\nearrow t_0$, we see that $\log{f^{\prime}(t)}$ remains bounded from below, thus $f'(t_0)>0$, which is a contradiction. One key consequence of \eqref{eqn:gLpredux f' positivity propagation} that we exploit is that $f$ cannot have any \emph{strict} local maxima in $(0, 1)$. 

We now return to the main inequality, fixing $t\in [0, 1]$ and $t'\in[0, 1)$ we consider two cases, according to whether $f(1)> f(t')$ or $f(1)\leq f(t')$.

First suppose $f(1)\leq f(t')$. We must then have $f(0)\geq f(t')$, otherwise this will contradict \eqref{eqn:gLpredux f' positivity propagation}. Now suppose that $f(t)>f(0)$ as otherwise there is nothing to prove, then this easily would imply the existence of some strict local maximum of $f$ in $(0, 1)$, which is a contradiction. Thus we obtain the inequality in this case.

It remains to consider the main case when $f(1)>f(t')$, to handle this case we follow a refined version of the argument in \cite[Lemma 2.23]{GK14}. Again assume $f(t)>f(0)$, and temporarily assume $f'(t')>0$. Consider the functions
\begin{align*}
\ftil(\ttil):&=f(t\ttil),\\
\fhat(\ttil):&=f(t'+\ttil(1-t')),
\end{align*}
then by Cauchy's mean value theorem, for some $\ttil\in[0, 1]$ we have
\begin{align}
 \frac{f(t)-f(0)}{f(1)-f(t')}&=\frac{\ftil(1)-\ftil(0)}{\fhat(1)-\fhat(0)}\notag\\
 &=\frac{\ftil^\prime(\ttil)}{\fhat^\prime(\ttil)}\notag\\
 &=\frac{tf^\prime(t\ttil)}{(1-t')f^\prime(t'+\ttil(1-t'))}.\label{eqn: cauchy mean value}
\end{align}
Since $t'+\ttil(1-t')>t'$ and we have assumed $f'(t')>0$, \eqref{eqn:gLpredux f' positivity propagation}  guarantees that $f^\prime(t'+\ttil(1-t'))>0$. This and \eqref{eqn: cauchy mean value} in turn imply that $f^\prime(t\ttil)>0$ as well. On the other hand, since $\ttil$, $t'\leq 1$ it is clear that $0\leq t\ttil\leq t'+\ttil(1-t')\leq 1$. Thus by \eqref{eqn:gLpredux f' positivity propagation} again we see $f'>0$ on $[t\ttil, t'+\ttil(1-t')]$ and we can integrate the inequality in Lemma~\ref{lem: cataclysm} over this interval. This yields
\begin{align*}
 \frac{f^\prime(t\ttil)}{f^\prime(t'+\ttil(1-t'))}\leq e^{C(t'+\ttil(1-t')-t\ttil)}\leq e^C=: M
\end{align*}
(note that $M\geq 1$). Combined with \eqref{eqn: cauchy mean value} we obtain the desired inequality when $f'(t')>0$.

Finally suppose that $f'(t')\leq 0$; by \eqref{eqn:gLpredux f' positivity propagation} we must have $f(t')\leq f(0)$. Let 
\begin{align*}
t_0=\sup\curly{t''\in [0, 1]\mid f(t'')=f(t')},
\end{align*}
 clearly $t'\leq t_0<1$ and for small enough $\epsilon>0$ we have $f'(t_0+\epsilon)>0$. We cannot have $t\leq t_0$, as  $f(t_0)=f(t')\leq f(0)<f(t)$ and this would imply the existence of a strict local maximum of $f$ on $(0, t_0)$. Thus $t\in (t_0, 1]$ and we redo the calculation leading to \eqref{eqn: cauchy mean value} with the function $f_\epsilon(\ttil):=f(t_0+\epsilon+\ttil(1-t_0-\epsilon))$ in place of $f$, $t_\epsilon:=\frac{t-t_0-\epsilon}{1-t_0-\epsilon}$ replacing $t$, and $0$ in place of $t'$ to obtain 
\begin{align*}
 \frac{f(t)-f(t_0+\epsilon)}{f(1)-f(t_0+\epsilon)}= \frac{f_\epsilon(t_\epsilon)-f_\epsilon(0)}{f_\epsilon(1)-f_\epsilon(0)}\leq \frac{t_\epsilon f_\epsilon^\prime(t_\epsilon\ttil)}{f_\epsilon^\prime(\ttil)}=\frac{t_\epsilon f^\prime(t_0+\epsilon+\ttil(t-t_0-\epsilon))}{f^\prime(t_0+\epsilon+\ttil(1-t_0-\epsilon))}\leq Mt_\epsilon;
\end{align*}
we are able to obtain the final inequality as $f^\prime(t_0+\epsilon)>0$, hence we may integrate as in the previous case of \eqref{eqn: cauchy mean value} for the bound. Finally taking $\epsilon$ to $0$ and using that
\begin{align*}
 f(t)-f(0)&\leq f(t)-f(t')=f(t)-f(t_0),\\
 \lim_{\epsilon \to 0}t_\epsilon &=\frac{t-t_0}{1-t_0}\leq t\leq \frac{t}{1-t'}
\end{align*}
we obtain the inequality in this case.

\end{proof}

\begin{rem}\label{rem: twists are symmetric}
 Note that \eqref{DualTwist} implies for fixed $\xbar$, the mapping $(\x, \u)\mapsto (\Dbar \H, \H)(\x, \xbar, \u)$ is injective on the set $\curly{(\x, u)\mid (\x, \xbar, \u)\in\gendomdual}$: suppose for $\xbar$ fixed, and $(\x_1, \u_1)$ and $(\x_2, \u_2)$ in this set we have 
\begin{align*}
 \Dbar\H(\x_1, \xbar, \u_1)&=\Dbar\H(\x_2, \xbar, \u_2),\\
 \H(\x_1, \xbar, \u_1)&=\H(\x_2, \xbar, \u_2)=:\z.
\end{align*}
Then, $\u_i=\G(\x_i, \xbar, \z)$ and $\Dbar\H(\x_i, \xbar, \G(\x_i, \xbar, \z))=-\frac{\Dbar \G}{\Gz}(\x_i, \xbar, \z)$ for $i=1$, $2$, hence the first line above is equivalent to 
\begin{align*}
-\frac{\Dbar \G}{\Gz}(\x_1, \xbar, \z)=-\frac{\Dbar \G}{\Gz}(\x_2, \xbar, \z).
\end{align*}
Since $(\x_i, \xbar, \z)=(\x_i, \xbar, \H(\x_i, \xbar, \u_i))\in\gendom$, by \eqref{DualTwist} we must have $x_1=x_2$. But then clearly also $\u_1=\u_2$, hence we have injectivity. Moreover, if $\U{\xbar}{\p}{\z}:=\G(\X{\xbar}{\z}{\p}, \xbar, z)$, then we have 
\begin{align*}
 \Dbar\H(\X{\xbar}{\z}{\p}, \xbar, \U{\xbar}{\p}{\z})&=\p,\\
 \H(\X{\xbar}{\z}{\p}, \xbar, \U{\xbar}{\p}{\z})&=z.
\end{align*}

Also note that \eqref{Twist} implies that for fixed $(\x, \u)$, the mapping $\xbar\mapsto-\frac{\Dx \H}{\Hu}(\x, \xbar, \u)$ is injective on the set $\curly{\xbar\mid (\x, \xbar, \u)\in\gendomdual}$: fix $\x$, $\u$ and say $\xbar_1\neq\xbar_2$ with $(\x, \xbar_i, \u)\in\gendomdual$ (thus $(\x, \xbar_i, \H(\x, \xbar_i, \u))\in\gendom$) for $i=1$, $2$. Now $\G(\x, \xbar_i, \H(\x, \xbar_i, \u))=\u$ for $i=0$, $1$,  thus by \eqref{Twist}  it must be that $\Dx\G(\x, \xbar_1, \H(\x, \xbar_1, \u))\neq \Dx\G(\x, \xbar_2, \H(\x, \xbar_2, \u))$. Since $-\frac{\Dx \H}{\Hu}(\x, \xbar_i, \u)=\Dx\G(\x, \xbar_i, \H(\x, \xbar_i, \u))$, this gives the claim. Moreover,
\begin{align*}
 -\frac{\Dx \H}{\Hu}(\x, \Xbar{\x}{\u}{\pbar}, \u)&=\Dx\G(\x,\Xbar{\x}{\u}{\pbar}, \H(\x, \Xbar{\x}{\u}{\pbar}, \u) )\\
 &=\Dx\G(\x, \Xbar{\x}{\u}{\pbar}, \Z{\x}{\pbar}{\u})=\pbar.
\end{align*}
\end{rem}

With the above Corollary~\ref{cor: gLp redux} in hand, \eqref{DualQQConv} will be immediate. To obtain \eqref{QQConv}, we first show a ``dual'' version of \eqref{DualQQConv} for the function $\H$ (\eqref{HQQConv} below). We then exploit the relation between $\G$ and $\H$, along with monotonicity properties in the scalar variables to translate this to \eqref{QQConv}.

\begin{proof}[Proof of Theorem~\ref{thm: G3w implies QQconv}]
 Corollary~\ref{cor: gLp redux} immediately implies \eqref{DualQQConv}.
 
 To obtain \eqref{QQConv}, first fix a subinterval $[\QQConvlower, \QQConvupper]\subset (\Gfivelower, \Gfiveupper)$, and let $\x_0$, $\x_1\in\outerdom$, $\xbar_1$, $\xbar_0\in\outertarget$, $\z_0\in\real$ with $\G(\x(s), \xbar_0, \z_0)\in[\QQConvlower, \QQConvupper]$ for $s\in[0, 1]$, and $\x(s):=\Gseg{\x_0}{\x_1}{\xbar_0, \z_0}$. $\x(s)$ is well-defined and remains in $\outerdom^{\cl}$ for all $s\in[0, 1]$ by \eqref{G5} and \eqref{DomConv}. Keeping in mind Remark~\ref{rem: twists are symmetric}, we can follow the proofs of Lemma~\ref{lem: cataclysm} and Corollary~\ref{cor: gLp redux} with the roles of $\H$ and $\G$, $\outerdom$ and $\outertarget$, and $T$ and $T^*$ switched to obtain an analogous version of \eqref{DualQQConv}, i.e. there exists a constant $M\geq 1$ depending on $[\QQConvlower, \QQConvupper]$ but not on $\x_0$, $\x_1$, $\xbar_0$, and $\xbar_1$ such that
\begin{align*}
  &\H(\x(s), \xbar_1, \G(\x(s), \xbar_0, \z_0))-\H(\x_0, \xbar_1, \G(\x_0, \xbar_0, \z_0))\label{HQQConv}\tag{$\H^*$-QQConv}\\
  &\leq \frac{Ms}{1-s'}\brackets{\H(\x_1, \xbar_1, \G(\x_1, \xbar_0, \z_0))-\H(\x(s'), \xbar_1, \G(\x(s'), \xbar_0, \z_0))}_+,\qquad \forall\; s, s'\in[0, 1).
\end{align*}

Indeed, we can redo the proof of Lemma~\ref{lem: cataclysm} with the functions
\begin{align*}
 f(s, t; s_0):&=-\frac{\H(\x(s), \xbar(t; s_0), \u(s))}{\Hu(\x(s_0), \xbar(t; s_0), \u(s_0))}\\
 g(s):&=-\Hu(\x(s_0), \xbar_1, \u(s_0))f(s, 1; s_0),
\end{align*}
with
\begin{align*}
\p_0:&=\Dbar\H(\x_0, \xbar_0, \G(\x_0, \xbar_0, \z_0)),\qquad
\p_1=\Dbar\H(\x_1, \xbar_0, \G(\x_1, \xbar_0, \z_0)),\\
\u(s)&=\U{\xbar_0}{(1-s)\p_0+s\p_1}{\z_0}=\G(\x(s), \xbar_0, \z_0),
\end{align*}
\begin{align*}
\pbar_0(s_0):&=-\frac{\Dx \H}{\Hu}(\x(s_0), \xbar_0, \u(s_0)),\qquad
\pbar_1(s_0):=-\frac{\Dx \H}{\Hu}(\x(s_0), \xbar_1, \u(s_0)),\\
 \xbar(t; s_0):&=\Xbar{\x(s_0)}{\u(s_0)}{(1-t)\pbar_0(s_0)+t\pbar_1(s_0)}.
\end{align*}
Since
$\u(s_0)\in[\QQConvlower, \QQConvupper]\subset(\Gfivelower, \Gfiveupper)$ for any $s_0\in[0, 1]$, \eqref{DualDomConv} implies that $\xbar(t; s_0)$ is well-defined and remains in $\outertarget^{\cl}$ for all $t$, $s_0\in[0, 1]$.
As before, (using \eqref{G3w} in place of \eqref{dual-G3w}) we eventually obtain the existence of a constant $C>0$ with the correct dependencies for which $g''(s)\geq -C\norm{g'(s)}$ for all $s\in[0, 1]$, and the same proof as Corollary~\ref{cor: gLp redux} yields \eqref{HQQConv}.

Let us write $\z_1(s):=\H(\x(s), \xbar_1, \G(\x(s), \xbar_0, \z_0))$, and fix some $s\in[0, 1]$, $s'\in [0, 1)$. Rearranging \eqref{HQQConv}, taking $\G(\x(s), \xbar_1, \cdot)$ of both sides, and using that $\Gz<0$ we obtain
\begin{align*}
 &\G(\x(s), \xbar_1, \z_1(0))\notag\\
 &\leq \G(\x(s), \xbar_1, \z_1(s)-\frac{Ms}{1-s'}\;\brackets{\z_1(1)-\z_1(s')}_{+})\notag\\
&\leq \G(\x(s), \xbar_1, \H(\x(s), \xbar_1, \G(\x(s), \xbar_0, \z_0)))+\sup\norm{\Gz}\frac{Ms}{1-s'}\;\brackets{\z_1(1)-\z_1(s')}_{+}\notag\\
&=\G(\x(s), \xbar_0, \z_0)+\sup\norm{\Gz}\frac{Ms}{1-s'}\;\brackets{\z_1(1)-\z_1(s')}_{+},
\end{align*}
where here the supremum of $\norm{\Gz}$ is over $(\x, \xbar)\in\outerdom^{\cl}\times\outertarget^{\cl}$ and 
\begin{align*}
\z_1(s)-\frac{Ms}{1-s'}\;\brackets{\z_1(1)-\z_1(s')}_{+}\leq z\leq \z_1(s).
\end{align*}
Now since $\G(\x(s), \xbar_0, \z_0)$ remains in the interval $[\QQConvlower, \QQConvupper]$, by using \eqref{Nondeg} and \eqref{eqn: xdot representation}, we see there is a finite upper bound $C_1>0$ depending only on $\G$ (through $\H$), $\outerdom$, $\outertarget$, and $[\QQConvlower, \QQConvupper]$ such that
\begin{align*}
 \z_1(1)-\z_1(s')&=\int_{s'}^1 \diff{s}\z_1(s)ds\leq C_1(1-s').
\end{align*}
Thus the supremum of $\norm{\Gz}$ can be taken over the interval $[\z_1(s)-C_1M, \z_1(s)]$, and in turn for a $C_2>0$ with the same dependencies as $C_1$ we find
\begin{align}
  \G(\x(s), \xbar_1, \z_1(0))-\G(\x(s), \xbar_0, \z_0)\leq C_2\frac{Ms}{1-s'}\brackets{\H(\x_1, \xbar_1, \G(\x_1, \xbar_0, \z_0))-\z_1(s')}_{+}.\label{eqn: intermediate QQConv}
\end{align}
First suppose $\G(\x_1, \xbar_1, \z_1(s'))\leq \G(\x_1, \xbar_0, \z_0)$, we then calculate that 
\begin{align*}
 \z_1(s')&=\H(\x_1, \xbar_1, \G(\x_1, \xbar_1, \z_1(s')))\\
 &\geq \H(\x_1, \xbar_1, \G(\x_1, \xbar_0, \z_0)),
\end{align*}
hence by \eqref{eqn: intermediate QQConv} we obtain \eqref{QQConv} in this case. 

Otherwise,
\begin{align*}
&\brackets{\H(\x_1, \xbar_1, \G(\x_1, \xbar_0, \z_0))-\z_1(s')}_{+}\\
&=\H(\x_1, \xbar_1, \G(\x_1, \xbar_0, \z_0))-\H(\x_1, \xbar_1, \G(\x_1, \xbar_1, \z_1(s')))\\
&\leq \sup\norm{\Hu}\norm{\G(\x_1, \xbar_1, \z_1(s'))-\G(\x_1, \xbar_0, \z_0)}\\  
&=\sup\norm{\Hu}\brackets{\G(\x_1, \xbar_1, \z_1(s'))-\G(\x_1, \xbar_0, \z_0)}_{+}
\end{align*}
where the supremum above is over $\u\in[\G(\x_1, \xbar_0, \z_0), \G(\x_1, \xbar_1, \z_1(s'))]$. Again, this supremum then has a finite upper bound $C_3>0$ depending only on $\G$, $\outerdom$, $\outertarget$, and $[\QQConvlower, \QQConvupper]$, and by \eqref{eqn: intermediate QQConv} we obtain \eqref{QQConv} with the constant $\max\curly{1, C_2C_3M}$. 

\end{proof}

\bibliography{PJEreg}

\def\cprime{$'$}
\begin{thebibliography}{FKM13b}

\bibitem[Bre91]{Bre91}
Yann Brenier.
\newblock Polar factorization and monotone rearrangement of vector-valued
  functions.
\newblock {\em Comm. Pure Appl. Math.}, 44(4):375--417, 1991.

\bibitem[Caf90a]{Caf90b}
Luis~A. Caffarelli.
\newblock Interior ${W}^{2,p}$ estimates for {S}olutions of the
  {M}onge-{A}mp\`ere {E}quation.
\newblock {\em Ann. of Math. (2)}, 131(1):135--150, 1990.

\bibitem[Caf90b]{Caf90}
Luis~A. Caffarelli.
\newblock A localization property of viscosity solutions to the
  {M}onge-{A}mp\`ere equation and their strict convexity.
\newblock {\em Ann. of Math. (2)}, 131(1):129--134, 1990.

\bibitem[Caf91]{Caf91}
Luis~A. Caffarelli.
\newblock Some regularity properties of solutions of {M}onge {A}mp\`ere
  equation.
\newblock {\em Comm. Pure Appl. Math.}, 44(8-9):965--969, 1991.

\bibitem[Caf92a]{Caf92lectures}
Luis~A. Caffarelli.
\newblock A-priori estimates and the geometry of the {M}onge-{A}mp\`ere
  equation.
\newblock {\em Nonlinear partial differential equations in differential
  geometry (Park City, UT, 1992)}, 2:5--63, 1992.

\bibitem[Caf92b]{Caf92}
Luis~A. Caffarelli.
\newblock The regularity of mappings with a convex potential.
\newblock {\em J. Amer. Math. Soc.}, 5(1):99--104, 1992.

\bibitem[CGH08]{CafGutHuang08}
Luis~A. Caffarelli, Cristian~E. Guti{\'e}rrez, and Qingbo Huang.
\newblock On the regularity of reflector antennas.
\newblock {\em Ann. of Math. (2)}, 167(1):299--323, 2008.

\bibitem[Cla90]{Clarke90}
Francis~H. Clarke.
\newblock {\em Optimization and nonsmooth analysis}, volume~5 of {\em Classics
  in Applied Mathematics}.
\newblock Society for Industrial and Applied Mathematics (SIAM), Philadelphia,
  PA, second edition, 1990.

\bibitem[CMN10]{CMN10}
Pierre-Andr{\'e} Chiappori, Robert~J. McCann, and Lars~P. Nesheim.
\newblock Hedonic price equilibria, stable matching, and optimal transport:
  equivalence, topology, and uniqueness.
\newblock {\em Econom. Theory}, 42(2):317--354, 2010.

\bibitem[CO08]{CafOli08}
L.~A. Caffarelli and V.~I. Oliker.
\newblock Weak solutions of one inverse problem in geometric optics.
\newblock {\em J. Math. Sci. (N. Y.)}, 154(1):39--49, 2008.
\newblock Problems in mathematical analysis. No. 37.

\bibitem[DG10]{DG10}
Philippe Delano{\"e} and Yuxin Ge.
\newblock Regularity of optimal transport on compact, locally nearly spherical,
  manifolds.
\newblock {\em J. Reine Angew. Math.}, 646:65--115, 2010.

\bibitem[DPF13]{DePFig2013}
Guido De~Philippis and Alessio Figalli.
\newblock ${W}^{2, 1}$ regularity for solutions of the {M}onge--{A}mp{\`e}re
  equation.
\newblock {\em Inventiones mathematicae}, 192(1):55--69, 2013.

\bibitem[DPF14]{DePFig2014}
Guido De~Philippis and Alessio Figalli.
\newblock The {M}onge--{A}mp{\`e}re equation and its link to optimal
  transportation.
\newblock {\em Bulletin of the American Mathematical Society}, 2014.

\bibitem[DPFS13]{DePFigSav2013}
Guido De~Philippis, Alessio Figalli, and Ovidiu Savin.
\newblock A note on interior ${W}^{2, 1+\varepsilon}$ estimates for the
  {M}onge--{A}mp{\`e}re equation.
\newblock {\em Mathematische Annalen}, 357(1):11--22, 2013.

\bibitem[Eke05]{Ek05}
Ivar Ekeland.
\newblock An optimal matching problem.
\newblock {\em ESAIM Control Optim. Calc. Var.}, 11(1):57--71 (electronic),
  2005.

\bibitem[Eke10]{Ek10}
Ivar Ekeland.
\newblock Existence, uniqueness and efficiency of equilibrium in hedonic
  markets with multidimensional types.
\newblock {\em Econom. Theory}, 42(2):275--315, 2010.

\bibitem[FKM11]{FKM11a}
Alessio Figalli, Young-Heon Kim, and Robert~J. McCann.
\newblock When is multidimensional screening a convex program?
\newblock {\em J. Econom. Theory}, 146(2):454--478, 2011.

\bibitem[FKM13a]{FKM13}
Alessio Figalli, Young-Heon Kim, and Robert~J. McCann.
\newblock H{\"o}lder continuity and injectivity of optimal maps.
\newblock {\em Arch. Ration. Mech. Anal.}, 209(3):747--795, 2013.

\bibitem[FKM13b]{FKM13b}
Alessio Figalli, Young-Heon Kim, and Robert~J. McCann.
\newblock {On supporting hyperplanes to convex bodies}.
\newblock {\em Methods Appl. Anal.}, 20(3):261--272, 2013.

\bibitem[FM04]{FM04}
Liliana Forzani and Diego Maldonado.
\newblock Properties of the solutions to the {M}onge-{A}mp\`ere equation.
\newblock {\em Nonlinear Anal.}, 57(5-6):815--829, 2004.

\bibitem[FR09]{FR09}
Alessio Figalli and Ludovic Rifford.
\newblock Continuity of optimal transport maps and convexity of injectivity
  domains on small deformations of {$\Bbb S^2$}.
\newblock {\em Comm. Pure Appl. Math.}, 62(12):1670--1706, 2009.

\bibitem[FRV12]{FRV12}
Alessio Figalli, Ludovic Rifford, and C{\'e}dric Villani.
\newblock Nearly round spheres look convex.
\newblock {\em Amer. J. Math.}, 134(1):109--139, 2012.

\bibitem[GH14]{GutHuang14}
Cristian~E. Guti{\'e}rrez and Qingbo Huang.
\newblock The near field refractor.
\newblock {\em Ann. Inst. H. Poincar\'e Anal. Non Lin\'eaire}, 31(4):655--684,
  2014.

\bibitem[GK15]{GK14}
Nestor Guillen and Jun Kitagawa.
\newblock On the local geometry of maps with c-convex potentials.
\newblock {\em Calc. Var. Partial Differential Equations}, 52(1-2):345--387,
  2015.

\bibitem[GM96]{GM96}
Wilfrid Gangbo and Robert~J. McCann.
\newblock The geometry of optimal transportation.
\newblock {\em Acta Math.}, 177(2):113--161, 1996.

\bibitem[GM13]{GutMawi13}
Cristian~E. Guti{\'e}rrez and Henok Mawi.
\newblock The refractor problem with loss of energy.
\newblock {\em Nonlinear Anal.}, 82:12--46, 2013.

\bibitem[GO03]{GlimmOliker03}
T.~Glimm and V.~Oliker.
\newblock Optical design of single reflector systems and the
  {M}onge-{K}antorovich mass transfer problem.
\newblock {\em J. Math. Sci. (N. Y.)}, 117(3):4096--4108, 2003.
\newblock Nonlinear problems and function theory.

\bibitem[GO04]{GlimmOliker04}
Tilmann Glimm and Vladimir Oliker.
\newblock Optical design of two-reflector systems, the {M}onge-{K}antorovich
  mass transfer problem and {F}ermat's principle.
\newblock {\em Indiana Univ. Math. J.}, 53(5):1255--1277, 2004.

\bibitem[GS14]{GutSabra14}
Cristian~E. Guti{\'e}rrez and Ahmad Sabra.
\newblock The reflector problem and the inverse square law.
\newblock {\em Nonlinear Anal.}, 96:109--133, 2014.

\bibitem[GT14]{GutTour14}
Cristian~E Guti{\'e}rrez and Federico Tournier.
\newblock Regularity for the near field parallel refractor and reflector
  problems.
\newblock {\em Calculus of Variations and Partial Differential Equations},
  pages 1--33, 2014.

\bibitem[Gut01]{Gut01}
Cristian~E. Guti{\'e}rrez.
\newblock {\em The {M}onge-{A}mp\`ere equation}.
\newblock Progress in Nonlinear Differential Equations and their Applications,
  44. Birkh\"auser Boston Inc., Boston, MA, 2001.

\bibitem[GW98]{GuanWang98}
Pengfei Guan and Xu-Jia Wang.
\newblock On a {M}onge-{A}mp\`ere equation arising in geometric optics.
\newblock {\em J. Differential Geom.}, 48(2):205--223, 1998.

\bibitem[HK85]{HasanisKoutroufiotis85}
Thomas Hasanis and Dimitri Koutroufiotis.
\newblock The characteristic mapping of a reflector.
\newblock {\em J. Geom.}, 24(2):131--167, 1985.

\bibitem[HMT07]{HofmannMitreaTaylor07}
Steve Hofmann, Marius Mitrea, and Michael Taylor.
\newblock Geometric and transformational properties of {L}ipschitz domains,
  {S}emmes-{K}enig-{T}oro domains, and other classes of finite perimeter
  domains.
\newblock {\em J. Geom. Anal.}, 17(4):593--647, 2007.

\bibitem[JM92]{JanssenMaes92}
A.~J. E.~M. Janssen and M.~J. J. J.~B. Maes.
\newblock An optimization problem in reflector design.
\newblock {\em Philips J. Res.}, 47(2):99--143, 1992.

\bibitem[JT14]{JiangTru14}
Feida Jiang and Neil~S. Trudinger.
\newblock On {P}ogorelov estimates in optimal transportation and geometric
  optics.
\newblock {\em Bull. Math. Sci.}, 4(3):407--431, 2014.

\bibitem[Kar14]{Kar14}
Aram~L. Karakhanyan.
\newblock Existence and regularity of the reflector surfaces in
  {$\Bbb{R}^{n+1}$}.
\newblock {\em Arch. Ration. Mech. Anal.}, 213(3):833--885, 2014.

\bibitem[KM10]{KM10}
Young-Heon Kim and Robert~J. McCann.
\newblock Continuity, curvature, and the general covariance of optimal
  transportation.
\newblock {\em J. Eur. Math. Soc. (JEMS)}, 12(4):1009--1040, 2010.

\bibitem[KM12]{KM12}
Young-Heon Kim and Robert~J. McCann.
\newblock Towards the smoothness of optimal maps on {R}iemannian submersions
  and {R}iemannian products (of round spheres in particular).
\newblock {\em J. Reine Angew. Math.}, 664:1--27, 2012.

\bibitem[KO98]{KO98}
Sergey~A. Kochengin and Vladimir~I. Oliker.
\newblock Determination of reflector surfaces from near-field scattering data.
  {II}. {N}umerical solution.
\newblock {\em Numer. Math.}, 79(4):553--568, 1998.

\bibitem[KW10]{KarWang10}
Aram Karakhanyan and Xu-Jia Wang.
\newblock On the reflector shape design.
\newblock {\em J. Differential Geom.}, 84(3):561--610, 2010.

\bibitem[Lee13]{LeeJ13}
John~M. Lee.
\newblock {\em Introduction to smooth manifolds}, volume 218 of {\em Graduate
  Texts in Mathematics}.
\newblock Springer, New York, second edition, 2013.

\bibitem[Liu09]{Liu09}
Jiakun Liu.
\newblock H\"older regularity of optimal mappings in optimal transportation.
\newblock {\em Calc. Var. Partial Differential Equations}, 34(4):435--451,
  2009.

\bibitem[LL12]{LeeLi12}
Paul W.~Y. Lee and Jiayong Li.
\newblock New examples satisfying {M}a-{T}rudinger-{W}ang conditions.
\newblock {\em SIAM J. Math. Anal.}, 44(1):61--73, 2012.

\bibitem[LM11]{LeeMcCann11}
Paul W.~Y. Lee and Robert~J. McCann.
\newblock The {M}a-{T}rudinger-{W}ang curvature for natural mechanical actions.
\newblock {\em Calc. Var. Partial Differential Equations}, 41(1-2):285--299,
  2011.

\bibitem[LO95]{LutOli1995}
Erwin Lutwak and Vladimir Oliker.
\newblock On the regularity of solutions to a generalization of the minkowski
  problem.
\newblock {\em Journal of Differential Geometry}, 41(1):227--246, 1995.

\bibitem[Loe09]{Loe09}
Gr{\'e}goire Loeper.
\newblock On the regularity of solutions of optimal transportation problems.
\newblock {\em Acta Math.}, 202(2):241--283, 2009.

\bibitem[Loe11]{Loe11}
Gr{\'e}goire Loeper.
\newblock Regularity of optimal maps on the sphere: the quadratic cost and the
  reflector antenna.
\newblock {\em Arch. Ration. Mech. Anal.}, 199(1):269--289, 2011.

\bibitem[LTW10]{LTW10}
Jiakun Liu, Neil~S. Trudinger, and Xu-Jia Wang.
\newblock Interior {$C^{2,\alpha}$} regularity for potential functions in
  optimal transportation.
\newblock {\em Comm. Partial Differential Equations}, 35(1):165--184, 2010.

\bibitem[LV10]{LV10}
Gr{\'e}goire Loeper and C{\'e}dric Villani.
\newblock Regularity of optimal transport in curved geometry: the nonfocal
  case.
\newblock {\em Duke Math. J.}, 151(3):431--485, 2010.

\bibitem[McC01]{McC01}
Robert~J. McCann.
\newblock Polar factorization of maps on {R}iemannian manifolds.
\newblock {\em Geom. Funct. Anal.}, 11(3):589--608, 2001.

\bibitem[MTW05]{MTW05}
Xi-Nan Ma, Neil~S. Trudinger, and Xu-Jia Wang.
\newblock Regularity of potential functions of the optimal transportation
  problem.
\newblock {\em Arch. Ration. Mech. Anal.}, 177(2):151--183, 2005.

\bibitem[NS13]{NoldekeSamuelson2013}
Georg N{\"o}ldeke and Larry Samuelson.
\newblock Implementation and matching without quasilinearity: A duality
  approach.
\newblock {\em (preprint)}, 2013.

\bibitem[Oli89]{Oli89}
V.~I. Oliker.
\newblock On reconstructing a reflecting surface from the scattering data in
  the geometric optics approximation.
\newblock {\em Inverse Problems}, 5(1):51--65, 1989.

\bibitem[Oli03]{Oli03}
Vladimir Oliker.
\newblock Mathematical aspects of design of beam shaping surfaces in
  geometrical optics.
\newblock In {\em Trends in nonlinear analysis}, pages 193--224. Springer,
  Berlin, 2003.

\bibitem[Oli11]{Oli11}
Vladimir Oliker.
\newblock Designing freeform lenses for intensity and phase control of coherent
  light with help from geometry and mass transport.
\newblock {\em Arch. Ration. Mech. Anal.}, 201(3):1013--1045, 2011.

\bibitem[ORW15]{OliRubWol15}
Vladimir Oliker, Jacob Rubinstein, and Gershon Wolansky.
\newblock Supporting quadric method in optical design of freeform lenses for
  illumination control of a collimated light.
\newblock {\em Adv. in Appl. Math.}, 62:160--183, 2015.

\bibitem[Roc70]{Roc70}
R.~Tyrrell Rockafellar.
\newblock {\em Convex analysis}.
\newblock Princeton University Press, 1970.

\bibitem[RP70]{RuschPotter1970}
W.V.T. Rusch and P.D. Potter.
\newblock {\em Analysis of reflector antennas}.
\newblock Academic Press, 1970.

\bibitem[Tru14a]{Trudinger}
Neil~S. Trudinger.
\newblock Personal communication, 2014.

\bibitem[Tru14b]{Tru14}
Neil~S. Trudinger.
\newblock On the local theory of prescribed {J}acobian equations.
\newblock {\em Discrete Contin. Dyn. Syst.}, 34(4):1663--1681, 2014.

\bibitem[TW09]{TW09}
Neil~S. Trudinger and Xu-Jia Wang.
\newblock On the second boundary value problem for {M}onge-{A}mp\`ere type
  equations and optimal transportation.
\newblock {\em Ann. Sc. Norm. Super. Pisa Cl. Sci. (5)}, 8(1):143--174, 2009.

\bibitem[V{\'e}t15]{Vetois2015}
J{\'e}r{\^o}me V{\'e}tois.
\newblock Continuity and injectivity of optimal maps.
\newblock {\em Calculus of Variations and Partial Differential Equations},
  52(3-4):587--607, 2015.

\bibitem[Vil09]{Vil09}
C{\'e}dric Villani.
\newblock {\em Optimal transport: Old and new}, volume 338 of {\em Grundlehren
  der Mathematischen Wissenschaften [Fundamental Principles of Mathematical
  Sciences]}.
\newblock Springer-Verlag, Berlin, 2009.

\bibitem[Wan96]{Wan96}
Xu-Jia Wang.
\newblock On the design of a reflector antenna.
\newblock {\em Inverse Problems}, 12(3):351--375, 1996.

\bibitem[Wan04]{Wan04}
Xu-Jia Wang.
\newblock On the design of a reflector antenna. {II}.
\newblock {\em Calc. Var. Partial Differential Equations}, 20(3):329--341,
  2004.

\end{thebibliography}
\bibliographystyle{alpha}
\end{document}